\theoremstyle{plain}
\newtheorem{lemma}{Lemma}[section]
\newtheorem{theorem}[lemma]{Theorem}
\newtheorem{proposition}[lemma]{Proposition}
\newtheorem{corollary}[lemma]{Corollary}
\newtheorem*{sclaim}{Claim}
\newtheorem*{proposition*}{Proposition}
\newtheorem*{theorem*}{Theorem}
\newtheorem*{stat}{\name}
\newcommand{\name}{testing}
\theoremstyle{definition}
\newtheorem{definition}[lemma]{Definition}
\newtheorem{example}[lemma]{Example}
\theoremstyle{remark}
\newtheorem{remark}[lemma]{Remark}
\newtheorem{notation}[lemma]{Notation}
\newtheorem*{remark*}{Remark}
\newcommand{\qedc}{{\qed}~{\rm Claim~{\theclaim}.}}
\newcommand{\qedsc}{{\qed}~{\rm Claim.}}
\newenvironment{scproof}
{\begin{proof}[Proof of Claim.]}
{\qedsc\renewcommand{\qed}{}\end{proof}}
\numberwithin{equation}{section}
\newcommand{\pup}[1]{\textup{(}{#1}\textup{)}}
\newcommand{\set}[1]{\{#1\}}
\newcommand{\setm}[2]{\set{#1\mid#2}}
\newcommand{\Set}[1]{\left\{#1\right\}}
\newcommand{\Setm}[2]{\Set{#1\mid#2}}
\newcommand{\famm}[2]{(#1\mid#2)}
\newcommand{\gen}[1]{\langle{#1}\rangle}
\newcommand{\Pow}{\mathfrak{P}}
\newcommand{\dnw}{\mathbin{\downarrow}}
\newcommand{\Qone}{\mathbf{(Q1)}}
\newcommand{\Qtwo}{\mathbf{(Q2)}}
\newcommand{\Qonep}{\mathbf{(Q1')}}
\newcommand{\Qtwop}{\mathbf{(Q2')}}
\newcommand{\cA}{\mathcal{A}}
\newcommand{\cK}{\mathcal{K}}
\newcommand{\cL}{\mathcal{L}}
\newcommand{\cM}{\mathcal{M}}
\newcommand{\cS}{\mathcal{S}}
\newcommand{\cV}{\mathcal{V}}
\newcommand{\cW}{\mathcal{W}}
\newcommand{\rB}{\mathrm{B}}
\newcommand{\into}{\hookrightarrow}
\newcommand{\zero}{\mathbf{0}}
\newcommand{\two}{\mathbf{2}}
\newcommand{\bx}{\boldsymbol{x}}
\newcommand{\by}{\boldsymbol{y}}
\newcommand{\bu}{\boldsymbol{u}}
\newcommand{\bpi}{\boldsymbol{\pi}}
\newcommand{\brho}{\boldsymbol{\rho}}
\newcommand{\bff}{\boldsymbol{f}}
\newcommand{\bgg}{\boldsymbol{g}}
\newcommand{\bhh}{\boldsymbol{h}}
\newcommand{\bxi}{\boldsymbol{\xi}}
\newcommand{\bchi}{\boldsymbol{\chi}}
\newcommand{\bA}{\boldsymbol{A}}
\newcommand{\bB}{\boldsymbol{B}}
\newcommand{\bC}{\boldsymbol{C}}
\newcommand{\bU}{\boldsymbol{U}}
\newcommand{\bX}{\boldsymbol{X}}
\newcommand{\xF}{\mathbf{F}}
\newcommand{\CG}{\boldsymbol{C}}
\newcommand{\GA}{\boldsymbol{G}}
\newcommand{\PGGL}{\boldsymbol{P}_{\mathrm{gl}}}
\newcommand{\PGGR}{\boldsymbol{P}_{\mathrm{gr}}}
\newcommand{\PGA}{\boldsymbol{P}_{\mathrm{ga}}}
\newcommand{\CPG}{\boldsymbol{C}_{\mathrm{pg}}}
\newcommand{\sL}{\mathscr{L}}
\newcommand{\id}{\mathrm{id}}
\newcommand{\jz}{$(\vee,0)$}
\newcommand{\jzs}{\jz-semi\-lat\-tice}
\newcommand{\jzh}{\jz-ho\-mo\-mor\-phism}
\newcommand{\jze}{\jz-em\-bed\-ding}
\newcommand{\res}{\mathbin{\restriction}}
\newcommand{\pregamp}{pregamp}
\newcommand{\pregamps}{pregamps}
\newcommand{\subpregamp}{sub-pregamp}
\newcommand{\subpregamps}{sub-pregamps}
\DeclareMathOperator{\card}{card}
\DeclareMathOperator{\crita}{crit}
\newcommand{\crit}[2]{\crita({{#1};{#2}})}
\DeclareMathOperator{\Ids}{Id_s}
\DeclareMathOperator{\Def}{Def}
\DeclareMathOperator{\rng}{rng}
\DeclareMathOperator{\Max}{Max}
\DeclareMathOperator{\Con}{Con}
\newcommand{\Conc}{\Con_{\mathrm{c}}}
\DeclareMathOperator{\Id}{Id}
\DeclareMathOperator{\ari}{ar}
\DeclareMathOperator{\Var}{{\bf{Var}}}
\newcommand{\tosurj}{\mathbin{\twoheadrightarrow}}
\newcommand{\toinj}{\mathbin{\hookrightarrow}}
\newcommand{\SET}{\mathbf{Set}}
\newcommand{\PALG}{\mathbf{PAlg}}
\newcommand{\SEM}{\mathbf{Sem}_{\vee,0}}
\newcommand{\GAMP}{\mathbf{Gamp}}
\newcommand{\MPALG}{\mathbf{PGamp}}
\newcommand{\ignorer}[1]{}
\newcommand{\kposet}[4]{{{#1}} \mathbin{\boxtimes}_{{#3}} {{#4}} }
\newcommand{\dual}[1]{{#1}^{\mathrm{d}}}
\begin{document}

\title[Critical points]{Categories of partial algebras for critical points between varieties of algebras}

\author[P.~Gillibert]{Pierre Gillibert}
\address{Charles University in Prague, Faculty of Mathematics and Physics, Department of Algebra, Sokolovska 83, 186 00 Prague, Czech Republic.}
\email{gilliber@karlin.mff.cuni.cz}
\email{pgillibert@yahoo.fr}

\subjclass[2000]{Primary 08A55, 08A30; Secondary , 06A12, 03C05}

\keywords{Partial algebra, congruence relation, gamp, pregamp, variety of algebras, critical point, Condensate Lifting Lemma, lattice, congruence-permutable}
\thanks{This work was partially supported by the institutional grant MSM 0021620839}

\date{\today}

\begin{abstract}
We denote by $\Conc A$ the \jzs\ of all finitely generated congruences of an algebra~$A$. A \emph{lifting} of a \jzs\ $S$ is an algebra~$A$ such that $S\cong \Conc A$.

The aim of this work is to give a categorical theory of partial algebras endowed with a partial subalgebra together with a semilattice-valued distance, that we call \emph{gamps}. This part of the theory is formulated in any variety of (universal) algebras.

Let~$\cV$ and $\cW$ be varieties of algebras (on a finite similarity type). Let $P$ be a finite lattice of order-dimension $d>0$. Let $\vec A$ be a $P$-indexed diagram of finite algebras in~$\cV$. If $\Conc\circ\vec A$ has no \emph{partial lifting} in the category of gamps of $\cW$, then there is an algebra $A\in\cV$ of cardinality $\aleph_{d-1}$ such that $\Conc A$ is not isomorphic to $\Conc B$ for any $B\in\cW$.

We already knew a similar result for diagrams $\vec A$ such that $\Conc\circ\vec A$ has no lifting in $\cW$, however the algebra~$A$ constructed here has cardinality $\aleph_d$.

Gamps are also used to study congruence-preserving extensions. Denote by~$\cM_3$ the variety generated by the lattice of length two, with three atoms. We construct a lattice $A\in\cM_3$ of cardinality $\aleph_1$ with no congruence $n$-permutable, congruence-preserving extension, for each $n\geq 2$.
\end{abstract}
\maketitle

\section{Introduction}

For an algebra~$A$ we denote by $\Con A$ the lattice of all congruences of~$A$ under inclusion. Given $x,y\in A$, we denote by $\Theta_A(x,y)$ the smallest congruence of~$A$ that identifies $x$ and $y$, such a congruence is called \emph{principal}. A congruence is \emph{finitely generated} if it is a finite join of principal congruences. The lattice $\Con A$ is algebraic and the compact element of $\Con A$ are the finitely generated congruences.

The lattice $\Con A$ is determined by the \jzs\ $\Conc A$ of compact congruences of~$A$. In this paper we mostly refer to $\Conc A$ instead of $\Con A$. If $\Conc A$ is isomorphic to a \jzs\ $S$, we call~$A$ a \emph{lifting} of $S$.

Given a class of algebras $\cK$ we denote by $\Conc\cK$ the class of all \jzs s with a lifting in $\cK$. In general, even if $\cK$ is a variety of algebras, there is no good description of $\Conc\cK$. The negative solution to the congruence lattice problem (CLP) in \cite{CLP} is a good example of the difficulty to find such a description.

The study of CLP led to the following related questions. Fix two classes of algebras~$\cV$ and~$\cW$.

\begin{itemize}
\item[$\Qone$] Given $A\in\cV$, does there exist $B\in\cW$ such that $\Conc A\cong\Conc B$?

\item[$\Qtwo$] Given $A\in\cV$, does there exist a congruence-preserving extension $B\in\cW$ of~$A$?
\end{itemize}

A positive answer to $\Qone$ is equivalent to $\Conc\cV\subseteq\Conc\cW$. The ``containment defect'' of $\Conc\cV$ into $\Conc\cW$ is measured (cf. \cite{CLPSurv, G1}) by the \emph{critical point} between~$\cV$ and~$\cW$, defined as
\begin{align*}
 \crit{\cV}{\cW}=
  \begin{cases}
  \min\setm{\card S}{S\in(\Conc\cV)-(\Conc\cW)}, &\text{if $\Conc\cV\not\subseteq\Conc\cW$,}\\
  \infty, &\text{if $\Conc\cV\subseteq\Conc\cW$.}
 \end{cases}
\end{align*}
This critical point has been already studied, for different families of varieties of lattices, in \cite{Ploscica00, Ploscica03, G1}.

We now give an illustration of $\Qtwo$. Every countable locally finite lattice has a relatively complemented, congruence-preserving extension (cf. \cite{GLWe}). In particular every countable locally finite lattice has a congruence-permutable, congruence-preserving extension. However, in every non-distributive variety of lattices, the free lattice on~$\aleph_1$ generators has no congruence-permutable, congruence-preserving extension (cf. \cite[Chapter 5]{GiWe1}). A precise answer to $\Qtwo$ also depends on the cardinality of~$A$.

In order to study a similar problem, Pudl\'ak in \cite{Pudl} uses an approach based on liftings of diagrams. The assignment $A\mapsto\Conc A$ can be extended to a functor. This leads to the following questions:

\begin{itemize}
\item[$\Qonep$] Given a diagram $\vec A$ in~$\cV$, does there exist a diagram $\vec B$ in~$\cW$ such that $\Conc \circ\vec A\cong\Conc \circ \vec B$?

\item[$\Qtwop$] Given a diagram $\vec A$ in~$\cV$, does there exist a diagram $\vec B$ in~$\cW$ which is a congruence-preserving extension of $\vec A$?
\end{itemize}

The functor $\Conc$ preserves directed colimits, thus, in many cases, a positive answer for the finite case of $\Qonep$ implies a positive answer to $\Qone$.

\begin{proposition}\label{P:intro}
Assume that~$\cV$ and~$\cW$ are varieties of algebras. If~$\cV$ is locally finite and for every lattice-indexed diagram $\vec A$ of finite algebras in~$\cV$ there exists a diagram~$\vec B$ in~$\cW$ such that $\Conc\circ\vec A\cong \Conc\circ\vec B$, then $\Qone$ has a positive answer.
\end{proposition}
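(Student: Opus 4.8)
The plan is to express an arbitrary $A\in\cV$ as a directed colimit of a \emph{lattice}-indexed diagram of finite algebras in $\cV$, apply the hypothesis to obtain a companion diagram in $\cW$, and then pass to the colimit in $\cW$.

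First I would fix $A\in\cV$ and let $L$ denote the set of all finite subalgebras of $A$, ordered by inclusion. Because $\cV$ is locally finite, every finite subset of $A$ generates a finite subalgebra, so $A=\bigcup_{S\in L}S$; moreover $L$ is a lattice: the meet of $S,T\in L$ is the intersection $S\cap T$ (a subalgebra, and finite since it is a subset of the finite set $S$), and the join of $S$ and $T$ is the subalgebra generated by $S\cup T$, which is again finite by local finiteness. (The subalgebra generated by $\es$ is the least element of $L$.) Let $\vec A$ be the $L$-indexed diagram $S\mapsto S$, with the inclusion maps as transition morphisms. Since $L$ is a join-semilattice it is upward directed, and $A=\varinjlim\vec A$, the colimit being directed.

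Next I would invoke the hypothesis on the lattice-indexed diagram $\vec A$: there is an $L$-indexed diagram $\vec B$ in $\cW$ together with a natural isomorphism $\Conc\circ\vec A\cong\Conc\circ\vec B$. As $\cW$ is a variety it is closed under directed colimits, so $B:=\varinjlim\vec B$ exists and lies in $\cW$. Finally, using that $\Conc$ preserves directed colimits (as recalled in the excerpt), one gets
\[
\Conc A=\Conc\bigl(\varinjlim\vec A\bigr)\cong\varinjlim\bigl(\Conc\circ\vec A\bigr)\cong\varinjlim\bigl(\Conc\circ\vec B\bigr)\cong\Conc\bigl(\varinjlim\vec B\bigr)=\Conc B,
\]
the middle isomorphism coming from applying the colimit functor to the natural isomorphism $\Conc\circ\vec A\cong\Conc\circ\vec B$. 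Hence $B\in\cW$ is a lifting of $\Conc A$, and $\Qone$ holds.

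I expect no serious obstacle here: the one point demanding attention is checking that $L$ really is a lattice rather than merely a directed poset — this is precisely where local finiteness is used, to keep the subalgebra generated by $S\cup T$ finite — together with the (routine) observation that the hypothesis ``$\Conc\circ\vec A\cong\Conc\circ\vec B$'' must be read as a natural isomorphism of functors on $L$ so that it survives passage to the colimit.
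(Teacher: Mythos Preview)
Your argument is correct and is exactly the one the paper has in mind: the proposition is stated in the introduction without proof, immediately after the remark that ``the functor $\Conc$ preserves directed colimits, thus, in many cases, a positive answer for the finite case of $\Qonep$ implies a positive answer to $\Qone$.'' Your write-up supplies precisely this standard argument, including the point the paper leaves implicit, namely that local finiteness makes the poset of finite subalgebras a lattice.
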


In this proposition, we consider infinite diagrams of finite algebras. However if~$\cW$ is finitely generated and congruence-distributive, a compactness argument makes it possible to restrict the assumptions to finite diagrams of finite algebras.

In order to study the converse of Proposition~\ref{P:intro}, we shall use the construction of \emph{condensate}, introduced in~\cite{G1}. This construction was introduced in order to turn diagram counterexamples to object counterexamples. We use it here to turn a diagram counterexample of $\Qonep$ to a counterexample of $\Qone$.

\begin{theorem}\label{T1:intro}
Assume that~$\cV$ and~$\cW$ are varieties of algebras. Let $P$ be a finite lattice. Let $\vec A$ be a $P$-indexed diagram in~$\cV$. If $\Conc\circ\vec A$ is not liftable in~$\cW$ then there is a condensate $A\in\cV$ of $\vec A$, such that $\Conc A$ is not liftable in~$\cW$.

Moreover, if~$\cW$ has a countable similarity type and all algebras of $\vec A$ are countable, then the condensate~$A$ can be chosen of cardinal~$\aleph_d$, where $d$ is the order-dimension of $P$. In particular, $\crit{\cV}{\cW}\le\aleph_d$.

If every algebra of $\vec A$ is finite and~$\cW$ is finitely generated and congruence-distributive, then~$A$ can be chosen of cardinal~$\aleph_{d-1}$, so $\crit{\cV}{\cW}\le\aleph_{d-1}$.
\end{theorem}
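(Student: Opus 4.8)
The plan is to prove Theorem~\ref{T1:intro} by invoking the Condensate Lifting Lemma (CLL) machinery from~\cite{G1}, which attaches to the $P$-indexed diagram $\vec A$ a family of \emph{condensates} $A = \bF(X)\otimes\vec A$ indexed by objects $X$ of a suitable category of ``norm-covered'' posets, and then choosing $X$ with care. First I would set up the condensate construction: for a directed partially ordered set $X$ admitting a \emph{norm} into $P$ (a structure recording how $P$ sits inside $X$), the condensate $\bF(X)\otimes\vec A$ is a directed colimit of finite products of the algebras $A_p$, built so that $\Conc$ applied to it is the corresponding condensate of the diagram $\Conc\circ\vec A$ in the category $\SEM$ of \jzs s. The key structural fact is the \emph{CLL dichotomy}: if $\Conc\circ\vec A$ admits \emph{no} lifting in $\cW$, then for every large enough $X$ the algebra $\bF(X)\otimes\vec A$ is itself a counterexample, i.e. $\Conc(\bF(X)\otimes\vec A)$ has no lifting in $\cW$. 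This is where the non-liftability hypothesis is consumed.

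Next I would address the three cardinality claims by choosing $X$ appropriately. For the first (unconditional) assertion, any sufficiently saturated $X$ works and we get \emph{some} condensate $A\in\cV$ with $\Conc A$ not liftable in $\cW$; no size control is asserted. For the second assertion, I would take $X$ to be a specific poset of cardinality $\aleph_d$ built from $P$ using its order-dimension $d$: realize $P$ as a sub-poset of a product of $d$ chains, and form the corresponding ``$d$-dimensional cube-like'' norm-covered poset whose underlying set has size $\aleph_d$ — this is exactly the construction yielding the $\aleph_d$ bound in~\cite{G1}. Since all $A_p$ are countable and $\cW$ has a countable similarity type, the colimit defining $\bF(X)\otimes\vec A$ has cardinality $|X|\cdot\aleph_0 = \aleph_d$, and the inequality $\crit{\cV}{\cW}\le\aleph_d$ follows because $\Conc A$ witnesses membership in $(\Conc\cV)-(\Conc\cW)$.

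For the third assertion I would exploit that a finitely generated congruence-distributive variety $\cW$ is \emph{finite-to-finite universal} in the relevant sense: by Jónsson's lemma and a compactness/Löwenheim–Skolem argument, a \jzs\ $S$ of size $\le\aleph_{d-1}$ is liftable in $\cW$ iff every finite sub-\jzs\ of $S$ (or every $\aleph_{d-1}$-sized piece in a continuous chain) is, so one dimension can be shaved off: instead of a $d$-cube one uses a $(d-1)$-dimensional norm-covered poset together with a compactness step absorbing the last coordinate, giving $|A| = \aleph_{d-1}$ and hence $\crit{\cV}{\cW}\le\aleph_{d-1}$. The main obstacle, and the technical heart of the argument, is verifying the CLL dichotomy in this \emph{partial-algebra-free} formulation — that non-liftability of the diagram $\Conc\circ\vec A$ genuinely propagates to non-liftability of $\Conc$ of the condensate — together with the delicate bookkeeping that the condensate's congruence lattice really is the semilattice-theoretic condensate of $\Conc\circ\vec A$; the cardinality computations and the compactness reduction in the congruence-distributive case are comparatively routine once that correspondence is in hand.
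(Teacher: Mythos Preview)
The paper does \emph{not} prove Theorem~\ref{T1:intro}: it is stated in the introduction as background, with the proof deferred to the earlier works~\cite{G1} and~\cite{GiWe1} (see the surrounding text and the remark that ``the best bound of Theorem~\ref{T1:intro} is obtained in a more general case (cf.~\cite[Theorem~4-9.2]{GiWe1})''). So there is no in-paper proof to compare against; what the present paper actually proves is the gamp-theoretic refinement Theorem~\ref{T:CLLGamp} and the congruence-proper variant Theorem~\ref{T:diagcrit}.

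That said, your sketch is on the right track for how the argument runs in~\cite{G1,GiWe1}, with two caveats. First, your ``CLL dichotomy'' is stated in the wrong direction of causality: one does not show directly that non-liftability propagates to the condensate. Rather, CLL (via the Armature Lemma) shows that \emph{if} $\Conc(\bF(X)\otimes\vec A)$ has a lifting $B\in\cW$, \emph{then} one can extract from $B$ a lifting of the whole diagram $\Conc\circ\vec A$; the theorem is the contrapositive. Your phrase ``for every large enough $X$'' should be replaced by ``for any $\aleph_0$-lifter $(X,\bX)$ of $P$'', and the existence of such lifters of size $\aleph_d$ (resp.\ $\aleph_{d-1}$) is exactly where the order-dimension enters.

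Second, your explanation of the $\aleph_{d-1}$ bound is not correct. It is not a L\"owenheim--Skolem/compactness reduction on the \jzs\ side (``$S$ is liftable iff every finite sub-\jzs\ is'' is false in general and is not what is used). The genuine mechanism is that a finitely generated congruence-distributive variety is \emph{congruence-proper}: every $B\in\cW$ with $\Conc B$ finite is itself finite. This lets one run the Armature/Buttress machinery with finite witnesses at every non-maximal node of the lifter, which is precisely what allows one to use a lifter one cardinal smaller. Compare the present paper's Theorem~\ref{T:diagcrit} and its proof, where congruence-properness is invoked explicitly to force $\bB_{\bx}/\ker_0\rho_{\bx}=\GA(B)/\ker_0\rho_{\bx}$ to be finite.
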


The cardinality bound, in case~$\cW$ is finitely generated and congruence-distributive, is optimal, in the following sense: There are finitely generated varieties~$\cV$ and~$\cW$ of lattices such that every countable \jzs\ liftable in~$\cV$ is liftable in~$\cW$ and there is a square-indexed diagram of \jzs s that has a lifting in~$\cV$ but no lifting in~$\cW$. In particular there is a \jzs\ of cardinal~$\aleph_1$ that is liftable in~$\cV$ but not liftable in~$\cW$, thus $\crit{\cV}{\cW}=\aleph_1$. This example appears in \cite[Section~8]{G1}.

Later, we generalized the condensate construction in \cite{GiWe1}, to a larger categorical context. The best bound of Theorem~\ref{T1:intro} is obtained in a more general case (cf. \cite[Theorem~4-9.2]{GiWe1}), namely if~$\cW$ is both \emph{congruence-proper} (cf. \cite[Definition 4-8.1]{GiWe1}) and locally finite, for example~$\cW$ is a finitely generated congruence-modular variety. Using the tools introduced in this paper, we give a new version (cf. Theorem~\ref{T:diagcrit}), we assume that~$\cW$ is congruence-proper and has finite similarity type.

This categorical version of condensate can also apply to turn a counterexample of $\Qtwop$ to a counterexample of $\Qtwo$. For example in \cite[Chapter~5]{GiWe1} we give a square~$\vec A$ of finite lattices, that has no congruence-permutable, congruence-preserving extension. A condensate of this square has cardinality~$\aleph_1$ and it has no congruence-permutable, congruence-preserving extension.

The largest part of this paper is the introduction of \emph{pregamps} and \emph{gamps}, it is a generalization of semilattice-metric spaces and semilattice-metric covers given in \cite[Chapter~5]{GiWe1}. The category of gamps of a variety~$\cV$ has properties similar to a finitely generated congruence-distributive variety.

A \emph{pregamp} is a triplet $\bA=(A,\delta,S)$, where~$A$ is a partial algebra, $S$ is a \jzs\ and $\delta\colon A^2\to S$ is a \emph{distance}, \emph{compatible with the operations}. A typical example of pregamp is $(A,\Theta_A,\Conc A)$, for an algebra~$A$. This generalizes to partial algebras the notion of a congruence.

A \emph{gamp} $\bA$ is a pregamp $(A,\delta,S)$ with a partial subalgebra $A^*$ of~$A$. There are many natural properties that a gamp can satisfy (cf. Section~\ref{S:LFP}), for example $\bA$ is \emph{full} if all operations with parameters in $A^*$ can be evaluated in~$A$. A \emph{morphism} of gamps is a morphism of partial algebras with a morphism of \jzs s satisfying a compatibility condition with the distances (cf. Definition~\ref{D:gamp}).

The class of all gamps (on a given type), with morphisms of gamps, forms a category. Denote by $\CG$ the forgetful functor from the category of gamps to the category of \jzs s. A \emph{partial lifting} of a diagram $\vec S$ of \jzs\ is a diagram $\vec\bB$ of gamps, with some additional properties, such that $\CG\circ\vec\bB\cong\vec S$.

The category of gamps has properties similar to locally finite, congruence-proper varieties. Let $S$ be a finite \jzs, let $\bB$ be a gamp such that $\CG(\bB)\cong S$, there are (arbitrary large) finite subgamps $\bB'$ of $\bB$ such that $\CG(\bB')\cong S$. There is no equivalent result for algebras: for example, the three-element chain is the congruence lattice of a modular lattice, but not the congruence lattice of any finite modular lattice.

Assume that~$\cV$ and $\cW$ are varieties of algebras. Let $P$ be a finite lattice of order-dimension $d$. Suppose that we find~$\vec A$ a $P$-indexed diagram of finite algebras in~$\cV$, such that $\Conc\circ\vec A$ has no partial lifting (with maybe some additional \emph{``locally finite properties''}, cf. Section~\ref{S:LFP}) in the category of gamps of $\cW$, then $\crit{\cV}{\cW}\le\aleph_{d-1}$. Hence we obtain the optimal bound, with no assumption on $\cW$. However there is no (known) proof that a diagram with no lifting has no partial lifting, but no counterexample has been found.

The \emph{dual} of a lattice~$L$ is the lattice $\dual{L}$ with reverse order. The \emph{dual} of a variety of lattices~$\cV$ is $\dual{\cV}$ the variety of all duals of lattices in~$\cV$. Let~$\cV$ and $\cW$ be varieties of lattices, if $\cV\subseteq\cW$ or $\cV\subseteq\dual{\cW}$, then $\Conc\cV\subseteq\Conc\cW$. In a sequel to the present paper (cf. \cite{G4}), we shall prove the following result.
\begin{theorem*}
Let~$\cV$ and~$\cW$ be varieties of lattices. If every simple lattice in~$\cW$ contains a prime interval, then one of the following statements holds:
\begin{enumerate}
\item $\crit{\cV}{\cW}\le\aleph_2$.
\item $\cV\subseteq\cW$.
\item $\cV\subseteq\dual{\cW}$.
\end{enumerate}
\end{theorem*}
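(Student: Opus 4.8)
We argue by contraposition: assuming that neither \pup{2} nor \pup{3} holds, i.e., $\cV\not\subseteq\cW$ and $\cV\not\subseteq\dual{\cW}$, we shall prove that $\crit{\cV}{\cW}\le\aleph_2$. A square $\two\times\two$ has order-dimension~$2$ and lattices have a finite similarity type, so, by Theorem~\ref{T1:intro}, it suffices to produce a square-indexed diagram $\vec{\mathbf L}$ of \emph{countable} lattices of $\cV$ such that $\Conc\circ\vec{\mathbf L}$ is not liftable in $\cW$ (a condensate of such a diagram is then a lattice of $\cV$ of cardinality $\aleph_2$ whose congruence semilattice is not liftable in $\cW$).

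I would begin by disposing of the degenerate cases. If $\cW$ is trivial then $\Conc\cW=\set{\one}$ while $\cV$ is nontrivial, so $\two\in\Conc\cV$ and $\crit{\cV}{\cW}\le 2$. If $\cV$ is distributive then $\cV\subseteq\cD$, and $\cD\subseteq\cW$ as soon as $\cW$ is nontrivial, contradicting $\cV\not\subseteq\cW$. So from now on $\cW$ is nontrivial and $\cV$ is non-distributive; since a non-distributive lattice contains a copy of $M_3$ or of $N_5$, either $\mathrm{Var}(M_3)\subseteq\cV$ or $\mathrm{Var}(N_5)\subseteq\cV$, and I would run the two cases in parallel, using the relevant finite lattice --- both $M_3$ and $N_5$ are self-dual --- as a building block.

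Next I would manufacture countable witnesses for the two non-containments. Since $\cV\not\subseteq\cW$, some lattice of $\cV$ is not in $\cW$; restricting to a finitely generated sublattice on which an identity of $\cW$ already fails keeps it outside $\cW$ and inside $\cV$, so we obtain a \emph{countable} $K\in\cV\setminus\cW$ and, symmetrically, a countable $L\in\cV$ with $\dual{L}\notin\cW$. The heart of the argument is to weave $K$, $\dual{L}$ and the chosen finite building block into a square-indexed diagram $\vec{\mathbf L}$ of countable lattices of $\cV$, and then to prove that $\Conc\circ\vec{\mathbf L}$ has \emph{no} lifting in $\cW$. This is exactly where the hypothesis on $\cW$ is indispensable: if $\vec{\mathbf B}$ were a lifting in $\cW$, then, since every simple quotient of every $\mathbf B_i\in\cW$ contains a prime interval, the compact congruences of the $\mathbf B_i$ occurring in the lifting are ``witnessed discretely'' by covering pairs in suitable sub- and quotient-lattices; this rigidity forces the prescribed combinatorics of $\vec{\mathbf L}$ to be realised within $\cW$ by an actual sublattice isomorphic to $K$ or to $\dual{L}$, contradicting the choice of $K$, $L$. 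Hence no such lifting exists, and $\crit{\cV}{\cW}\le\aleph_2$ by Theorem~\ref{T1:intro}. The gamp machinery of the present paper is the natural vehicle for this last step: the semilattice-valued distances of gamps are precisely what lets one track how the congruences of the $\mathbf B_i$ are generated from prime intervals, and thus carry out the reconstruction of $K$ (resp.\ $\dual{L}$) from a hypothetical lifting.

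The decisive obstacle is the construction of $\vec{\mathbf L}$ together with the proof of non-liftability: one must arrange the countable lattices and the connecting homomorphisms so that $\vec{\mathbf L}$ lies in $\cV$, the index poset has order-dimension~$2$, and --- the real point --- the prime-interval hypothesis on $\cW$ is exactly what converts any lifting of $\Conc\circ\vec{\mathbf L}$ in $\cW$ into an embedding of $K$ or of $\dual{L}$ into a member of $\cW$. In particular one must check that the sole phenomenon able to defeat this scheme --- a simple lattice of $\cW$ with no prime interval, ``continuous-geometry-like'' behaviour that would let $\cW$ lift the diagram --- is precisely what the hypothesis forbids. The remaining points (that $\vec{\mathbf L}$ lies in $\cV$, the split into the $M_3$- and $N_5$-cases, the order-dimension bookkeeping, and the final appeal to Theorem~\ref{T1:intro}) go through routinely within the framework developed in this paper and in \cite{G1,GiWe1}.
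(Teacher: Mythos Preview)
First, note that this paper does \emph{not} prove the theorem in question: it is announced in the introduction as the main result of the sequel~\cite{G4}. What the present paper does contain, however, are strong indications of how that proof goes, and those indications conflict with your outline.

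Your plan is to build a \emph{square}-indexed diagram of countable lattices in~$\cV$ whose $\Conc$-image has no lifting in~$\cW$, and then invoke Theorem~\ref{T1:intro} to get the bound~$\aleph_2$. But the paper states explicitly, right after the theorem, that ``without the use of gamps, we would have obtained an upper bound~$\aleph_3$ instead of~$\aleph_2$.'' A square has order-dimension~$2$, so if a square diagram with no lifting in~$\cW$ existed, Theorem~\ref{T1:intro} would already give~$\aleph_2$ with no gamps in sight. The paper's remark therefore tells you that the index poset actually used in~\cite{G4} has order-dimension~$3$ (so that the naive bound is~$\aleph_3$), and that one must prove the stronger statement that $\Conc\circ\vec A$ has no \emph{partial lifting} in gamps of~$\cW$ to recover~$\aleph_2$. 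This is corroborated by the paper's own infrastructure: the poset construction $\kposet{P}{X}{\vec R}{\alpha}$ of Definition~\ref{D:newposet} is introduced with the comment that it ``is mainly used in~\cite{G4}'', and Corollary~\ref{C:norcov} supplies~$\aleph_0$-lifters of cardinality~$\aleph_2$ precisely for posets of that shape sitting inside~$\rB_m({\le}2)$ (posets of order-dimension~$3$ in the relevant cases). So the intended route is: a diagram indexed by one of these more elaborate posets, a proof that it has no partial lifting in~$\GAMP(\cW)$ (this is where the prime-interval hypothesis and the ``distance-generated with chains'' and ``congruence-cuttable with chains'' notions of Definition~\ref{D:propgamp} enter), and then Theorem~\ref{T:CLLGamp} together with Corollary~\ref{C:norcov}.

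Independently of this mismatch, your sketch is not yet a proof: the step you yourself call ``the decisive obstacle'' --- the construction of~$\vec{\mathbf L}$ and the verification that no lifting exists --- is left entirely as an intention. Phrases like ``weave $K$, $\dual L$ and the chosen finite building block into a square-indexed diagram'' and ``this rigidity forces the prescribed combinatorics \dots\ to be realised within~$\cW$ by an actual sublattice isomorphic to~$K$ or~$\dual L$'' are not arguments; they are hopes. In particular, the prime-interval hypothesis does not by itself give you that a lifting in~$\cW$ must contain a copy of an arbitrary preassigned countable lattice~$K$; getting from covering pairs in quotients to an embedded copy of~$K$ requires a very specific diagram and a careful chain-tracking argument (this is exactly what the lattice-specific gamp notions in Section~\ref{S:Gamp} are designed for). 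Until that construction is written down and verified, there is no proof here.
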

The $\aleph_2$ bound is optimal, as there are varieties~$\cV$ and $\cW$ of lattices such that $\crit{\cV}{\cW}=\aleph_2$. Without the use of gamps, we would have obtained an upper bound~$\aleph_3$ instead of~$\aleph_2$.

The gamps can also be used to study congruence-preserving extensions. Denote by $\PGGL$ the functor that maps a gamp $(A^*,A,\delta,\widetilde A)$ to the pregamp $(A^*,\delta,\widetilde A)$; we also denote by $\PGA$ the functor that maps an algebra~$A$ to the pregamp $(A,\Theta_A,\Conc A)$. Let~$B$ be a congruence-preserving extension of an algebra~$A$, then $(A,B,\Theta_B,\Conc B)$ is a gamp. Similarly, let $\vec B=\famm{B_p,g_{p,q}}{p\le q\text{ in }P}$ be a congruence-preserving extension of a diagram $\vec A=\famm{A_p,f_{p,q}}{p\le q\text{ in }P}$, denote by $\bB_p=(A,B,\Theta_B,\Conc B)$ and $\bgg_{p,q}=(g_{p,q},\Conc g_{p,q})$, for all $p\le q$ in $P$, then $\vec\bB= \famm{\bB_p,\bgg_{p,q}}{p\le q\text{ in }P}$ is a diagram of gamps. Moreover, $\PGGL\circ\vec \bB=\PGA\circ\vec A$, up to the identification of $\Conc B$ and $\Conc A$.

In Section~\ref{S:unlift}, given $n\ge 2$, we construct a square $\vec A$ of finite lattices in $\cM_3$, such that the diagram $\vec A$ has no congruence $n$-permutable, congruence-preserving extension. Another condensate construction gives a result proved in \cite{Ploscica08}, namely the existence of a lattice $A\in\cM_3$ of cardinality $\aleph_2$ with no congruence $n$-permutable, congruence-preserving extension.

Hopefully, once again, the diagram $\vec A$ satisfies a stronger statement, there is no \emph{operational} (cf. Definition~\ref{D:perminlatt}) diagram $\vec\bB$ of \emph{congruence $n$-permutable} gamps of lattices such that $\PGGL\circ\vec\bB\cong \PGA\circ\vec A$. Using a condensate, we obtain a lattice $\cM_3$ of cardinality $\aleph_1$ with no congruence $n$-permutable, congruence-preserving extension.

\section{Basic Concepts}

We denote by~$0$ (resp., $1$) the least (resp. largest) element of a poset if it exists. We denote by $\two=\set{0,1}$ the two-element lattice, or poset (i.e., partially ordered set), or \jzs, depending of the context. Given an algebra~$A$, we denote by $\zero_A$ the identity congruence of~$A$.

Given subsets $P$ and $Q$ of a poset $R$, we set
\[
P\dnw Q=\setm{p\in P}{(\exists q\in Q)(p\le q)}.
\]
If $Q=\set{q}$, we simply write $P\dnw q$ instead of $P\dnw \set{q}$.

Let~$\cV$ be a variety of algebras, let $\kappa$ be a cardinal, we denote by $F_\cV(\kappa)$ the free algebras in~$\cV$ with $\kappa$ generators. Given an algebra~$A$ we denote by $\Var A$ the variety of algebras generated by~$A$. If~$A$ is a lattice we also denote by $\Var^{0,1}A$ the variety of bounded lattices generated by~$A$. We denote by $\cL$ the variety of lattices.

We denote the \emph{range} of a function $f\colon X\to Y$ by $\rng f=\setm{f(x)}{x\in X}$. We use basic set-theoretical notation, for example~$\omega$ is the first infinite ordinal, and also the set of all nonnegative integers; furthermore, $n=\set{0,1,\dots,n-1}$ for every nonnegative integer~$n$. By ``countable'' we will always mean ``at most countable''.

Let $X$, $I$ be sets, we often denote $\vec x=(x_i)_{i\in I}$ an element of $X^I$. In particular, for~$n<\omega$, we denote by~$\vec x=(x_0,\dots,x_{n-1})$ an~$n$-tuple of $X$. If $f\colon Y\to Z$ is a function, where $Y\subseteq X$, we denote $f(\vec x)=(f(x_0),\dots,f(x_{n-1}))$ whenever it is defined. Similarly, if $f\colon Y\to Z$ is a function, where $Y\subseteq X^n$, we denote $f(\vec x)=f(x_0,\dots,x_{n-1})$ whenever it is defined. We also write $f(\vec x,\vec y)=f(x_0,\dots,x_{m-1},y_0,\dots,y_{n-1})$ in case $\vec x=(x_0,\dots,x_{m-1})$ and $\vec y=(y_0,\dots,y_{n-1})$, and so on.

For example, let~$A$ and~$B$ be algebras of the same similarity type. Let $\ell$ be an $n$-ary operation. Let $f\colon A\to B$ a map. The map $f$ is \emph{compatible} with $\ell$ if $f(\ell(\vec x))=\ell(f(\vec x))$ for every $n$-tuple~$\vec x$ of $X$. Let $m\le n\le\omega$. Let~$\vec X$ be an $n$-tuple of~$X$, we denote by $\vec x\res m$ the $m$-tuple $(x_k)_{k<m}$.

If~$X$ is a set and $\theta$ is an equivalence relation on~$X$, we denote by $X/\theta$ the set of all equivalence classes of $\theta$. Given $x\in X$ we denote by $x/\theta$ the equivalence class of $\theta$ containing $x$. Given an $n$-tuple~$\vec x$ of~$X$, we denote~$\vec x/\theta=(x_0/\theta,\dots,x_{n-1}/\theta)$. Given $Y\subseteq X$, we set $Y/\theta=\setm{x/\theta}{x\in Y}$.

Let $n\ge 2$ an integer. An algebra~$A$ is \emph{congruence $n$-permutable} if the following equality holds:
\[
\underbrace{\alpha\circ\beta\circ\alpha\circ\dots}_{n\text{ times}}=\underbrace{\beta\circ\alpha\circ\beta\circ\dots}_{n\text{ times}},\text{ for all $\alpha,\beta\in\Con A$}.
\]
If $n=2$ we say that~$A$ is \emph{congruence-permutable} instead of \emph{congruence~$2$-permutable}. 

The following statement is folklore.

\begin{proposition}\label{P:CP-PR}
Let~$A$ be an algebra, let $n\ge 2$ an integer. The following conditions are equivalent:
\begin{enumerate}
\item The algebra~$A$ is congruence $n$-permutable.
\item For all $x_0,x_1,\dots,x_{n}\in A$, there are $x_0=y_0,y_1,\dots,y_{n}=x_{n}\in A$ such that the following containments hold:
\begin{align*}
\Theta_A(y_k,y_{k+1})&\subseteq\bigvee\famm{\Theta_A(x_{i},x_{i+1})}{i< n\text{ even}},&&\text{for all $k< n$ odd},\\
\Theta_A(y_k,y_{k+1})&\subseteq\bigvee\famm{\Theta_A(x_{i},x_{i+1})}{i< n\text{ odd}},&&\text{for all $k< n$ even}.
\end{align*}
\end{enumerate}
\end{proposition}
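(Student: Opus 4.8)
# Proof Proposal

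\textbf{Overall strategy.} The equivalence is essentially the partial-algebra-free reformulation of congruence $n$-permutability in terms of principal congruences; the plan is to prove both implications by unwinding the definition of the iterated relational composition $\alpha\circ\beta\circ\alpha\circ\cdots$ ($n$ factors) and applying it to cleverly chosen congruences. The key observation is that a chain of the form $\alpha\circ\beta\circ\alpha\circ\cdots$ evaluated at a pair $(x_0,x_n)$ produces exactly an interpolating sequence $x_0=y_0,y_1,\dots,y_n=x_n$ with $(y_k,y_{k+1})$ alternately in $\alpha$ and in $\beta$.

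\textbf{Proof of (1) $\Rightarrow$ (2).} Assume $A$ is congruence $n$-permutable. Fix $x_0,\dots,x_n\in A$ and set $\alpha=\bigvee\famm{\Theta_A(x_i,x_{i+1})}{i<n\text{ even}}$ and $\beta=\bigvee\famm{\Theta_A(x_i,x_{i+1})}{i<n\text{ odd}}$. First I observe that the pair $(x_0,x_n)$ lies in the composite $\gamma\mathrel{:=}\underbrace{\alpha\circ\beta\circ\alpha\circ\cdots}_{n\text{ factors, starting with }\alpha}$: indeed, stepping along $x_0\mathbin{\alpha}x_1\mathbin{\beta}x_2\mathbin{\alpha}x_3\cdots x_n$, each pair $(x_i,x_{i+1})$ lies in $\Theta_A(x_i,x_{i+1})$, hence in $\alpha$ when $i$ is even and in $\beta$ when $i$ is odd. (Here one must take a little care about whether the last factor of $\gamma$ is $\alpha$ or $\beta$ depending on the parity of $n$, but the witnessing chain $x_0,x_1,\dots,x_n$ has exactly $n$ links and the $i$-th link uses $\alpha$ iff $i$ is even, which is consistent with $\gamma$ in both parities.) By hypothesis $\gamma=\underbrace{\beta\circ\alpha\circ\beta\circ\cdots}_{n\text{ factors}}$, so $(x_0,x_n)$ also lies in this other composite; spelling out membership in an $n$-fold relational product yields elements $x_0=y_0,y_1,\dots,y_n=x_n$ with $(y_k,y_{k+1})\in\beta$ for $k$ even and $(y_k,y_{k+1})\in\alpha$ for $k$ odd. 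Since $(y_k,y_{k+1})\in\alpha$ is equivalent to $\Theta_A(y_k,y_{k+1})\subseteq\alpha$, and likewise for $\beta$, this is precisely the system of containments in (2).

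\textbf{Proof of (2) $\Rightarrow$ (1).} Conversely, assume (2) and fix $\alpha,\beta\in\Con A$; by symmetry it suffices to prove the inclusion $\underbrace{\alpha\circ\beta\circ\cdots}_{n}\subseteq\underbrace{\beta\circ\alpha\circ\cdots}_{n}$. Take $(a,b)$ in the left-hand side, so there are $a=z_0,z_1,\dots,z_n=b$ with $(z_i,z_{i+1})\in\alpha$ for $i$ even and $(z_i,z_{i+1})\in\beta$ for $i$ odd. Apply (2) to the $(n+1)$-tuple $x_i\mathrel{:=}z_i$: we get $a=z_0=y_0,y_1,\dots,y_n=z_n=b$ with $\Theta_A(y_k,y_{k+1})$ contained in $\bigvee\famm{\Theta_A(z_i,z_{i+1})}{i<n\text{ even}}\subseteq\alpha$ for $k$ odd and in $\bigvee\famm{\Theta_A(z_i,z_{i+1})}{i<n\text{ odd}}\subseteq\beta$ for $k$ even (using $\Theta_A(z_i,z_{i+1})\subseteq\alpha$ resp. $\subseteq\beta$ from the choice of the $z_i$). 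Reading off $(y_k,y_{k+1})\in\beta$ for $k$ even and $\in\alpha$ for $k$ odd exhibits $(a,b)$ in $\underbrace{\beta\circ\alpha\circ\cdots}_{n}$, as required. The reverse inclusion is obtained by interchanging the roles of $\alpha$ and $\beta$, completing the proof.

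\textbf{Main obstacle.} There is no deep difficulty here; the only thing to be careful about is the bookkeeping of parities — which factor ($\alpha$ or $\beta$) sits in which position, and how this depends on whether $n$ is even or odd — together with the distinction between ``starting with $\alpha$'' versus ``starting with $\beta$'' in the two composites. One standard way to sidestep the clutter is to note that both composites in the definition of $n$-permutability denote the same relation (this is a well-known equivalent formulation), so it is enough to track a single composite $\alpha\circ\beta\circ\alpha\circ\cdots$ and argue as above; alternatively one can simply run the argument uniformly using the indexing ``$i$ even $\leftrightarrow\alpha$, $i$ odd $\leftrightarrow\beta$'' throughout.
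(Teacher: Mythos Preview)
The paper does not actually prove this proposition: it is introduced with ``The following statement is folklore'' and no proof is supplied. Your argument is correct and is precisely the standard direct verification one would expect for this folklore fact---unwinding membership in the $n$-fold composite $\alpha\circ\beta\circ\cdots$ via an interpolating chain, and choosing $\alpha,\beta$ to be the two joins of alternating principal congruences.
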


Let $n\ge 2$. The class of all congruence $n$-permutable algebras of a given similarity type is closed under directed colimits and quotients. Moreover the class of congruence $n$-permutable algebras of a congruence-distributive variety is also closed under finite products (the latter statement is known not to extend to arbitrary algebras).

\section{Semilattices}

In this section we give some well-known facts about \jzs s. Most notions and results will have later a generalization involving \pregamps\ and gamps.

\begin{proposition}\label{P:semilatiso}
Let $S,T$ be \jzs s, let $X$ be a set, and let $f\colon X\to S$ and $g\colon X\to T$ be maps. Assume that for every $x\in X$, for every positive integer $n$, and for every $n$-tuple $\vec y$ of $X$ the following implication holds:
\begin{equation}\label{E:semilatiso}
f(x)\le\bigvee_{k<n}f(y_k)\quad \Longrightarrow\quad g(x)\le\bigvee_{k<n}g(y_k).
\end{equation}
If~$S$ is join-generated by $f(X)$, then there exists a unique \jzh\ $\phi\colon S\to T$ such that $\phi(f(x))=g(x)$ for each $x\in X$.

If the converse of~\eqref{E:semilatiso} also holds and $g(X)$ also join-generates $T$, then $\phi$ is an isomorphism.
\end{proposition}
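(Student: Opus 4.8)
The plan is to define $\phi$ on $S$ by the obvious formula and then show it is well-defined, a \jzh, unique, and (under the extra hypothesis) an isomorphism. Since $S$ is join-generated by $f(X)$, every $s\in S$ can be written as $s=\bigvee_{k<n}f(y_k)$ for some positive integer $n$ and some $n$-tuple $\vec y$ of $X$ (the case $s=0$ requires a small remark: either $0$ is itself such a join, or we allow $n$ to range so that $0=\bigvee_{k\in\es}$; the cleanest fix is to note that since $S$ is a \jzs\ and $f(X)$ join-generates it, $0=\bigvee f(\es)$, and we set $\phi(0)=0$). I would then \emph{define} $\phi\Seq{\bigvee_{k<n}f(y_k)}=\bigvee_{k<n}g(y_k)$.

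\textbf{Well-definedness.} This is the one step that actually uses the hypothesis~\eqref{E:semilatiso}, and it is the main obstacle — everything else is formal. Suppose $\bigvee_{k<n}f(y_k)=\bigvee_{j<m}f(z_j)$ in $S$. For each fixed $j<m$ we have $f(z_j)\le\bigvee_{k<n}f(y_k)$, so by~\eqref{E:semilatiso} (applied with $x=z_j$ and the $n$-tuple $\vec y$) we get $g(z_j)\le\bigvee_{k<n}g(y_k)$; taking the join over $j<m$ yields $\bigvee_{j<m}g(z_j)\le\bigvee_{k<n}g(y_k)$. By symmetry the reverse inequality holds, so the two joins in $T$ agree. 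Hence $\phi$ is a well-defined map $S\to T$, and by construction $\phi(f(x))=g(x)$ for each $x\in X$ (take $n=1$, $y_0=x$).

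\textbf{Homomorphism and uniqueness.} That $\phi$ preserves finite joins is immediate from the defining formula: writing $s=\bigvee_{k<n}f(y_k)$ and $t=\bigvee_{j<m}f(z_j)$, the element $s\vee t$ is the join of the concatenated tuple, and $\phi(s\vee t)=\bigvee_{k<n}g(y_k)\vee\bigvee_{j<m}g(z_j)=\phi(s)\vee\phi(t)$; likewise $\phi(0)=0$, so $\phi$ is a \jzh. For uniqueness, any \jzh\ $\psi$ with $\psi\circ f=g$ must satisfy $\psi\Seq{\bigvee_{k<n}f(y_k)}=\bigvee_{k<n}\psi(f(y_k))=\bigvee_{k<n}g(y_k)=\phi\Seq{\bigvee_{k<n}f(y_k)}$, and since $f(X)$ join-generates $S$ this determines $\psi$ on all of $S$; hence $\psi=\phi$.

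\textbf{The isomorphism statement.} Now assume in addition that the converse of~\eqref{E:semilatiso} holds and that $g(X)$ join-generates $T$. Then $f$ and $g$ play symmetric roles, so the first part of the proposition applied with $(S,f)$ and $(T,g)$ interchanged gives a \jzh\ $\phi'\colon T\to S$ with $\phi'(g(x))=f(x)$ for all $x\in X$. The composite $\phi'\circ\phi\colon S\to S$ is a \jzh\ fixing $f(x)$ for every $x\in X$; by the uniqueness clause (with $T=S$, $g=f$) it equals $\id_S$. Symmetrically $\phi\circ\phi'=\id_T$. Therefore $\phi$ is an isomorphism of \jzs s, completing the proof.
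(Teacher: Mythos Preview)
Your proof is correct and complete; the paper itself states this proposition without proof, treating it as a standard fact about \jzs s, so there is no argument to compare against. Your handling of well-definedness via the hypothesis~\eqref{E:semilatiso}, the homomorphism and uniqueness steps, and the symmetric construction of the inverse are exactly the expected route.
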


\begin{definition}\label{D:idealsem}
An \emph{ideal} of a \jzs~$S$ is a lower subset~$I$ of~$S$ such that $0\in I$ and $u\vee v\in I$ for all $u,v\in I$. We denote by $\Id S$ the lattice of ideals of~$S$.

Let $\phi\colon S\to T$ be a \jzh. The \emph{$0$-kernel} of~$\phi$ is $\ker_0 \phi=\setm{a\in S}{\phi(a)=0}$; it is an ideal of~$S$. We say that $\phi$ \emph{separates zero} if $\ker_0\phi=\set{0}$.

Let $P$ be a poset, let $\vec S=\famm{S_p,\phi_{p,q}}{p\le q\text{ in }P}$ be a diagram of \jzs s. An \emph{ideal} of $\vec S$ is a family $(I_p)_{p\in P}$ such that $I_p$ is an ideal of $S_p$ and $\phi_{p,q}(I_p)\subseteq I_q$ for all $p\le q$ in $P$.

Let $\vec \phi=(\phi_p)_{p\in P}\colon \vec S\to \vec T$ be a natural transformation of $P$-indexed diagrams of \jzs s. The \emph{$0$-kernel} of~$\vec\phi$ is $\ker_0\vec\phi=(\ker_0\phi_p)_{p\in P}$, it is an ideal of $\vec S$.
\end{definition}

\begin{lemma}\label{L:quotSem}
Let~$S$ be a \jzs, let $I\in\Id S$. Put:
\[\theta_I=\setm{(x,y)\in S^2}{(\exists u\in I)(x\vee u=y\vee u)}.\]
The relation $\theta_I$ is a congruence of~$S$.
\end{lemma}

\begin{notation}
We denote by $S/I$ the \jzs\ $S/\theta_I$, where $\theta_I$ is the congruence defined in Lemma~\ref{L:quotSem}. Given $a\in S$, we denote by $a/I$ the equivalent class of $a$ for~$\theta_I$. The \jzh\ $\phi\colon S\to S/I$, $a\mapsto a/I$ is \emph{the canonical projection}. Notice that $\ker_0\phi=I$.

If $I=\set{0}$, we identify $S/I$ and~$S$.
\end{notation}

\begin{lemma}\label{L:quotMorphSem}
Let $\phi\colon S\to T$ be a \jzh, and let $I\in\Id S$ and $J\in\Id T$ such that $\phi(I)\subseteq J$. There exists a unique map $\psi\colon S/I\to T/J$ such that $\psi(a/I)=\phi(a)/J$ for each $a\in S$. Moreover, $\psi$ is a \jzh.
\end{lemma}

\begin{notation}
We say that $\phi$ \emph{induces} the \jzh\ $\psi\colon S/I\to T/J$ in Lemma~\ref{L:quotMorphSem}.
\end{notation}

\begin{lemma}\label{L:quotDiagSem}
Let $P$ be a poset, let $\vec S=\famm{S_p,\phi_{p,q}}{p\le q\text{ in }P}$ be a diagram of \jzs s, and let $\vec I$ be an ideal of $\vec S$. Denote by $\psi_{p,q}\colon S_p/I_p\to S_q/I_q$ the \jzh\ induced by $\phi_{p,q}$, then $\famm{S_p/I_p,\psi_{p,q}}{p\le q\text{ in }P}$ is a diagram of \jzs s.
\end{lemma}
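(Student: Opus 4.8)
The plan is to obtain the diagram $\famm{S_p/I_p,\psi_{p,q}}{p\le q\text{ in }P}$ directly from Lemma~\ref{L:quotMorphSem} and then verify the two functoriality conditions. First, for each pair $p\le q$ in $P$, the assumption that $\vec I$ is an ideal of $\vec S$ gives, by Definition~\ref{D:idealsem}, that $\phi_{p,q}(I_p)\subseteq I_q$. Hence Lemma~\ref{L:quotMorphSem}, applied with $S=S_p$, $T=S_q$, $I=I_p$, and $J=I_q$, produces a unique \jzh\ $\psi_{p,q}\colon S_p/I_p\to S_q/I_q$ with $\psi_{p,q}(a/I_p)=\phi_{p,q}(a)/I_q$ for all $a\in S_p$; this is precisely the map named in the statement, so each $\psi_{p,q}$ is already a well-defined \jzh.

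Next I would check $\psi_{p,p}=\id_{S_p/I_p}$: since $\vec S$ is a diagram we have $\phi_{p,p}=\id_{S_p}$, so $\psi_{p,p}(a/I_p)=\phi_{p,p}(a)/I_p=a/I_p$ for every $a\in S_p$, and as the classes $a/I_p$ with $a\in S_p$ exhaust $S_p/I_p$ this forces $\psi_{p,p}=\id$. Finally, for $p\le q\le r$ in $P$ I would compute on a representative $a\in S_p$:
\[
(\psi_{q,r}\circ\psi_{p,q})(a/I_p)=\psi_{q,r}\bigl(\phi_{p,q}(a)/I_q\bigr)=\phi_{q,r}(\phi_{p,q}(a))/I_r=\phi_{p,r}(a)/I_r=\psi_{p,r}(a/I_p),
\]
where $\phi_{q,r}\circ\phi_{p,q}=\phi_{p,r}$ because $\vec S$ is a diagram. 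Since composites of \jzh s are \jzh s and the classes $a/I_p$ exhaust $S_p/I_p$, this yields $\psi_{q,r}\circ\psi_{p,q}=\psi_{p,r}$; alternatively, one invokes the uniqueness clause of Lemma~\ref{L:quotMorphSem} for $\phi_{p,r}$ and concludes in one stroke. Together with $\psi_{p,p}=\id$ this shows that $\famm{S_p/I_p,\psi_{p,q}}{p\le q\text{ in }P}$ is a $P$-indexed diagram of \jzs s.

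I do not expect a genuine obstacle here: every substantive point — well-definedness of the quotient map and its being a \jzh — has already been isolated in Lemmas~\ref{L:quotSem} and~\ref{L:quotMorphSem}, and what remains is pure bookkeeping. The only step deserving a moment's care is that both functoriality identities must be verified on representatives and then promoted to the whole quotient, which is legitimate exactly because the canonical projections $S_p\to S_p/I_p$ are surjective.
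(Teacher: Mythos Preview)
Your proof is correct and is exactly the routine verification one would expect. The paper in fact omits the proof of this lemma entirely, treating it as an immediate consequence of Lemma~\ref{L:quotMorphSem}; your write-up simply spells out the bookkeeping the paper leaves implicit.
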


\begin{notation}
We denote by $\vec S/\vec I$ the diagram $\famm{S_p/I_p,\psi_{p,q}}{p\le q\text{ in }P}$ introduced in Lemma~\ref{L:quotDiagSem}.
\end{notation}

\begin{lemma}\label{L:quotMorphSemDiag}
Let $P$ be a poset, let $\vec \phi\colon \vec S\to \vec T$ be a natural transformation of $P$-indexed diagrams of \jzs s, let $\vec I\in\Id \vec S$ and $\vec J\in\Id \vec T$ such that $\phi_p(I_p)\subseteq J_p$ for all $p\in P$. Denote by $\psi_p\colon S_p/I_p\to T_p/J_p$ the \jzh\ induced by $\phi_p$. Then $\vec\psi$ is a natural transformation from $\vec S/\vec I$ to $\vec T/\vec J$.
\end{lemma}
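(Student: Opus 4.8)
The plan is to reduce everything to the componentwise statements already proved and then chase the naturality square. First I would invoke Lemma~\ref{L:quotMorphSem}: since $\phi_p(I_p)\subseteq J_p$ for every $p\in P$, it supplies for each $p$ a unique \jzh\ $\psi_p\colon S_p/I_p\to T_p/J_p$ with $\psi_p(a/I_p)=\phi_p(a)/J_p$ for all $a\in S_p$, which is precisely the map named in the statement; hence $\vec\psi=(\psi_p)_{p\in P}$ is a well-defined family of \jzh s. The diagrams $\vec S/\vec I$ and $\vec T/\vec J$ are genuine $P$-indexed diagrams of \jzs s by Lemma~\ref{L:quotDiagSem} applied to $(\vec S,\vec I)$ and to $(\vec T,\vec J)$, so the only thing left to check is that $\vec\psi$ commutes with the transition maps.

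For the naturality square, write $\sigma_{p,q}\colon S_p\to S_q$ and $\tau_{p,q}\colon T_p\to T_q$ for the transition maps of $\vec S$ and $\vec T$, and $\overline\sigma_{p,q}$, $\overline\tau_{p,q}$ for the transition maps of $\vec S/\vec I$ and $\vec T/\vec J$, which by construction in Lemma~\ref{L:quotDiagSem} are the \jzh s induced by $\sigma_{p,q}$ and $\tau_{p,q}$. Fix $p\le q$ in $P$; I must show $\overline\tau_{p,q}\circ\psi_p=\psi_q\circ\overline\sigma_{p,q}$ as maps $S_p/I_p\to T_q/J_q$. Since every element of $S_p/I_p$ has the form $a/I_p$ for some $a\in S_p$, it suffices to evaluate both sides there. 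Using the defining formulas for the induced maps,
\[
(\overline\tau_{p,q}\circ\psi_p)(a/I_p)=\overline\tau_{p,q}\bigl(\phi_p(a)/J_p\bigr)=\tau_{p,q}(\phi_p(a))/J_q,
\]
whereas
\[
(\psi_q\circ\overline\sigma_{p,q})(a/I_p)=\psi_q\bigl(\sigma_{p,q}(a)/I_q\bigr)=\phi_q(\sigma_{p,q}(a))/J_q.
\]
Naturality of $\vec\phi$ gives $\tau_{p,q}\circ\phi_p=\phi_q\circ\sigma_{p,q}$, so $\tau_{p,q}(\phi_p(a))=\phi_q(\sigma_{p,q}(a))$ and the two classes coincide. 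Thus every square commutes and $\vec\psi\colon\vec S/\vec I\to\vec T/\vec J$ is a natural transformation.

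I do not expect a genuine obstacle: well-definedness of the quotient semilattices, of the induced morphisms, and the fact that these induced maps are \jzh s have all been isolated in Lemmas~\ref{L:quotSem}, \ref{L:quotMorphSem} and~\ref{L:quotDiagSem}, so what remains is a one-line diagram chase together with the trivial remark that maps out of $S_p/I_p$ are determined by their values on the classes $a/I_p$. The only point demanding a little care is the overloaded notation in the source (the components $\phi_p$ of the natural transformation versus the transition maps of the diagrams), which is why I would rename the latter $\sigma_{p,q}$ and $\tau_{p,q}$; one should also recall, when invoking Lemma~\ref{L:quotDiagSem} for the transition maps, that $\sigma_{p,q}(I_p)\subseteq I_q$ and $\tau_{p,q}(J_p)\subseteq J_q$ hold precisely because $\vec I\in\Id\vec S$ and $\vec J\in\Id\vec T$ in the sense of Definition~\ref{D:idealsem}.
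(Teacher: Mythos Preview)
Your proof is correct. The paper actually omits the proof of this lemma entirely, treating it as an immediate consequence of Lemmas~\ref{L:quotMorphSem} and~\ref{L:quotDiagSem}; your argument spells out precisely the routine diagram chase that justifies this omission.
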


\begin{notation}
We say that $\vec\phi$ \emph{induces} $\vec \psi\colon \vec S/\vec I\to \vec T/\vec J$, the natural transformation defined in Lemma~\ref{L:quotMorphSemDiag}.
\end{notation}

\begin{definition}
A \jzh\ $\phi\colon S\to T$ is \emph{ideal-induced} if $\phi$ is surjective and for all $x,y\in S$ with $\phi(x)=\phi(y)$ there exists $z\in S$ such that $x\vee z=y\vee z$ and $\phi(z)=0$.

Let $P$ be a poset, let $\vec S=\famm{S_p,\phi_{p,q}}{p\le q\text{ in }P}$ and $\vec T=\famm{T_p,\psi_{p,q}}{p\le q\text{ in }P}$ be $P$-indexed diagrams of \jzs s. A natural transformation $\vec\pi=(\pi_p)_{p\in P}\colon\vec S\to\vec T$ is \emph{ideal-induced} if $\pi_p$ is ideal-induced for each $p\in P$.
\end{definition}

\begin{remark}
Let $I$ be an ideal of a \jzs\ $A$, denote by $\pi\colon A\to A/I$ the canonical projection, then $\pi$ is ideal-induced.
\end{remark}

The next lemmas give a characterization of ideal-induced \jzh s.

\begin{lemma}\label{L:IIsem}
Let $\phi\colon S\to T$ be a \jzh. The following statements are equivalent
\begin{enumerate}
\item $\phi$ is ideal-induced.
\item The \jzh\ $\psi\colon S/\ker_0\phi\to T$ induced by $\phi$ is an isomorphism.
\end{enumerate}
\end{lemma}

The following lemma expresses that, given a diagram $\vec S$ of \jzs s, the colimits of quotients of $\vec S$ are the quotients of the colimits of $\vec S$.

\begin{lemma}\label{L:colimitquot2}
Let $P$ be a directed poset, let~$\vec S=\famm{S_p,\phi_{p,q}}{p\le q\text{ in }P}$ be a $P$-indexed diagram in $\SEM$, and let $\famm{S,\phi_p}{p\in P}=\varinjlim \vec S$ be a directed colimit cocone in $\SEM$. The following statements hold:
\begin{enumerate}
\item Let $\vec I$ be an ideal of $\vec S$. Then $I=\bigcup_{p\in P}\phi_p(I_p)$ is an ideal of~$S$. Moreover, denote by $\psi_p\colon S_p/I_p\to S/I$ the \jzh\ induced by $\phi_p$, for each $p\in P$. The following is a directed colimit cocone:
\[
\famm{S/I,\psi_p}{p\in P}=\varinjlim \vec S/\vec I\quad\text{ in $\SEM$}.
\]
\item Let $I\in\Id S$. Put $I_p=\phi_p^{-1}(I)$ for each $p\in P$. Then~$\vec I=(I_p)_{p\in P}$ is an ideal of~$\vec S$, moreover $I=\bigcup_{p\in P} \phi_p(I_p)$.
\end{enumerate}
\end{lemma}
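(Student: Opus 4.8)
The plan is to handle the two parts separately, using only the standard description of directed colimits in $\SEM$: every element of~$S$ has the form $\phi_p(a)$ for some $p\in P$ and $a\in S_p$, and for $a\in S_p$, $b\in S_q$ one has $\phi_p(a)\le\phi_q(b)$ in~$S$ if and only if $\phi_{p,r}(a)\le\phi_{q,r}(b)$ in $S_r$ for some $r\ge p,q$ in~$P$ (so also $\phi_p(a)=\phi_q(b)$ if and only if $\phi_{p,r}(a)=\phi_{q,r}(b)$ for some such~$r$).

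For part~(1), I would first verify that $I=\bigcup_{p\in P}\phi_p(I_p)$ is an ideal of~$S$. That $0\in I$ is immediate. For closure under~$\vee$, write two members of~$I$ as $\phi_p(a)$ and $\phi_q(b)$ with $a\in I_p$, $b\in I_q$, choose $r\ge p,q$, and use $\phi_p(a)\vee\phi_q(b)=\phi_r\bigl(\phi_{p,r}(a)\vee\phi_{q,r}(b)\bigr)$ together with $\phi_{p,r}(a),\phi_{q,r}(b)\in I_r$. For the lower-set property, given $y\le\phi_p(a)$ with $a\in I_p$, write $y=\phi_q(b)$, pick $r\ge p,q$ with $\phi_{q,r}(b)\le\phi_{p,r}(a)\in I_r$, and conclude $\phi_{q,r}(b)\in I_r$ (as $I_r$ is a lower set), so $y\in I$. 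Since $\phi_p(I_p)\subseteq I$, the \jzh\ $\psi_p\colon S_p/I_p\to S/I$ induced by~$\phi_p$ (Lemma~\ref{L:quotMorphSem}) is well defined, and $\famm{S/I,\psi_p}{p\in P}$ is a cocone over $\vec S/\vec I$ since $\famm{S,\phi_p}{p\in P}$ is a cocone over~$\vec S$ and the defining identities survive passage to quotients.

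The core of part~(1) — and the step I expect to need the most care — is the universal property. Given a cocone $\famm{T,\chi_p}{p\in P}$ over $\vec S/\vec I$, I would compose with the canonical projections $\pi_p\colon S_p\to S_p/I_p$ to obtain a cocone $\famm{T,\chi_p\circ\pi_p}{p\in P}$ over~$\vec S$, and let $\theta\colon S\to T$ be the unique \jzh\ with $\theta\circ\phi_p=\chi_p\circ\pi_p$ furnished by the colimit~$S$. Then for $x=\phi_p(a)\in I$ with $a\in I_p$ one gets $\theta(x)=\chi_p(a/I_p)=0$, so $\theta$ kills~$I$; hence if $x\vee u=y\vee u$ with $u\in I$, then $\theta(x)=\theta(x)\vee\theta(u)=\theta(y)\vee\theta(u)=\theta(y)$, so $\theta$ is constant on each $\theta_I$-class and factors as $\theta=\chi\circ\pi$ through the canonical projection $\pi\colon S\to S/I$ (Lemmas~\ref{L:quotSem} and~\ref{L:quotMorphSem}) for a unique \jzh\ $\chi\colon S/I\to T$. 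Then $\chi\circ\psi_p\circ\pi_p=\chi\circ\pi\circ\phi_p=\theta\circ\phi_p=\chi_p\circ\pi_p$ and surjectivity of~$\pi_p$ give $\chi\circ\psi_p=\chi_p$, while uniqueness of~$\chi$ follows because every element of $S/I$ equals $\psi_p(a/I_p)$ for some $p,a$ (as every element of~$S$ is some~$\phi_p(a)$). The delicate point is that one must quotient by the congruence~$\theta_I$ — the notion of $S/I$ from Lemma~\ref{L:quotSem} — rather than merely pass to the $0$-kernel; the rest is routine diagram chasing with colimits.

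Part~(2) should be routine. I would note that $I_p=\phi_p^{-1}(I)$ is an ideal of~$S_p$ because the preimage of an ideal under a \jzh\ is an ideal, and that $\phi_{p,q}(I_p)\subseteq I_q$ since $\phi_q(\phi_{p,q}(a))=\phi_p(a)\in I$ for $a\in I_p$; thus $\vec I=(I_p)_{p\in P}$ is an ideal of~$\vec S$. Finally $\phi_p(I_p)\subseteq I$ holds by the definition of~$I_p$, and conversely any $x\in I$ is $\phi_p(a)$ for some $p,a$, so $a\in\phi_p^{-1}(I)=I_p$ and $x\in\phi_p(I_p)$; hence $I=\bigcup_{p\in P}\phi_p(I_p)$, consistent with the formula appearing in part~(1).
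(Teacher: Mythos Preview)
Your argument is correct. The paper itself states this lemma without proof, in a section explicitly devoted to ``well-known facts about \jzs s'', so there is no proof to compare against; your write-up supplies exactly the standard verification the paper omits. The one stylistic remark is that your caveat about ``the delicate point'' is unnecessary here: factoring through the quotient $S/I$ via Lemma~\ref{L:quotMorphSem} with $J=\set{0}$ already handles the congruence~$\theta_I$ automatically, so no extra care is needed beyond what you have written.
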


\begin{lemma}\label{L:quotDiagAlgAndSem}
Let $\pi\colon A\tosurj B$ be a surjective morphism of algebras. The \jzh\ $\Conc\pi$ is ideal-induced. Moreover, $\ker_0(\Conc\pi)=(\Conc A)\dnw\ker\pi$.
\end{lemma}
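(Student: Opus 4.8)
The statement has two parts: first, that $\Conc\pi\colon\Conc A\to\Conc B$ is ideal-induced; second, the explicit formula $\ker_0(\Conc\pi)=(\Conc A)\dnw\ker\pi$. I would prove the second part first, since it makes the first part almost immediate once combined with a suitable general fact about $\Conc$ applied to surjections.

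For the kernel formula, recall that for $x,y\in A$ one has $\Conc\pi(\Theta_A(x,y))=\Theta_B(\pi(x),\pi(y))$, and that $\Theta_B(\pi(x),\pi(y))=\zero_B$ exactly when $(x,y)\in\ker\pi$. A general compact congruence of $A$ is a finite join $\alpha=\bigvee_{i<n}\Theta_A(x_i,y_i)$, and $\Conc\pi(\alpha)=\bigvee_{i<n}\Theta_B(\pi(x_i),\pi(y_i))$. The inclusion $(\Conc A)\dnw\ker\pi\subseteq\ker_0(\Conc\pi)$ is clear: if $\alpha\le\ker\pi$ then $\pi$ identifies every pair related by $\alpha$, so $\Conc\pi(\alpha)=\zero_B$. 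For the reverse inclusion, suppose $\Conc\pi(\alpha)=\zero_B$; I must show $\alpha\subseteq\ker\pi$, i.e. every pair $(a,b)\in\alpha$ is collapsed by $\pi$. This follows because $\pi$ being surjective makes $\ker\pi$ the congruence generated by its own pairs, and $\Theta_B(\pi(a),\pi(b))\le\Conc\pi(\alpha)=\zero_B$ forces $(\pi(a),\pi(b))$ diagonal; hence $\alpha\subseteq\ker\pi$ and $\alpha\in(\Conc A)\dnw\ker\pi$. (Note $(\Conc A)\dnw\ker\pi$ is indeed an ideal of $\Conc A$ since $\ker\pi\in\Con A$ and $\Conc A$ is join-generated by compact congruences below it; closure under join uses that $\ker\pi$ is closed under the lattice operations of $\Con A$.)

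For the ideal-induced part, by Lemma~\ref{L:IIsem} it suffices to show that the induced \jzh\ $\psi\colon(\Conc A)/\ker_0(\Conc\pi)\to\Conc B$ is an isomorphism, or directly to verify the definition: $\Conc\pi$ is surjective, and whenever $\Conc\pi(\alpha)=\Conc\pi(\beta)$ we must find $\gamma\in\Conc A$ with $\alpha\vee\gamma=\beta\vee\gamma$ and $\Conc\pi(\gamma)=\zero_B$. Surjectivity of $\Conc\pi$ is standard: since $\pi$ is onto, every compact congruence of $B$ has the form $\bigvee_{i<n}\Theta_B(\pi(x_i),\pi(y_i))=\Conc\pi\bigl(\bigvee_{i<n}\Theta_A(x_i,y_i)\bigr)$. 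For the kernel condition, take $\gamma=(\alpha\vee\beta)\cap\ker\pi$; this is a congruence of $A$ below $\ker\pi$, hence in $\ker_0(\Conc\pi)$ by the formula just proved—provided it is compact, which needs the observation that $(\alpha\vee\beta)\cap\ker\pi$ is compact because $\Conc\pi$ maps $\alpha\vee\beta$ to the identity... wait, that is not generally true, so instead I take $\gamma$ to be a compact congruence with $\alpha\vee\gamma=\beta\vee\gamma$ extracted as follows: writing $\alpha=\bigvee_i\Theta_A(a_i,b_i)$ and $\beta=\bigvee_j\Theta_A(c_j,d_j)$, the equality $\Conc\pi(\alpha)=\Conc\pi(\beta)$ means each generator of $\alpha$ lies below $\Conc\pi(\beta)$ after applying $\pi$, which by compactness of $\Conc B$ lets me lift witnesses of $\Theta_A(a_i,b_i)\le\beta\vee(\text{something in }\ker_0)$; collecting finitely many such witnesses and symmetrically produces a single compact $\gamma\in\ker_0(\Conc\pi)$ with $\alpha\vee\gamma=\beta\vee\gamma$.

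**Main obstacle.** The delicate point is the kernel-condition step: constructing the single \emph{compact} congruence $\gamma$ that witnesses $\alpha\vee\gamma=\beta\vee\gamma$ while lying in $\ker_0(\Conc\pi)$. The naive candidate $(\alpha\vee\beta)\cap\ker\pi$ need not be compact, so the argument must instead work generator-by-generator: for each generating principal congruence of $\alpha$, use that its $\Conc\pi$-image is below $\Conc\pi(\beta)$, apply compactness in $\Conc B$ and surjectivity of $\pi$ to pull back a finite "correction" into $\Conc A$ that is killed by $\Conc\pi$, then take the join of all these finitely many corrections (from both the $\alpha$-side and the $\beta$-side) as $\gamma$. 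Checking that this finite join genuinely equalizes $\alpha$ and $\beta$ after joining, and that it lands in $\ker_0(\Conc\pi)=(\Conc A)\dnw\ker\pi$, is where the real work lies; everything else is routine manipulation of principal congruences and the functoriality of $\Conc$.
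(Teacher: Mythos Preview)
The paper states this lemma without proof, treating it (like the surrounding lemmas of Section~3) as a well-known fact about the functor $\Conc$. So there is no ``paper's own proof'' to compare against; your task is simply to give a correct argument, which you essentially do.

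Your kernel formula argument is clean and correct. For the ideal-induced part, your overall strategy is right, but the execution is more laboured than necessary, and one remark is misplaced: the compactness you need is not ``in $\Conc B$'' but rather the compactness of $\alpha$ and $\beta$ as elements of the algebraic lattice $\Con A$. Here is the streamlined version. Since $\pi$ is surjective, the Second Isomorphism Theorem identifies $\Con B$ with the interval $[\ker\pi,A\times A]$ in $\Con A$, and under this identification $\Conc\pi(\alpha)=(\alpha\vee\ker\pi)/\ker\pi$. Hence $\Conc\pi(\alpha)=\Conc\pi(\beta)$ is equivalent to $\alpha\vee\ker\pi=\beta\vee\ker\pi$, so in particular $\alpha\le\beta\vee\ker\pi$. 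Now $\ker\pi$ is the directed join of the compact congruences below it; since $\alpha$ is compact, there exists a compact $\gamma_1\le\ker\pi$ with $\alpha\le\beta\vee\gamma_1$. Symmetrically there is a compact $\gamma_2\le\ker\pi$ with $\beta\le\alpha\vee\gamma_2$. Then $\gamma=\gamma_1\vee\gamma_2$ is compact, lies in $(\Conc A)\dnw\ker\pi=\ker_0(\Conc\pi)$, and satisfies $\alpha\vee\gamma=\beta\vee\gamma$. This avoids the generator-by-generator bookkeeping entirely and makes it transparent why your first candidate $(\alpha\vee\beta)\cap\ker\pi$ was almost right: you only needed a compact piece of it.
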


\begin{proposition}\label{P:IdeIdu}
Let $S$ and~$T$ be \jzs s with~$T$ finite, let $\phi\colon S\to T$ be an ideal-induced \jzh, and let $X\subseteq S$ finite. There exists a finite \jz-subsemilattice $S'$ of~$S$ such that $X\subseteq S'$ and $\phi\res S'\colon S'\to T$ is ideal-induced.
\end{proposition}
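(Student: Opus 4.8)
The plan is to first arrange surjectivity of the restriction, and then to destroy every failure of the ideal-induced condition in a single enlargement step. First I would set $G=X\cup\set{0}\cup\setm{a_t}{t\in T}$, where for each $t\in T$ we fix some $a_t\in S$ with $\phi(a_t)=t$; this is legitimate because $\phi$ is surjective. Since $G$ is finite and $\vee$ is idempotent, commutative and associative, the $(\vee,0)$-subsemilattice $\gen{G}$ of $S$ generated by $G$ is finite, say $\gen{G}=\set{c_1,\dots,c_N}$, and $\phi\res\gen{G}$ is already surjective onto $T$ since $\phi(a_t)=t$ for all $t\in T$.

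Next, the only obstruction to $\phi\res\gen{G}$ being ideal-induced is a possible lack of witnesses, and I would repair this in one stroke. For each pair $(i,j)$ with $1\le i,j\le N$ and $\phi(c_i)=\phi(c_j)$, the hypothesis that $\phi\colon S\to T$ is ideal-induced yields some $u_{i,j}\in S$ with $\phi(u_{i,j})=0$ and $c_i\vee u_{i,j}=c_j\vee u_{i,j}$. I would then put
\[
v=0\vee\bigvee\setm{u_{i,j}}{1\le i,j\le N,\ \phi(c_i)=\phi(c_j)},
\]
a finite join of elements of $\ker_0\phi$, so that $\phi(v)=0$ and $v\ge u_{i,j}$ for every such pair, and take $S'$ to be the $(\vee,0)$-subsemilattice of $S$ generated by $\gen{G}\cup\set{v}$.

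Finally I would verify that $S'$ works. Clearly $X\cup\set{0}\subseteq\gen{G}\subseteq S'$ and $\phi\res S'$ is surjective. For finiteness, any join of elements of $\set{c_1,\dots,c_N}$ again lies in $\gen{G}$ and hence equals some $c_k$, so every element of $S'$ has the form $c_k$ or $c_k\vee v$ and $\card S'\le 2N$. To check that $\phi\res S'$ is ideal-induced, take $p,q\in S'$ with $\phi(p)=\phi(q)$ and write $p\in\set{c_i,c_i\vee v}$, $q\in\set{c_j,c_j\vee v}$; then $p\vee v=c_i\vee v$ and $q\vee v=c_j\vee v$, while $\phi(v)=0$ forces $\phi(c_i)=\phi(c_j)$, so $c_i\vee u_{i,j}=c_j\vee u_{i,j}$, and joining both sides with $v\ge u_{i,j}$ gives $c_i\vee v=c_j\vee v$. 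Thus $p\vee v=q\vee v$ with $v\in S'$ and $\phi(v)=0$, so $v$ is the required witness and $\phi\res S'$ is ideal-induced.

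The main point — and the only step I expect to need real care — is why a single enlargement suffices. The naive strategy of repeatedly adjoining witnesses for the currently unresolved pairs and reclosing under $\vee$ produces an increasing chain of finite subsemilattices whose union is ideal-induced but is typically infinite, since every new join can spawn a new pair needing a new witness, and finiteness of $T$ by itself does not obviously halt this. The trick is that a single $v$ dominating all the $u_{i,j}$ breaks the regress: once $v$ is adjoined, the new elements are exactly the $c_k\vee v$, and the only new pairs with equal image, namely $(c_i\vee v,c_j\vee v)$ and $(c_i,c_j\vee v)$, are already resolved by $v$ itself, precisely because $u_{i,j}\le v$ forces $c_i\vee v=c_j\vee v$. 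The remaining verifications are routine facts about finitely generated $(\vee,0)$-semilattices.
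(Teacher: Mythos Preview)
Your proof is correct and follows essentially the same approach as the paper: first pass to a finite \jz-subsemilattice on which $\phi$ is surjective, then adjoin the finitely many witnesses $u_{i,j}$ needed for the ideal-induced condition. The only cosmetic difference is that you take a single dominating element $v=\bigvee u_{i,j}$ as a universal witness, whereas the paper keeps the witnesses separate in a subsemilattice~$U$ and builds each witness as $w=u\vee v\vee u_{x,y}$; your version is a mild streamlining of the same argument.
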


\begin{proof}
As $\phi$ is surjective and~$X$ is finite, there exists a finite \jz-subsemilattice~$Y$ of~$S$ such that $X\subseteq Y$ and $\phi(Y)=T$. Given $x,y\in Y$ with $\phi(x)=\phi(y)$ we fix $u_{x,y}\in S$ such that $\phi(u_{x,y})=0$ and $x\vee u_{x,y}=y\vee u_{x,y}$. Let~$U$ be the \jz-subsemilattice of~$S$ generated by $\setm{u_{x,y}}{x,y\in Y\text{ and }\phi(x)=\phi(y)}$. As $\phi(u)=0$ for all generators, $\phi(u)=0$ for each $u\in U$.

Let $S'$ be the \jz-subsemilattice of~$S$ generated by $Y\cup U$. As $S'$ is finitely generated, it is finite. As $Y\subseteq S'$, $\phi(S')=T$. Let $a,b\in S'$ such that $\phi(a)=\phi(b)$. There exist $x,y\in Y$ and $u,v\in U$ such that $a=x\vee u$ and $b=y\vee v$, thus $\phi(a)=\phi(x\vee u)=\phi(x)\vee\phi(u)=\phi(x)$. Similarly, $\phi(b)=\phi(y)$, hence $\phi(x)=\phi(y)$, moreover $x,y\in Y$, so $u_{x,y}\in U$. The element $w=u\vee v\vee u_{x,y}$ belongs to~$U$, hence $\phi(w)=0$, moreover $w\in S'$. {}From $x\vee u_{x,y}=y\vee u_{x,y}$ it follows that $a\vee w=b\vee w$. Therefore, $\phi\res S'$ is ideal-induced.
\end{proof}

\section{Partial algebras}\label{S:Palg}

In this section we introduce a few basic properties of partial algebras. We fix a similarity type~$\sL$. Given $\ell\in\sL$ we denote by $\ari(\ell)$ the arity of $\ell$.

\begin{definition}
A \emph{partial algebra}~$A$ is a set (the \emph{universe} of the partial algebra), given with a set $D_{\ell}=\Def_{\ell}(A)\subseteq A^{\ari(\ell)}$ and a map $\ell^A\colon D_{\ell}\to A$ called a \emph{partial operation}, for each $\ell\in\sL$.

Let $\ell\in\sL$ be an $n$-ary operation. If~$\vec x\in\Def_{\ell}(A)$ we say that $\ell^A(\vec x)$ is \emph{defined in~$A$}. We generalize this notion to terms in the usual way. For example, given binary operations~$\ell_1$ and~$\ell_2$ of a partial algebra~$A$ and $x,y,z\in A$, $\ell_1^A(\ell_2^A(x,y),\ell_1^A(y,z))$ is defined in~$A$ if and only if $(x,y)\in\Def_{\ell_2}(A)$, $(y,z)\in\Def_{\ell_1}(A)$, and $(\ell_2^A(x,y),\ell_1^A(y,z))\in\Def_{\ell_1}(A)$.

Given a term $t$, we denote by $\Def_t(A)$ the set of all tuples~$\vec x$ of~$A$ such that $t(\vec x)$ is defined in~$A$.
\end{definition}

We denote $\ell(\vec x)$ instead of $\ell^A(\vec x)$ when there is no ambiguity. Any algebra~$A$ has a natural structure of partial algebra with $\Def_{\ell}(A)=A^{\ari(\ell)}$ for each $\ell\in\sL$.

\begin{definition}
Let~$A$,~$B$ be partial algebras. A \emph{morphism of partial algebras} is a map $f\colon A\to B$ such that $f(\vec x)\in\Def_{\ell}(B)$ and $\ell(f(\vec x))=f(\ell(\vec x))$, for all $\ell\in\sL$ and all~$\vec x\in\Def_{\ell}(A)$.

The \emph{category of partial algebras}, denoted by $\PALG_{\sL}$, is the category in which the objects are the partial algebras and the arrows are the above-mentioned morphisms of partial algebras.

A morphism $f\colon A\to B$ of partial algebras is \emph{strong} if $(f(A))^{\ari(\ell)}\subseteq\Def_{\ell}(B)$ for each $\ell\in\sL$.

A partial algebra~$A$ is \emph{finite} if its universe is finite.
\end{definition}

\begin{remark}\label{R:isopalg}
A morphism $f\colon A\to B$ of partial algebras is an isomorphism if and only if the following conditions are both satisfied
\begin{enumerate}
\item The map~$f$ is bijective.
\item If $\ell(f(\vec x))$ is defined in~$B$ then $\ell(\vec x)$ is defined in~$A$, for each operation $\ell\in\sL$ and each tuple~$\vec x$ of~$A$.
\end{enumerate}

We remind the reader that the converse of $(2)$ is always true.
\end{remark}

\begin{definition}\label{D:StrPartAlg}
Given a partial algebra~$A$, a \emph{partial subalgebra}~$B$ of~$A$ is a subset~$B$ of~$A$ endowed with a structure of partial algebra such that $\Def_{\ell}(B)\subseteq\Def_{\ell}(A)$ and $\ell^A(\vec x) = \ell^B(\vec x)$ for all $\ell\in\sL$ and all~$\vec x\in\Def_{\ell}(B)$. The inclusion map from~$A$ into~$B$ is a morphism of partial algebras called \emph{the inclusion morphism}.

A partial subalgebra~$B$ of~$A$ is \emph{full} if whenever~$\ell\in\sL$ and~$\vec x\in B^{\ari(\ell)}$ are such that $\ell^A(\vec x)$ is defined and belongs to~$B$, then $\ell(\vec x)$ is defined in~$B$. It is equivalent to the following equality:
\[
\Def_{\ell}(B)=\setm{\vec x\in\Def_{\ell}(A)\cap B^{\ari(\ell)}}{\ell(\vec x)\in B},\quad\text{ for each $\ell\in\sL$.}
\]

A partial subalgebra~$B$ of~$A$ is \emph{strong} if the inclusion map is a strong morphism, that is,~$B^{\ari(\ell)}\subseteq\Def_{\ell}(A)$ for each $\ell\in\sL$.

An \emph{embedding} of partial algebras is a one-to-one morphism of partial algebras.
\end{definition}

\begin{notation}\label{N:imagepartsubalgebra}
Let $f\colon A\to B$ be a morphism of partial algebras, let~$X$ be a partial subalgebra of~$A$. The set $f(X)$ can be endowed with a natural structure of partial algebra, by setting $\Def_{\ell}(f(X))=f(\Def_{\ell}(X))=\setm{f(\vec x)}{\vec x\in\Def_{\ell}(X)}$, for each $\ell\in\sL$. Similarly, let~$Y$ be a partial subalgebra of~$B$. The set $f^{-1}(Y)$ can be endowed with a natural structure of partial algebra, by setting $\Def_{\ell}(f^{-1}(Y))=f^{-1}(\Def_{\ell}(Y))=\setm{\vec x\in A}{f(\vec x)\in\Def_{\ell}(Y)}$, for each $\ell\in\sL$.
\end{notation}

\begin{remark*}
Let $f\colon A\to B$ and $g\colon B\to C$ be morphisms of partial algebras, let $X$ a sub-partial algebra of~$A$, then $(g\circ f)(X)=g(f(X))$ as partial algebras. Let $Z$ be a partial subalgebra of $X$, then $(g\circ f)^{-1}(Z)=f^{-1}(g^{-1}(Z))$ as partial algebras.

Let $f\colon A\to B$ be a morphism of partial algebras, let $X$ be a partial subalgebra of~$A$. Then $X$ is a partial subalgebra of $f^{-1}(f(X))$. In particular $f^{-1}(f(A))=A$ as partial algebras. Let $Y$ be a partial subalgebra of~$B$, then $f(f^{-1}(Y))$ is a partial subalgebra of $Y$.

If~$\sL$ is infinite, then there are a finite partial algebra~$A$ (even with one element) and an infinite chain of partial subalgebras of~$A$ with union~$A$. In particular,~$A$ is not finitely presented in the category $\PALG_{\sL}$.

A morphism $f\colon A\to B$ of partial algebras is strong if and only if $f(A)$ is a strong partial subalgebra of~$B$.
\end{remark*}

\begin{lemma}\label{L:caractiso}
An embedding $f\colon A\to B$ of partial algebras is an isomorphism if and only if $f(A)=B$ as partial algebras.
\end{lemma}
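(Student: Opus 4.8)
The plan is to prove both implications directly from the definitions, the forward direction being essentially trivial and the reverse direction requiring us to verify the two conditions in Remark~\ref{R:isopalg}. First I would dispose of the forward implication: if $f\colon A\to B$ is an isomorphism of partial algebras, then in particular it is a bijection onto $B$, and by the remark following Notation~\ref{N:imagepartsubalgebra} (the converse of condition~(2) in Remark~\ref{R:isopalg} always holds, so $\ell^A(\vec x)$ defined implies $\ell^{f(A)}(f(\vec x))$ defined) together with condition~(2) of Remark~\ref{R:isopalg} applied to the isomorphism $f$, one gets that for every $\ell\in\sL$ and every tuple $\vec x$ of $A$, $\ell^B(f(\vec x))$ is defined if and only if $\ell^A(\vec x)$ is defined. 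Since $f$ is onto, every tuple of $B$ is of the form $f(\vec x)$, and $\Def_\ell(f(A))=f(\Def_\ell(A))$ by Notation~\ref{N:imagepartsubalgebra}; combining these gives $\Def_\ell(f(A))=\Def_\ell(B)$, and the partial operation values agree because $f$ is a morphism. Hence $f(A)=B$ as partial algebras.

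For the reverse implication, suppose $f\colon A\to B$ is an embedding (a one-to-one morphism) with $f(A)=B$ as partial algebras. I would check the two conditions of Remark~\ref{R:isopalg}. Condition~(1): $f$ is injective by hypothesis and surjective because $f(A)=B$ forces the underlying universes to coincide, so $f$ is a bijection. Condition~(2): let $\ell\in\sL$ and let $\vec x$ be a tuple of $A$ such that $\ell^B(f(\vec x))$ is defined in $B$. Since $B=f(A)$ as partial algebras, $\Def_\ell(B)=\Def_\ell(f(A))=f(\Def_\ell(A))=\setm{f(\vec y)}{\vec y\in\Def_\ell(A)}$ by Notation~\ref{N:imagepartsubalgebra}. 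Thus $f(\vec x)=f(\vec y)$ for some $\vec y\in\Def_\ell(A)$; as $f$ is injective this yields $\vec x=\vec y\in\Def_\ell(A)$, i.e. $\ell^A(\vec x)$ is defined in $A$. By Remark~\ref{R:isopalg}, $f$ is an isomorphism.

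The only subtlety — and the one point I would be careful about — is making sure the identification ``$f(A)=B$ as partial algebras'' is used with the right meaning: it asserts not merely that the universes coincide but that the partial-algebra structure on $f(A)$ induced as in Notation~\ref{N:imagepartsubalgebra} (namely $\Def_\ell(f(A))=f(\Def_\ell(A))$) equals the given structure on $B$. This is precisely what converts the a priori one-sided inclusion $f(\Def_\ell(A))\subseteq\Def_\ell(B)$ (which holds for any morphism) into the equality needed for condition~(2). No other step presents any difficulty; there are no calculations to grind through, only a careful bookkeeping of which inclusions are automatic and which come from the hypothesis.
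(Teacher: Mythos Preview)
Your proof is correct and follows essentially the same approach as the paper's. The only cosmetic difference is that for the forward direction the paper uses the inverse morphism $g$ to observe that $B=f(g(B))$ is a partial subalgebra of $f(A)$ (hence $f(A)=B$), while you verify $\Def_\ell(f(A))=\Def_\ell(B)$ directly via Remark~\ref{R:isopalg}; the reverse directions are identical in substance.
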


\begin{proof}
Assume that $f$ is an isomorphism, let $g$ be its inverse. Notice that $f(A)$ is a partial subalgebra of~$B$ and $B=f(g(B))$ is a partial subalgebra of $f(A)$, therefore $B=f(A)$ as partial algebras.

Conversely, assume that $B=f(A)$ as partial algebras. Then $f$ is surjective, moreover $f$ is an embedding, so $f$ is a bijection. Let $g=f^{-1}$ in $\SET$. Let $\vec y\in\Def_{\ell}(B)$. As $B=f(A)$, there exists $\vec x\in\Def_{\ell}(A)$ such that $f(\vec x)=\vec y$, thus $g(\vec y)=\vec x\in\Def_{\ell}(A)$. Moreover $g(\ell(\vec y))=g(\ell(f(\vec x)))=g(f(\ell(\vec x)))=\ell(\vec x)=\ell(g(\vec y)).$
\end{proof}

\begin{notation}\label{N:gen}
Let~$A$ be a partial algebra, let~$X$ be a subset of~$A$. We define inductively, for each $n<\omega$,
\begin{align*}
\gen{X}^0_A&=X\cup\setm{c}{\text{$c$ is a constant of~$\sL$}}\\
\gen{X}^{n+1}_A&=\gen{X}^{n}_A
\cup\big\{\ell(\vec x)\mid \ell\in\sL,\ \vec x\in\Def_{\ell}(A),\ \vec x\text{ is an $\ari(\ell)$-tuple of}\gen{X}^{n}_A\big\}
\end{align*}
We endow $\gen{X}^n_A$ with the induced structure of full partial subalgebra of~$A$. If~$\sL$ and~$X$ are both finite, then $\gen{X}^n_A$ is finite for each $n<\omega$. If~$A$ is understood, we shall simply denote this partial algebra by $\gen{X}^n$.
\end{notation}

\begin{definition}
A partial algebra~$A$ \emph{satisfies an identity $t_1=t_2$} if $t_1(\vec x)=t_2(\vec x)$ for each tuple~$\vec x$ of~$A$ such that both $t_1(\vec x)$ and $t_2(\vec x)$ are defined in~$A$. Otherwise we say that~$A$ \emph{fails} $t_1=t_2$.

Let~$\cV$ be a variety of algebras, a partial algebra~$A$ is \emph{a partial algebra of~$\cV$} if~$A$ satisfies all identities of~$\cV$.
\end{definition}

\begin{remark*}
Let~$A$ be a partial algebra, let $\ell\in\sL$. If $\Def_{\ell}(A)=\emptyset$ then~$A$ satisfies $\ell(\vec x)=y$, vacuously.

If~$A$ fails $t_1=t_2$, then there exists a tuple~$\vec x$ of~$A$ such that $t_1(\vec x)$ and $t_2(\vec x)$ are both defined and $t_1(\vec x)\not=t_2(\vec x)$.
\end{remark*}

\begin{lemma}\label{L:PALGhasLimit}
The category $\PALG_{\sL}$ has all directed colimits. Moreover, given a directed poset $P$, a $P$-indexed diagram~$\vec A=\famm{A_p,f_{p,q}}{p\le q\text{ in }P}$ in $\PALG_{\sL}$, and a directed colimit cocone:
\begin{equation}\label{E:PALGL-col-set}
\famm{A,f_p}{p\in P}=\varinjlim \famm{A_p,f_{p,q}}{p\le q\text{ in }P},\quad\text{in $\SET$,}
\end{equation}
the set~$A$ can be uniquely endowed with a structure of partial algebra such that:
\begin{itemize}
\item $\Def_{\ell}(A)=\setm{f_p(\vec x)}{p\in P\text{ and }\vec x\in \Def_{\ell}(A_p)}$, for each $\ell\in\sL$;
\item $\ell(f_p(\vec x))=f_p(\ell(\vec x))$ for each $p\in P$, all $\ell\in\sL$, and all~$\vec x\in\Def_{\ell}(A_p)$.
\end{itemize}
Moreover, if~$A$ is endowed with this structure of partial algebra, the following statements hold:
\begin{enumerate}
\item $\famm{A,f_p}{p\in P}=\varinjlim\famm{A_p,f_{p,q}}{p\le q\text{ in }P}$ in $\PALG_{\sL}$.
\item Assume that for each $\ell\in\sL$, each $p\in P$, and each $\ari(\ell)$-tuple~$\vec x$ of~$A_p$ there exists $q\ge p$ such that $f_{p,q}(\vec x)\in\Def_{\ell}(A_q)$. Then~$A$ is an algebra, that is, $\Def_{\ell}(A)=A^{\ari(\ell)}$ for each $\ell\in\sL$.
\item If $P$ has no maximal element and $f_{p,q}$ is strong for all $p<q$ in $P$, then~$A$ is an algebra.
\item $\Def_t(A)=\setm{f_p(\vec x)}{p\in P\text{ and }\vec x\in \Def_{t}(A_p)}$ for each term $t$ of~$\sL$.
\item Let $t_1=t_2$ be an identity. If $A_p$ satisfies $t_1=t_2$ for all $p\in P$, then~$A$ satisfies $t_1=t_2$.
\end{enumerate}
\end{lemma}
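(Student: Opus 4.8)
The plan is to derive everything from the construction of directed colimits in $\SET$ together with two preliminary observations, which I would record first. The first is the routine extension of the morphism property to terms: if $g\colon A\to B$ is a morphism of partial algebras and $\vec x\in\Def_t(A)$, then $g(\vec x)\in\Def_t(B)$ and $t(g(\vec x))=g(t(\vec x))$; this follows by an easy induction on the complexity of $t$, using the definition of a morphism at the induction step. The second is the standard description of the directed colimit cocone \eqref{E:PALGL-col-set} in $\SET$: every element of $A$ has the form $f_p(a)$ for some $p\in P$ and $a\in A_p$, and for $p,q\in P$, $a\in A_p$, $b\in A_q$ one has $f_p(a)=f_q(b)$ if and only if $f_{p,r}(a)=f_{q,r}(b)$ for some $r\ge p,q$. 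Because $P$ is directed, the latter holds simultaneously for any finite family of such equalities, and so applies componentwise to tuples.

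Next I would check that the two displayed bullet points determine a well-defined partial algebra structure on $A$, unique with those properties. Uniqueness is immediate, since every element of $A$ comes from some stage and every tuple belonging to a prospective $\Def_\ell(A)$ is the $f_p$-image of a tuple of $\Def_\ell(A_p)$. For well-definedness of $\ell^A$, suppose $f_p(\vec x)=f_q(\vec y)$ with $\vec x\in\Def_\ell(A_p)$ and $\vec y\in\Def_\ell(A_q)$; choose $r\ge p,q$ with $f_{p,r}(\vec x)=f_{q,r}(\vec y)$. Since $f_{p,r}$ and $f_{q,r}$ are morphisms of partial algebras, $f_{p,r}(\ell(\vec x))=\ell(f_{p,r}(\vec x))=\ell(f_{q,r}(\vec y))=f_{q,r}(\ell(\vec y))$, and applying $f_r$ gives $f_p(\ell(\vec x))=f_q(\ell(\vec y))$, as required.

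With the structure in place, statements (1)--(3) are short. For (1): each $f_p$ is a morphism of partial algebras directly by the two bullets; the equalities $f_q\circ f_{p,q}=f_p$ already hold for the $\SET$-colimit cocone; and if $(g_p)_{p\in P}$ is a cocone from $\vec A$ to a partial algebra $B$, then the unique $\SET$-map $g\colon A\to B$ with $g\circ f_p=g_p$ for all $p$ is a morphism of partial algebras, since any $\vec y\in\Def_\ell(A)$ equals $f_p(\vec x)$ with $\vec x\in\Def_\ell(A_p)$, whence $g(\vec y)=g_p(\vec x)\in\Def_\ell(B)$ and $\ell(g(\vec y))=\ell(g_p(\vec x))=g_p(\ell(\vec x))=g(f_p(\ell(\vec x)))=g(\ell(\vec y))$. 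The opening assertion of the lemma then follows: one takes any directed diagram in $\PALG_\sL$, forms the colimit of the underlying $\SET$-diagram, and equips it with the structure just described. For (2): by directedness any tuple of $A$ comes from a single $A_p$; the hypothesis moves it into $\Def_\ell(A_q)$ for some $q\ge p$, and the first bullet places its $f_q$-image in $\Def_\ell(A)$. Statement (3) is the instance of (2) in which the absence of a maximal element supplies some $q>p$ and strongness of $f_{p,q}$ guarantees that every tuple over $f_{p,q}(A_p)$ --- in particular the image of any tuple of $A_p$ --- lies in $\Def_\ell(A_q)$.

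Finally, (4) is proved by induction on the term $t$, the cases of variables and constants being immediate and, for $t=\ell(t_1,\dots,t_k)$, the inclusion $\supseteq$ following at once from the first preliminary observation and the first bullet. The reverse inclusion is the core of the argument and the step I expect to be the main obstacle: given $\vec x\in\Def_t(A)$, the induction hypothesis yields, for each $j<k$, a stage and a tuple there that maps to $\vec x$ and witnesses $\vec x\in\Def_{t_j}(A)$, and these finitely many data must be amalgamated. I would first lift all of them to a common stage $p_0$ (membership in $\Def_{t_j}$ being preserved by the first preliminary observation), then use the $\SET$-colimit description to pass to a stage $p\ge p_0$ at which all these preimages of $\vec x$ coincide with one tuple $\vec v$; then $\vec v\in\Def_{t_j}(A_p)$ for every $j$ and $f_p(\vec v)=\vec x$, and from $f_p\bigl((t_j(\vec v))_{j<k}\bigr)=(t_j(\vec x))_{j<k}\in\Def_\ell(A)$ together with the first bullet I would choose $r\ge p$ with $(t_j(f_{p,r}(\vec v)))_{j<k}\in\Def_\ell(A_r)$, so that $f_{p,r}(\vec v)\in\Def_t(A_r)$ maps onto $\vec x$. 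Statement (5) uses the same amalgamation: for $\vec x\in\Def_{t_1}(A)\cap\Def_{t_2}(A)$ one obtains a stage $p$ and a tuple $\vec v$ of $A_p$ with $\vec v\in\Def_{t_1}(A_p)\cap\Def_{t_2}(A_p)$ and $f_p(\vec v)=\vec x$; since $A_p$ satisfies $t_1=t_2$ we get $t_1(\vec v)=t_2(\vec v)$, and applying $f_p$ gives $t_1(\vec x)=t_2(\vec x)$, so $A$ satisfies $t_1=t_2$. Throughout (4) and (5) the only real difficulty is the bookkeeping --- repeatedly invoking directedness of $P$ to drive finitely many equalities and definedness conditions down to a single common index --- rather than anything conceptual.
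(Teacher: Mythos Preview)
Your proof is correct and follows essentially the same route as the paper's own proof: the well-definedness argument, the verification of the colimit property, and the reductions for (2) and (3) are all carried out in the same way. The paper dismisses (4) as ``a straightforward induction on terms'' and (5) as ``an easy consequence of (4)'', whereas you spell out the amalgamation bookkeeping in full; your detailed treatment of these steps is accurate and is exactly what the paper's one-line remarks are gesturing at.
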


\begin{proof}
Put $\Def_{\ell}(A)=\setm{f_p(\vec x)}{p\in P\text{ and }\vec x\in \Def_{\ell}(A_p)}$, for each $\ell\in\sL$.

Let $\ell\in\sL$, let~$\vec x\in\Def_{\ell}(A)$. There exist $p\in P$ and~$\vec y\in \Def_{\ell}(A_p)$ such that~$\vec x=f_p(\vec y)$. We first show that $f_p(\ell(\vec y))$ does not depend on the choice of $p$ and~$\vec y$. Let $q\in P$ and~$\vec z\in \Def_{\ell}(A_q)$ such that~$\vec x=f_q(\vec z)$. As $f_p(\vec y)=\vec x=f_q(\vec z)$, it follows from~\eqref{E:PALGL-col-set} that there exists $r\ge p,q$ such that $f_{p,r}(\vec y)=f_{q,r}(\vec z)$. Therefore the following equalities hold:
\[
f_p(\ell(\vec y))=f_r(f_{p,r}(\ell(\vec y)))=f_r(\ell(f_{p,r}(\vec y)))=f_r(\ell(f_{q,r}(\vec z)))=f_r(f_{q,r}(\ell(\vec z)))=f_q(\ell(\vec z)).
\]
Hence $\ell(f_p(\vec y))=f_p(\ell(\vec y))$ for all $p\in P$ and all~$\vec y\in\Def_{\ell}(A_p)$ uniquely define a partial operation $\ell\colon \Def_{\ell}(A)\to A$. Moreover $f_p$ is a morphism of partial algebras for each $p\in P$.

Let $\famm{B,g_p}{p\in P}$ be a cocone over $\famm{A_p,f_{p,q}}{p\le q\text{ in }P}$ in $\PALG_{\sL}$. In particular, it is a cocone in $\SET$, so there exists a unique map $h\colon A\to B$ such that $h\circ f_p=g_p$ for each $p\in P$. Let $\ell\in \sL$, let~$\vec x\in\Def_{\ell}(A)$. There exist $p\in P$ and~$\vec y\in\Def_{\ell}(A_p)$ such that~$\vec x=f_p(\vec y)$, thus $h(\vec x)=h(f_p(\vec y))=g_p(\vec y)$. As~$g_p$ is a morphism of partial algebras and~$\vec y\in\Def_{\ell}(A_p)$, we obtain that $h(\vec x)\in\Def_{\ell}(B)$. Moreover the following equalities hold:
\[
\ell(h(\vec x))=\ell(g_p(\vec y))=g_p(\ell(\vec y))=h(f_p(\ell(\vec y)))=h(\ell(f_p(\vec y)))=h(\ell(\vec x)).
\]
Hence $h$ is a morphism of partial algebras. Therefore:
\[
\famm{A,f_p}{p\in P}=\varinjlim\famm{A_p,f_{p,q}}{p\le q\text{ in }P}\text{\quad in $\PALG_{\sL}$.}
\]

Assume that for each $\ell\in\sL$, for all $p\in P$, and for all $\ari(\ell)$-tuples~$\vec x$ of~$A_p$, there exists $q\ge p$ such that $f_{p,q}(\vec x)\in\Def_{\ell}(A_q)$.

Let $\ell\in\sL$, let~$\vec x$ be an $\ari(\ell)$-tuple of~$A$. There exist $p\in P$ and a tuple~$\vec y$ of~$A_p$ such that~$\vec x=f_p(\vec y)$. Let $q\ge p$ such that $f_{p,q}(\vec y)\in\Def_{\ell}(A_q)$. It follows that~$\vec x=f_p(\vec y)=f_q(f_{p,q}(\vec y))$ belongs to $\Def_{\ell}(A)$. Therefore~$A$ is an algebra.

The statement $(3)$ follows directly from $(2)$. The statement $(4)$ is proved by a straightforward induction on terms, and $(5)$ is an easy consequence of $(4)$.
\end{proof}

\section{Pregamps}\label{S:Pregamp}

A \emph{\pregamp} is a partial algebra endowed with a semilattice-valued ``distance'' (cf. $(1)$-$(3)$) compatible with all operations of~$A$ (cf. $(4)$). It is a generalization of the notion of \emph{semilattice-metric space} defined in \cite[Section 5-1]{GiWe1}.

\begin{definition}\label{D:distance}
Let~$A$ be a partial algebra, let~$S$ be a \jzs. A \emph{$S$-valued partial algebra distance} on~$A$ is a map $\delta\colon A^2\to S$ such that:
\begin{enumerate}
\item $\delta(x,y)=0$ if and only if $x=y$, for all $x,y\in A$.
\item $\delta(x,y)=\delta(y,x)$, for all $x,y\in A$.
\item $\delta(x,y)\le \delta(x,z)\vee \delta(z,y)$, for all $x,y,z\in A$.
\item $\delta(\ell(\vec x),\ell(\vec y))\le\bigvee_{k<\ari(\ell)}\delta(x_k,y_k)$, for all $\ell\in\sL$ and all~$\vec x,\vec y\in\Def_{\ell}(A)$.
\end{enumerate}

Then we say that $\bA=(A,\delta,S)$ is a \emph{\pregamp}. We shall generally write $\delta_{\bA}=\delta$ and~$\widetilde{A}=S$.

The \pregamp\ is \emph{distance-generated} if it satisfies the following additional property:
\begin{enumerate}
\item[$(5)$]~$S$ is join-generated by $\delta_{\bA}(A^2)$. That is, for all $\alpha\in S$ there are $n\ge 0$ and $n$-tuples $\vec x,\vec y$ of~$A$ such that $\alpha=\bigvee_{k<n}\delta_{\bA}(x_k,y_k)$.
\end{enumerate}
\end{definition}

\begin{example}
Let~$A$ be an algebra. We remind the reader that $\Theta_A(x,y)$ denotes the smallest congruence that identifies $x$ and $y$, for all $x,y\in A$. This defines a distance $\Theta_A\colon A^2\to\Conc A$. Moreover, $(A,\Theta_A,\Conc A)$ is a distance-generated \pregamp.
\end{example}

A straightforward induction argument on the length of the term~$t$ yields the following lemma.

\begin{lemma}\label{L:DistCompTerms}
Let~$\bA$ be a \pregamp, let $t$ be an $n$-ary term, and let~$\vec x,\vec y\in\Def_t(A)$. The following inequality holds:
\begin{equation*}
\delta_{\bA}(t(\vec x),t(\vec y))\le\bigvee_{k<n}\delta_{\bA}(x_k,y_k).
\end{equation*}
We say that~$\delta_{\bA}$ and $t$ are \emph{compatible}.
\end{lemma}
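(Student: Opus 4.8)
The plan is a routine induction on the length of the term~$t$ (treating a constant of~$\sL$ as the nullary case of an operation symbol), using only clause~$(4)$ of Definition~\ref{D:distance}.

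For the base case, $t$ is a variable~$x_i$ with $i<n$; then $t(\vec x)=x_i$ and $t(\vec y)=y_i$, so the left-hand side is $\delta_{\bA}(x_i,y_i)$, which is trivially $\le\bigvee_{k<n}\delta_{\bA}(x_k,y_k)$. For the inductive step, I would write $t=\ell(t_1,\dots,t_m)$ with $\ell\in\sL$ of arity~$m\ge 0$ and each $t_j$ an $n$-ary term of strictly smaller length (when $m=0$ there are no $t_j$ and $t$ is a constant). The one point worth spelling out is the bookkeeping of definedness: by the inductive definition of ``defined in~$A$'', the hypothesis $\vec x\in\Def_t(A)$ forces $\vec x\in\Def_{t_j}(A)$ for every $j<m$ together with $(t_1(\vec x),\dots,t_m(\vec x))\in\Def_{\ell}(A)$, and symmetrically for~$\vec y$. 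Hence the induction hypothesis is available for each~$t_j$, giving $\delta_{\bA}(t_j(\vec x),t_j(\vec y))\le\bigvee_{k<n}\delta_{\bA}(x_k,y_k)$. Applying clause~$(4)$ to~$\ell$ with the tuples $(t_1(\vec x),\dots,t_m(\vec x))$ and $(t_1(\vec y),\dots,t_m(\vec y))$ and substituting these bounds then yields
\[
\delta_{\bA}(t(\vec x),t(\vec y))\le\bigvee_{j<m}\delta_{\bA}(t_j(\vec x),t_j(\vec y))\le\bigvee_{j<m}\bigvee_{k<n}\delta_{\bA}(x_k,y_k)=\bigvee_{k<n}\delta_{\bA}(x_k,y_k),
\]
with the convention that the empty join is~$0$ (this covers the constant case $m=0$, where clauses~$(4)$ and~$(1)$ both give $\delta_{\bA}(t(\vec x),t(\vec y))=0$). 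This closes the induction.

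I do not expect any genuine obstacle here: all the content sits in clause~$(4)$ of the definition of a distance, and the only care needed is the elementary observation that each subterm~$t_j$ inherits definedness at~$\vec x$ and at~$\vec y$ from~$t$, so that the induction hypothesis legitimately applies.
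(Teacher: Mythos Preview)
Your argument is correct and is exactly the straightforward induction on the length of~$t$ that the paper invokes (the paper gives no further details). There is nothing to add.
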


\begin{definition}\label{D:pregampcategoryandfunctor}
Let~$\bA$ and~$\bB$ be \pregamps. A \emph{morphism} from~$\bA$ to~$\bB$ is an ordered pair $\bff=(f,\widetilde{f})$ such that $f\colon A\to B$ is a morphism of partial algebras, $\widetilde{f}\colon \widetilde{A}\to \widetilde{B}$ is a \jzh, and $\delta_{\bB}(f(x),f(y))=\widetilde{f}(\delta_{\bA}(x,y))$ for all $x,y\in A$.

Given morphisms $\bff\colon\bA\to\bB$ and $\bgg\colon\bB\to\bC$ of \pregamps, the pair $\bgg\circ\bff=(g\circ f,\widetilde{g}\circ \widetilde{f})$ is a morphism from~$\bA$ to $\bC$.

We denote by $\MPALG_{\sL}$ the category of \pregamps\ with the morphisms defined above.

We denote by $\PGA$ the functor from the category of~$\sL$-algebras to $\MPALG_{\sL}$ that maps an algebra~$A$ to $(A,\Theta_A,\Conc A)$, and a morphism of algebras $f$ to $(f,\Conc f)$. We denote by $\CPG$ the functor from $\MPALG_{\sL}$ to $\SEM$ that maps a \pregamp~$\bA$ to $\widetilde A$, and maps a morphism of \pregamps\ $\bff\colon\bA\to\bB$ to the \jzh\ $\widetilde f$.
\end{definition}

\begin{remark}\label{R:isopregamp}
A morphism $\bff\colon\bA\to\bB$ of \pregamps\ is an isomorphism if and only if~$f$ is an isomorphism of partial algebras and~$\widetilde f$ is an isomorphism of \jzs s.

Notice that $\CPG\circ\PGA=\Conc$.
\end{remark}

We leave to the reader the straightforward proof of the following lemma.

\begin{lemma}\label{L:MPALGhasLimit}
The category $\MPALG_{\sL}$ has all directed colimits. Moreover, given a directed poset $P$, a $P$-indexed diagram~$\vec \bA=\famm{\bA_p,\bff_{p,q}}{p\le q\text{ in }P}$ in $\MPALG_{\sL}$, a directed colimit cocone $\famm{A,f_p}{p\in P}=\varinjlim \famm{A_p,f_{p,q}}{p\le q\text{ in }P}$ in $\PALG_{\sL}$, and a directed colimit cocone $\famm{\widetilde{A},\widetilde{f}_p}{p\in P}=\varinjlim \famm{\widetilde{A}_p,\widetilde{f}_{p,q}}{p\le q\text{ in }P}$ in $\SEM$, there exists a unique~$\widetilde{A}$-valued partial algebra distance~$\delta_{\bA}$ on~$A$ such that $\delta_{\bA}(f_p(x),f_p(y))=\widetilde{f}_p(\delta_{\bA_p}(x,y))$ for all $p\in P$ and all $x,y\in A_p$.

Furthermore $\bA=(A,\delta_{\bA},\widetilde{A})$ is a \pregamp, $\bff_{p}\colon\bA_p\to\bA$ is a morphism of \pregamps\ for each $p\in P$, and the following is a directed colimit cocone:
\[
\famm{\bA,\bff_p}{p\in P}=\varinjlim \famm{\bA_p,\bff_{p,q}}{p\le q\text{ in }P},\quad\text{ in }\MPALG_{\sL}.
\]

Moreover if $\bA_p$ is distance-generated for each $p\in P$, then~$\bA$ is distance-generated.
\end{lemma}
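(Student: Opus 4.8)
The plan is to define the distance on the colimit partial algebra~$A$ by transporting distances along the colimit cocone, exactly as the partial-algebra structure is transported in the proof of Lemma~\ref{L:PALGhasLimit}, and then to check the \pregamp\ axioms and the universal property by choosing, for each finite configuration of points of~$A$, a single index~$p$ from which they all come.

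First I would construct $\delta_{\bA}$. Given $x,y\in A$, since $\famm{A,f_p}{p\in P}$ is in particular a colimit in $\SET$ and $P$ is directed, there are $p\in P$ and $x',y'\in A_p$ with $f_p(x')=x$ and $f_p(y')=y$; set $\delta_{\bA}(x,y)=\widetilde f_p(\delta_{\bA_p}(x',y'))$. To see this is well defined, suppose also $f_q(x'')=x$ and $f_q(y'')=y$; passing to a common upper bound $r\ge p,q$ and using that directed colimits in $\SET$ identify elements at a finite stage, one may assume $f_{p,r}(x')=f_{q,r}(x'')$ and $f_{p,r}(y')=f_{q,r}(y'')$. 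Then, since $\bff_{p,r}$ and $\bff_{q,r}$ are morphisms of \pregamps, both $\widetilde f_p(\delta_{\bA_p}(x',y'))$ and $\widetilde f_q(\delta_{\bA_q}(x'',y''))$ equal $\widetilde f_r(\delta_{\bA_r}(f_{p,r}(x'),f_{p,r}(y')))$. The identity $\delta_{\bA}(f_p(x),f_p(y))=\widetilde f_p(\delta_{\bA_p}(x,y))$ then holds by construction and forces $\delta_{\bA}$ to be unique.

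Next I would verify axioms (1)--(4) of Definition~\ref{D:distance}. For (2) and (3), choosing a common index~$p$ for the two or three points involved reduces each to applying the \jzh~$\widetilde f_p$ to the corresponding identity or inequality for $\delta_{\bA_p}$. For (1), $\delta_{\bA}(x,x)=\widetilde f_p(0)=0$, and conversely if $\widetilde f_p(\delta_{\bA_p}(x',y'))=0=\widetilde f_p(0)$ then, since directed colimits in $\SEM$ also identify elements at a finite stage, there is $q\ge p$ with $\delta_{\bA_q}(f_{p,q}(x'),f_{p,q}(y'))=\widetilde f_{p,q}(\delta_{\bA_p}(x',y'))=0$, hence $f_{p,q}(x')=f_{p,q}(y')$ by axiom~(1) for $\bA_q$, hence $x=y$. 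For (4), the description of $\Def_{\ell}(A)$ in Lemma~\ref{L:PALGhasLimit} lets me pick a common index~$r$ and tuples $\vec u,\vec v\in\Def_{\ell}(A_r)$ with $f_r(\vec u)=\vec x$ and $f_r(\vec v)=\vec y$; then $\ell(\vec x)=f_r(\ell(\vec u))$, $\ell(\vec y)=f_r(\ell(\vec v))$, and applying $\widetilde f_r$ to axiom~(4) for $\bA_r$ yields the inequality. This makes $\bA=(A,\delta_{\bA},\widetilde A)$ a \pregamp, and $\bff_p=(f_p,\widetilde f_p)$ a morphism of \pregamps\ by construction, compatible with the transition maps componentwise.

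Finally, for the universal property, given a cocone $\famm{\bB,\bgg_p}{p\in P}$ over $\vec\bA$ in $\MPALG_{\sL}$ with $\bgg_p=(g_p,\widetilde g_p)$, the two families of components are cocones over the underlying diagrams in $\PALG_{\sL}$ and in $\SEM$, yielding unique $h\colon A\to B$ in $\PALG_{\sL}$ and $\widetilde h\colon\widetilde A\to\widetilde B$ in $\SEM$ with $h\circ f_p=g_p$ and $\widetilde h\circ\widetilde f_p=\widetilde g_p$; that $\bhh=(h,\widetilde h)$ is a \pregamp\ morphism is checked on a common representative, via $\delta_{\bB}(h(x),h(y))=\widetilde g_p(\delta_{\bA_p}(x',y'))=\widetilde h(\delta_{\bA}(x,y))$, and its uniqueness follows from that of $h$ and $\widetilde h$. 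For the last assertion, if every $\bA_p$ is distance-generated then, writing $\widetilde A=\bigcup_{p\in P}\widetilde f_p(\widetilde A_p)$, any $\alpha\in\widetilde A$ equals $\widetilde f_p(\beta)$ with $\beta=\bigvee_{k<n}\delta_{\bA_p}(u_k,v_k)$, so $\alpha=\bigvee_{k<n}\delta_{\bA}(f_p(u_k),f_p(v_k))$ is a join of distances in~$\bA$. I expect no genuine obstacle here: the only points needing care are the well-definedness of $\delta_{\bA}$ and the nontrivial direction of axiom~(1), both resting on the elementary fact that directed colimits in the algebraic categories $\SET$ and $\SEM$ detect equalities at a finite stage; everything else is routine transport through $\widetilde f_p$ after choosing a common index.
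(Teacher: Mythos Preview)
Your proposal is correct and is precisely the straightforward argument the paper has in mind; indeed, the paper explicitly leaves this proof to the reader. The only delicate points---well-definedness of~$\delta_{\bA}$ and the nontrivial direction of axiom~(1)---you handle exactly as one should, by passing to a common index and using that equalities in directed colimits of $\SET$ and $\SEM$ are witnessed at a finite stage.
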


\begin{remark}\label{R:CPGandPGApreslim}
As an immediate application of Lemma~\ref{L:MPALGhasLimit}, and the fact that $\Conc$ preserves directed colimits, we obtain that both $\CPG$ and $\PGA$ preserve directed colimits.
\end{remark}

\begin{definition}
An \emph{embedding} $\bff\colon\bB\to \bA$ of \pregamps\ is a morphism of \pregamps\ such that~$f$ and~$\widetilde f$ are both one-to-one.

A \emph{\subpregamp} of a \pregamp~$\bA$ is a \pregamp~$\bB$ such that~$B$ is a partial subalgebra of~$A$, $\widetilde B$ is a \jz-subsemilattice of~$\widetilde A$, and $\delta_{\bB}=\delta_{\bA}\res B^2$.

If $f\colon B\to A$ and~$\widetilde f\colon\widetilde B\to\widetilde A$ denote the inclusion maps, the morphism of \pregamps\ $\bff=(f,\widetilde f)$ is called \emph{the canonical embedding}.
\end{definition}

\begin{notation}\label{N:fbC}
Let $\bff\colon\bB\to\bA$ be a morphism of \pregamps. Given a \subpregamp~$\bC$ of~$\bB$, the triple $\bff(\bC)=(f(C),\delta_{\bA}\res (f(C))^2,\widetilde f(\widetilde C))$ (see Notation~\ref{N:imagepartsubalgebra}) is a \subpregamp\ of~$\bA$.

For a \subpregamp~$\bC$ of~$\bA$, the triple $\bff^{-1}(\bC)=(f^{-1}(C),\delta_{\bB}\res (f^{-1}(C))^2,\widetilde f^{-1}(\widetilde C))$ is a \subpregamp\ of~$\bB$.
\end{notation}

We leave to the reader the straightforward proof of the following description of \subpregamps\ and embeddings.

\begin{proposition}\label{P:descrsubpregamps}
The following statements hold.
\begin{enumerate}
\item Let~$\bA$ be a \pregamp, let~$B$ be a partial subalgebra of~$A$, let~$\widetilde B$ be a \jz-subsemilattice of~$\widetilde A$ that contains $\delta_{\bA}(B^2)$. Put $\delta_{\bB}=\delta_{\bA}\res B^2$. Then $(B,\delta_{\bB},\widetilde B)$ is a \subpregamp\ of~$\bA$. Moreover, all \subpregamps\ of~$\bA$ are of this form.
\item Let $\bff\colon\bB\to\bA$ be a morphism of \pregamps. Then~$f$ is an embedding of partial algebras if and only if~$\widetilde f$ separates~$0$. Moreover $\bff$ is an embedding if and only if~$\widetilde f$ is an embedding.
\item Let $\bff\colon\bB\to\bA$ be an embedding of \pregamps. The restriction $\bff\colon\bB\to\bff(\bB)$ is an isomorphism of \pregamps.
\end{enumerate}
\end{proposition}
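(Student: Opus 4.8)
The plan is to verify the three statements by direct inspection, using the facts on partial algebras and \jzs s recorded above. For statement~(1), I would first check that the triple $(B,\delta_{\bB},\widetilde B)$ satisfies conditions~(1)--(4) of Definition~\ref{D:distance}. Conditions~(1) and~(2) are inherited by restriction; condition~(3) likewise, once one observes that the join $\delta_{\bA}(x,z)\vee\delta_{\bA}(z,y)$ is computed identically in $\widetilde B$ and in $\widetilde A$ because $\widetilde B$ is a \jz-subsemilattice, and that $\delta_{\bB}=\delta_{\bA}\res B^2$ lands in $\widetilde B$ precisely because $\widetilde B\supseteq\delta_{\bA}(B^2)$. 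For condition~(4), given $\ell\in\sL$ and $\vec x,\vec y\in\Def_{\ell}(B)$, the hypothesis that $B$ is a partial subalgebra yields $\vec x,\vec y\in\Def_{\ell}(A)$ together with $\ell^B(\vec x)=\ell^A(\vec x)$ and $\ell^B(\vec y)=\ell^A(\vec y)$, so the inequality demanded of $\bB$ is literally the one holding in $\bA$, read inside $\widetilde B$. That $(B,\delta_{\bB},\widetilde B)$ is then a \subpregamp\ of $\bA$ is immediate from the definition. Conversely, any \subpregamp\ $\bB$ of $\bA$ has, by definition, $B$ a partial subalgebra, $\widetilde B$ a \jz-subsemilattice, and $\delta_{\bB}=\delta_{\bA}\res B^2$; since $\delta_{\bB}$ is $\widetilde B$-valued, $\delta_{\bA}(B^2)=\delta_{\bB}(B^2)\subseteq\widetilde B$, so $\bB$ is of the stated form.

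For statement~(2), the easy implication in the first assertion is this: if $\widetilde f$ separates~$0$ and $x\neq y$ in $B$, then $\delta_{\bB}(x,y)\neq 0$ by condition~(1), hence $\delta_{\bA}(f(x),f(y))=\widetilde f(\delta_{\bB}(x,y))\neq 0$ and $f(x)\neq f(y)$, so $f$ is one-to-one. For the converse I would reach each $a\in\ker_0\widetilde f$ through distances, using that $\widetilde B$ is join-generated by the values of $\delta_{\bB}$: writing $a=\bigvee_{k<n}\delta_{\bB}(x_k,y_k)$, every term satisfies $\delta_{\bA}(f(x_k),f(y_k))=\widetilde f(\delta_{\bB}(x_k,y_k))\le\widetilde f(a)=0$, so $f(x_k)=f(y_k)$, whence $x_k=y_k$ by injectivity of $f$, so $\delta_{\bB}(x_k,y_k)=0$ and therefore $a=0$. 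The ``moreover'' then follows formally: an embedding of \jzs s separates~$0$, so $\widetilde f$ one-to-one forces $f$ one-to-one by the first assertion, hence $\bff$ is an embedding, while $\bff$ an embedding trivially makes $\widetilde f$ one-to-one.

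For statement~(3), I would appeal to Remark~\ref{R:isopregamp}, by which a morphism of \pregamps\ is an isomorphism exactly when both of its components are isomorphisms. The \jzs\ component of the restriction $\bff\colon\bB\to\bff(\bB)$ is $\widetilde f\colon\widetilde B\to\widetilde f(\widetilde B)$, which is onto by construction of the image and one-to-one because $\bff$ is an embedding, hence a semilattice isomorphism; its partial-algebra component is $f\colon B\to f(B)$, an embedding of partial algebras onto $f(B)$ endowed with its natural structure (Notation~\ref{N:imagepartsubalgebra}), and Lemma~\ref{L:caractiso} then shows that $f\colon B\to f(B)$ is an isomorphism of partial algebras. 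It is essential to route this through Lemma~\ref{L:caractiso} and not through bare bijectivity, since by Remark~\ref{R:isopalg} a bijective morphism of partial algebras need not reflect definedness. Combining the two components, $\bff\colon\bB\to\bff(\bB)$ is an isomorphism of \pregamps.

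The proof is routine bookkeeping; the one step where I expect to need genuine care is the converse in the first assertion of statement~(2), since injectivity of a \pregamp\ morphism is detected solely through its behaviour on distances, so that implication rests on $\widetilde B$ being join-generated by $\delta_{\bB}(B^2)$ --- the point to keep an eye on.
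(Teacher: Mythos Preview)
The paper leaves this proposition's proof to the reader, so there is no argument to compare against. Your treatment of~(1) and~(3) is correct and complete, and for~(2) the implication ``$\widetilde f$ separates~$0$ $\Rightarrow$ $f$ one-to-one'' together with the ``moreover'' clause are handled correctly.

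You are right to flag the converse in the first assertion of~(2). Your argument that $f$ one-to-one forces $\widetilde f$ to separate~$0$ relies on $\widetilde B$ being join-generated by $\delta_{\bB}(B^2)$, i.e., on $\bB$ being distance-generated. This hypothesis is \emph{not} part of the definition of a \pregamp\ (Definition~\ref{D:distance}(5) is explicitly an additional property), and without it the implication fails. A counterexample: take $B=A=\{*\}$ with no operations, $\widetilde B=\two$, $\widetilde A=\{0\}$, $\delta_{\bB}(*,*)=0=\delta_{\bA}(*,*)$, $f=\id$, and $\widetilde f$ the unique \jzh\ $\two\to\{0\}$. Then $\bff=(f,\widetilde f)$ is a morphism of \pregamps\ and $f$ is trivially an embedding of partial algebras, yet $\widetilde f(1)=0$, so $\widetilde f$ does not separate~$0$.

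Thus the gap you sensed is genuine: the ``only if'' in the first sentence of~(2) needs $\bB$ distance-generated, and the proposition as stated is slightly too strong. The paper only ever invokes Proposition~\ref{P:descrsubpregamps}(2) in the direction you proved cleanly (for instance in the proof of Lemma~\ref{L:IIPG}), so the slip is harmless downstream; but your reservation is well founded, and no argument will close that direction without the extra hypothesis.
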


The following result appears in \cite[Theorem~10.4]{UnivAlgebra}. It gives a description of finitely generated congruences of a general algebra.

\begin{lemma}\label{L:Condcompcongruences}
Let~$B$ be an algebra, let $m$ be a positive integer, let $x,y\in B$, and let~$\vec x,\vec y$ be $m$-tuples of~$B$. Then $\Theta_B(x,y)\leq\bigvee_{i<m}\Theta_B(x_i,y_i)$ if and only if there are a positive integer~$n$, a list~$\vec z$ of parameters from~$B$, and terms $t_0$, \dots, $t_n$ such that
\begin{align*}
x&=t_0(\vec x,\vec y,\vec z),\\
y&=t_n(\vec x,\vec y,\vec z),\\
t_j(\vec y,\vec x,\vec z)&=t_{j+1}(\vec x,\vec y,\vec z)\quad(\text{for all }j<n).
\end{align*}
\end{lemma}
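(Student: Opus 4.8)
The plan is to prove the two implications separately, the forward (``only if'') one being the substantial one. Throughout write $\Phi=\bigvee_{i<m}\Theta_B(x_i,y_i)$, so that the claim is exactly that $(x,y)\in\Phi$ if and only if a chain of terms $t_0,\dots,t_n$ and a parameter list $\vec z$ as described exist.

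For the easy direction, assume such $t_0,\dots,t_n$ and $\vec z$ are given. Since each $(x_i,y_i)\in\Phi$ and $\Phi$ is a congruence, exchanging the $\vec x$-block with the $\vec y$-block inside any term operation keeps the value in the same $\Phi$-class; hence $t_j(\vec x,\vec y,\vec z)\mathrel{\Phi}t_j(\vec y,\vec x,\vec z)$ for each $j\le n$. Inserting the equalities $t_j(\vec y,\vec x,\vec z)=t_{j+1}(\vec x,\vec y,\vec z)$ then links $x=t_0(\vec x,\vec y,\vec z)$ to $y=t_n(\vec x,\vec y,\vec z)$ through $\Phi$, so $\Theta_B(x,y)\le\Phi$. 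This is routine congruence chasing.

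For the converse I would introduce the relation $R$ on $B$ consisting of all pairs $(a,b)$ admitting a chain of the stated form (with $a,b$ replacing $x,y$), and show that $R$ is a congruence of $B$ containing every $(x_i,y_i)$; since $\Phi$ is the smallest such congruence, this gives $\Phi\subseteq R$, and reading it off at $(x,y)$ produces the required chain. The convenient bookkeeping device is to picture a witnessing chain for $(a,b)$ as a finite sequence of values $a=v_0,v_1,\dots,v_\ell=b$ with terms $r_j$ satisfying $v_j=r_j(\vec x,\vec y,\vec z)$ and $v_{j+1}=r_j(\vec y,\vec x,\vec z)$ — the last term $t_n$ of the original formulation merely pins down $v_\ell=b$ and may be taken to be a projection onto a parameter. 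The single idea that makes every verification go through is that a term symmetric in the two variable blocks — for instance a projection onto one of the parameters — contributes a \emph{stationary} step $v_j=v_{j+1}$, so such steps may be appended or spliced in at will, and after merging parameter lists one may always assume two given chains have equal length.

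Granting that, reflexivity of $R$ and the containments $(x_i,y_i)\in R$ are immediate one-step chains (a parameter-projection for the former, the projection onto the $i$-th coordinate of the first block for the latter); symmetry follows by reversing the value sequence and replacing $r_j$ by $r_{\ell-1-j}$ with the two blocks exchanged; compatibility with an operation $\ell\in\sL$ follows by padding the argument chains to a common length, applying $\ell$ coordinatewise to the value sequences and using $\ell(r^0_j,\dots,r^{\ari(\ell)-1}_j)$ as the new terms; and transitivity follows by concatenating the two value sequences at their shared endpoint, discarding the terminal term of the first chain and using the first term of the second chain at that index. The main obstacle — and essentially the only place requiring care — is exactly this last point together with operation-compatibility, where chains of different lengths and different parameter lists must be reconciled; both dissolve once one notices that the terminal term of a chain is constrained only through its $(\vec x,\vec y)$-value and that parameter-projections act as identity-like padding. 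Since this is the classical description of compact congruences, recorded in \cite[Theorem~10.4]{UnivAlgebra}, I would in the end present only a short proof along these lines or simply cite it.
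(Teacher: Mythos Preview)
Your proof is correct and follows the standard argument for this classical result. The paper itself does not supply a proof at all: it simply records the statement and attributes it to \cite[Theorem~10.4]{UnivAlgebra}, which is exactly the reference you end up citing. So there is nothing to compare beyond noting that your sketch reconstructs the textbook argument the paper takes for granted.
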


The following lemma shows that the obvious direction of Lemma~\ref{L:Condcompcongruences} holds for \pregamps.

\begin{lemma}\label{L:TermsDist}
Let~$\bB$ be a \pregamp, let $m$ be a positive integer, let $x,y\in B$, and let~$\vec x,\vec y$ be $m$-tuples of~$B$. Assume that there are a positive integer~$n$, a list~$\vec z$ of parameters from~$B$, and terms $t_0$, \dots, $t_n$ such that the following equalities hold and all evaluations are defined
\begin{align*}
x&=t_0(\vec x,\vec y,\vec z),\\
y&=t_n(\vec x,\vec y,\vec z),\\
t_j(\vec y,\vec x,\vec z)&=t_{j+1}(\vec x,\vec y,\vec z),\quad(\text{for all }j<n).
\end{align*}
Then $\delta_{\bB}(x,y)\leq\bigvee_{i<m}\delta_{\bB}(x_i,y_i)$.
\end{lemma}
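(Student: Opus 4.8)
The plan is to chain the triangle inequality for $\delta_{\bB}$ along the intermediate terms $t_0,\dots,t_n$ and then bound each single step by means of Lemma~\ref{L:DistCompTerms}; this is simply the ``easy half'' of Lemma~\ref{L:Condcompcongruences} transported from congruences to pregamp-distances.

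First I would set $w_j=t_j(\vec x,\vec y,\vec z)$ for $j\le n$; each $w_j$ is a well-defined element of $B$ since all the displayed evaluations are assumed defined. By hypothesis $w_0=x$ and $w_n=y$, so iterating property~$(3)$ of Definition~\ref{D:distance} (legitimate because $n\ge 1$) yields
\[
\delta_{\bB}(x,y)=\delta_{\bB}(w_0,w_n)\le\bigvee_{j<n}\delta_{\bB}(w_j,w_{j+1}).
\]
Next, for each $j<n$ I would rewrite $w_{j+1}=t_{j+1}(\vec x,\vec y,\vec z)=t_j(\vec y,\vec x,\vec z)$, using the third family of equalities in the hypothesis, so that $\delta_{\bB}(w_j,w_{j+1})=\delta_{\bB}(t_j(\vec x,\vec y,\vec z),t_j(\vec y,\vec x,\vec z))$ with both argument tuples lying in $\Def_{t_j}(B)$. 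Applying Lemma~\ref{L:DistCompTerms} to $t_j$, viewed as a $(2m+\lh(\vec z))$-ary term, and comparing the argument lists $(\vec x,\vec y,\vec z)$ and $(\vec y,\vec x,\vec z)$ componentwise, the contributions are $\delta_{\bB}(x_i,y_i)$ from the $\vec x$-block, $\delta_{\bB}(y_i,x_i)$ from the $\vec y$-block (for $i<m$), and $\delta_{\bB}(z_l,z_l)=0$ from the $\vec z$-block by property~$(1)$. Using the symmetry property~$(2)$, $\delta_{\bB}(y_i,x_i)=\delta_{\bB}(x_i,y_i)$, so $\delta_{\bB}(w_j,w_{j+1})\le\bigvee_{i<m}\delta_{\bB}(x_i,y_i)$ for every $j<n$. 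Substituting into the previous display gives $\delta_{\bB}(x,y)\le\bigvee_{i<m}\delta_{\bB}(x_i,y_i)$, which is the claim.

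There is essentially no serious obstacle. The only points requiring care are: that the clause ``all evaluations are defined'' in the hypothesis guarantees $t_j(\vec x,\vec y,\vec z),\,t_j(\vec y,\vec x,\vec z)\in\Def_{t_j}(B)$, which is exactly what Lemma~\ref{L:DistCompTerms} needs; and the purely mechanical bookkeeping of which argument positions of $t_j$ are swapped when passing from $(\vec x,\vec y,\vec z)$ to $(\vec y,\vec x,\vec z)$, together with the observation that the parameter positions contribute zero.
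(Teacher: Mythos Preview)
Your proof is correct and follows essentially the same approach as the paper: apply Lemma~\ref{L:DistCompTerms} to bound each $\delta_{\bB}(t_j(\vec x,\vec y,\vec z),t_j(\vec y,\vec x,\vec z))$ by $\bigvee_{i<m}\delta_{\bB}(x_i,y_i)$, then chain via the triangle inequality. You merely present the two steps in the opposite order and spell out more explicitly why the $\vec z$-block contributes zero and why symmetry identifies the $\vec x$- and $\vec y$-block contributions.
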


\begin{proof}
As~$\delta_{\bB}$ is compatible with terms (cf. Lemma~\ref{L:DistCompTerms}), and $\delta_{\bB}(u,u)=0$ for each $u\in B$, the following inequality holds:
\[
\delta_{\bB}(t_j(\vec x,\vec y,\vec z),t_j(\vec y,\vec x,\vec z))\le\bigvee_{k<n}\delta_{\bB}(x_k,y_k),\quad\text{for all $j<n$}.
\]
Hence:
\[
\delta_{\bB}(x,y)\le\bigvee_{j<n}\delta_{\bB}(t_j(\vec x,\vec y,\vec z),t_j(\vec y,\vec x,\vec z))\le\bigvee_{k<n}\delta_{\bB}(x_k,y_k).\tag*{\qed}
\]
\renewcommand{\qed}{}
\end{proof}

The following definition expresses that whenever two elements of~$A$ are identified by a ``congruence'' of~$A$, then there is a ``good reason'' for this in~$B$ (cf. Lemma~\ref{L:Condcompcongruences}).

\begin{definition}\label{D:CongTract}
A morphism $\bff\colon\bA\to\bB$ of \pregamps\ is \emph{congruence-tractable} if for all $m<\omega$ and for all $x,y,x_0,y_0,\dots,x_{m-1},y_{m-1}$ in~$A$ such that:
\[
\delta_{\bB}(x,y)\le\bigvee_{k<m}\delta_{\bB}(x_k,y_k),
\]
there are a positive integer~$n$, a list~$\vec z$ of parameters from~$B$, and terms $t_0$, \dots, $t_n$ such that the following equations are satisfied in~$B$ (in particular, all the corresponding terms are defined).
\begin{align*}
f(x)&=t_0(f(\vec x),f(\vec y),\vec z),\\
f(y)&=t_n(f(\vec x),f(\vec y),\vec z),\\
t_j(f(\vec y),f(\vec x),\vec z)&=t_{j+1}(f(\vec x),f(\vec y),\vec z)\quad(\text{for all }j<n).
\end{align*}
\end{definition}

\begin{lemma}\label{L:DLPlagCon}
Let $P$ be directed poset and let~$\vec \bA=\famm{\bA_p,\bff_{p,q}}{p\le q\text{ in }P}$ be a direct system of \pregamps. Assume that for each $p\in P$ there exists $q\ge p$ in $P$ such that~$\bff_{p,q}$ is congruence-tractable. Let:
\[
\famm{\bA,\bff_p}{p\in P}=\varinjlim \famm{\bA_p,\bff_{p,q}}{p\le q\text{ in }P},\quad\text{ in }\MPALG_{\sL}.
\]
If~$A$ is an algebra then the following statements hold:
\begin{enumerate}
\item Let $x,y\in A$, let $m<\omega$, let $x_0,y_0,\dots,x_{m-1},y_{m-1}$ in~$A$. The following two inequalities are equivalent:
\begin{equation}\label{E:DLPC1}
\delta_{\bA}(x,y)\le \bigvee_{k<m}\delta_{\bA}(x_k,y_k),
\end{equation}
\begin{equation}\label{E:DLPC2}
\Theta_A(x,y)\le\bigvee_{k<m}\Theta_A(x_k,y_k).
\end{equation}
\item There exists a unique \jzh\ $\phi\colon \Conc A\to\widetilde{A}$ such that: \[\phi(\Theta_A(x,y))=\delta_{\bA}(x,y),\quad\text{ for all $x,y\in A$.}\]
Moreover $\phi$ is an embedding.
\item If~$\bA$ is distance-generated, then the \jzh\ $\phi$ above is an isomorphism.
\end{enumerate}
\end{lemma}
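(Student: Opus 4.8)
The plan is to prove the three statements in the stated order; statement~(1) carries all the content, and~(2) and~(3) then follow formally from~(1) together with Proposition~\ref{P:semilatiso}.

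One direction of~(1), the implication \eqref{E:DLPC2}$\Rightarrow$\eqref{E:DLPC1}, is the easy one. Since~$A$ is an algebra, Lemma~\ref{L:Condcompcongruences} converts~\eqref{E:DLPC2} into a positive integer~$n$, a list~$\vec z$ of parameters from~$A$, and terms $t_0,\dots,t_n$ with $x=t_0(\vec x,\vec y,\vec z)$, $y=t_n(\vec x,\vec y,\vec z)$, and $t_j(\vec y,\vec x,\vec z)=t_{j+1}(\vec x,\vec y,\vec z)$ for all $j<n$; all these terms are defined in~$A$, so Lemma~\ref{L:TermsDist} applied to~$\bA$ yields~\eqref{E:DLPC1}.

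The converse implication \eqref{E:DLPC1}$\Rightarrow$\eqref{E:DLPC2} is the heart of the proof, and the only place where I expect genuine work; it is a chase through the colimit cocone. Assume~\eqref{E:DLPC1}. Since the underlying set of~$A$ is the directed colimit of the~$A_p$, pick $p\in P$ together with $\bar x,\bar y,\bar x_k,\bar y_k\in A_p$ mapping under~$f_p$ to $x,y,x_k,y_k$. As~$\bff_p$ is a morphism of \pregamps, \eqref{E:DLPC1} becomes the semilattice identity $\widetilde f_p(a\vee b)=\widetilde f_p(b)$ in~$\widetilde A$, where $a=\delta_{\bA_p}(\bar x,\bar y)$ and $b=\bigvee_{k<m}\delta_{\bA_p}(\bar x_k,\bar y_k)$; since $\widetilde A=\varinjlim\widetilde A_p$ in $\SEM$ (Lemma~\ref{L:MPALGhasLimit}), it already holds after applying $\widetilde f_{p,q_1}$ for some $q_1\ge p$. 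Choose $q\ge q_1$ with $\bff_{q_1,q}$ congruence-tractable. Applying the \jzh\ $\widetilde f_{q_1,q}$ and translating through the \pregamp\ morphisms turns this into $\delta_{\bA_q}(x',y')\le\bigvee_{k<m}\delta_{\bA_q}(x'_k,y'_k)$, where $x'=f_{p,q}(\bar x)$ and likewise for $y',x'_k,y'_k$. Feeding this inequality into the definition of congruence-tractability of~$\bff_{q_1,q}$, applied to the elements $f_{p,q_1}(\bar x),f_{p,q_1}(\bar y),f_{p,q_1}(\bar x_k),f_{p,q_1}(\bar y_k)$ of~$A_{q_1}$, supplies a positive integer~$n$, a list~$\vec z$ of parameters from~$A_q$, and terms $t_0,\dots,t_n$ realizing, in~$A_q$, the equations of Lemma~\ref{L:Condcompcongruences} with $x',y',x'_k,y'_k$ in place of $x,y,x_i,y_i$ (in particular all these terms are defined in~$A_q$). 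Now apply the cocone morphism $f_q\colon A_q\to A$: it is a morphism of partial algebras, so it commutes with every term evaluation defined in~$A_q$; moreover $f_q\circ f_{p,q}=f_p$, so $f_q(x')=x$, $f_q(y')=y$, $f_q(x'_k)=x_k$, $f_q(y'_k)=y_k$; and~$A$ is an algebra, so every term is defined there. Setting $\vec w=f_q(\vec z)$, we obtain $x=t_0(\vec x,\vec y,\vec w)$, $y=t_n(\vec x,\vec y,\vec w)$, and $t_j(\vec y,\vec x,\vec w)=t_{j+1}(\vec x,\vec y,\vec w)$ for all $j<n$, whence~\eqref{E:DLPC2} by the other direction of Lemma~\ref{L:Condcompcongruences}.

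For~(2), I would invoke Proposition~\ref{P:semilatiso} with $X=A^2$, with $f=\Theta_A\colon A^2\to\Conc A$, and with $g=\delta_{\bA}\colon A^2\to\widetilde A$: hypothesis~\eqref{E:semilatiso} of that proposition is exactly the easy direction of~(1), and $\Conc A$ is join-generated by its principal congruences, so there is a unique \jzh\ $\phi\colon\Conc A\to\widetilde A$ with $\phi(\Theta_A(x,y))=\delta_{\bA}(x,y)$ for all $x,y\in A$. That~$\phi$ is an embedding follows once one checks that it reflects the order: if $\alpha,\beta\in\Conc A$ satisfy $\phi(\alpha)\le\phi(\beta)$, write $\alpha=\bigvee_i\Theta_A(x_i,y_i)$ and $\beta=\bigvee_j\Theta_A(u_j,v_j)$; then $\delta_{\bA}(x_i,y_i)\le\phi(\beta)=\bigvee_j\delta_{\bA}(u_j,v_j)$ for each~$i$, so the nontrivial direction of~(1) gives $\Theta_A(x_i,y_i)\le\beta$, and hence $\alpha\le\beta$. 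Finally, for~(3): if~$\bA$ is distance-generated, then $\widetilde A$ is join-generated by $\delta_{\bA}(A^2)$, while $\delta_{\bA}(x,y)=\phi(\Theta_A(x,y))\in\rng\phi$ for all $x,y\in A$; hence~$\phi$ is also surjective, and being an embedding it is an isomorphism. (Equivalently, once the nontrivial direction of~(1) is in hand, the hypotheses of the ``isomorphism'' clause of Proposition~\ref{P:semilatiso}, namely the converse of~\eqref{E:semilatiso} and that $g(X)$ join-generates the target, are both met.)
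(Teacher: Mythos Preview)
Your proof is correct and follows essentially the same route as the paper's: the easy direction via Lemmas~\ref{L:Condcompcongruences} and~\ref{L:TermsDist}, the hard direction by lifting the semilattice inequality to some~$A_{q_1}$ via the $\SEM$-colimit, then invoking congruence-tractability of some~$\bff_{q_1,q}$ and pushing the resulting term equations forward along~$f_q$; finally~(2) and~(3) via Proposition~\ref{P:semilatiso}. The only cosmetic difference is that the paper renames~$q_1$ back to~$p$ before choosing the congruence-tractable arrow, whereas you keep all three indices $p,q_1,q$ visible; and you spell out the order-reflection argument for the embedding in~(2) a bit more explicitly than the paper does.
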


\begin{proof}
Lemma~\ref{L:MPALGhasLimit} and Lemma~\ref{L:PALGhasLimit} imply that the following are directed colimits cocones
\begin{equation}\label{E:CL1}
\famm{A,f_p}{p\in P}=\varinjlim\famm{A_p,f_{p,q}}{p\le q\text{ in }P},\quad{\text{in $\PALG_{\sL}$.}}
\end{equation}
\begin{equation}\label{E:CL2}
\famm{A,f_p}{p\in P}=\varinjlim \famm{A_p,f_{p,q}}{p\le q\text{ in }P},\quad{\text{in $\SET$.}}
\end{equation}
\begin{equation}\label{E:CL3}
\famm{\widetilde{A},\widetilde{f}_p}{p\in P}=\varinjlim \famm{\widetilde{A}_p,\widetilde{f}_{p,q}}{p\le q\text{ in }P},\quad{\text{in $\SEM$.}}
\end{equation}

$(1)$ Let $x,y\in A$, let $m<\omega$, and let~$\vec x,\vec y$ be $m$-tuples of~$A$.

Assume that~\eqref{E:DLPC1} holds. It follows from~\eqref{E:CL2} that there are $p\in P$, $x',y'\in A$, and $m$-tuples~$\vec x',\vec y'$ of~$A_p$, such that $x=f_p(x')$, $y=f_p(y')$, $\vec x=f_p(\vec x')$, and~$\vec y=f_p(\vec y')$. The inequality~\eqref{E:DLPC1} can be written
\[
\delta_{\bA}(f_p(x'),f_p(y'))\le \bigvee_{k<m}\delta_{\bA}(f_p(x_k'),f_p(y_k')).
\]
This implies:
\[
\widetilde{f}_p\left(\delta_{\bA_p}(x',y')\right)\le \widetilde{f}_p\left(\bigvee_{k<m}\delta_{\bA_p}(x_k',y_k')\right).
\]
Hence, it follows from~\eqref{E:CL3} that there exists $q\ge p$ with: 
\[
\widetilde{f}_{p,q}\left(\delta_{\bA_p}(x',y')\right)\le \widetilde{f}_{p,q}\left(\bigvee_{k<m}\delta_{\bA_p}(x_k',y_k')\right),
\]
so, changing $p$ to $q$, $x'$ to $f_{p,q}(x')$, $y'$ to $f_{p,q}(y')$, $\vec x'$ to $f_{p,q}(\vec x')$, and~$\vec y'$ to $f_{p,q}(\vec y')$, we can assume that:
\[
\delta_{\bA_p}(x',y')\le \bigvee_{k<m}\delta_{\bA_p}(x_k',y_k').
\]

Let $q\ge p$ in $P$ such that $f_{p,q}$ is congruence-tractable. There are a positive integer~$n$, a list~$\vec z$ of parameters from~$A_q$, and terms $t_0$, \dots, $t_n$ such that the following equations are satisfied in~$A_q$:
\begin{align*}
f_{p,q}(x')&=t_0(f_{p,q}(\vec x'),f_{p,q}(\vec y'),\vec z),\\
f_{p,q}(y')&=t_n(f_{p,q}(\vec x'),f_{p,q}(\vec y'),\vec z),\\
t_k(f_{p,q}(\vec y'),f_{p,q}(\vec x'),\vec z)&=t_{k+1}(f_{p,q}(\vec x'),f_{p,q}(\vec y'),\vec z),\quad(\text{for all }k<n).
\end{align*}
Hence, applying $f_q$, we obtain
\begin{align*}
x&=t_0(\vec x,\vec y,f_q(\vec z)),\\
y&=t_n(\vec x,\vec y,f_q(\vec z)),\\
t_k(\vec y,\vec x,f_q(\vec z))&=t_{k+1}(\vec x,\vec y,f_q(\vec z)),\quad(\text{for all }k<n).
\end{align*}
Therefore, it follows from Lemma~\ref{L:Condcompcongruences} that~\eqref{E:DLPC2} holds.

Conversely, assume that~\eqref{E:DLPC2} holds. It follows from Lemma~\ref{L:Condcompcongruences} that there are a positive integer~$n$, a list~$\vec z$ of parameters from~$A$, and terms $t_0$, \dots, $t_n$ such that
\begin{align*}
x&=t_0(\vec x,\vec y,\vec z),\\
y&=t_n(\vec x,\vec y,\vec z),\\
t_j(\vec y,\vec x,\vec z)&=t_{j+1}(\vec x,\vec y,\vec z),\quad(\text{for all }j<n).
\end{align*}
We conclude, using Lemma~\ref{L:TermsDist}, that~\eqref{E:DLPC1} holds.

As $\Conc A$ is generated by $\setm{\Theta_A(x,y)}{x,y\in A}$, the statement $(2)$ follows from Proposition~\ref{P:semilatiso}. Moreover if we assume that~$\bA$ is distance-generated, that is~$\widetilde A$ is join-generated by $\setm{\delta_{\bA}(x,y)}{x,y\in A}$, then $\phi$ is an isomorphism.
\end{proof}

As an immediate application, we obtain that a ``true'' directed colimit of ``good'' \pregamps\ is an algebra together with its congruences.

\begin{corollary}\label{C:directlimisalgebra}
Let $P$ be directed poset with no maximal element and let~$\vec \bA=\famm{\bA_p,\bff_{p,q}}{p\le q\text{ in }P}$ be a $P$-indexed diagram of distance-generated \pregamps. If~$\bff_{p,q}$ is congruence-tractable and $f_{p,q}$ is strong for all $p<q$ in $P$, then there exists a unique \jzh\ $\phi\colon \Conc A\to \widetilde{A}$ such that: \[\phi(\Theta_A(x,y))=\delta_{\bA}(x,y)\quad\text{ for all $x,y\in A$.}\]
Moreover, $\phi$ is an isomorphism.
\end{corollary}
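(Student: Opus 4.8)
The plan is to deduce Corollary~\ref{C:directlimisalgebra} from Lemma~\ref{L:DLPlagCon} by checking that its hypotheses are met. The only nontrivial point is that the colimit partial algebra~$A$ is in fact an algebra, and this is exactly what part~$(3)$ of Lemma~\ref{L:PALGhasLimit} provides: since $P$ has no maximal element and $f_{p,q}$ is strong for all $p<q$ in $P$, the partial algebra~$A$ from the colimit cocone $\famm{A,f_p}{p\in P}=\varinjlim\famm{A_p,f_{p,q}}{p\le q\text{ in }P}$ in $\MPALG_{\sL}$ (which, by Lemma~\ref{L:MPALGhasLimit}, comes from the underlying colimit in $\PALG_{\sL}$) is a total algebra.

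Next I would verify the congruence-tractability hypothesis of Lemma~\ref{L:DLPlagCon}: that for each $p\in P$ there is $q\ge p$ with $\bff_{p,q}$ congruence-tractable. This is immediate since $P$ has no maximal element, so any $q>p$ works, and by assumption $\bff_{p,q}$ is congruence-tractable for all $p<q$. The diagram consists of distance-generated \pregamps, so in particular $\bA$ itself is distance-generated by the last sentence of Lemma~\ref{L:MPALGhasLimit}. Thus all hypotheses of Lemma~\ref{L:DLPlagCon}$(3)$ hold, and we conclude that the \jzh\ $\phi\colon\Conc A\to\widetilde A$ with $\phi(\Theta_A(x,y))=\delta_{\bA}(x,y)$ for all $x,y\in A$ exists, is unique, and is an isomorphism.

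There is no real obstacle here; the corollary is a packaging of the preceding lemmas, and the only thing to be careful about is invoking the correct strongness/no-maximal-element clause of Lemma~\ref{L:PALGhasLimit}$(3)$ to guarantee totality of~$A$ before applying Lemma~\ref{L:DLPlagCon}. I would also note in passing that uniqueness of $\phi$ already follows from the fact that $\Conc A$ is join-generated by the principal congruences $\Theta_A(x,y)$, via Proposition~\ref{P:semilatiso}, so only surjectivity of $\phi$ genuinely uses distance-generation.
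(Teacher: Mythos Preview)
Your proposal is correct and matches the paper's approach exactly: the paper presents this corollary as an immediate application of Lemma~\ref{L:DLPlagCon}, and you have spelled out precisely the two ingredients needed---that~$A$ is a total algebra by Lemma~\ref{L:PALGhasLimit}(3) (using that~$P$ has no maximal element and the~$f_{p,q}$ are strong), and that~$\bA$ is distance-generated by Lemma~\ref{L:MPALGhasLimit}. Your additional remark about uniqueness following already from Proposition~\ref{P:semilatiso} is also accurate and implicit in the proof of Lemma~\ref{L:DLPlagCon}(2).
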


\begin{definition}
An \emph{ideal} of a \pregamp~$\bA$ is an ideal of $\widetilde A$. Denote by $\Id\bA=\Id\widetilde A$ the set of all ideals of~$\bA$.

Let $P$ be a poset, let~$\vec \bA=\famm{\bA_p,\bff_{p,q}}{p\le q\text{ in }P}$ be a $P$-indexed diagram in $\MPALG_{\sL}$. An \emph{ideal} of $\vec\bA$ is an ideal of $\famm{\widetilde A_p,\widetilde f_{p,q}}{p\le q\text{ in }P}$ (cf. Definition~\ref{D:idealsem}).
\end{definition}

\begin{definition}
Let $\bpi\colon\bA\to\bB$ be a morphism of \pregamps. The \emph{$0$-kernel of~$\bpi$}, denoted by $\ker_0\bpi$, is the $0$-kernel of $\widetilde\pi$ (cf. Definition~\ref{D:idealsem}).

Let $P$ be a poset and let $\vec\bpi=(\bpi_p)_{p\in P}\colon\vec\bA\to\vec\bB$ be a natural transformation of $P$-indexed diagrams of \pregamps. The \emph{$0$-kernel of~$\vec\bpi$} is $\vec I=(\ker_0\bpi_p)_{p\in P}$.
\end{definition}

\begin{remark}
The $0$-kernel of~$\bpi$ is an ideal of~$\bA$. Similarly the $0$-kernel of~$\vec\bpi$ is an ideal of $\vec\bA$.

If $\pi\colon A\to B$ is a morphism of algebras, then $\ker_0\PGA(\pi)$ is the set of all compact congruences of~$A$ below $\ker\pi$, that is, $\ker_0\PGA(\pi)=(\Conc A)\dnw\ker\pi$.
\end{remark}

\begin{definition}
A morphism $\bff\colon\bA\to\bB$ of \pregamps, is \emph{ideal-induced} if $f(A)=B$ as partial algebras and $\widetilde f$ is ideal-induced. In that case we say that~$\bB$ is an \emph{ideal-induced image of~$\bA$}.

Let $P$ be a poset, let $\vec\bA$ and~$\vec\bB$ be $P$-indexed diagrams of \pregamps. A natural transformation $\vec\bff=(\bff_p)_{p\in P}\colon\vec\bA\to\vec\bB$ is \emph{ideal-induced} if $\bff_p$ is ideal-induced for each $p\in P$.
\end{definition}

\begin{remark}
A morphism $\bff\colon\bA\to\bB$ of \pregamps\ is ideal-induced if $\widetilde f$ is ideal-induced, $f$ is surjective, and for each $\ell\in\sL$ and each tuple $\vec b$ of~$B$, $\ell(\vec b)$ is defined in~$B$ if and only if there exists a tuple~$\vec a$ in~$A$ such that $\vec b=f(\vec a)$ and $\ell(\vec a)$ is defined in~$A$.

If $f\colon A\tosurj B$ is a surjective morphism of algebras, then $\PGA(f)$ is ideal-induced.

If $\bff\colon\bA\to\bB$ and $\bgg\colon\bB\to\bC$ are ideal-induced morphisms of \pregamps, then $\bgg\circ\bff$ is ideal-induced.
\end{remark}

The following proposition gives a description of \emph{quotients} of \pregamps.

\begin{proposition}\label{P:partialquotien}
Let~$\bA$ be a \pregamp\ and let $I\in\Id\widetilde{A}$. The binary relation $\theta_I=\setm{(x,y)\in A^2}{\delta_{\bA}(x,y)\in I}$ is an equivalence relation on~$A$. Given $a\in A$ denote by $a/I$ the $\theta_I$-equivalence class containing~$a$, and set $A/I=A/{\theta_I}$. We can define a structure of partial algebra on~$A/I$ in the following way. Given $\ell\in\sL$, we put:
\begin{align*}
&\Def_{\ell}(A/I)=\setm{\vec x/I}{\vec x\in\Def_{\ell}(A)},\\
&\ell^{A/I}(\vec x/I)=\ell^A(\vec x)/I,\quad \text{for all~$\vec x\in \Def_{\ell}(A)$}.
\end{align*}

Moreover $\delta_{\bA/I}\colon (A/I)^2\to \widetilde{A}/I$, $(x/I,y/I)\mapsto \delta_{\bA}(x,y)/I$ defines an~$\widetilde{A}/I$-valued partial algebra distance, and the following statements hold:
\begin{enumerate}
\item $\bA/I=(A/I,\delta_{\bA/I},\widetilde{A}/I)$ is a \pregamp.
\item Put $\pi\colon A\to A/I$, $x\mapsto x/I$, and denote by~$\widetilde{\pi}\colon \widetilde{A}\to \widetilde{A}/I$, $d\mapsto d/I$ the canonical projection. Then $\bpi=(\pi,\widetilde{\pi})$ is an ideal-induced morphism of \pregamps\ from~$\bA$ to $\bA/I$.
\item The $0$-kernel of~$\bpi$ is~$I$.
\item If~$\bA$ is distance-generated, then $\bA/I$ is distance-generated.
\end{enumerate}
\end{proposition}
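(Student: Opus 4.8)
The plan is to verify the assertions in order, the only substantive points being two well-definedness checks, each of which reduces to the fact that $I$ is an ideal of $\widetilde A$ --- a lower subset closed under finite joins --- combined with properties $(1)$--$(4)$ of the distance $\delta_{\bA}$.

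First I would check that $\theta_I$ is an equivalence relation: reflexivity holds since $\delta_{\bA}(x,x)=0\in I$, symmetry since $\delta_{\bA}(x,y)=\delta_{\bA}(y,x)$, and transitivity since $\delta_{\bA}(x,z)\le\delta_{\bA}(x,y)\vee\delta_{\bA}(y,z)$, the right-hand side lying in $I$ because $I$ is closed under join and $I$ is a lower subset. For the partial operations on $A/I$: if $\vec x,\vec y\in\Def_{\ell}(A)$ with $\vec x/I=\vec y/I$, then each $\delta_{\bA}(x_k,y_k)\in I$, so property $(4)$ of $\delta_{\bA}$ gives $\delta_{\bA}(\ell(\vec x),\ell(\vec y))\le\bigvee_{k<\ari(\ell)}\delta_{\bA}(x_k,y_k)\in I$, whence $\ell^A(\vec x)/I=\ell^A(\vec y)/I$; this is exactly what is needed for $\ell^{A/I}$ to be a well-defined partial operation with domain $\Def_{\ell}(A/I)$, and $\pi\colon A\to A/I$ is then a morphism of partial algebras straight from these definitions.

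Next, the well-definedness of $\delta_{\bA/I}$ is the one place calling for the explicit description of the semilattice congruence $\theta_I$ from Lemma~\ref{L:quotSem} (recall $\widetilde A/I=\widetilde A/\theta_I$). Given $x/I=x'/I$ and $y/I=y'/I$, set $u=\delta_{\bA}(x,x')\vee\delta_{\bA}(y,y')\in I$; repeated use of the triangle inequality yields $\delta_{\bA}(x,y)\le\delta_{\bA}(x',y')\vee u$ and $\delta_{\bA}(x',y')\le\delta_{\bA}(x,y)\vee u$, hence $\delta_{\bA}(x,y)\vee u=\delta_{\bA}(x',y')\vee u$, so $\delta_{\bA}(x,y)/I=\delta_{\bA}(x',y')/I$. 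Once this is done, the four axioms of a distance for $\delta_{\bA/I}$ follow by pushing those of $\delta_{\bA}$ through the monotone \jzh\ $\widetilde\pi\colon\widetilde A\to\widetilde A/I$: for $(1)$, $\delta_{\bA/I}(x/I,y/I)=0$ iff $\delta_{\bA}(x,y)\in\ker_0\widetilde\pi=I$ iff $x/I=y/I$; $(2)$ and $(3)$ are immediate; and for $(4)$, given $\bar x,\bar y\in\Def_{\ell}(A/I)$ I would choose representatives $\vec x,\vec y\in\Def_{\ell}(A)$ and apply $\widetilde\pi$ to the inequality for $\delta_{\bA}$. This proves clause $(1)$.

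It remains to handle $(2)$--$(4)$. For $(2)$: $\pi$ is a morphism of partial algebras as noted, $\widetilde\pi$ is the canonical projection and hence a \jzh, and $\delta_{\bA/I}(\pi x,\pi y)=\delta_{\bA}(x,y)/I=\widetilde\pi(\delta_{\bA}(x,y))$ holds by construction, so $\bpi$ is a morphism of \pregamps; it is ideal-induced because $\widetilde\pi$ is ideal-induced (the canonical projection onto the quotient of a \jzs\ by an ideal always is) and $\pi(A)=A/I$ as partial algebras, the latter since $\Def_{\ell}(\pi(A))=\pi(\Def_{\ell}(A))=\Def_{\ell}(A/I)$ by Notation~\ref{N:imagepartsubalgebra}. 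Clause $(3)$ is immediate from the definition of the $0$-kernel: $\ker_0\bpi=\ker_0\widetilde\pi=I$. For $(4)$, if $\alpha=\bigvee_{k<n}\delta_{\bA}(x_k,y_k)$ then $\alpha/I=\bigvee_{k<n}\delta_{\bA/I}(x_k/I,y_k/I)$, so $\widetilde A/I$ is join-generated by $\delta_{\bA/I}((A/I)^2)$. The main obstacle is no more than bookkeeping: keeping the partial-algebra quotient $A/\theta_I$ and the semilattice quotient $\widetilde A/\theta_I$ apart, and handling the well-definedness of $\delta_{\bA/I}$, which is the only step that uses the precise form of $\theta_I$ on the semilattice side rather than just the word ``ideal''.
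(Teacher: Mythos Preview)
Your proof is correct and follows essentially the same route as the paper's own proof: both check that $\theta_I$ is an equivalence via axioms (1)--(3), that the partial operations descend via axiom (4), and that $\delta_{\bA/I}$ is well-defined via the triangle inequality, with the remaining clauses being bookkeeping. If anything, you are more explicit than the paper (which leaves (2)--(4) to the reader), and your use of the concrete description of the semilattice congruence from Lemma~\ref{L:quotSem} to get $\delta_{\bA}(x,y)\vee u=\delta_{\bA}(x',y')\vee u$ is a minor variant of the paper's argument that instead shows $\delta_{\bA}(x,y)/I\le\delta_{\bA}(x',y')/I$ and symmetrically.
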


\begin{proof}
The relation $\theta_I$ is reflexive (it follows from Definition~\ref{D:distance}(1)), symmetric (see Definition~\ref{D:distance}(2)) and transitive (see Definition~\ref{D:distance}(3)), thus it is an equivalence relation.

Let $\ell\in\sL$, let~$\vec x,\vec y\in\Def_{\ell}(A)$ such that $x_k/I=y_k/I$ for each $k<\ari(\ell)$. It follows from Definition~\ref{D:distance}(4) that $\delta_{\bA}(\ell^A(\vec x),\ell^A(\vec y))\le\bigvee_{k<\ari(\ell)}\delta_{\bA}(x_k,y_k)\in I$, so $\ell^A(\vec x)/I=\ell^A(\vec y)/I$. Therefore the partial operation $\ell^{A/I}\colon \Def_{\ell}(A/I)\to A/I$ is well-defined.

Let $x,x',y,y'\in A$, assume that $x/I=x'/I$ and $y/I=y'/I$. The following inequality holds: \[\delta_{\bA}(x,y)\le\delta_{\bA}(x,x')\vee\delta_{\bA}(x',y')\vee\delta_{\bA}(y',y).
\]
However, $\delta_{\bA}(x,x')$ and $\delta_{\bA}(y,y')$ both belong to~$I$, hence $\delta_{\bA}(x,y)/I\le \delta_{\bA}(x',y')/I$. Similarly $\delta_{\bA}(x',y')/I\le \delta_{\bA}(x,y)/I$. So the map $\delta_{\bA/I}\colon (A/I)^2\to \widetilde{A}/I$ is well-defined.

Let $x,y\in A$, the following equivalences hold:
\[
\delta_{\bA/I}(x/I,y/I)=0/I\Longleftrightarrow\delta_{\bA}(x,y)\in I\Longleftrightarrow x/I=y/I.
\]
That is, Definition~\ref{D:distance}(1) holds. Each of the conditions of Definition~\ref{D:distance}(2)-(5) for~$\delta_{\bA}$ implies its analogue for~$\delta_{\bA/I}$.

It is easy to check that~$\bpi$ is well-defined and that it is a morphism of \pregamps.
\end{proof}

\begin{notation}\label{N:quotpregamp}
The notations $\bA/I$,~$A/I$, and $\delta_{\bA/I}$ used in Proposition~\ref{P:partialquotien} will be used throughout the paper. The map~$\bpi$ is \emph{the canonical projection}.

If $I=\set{0}$, we identify $\bA/I$ and~$\bA$.

If~$X$ is a partial subalgebra of~$A$, then we denote $X/I=\setm{x/I}{x\in X}$, with its natural structure of partial subalgebra of~$A$, inherited from~$X$, with $\Def_{\ell}(X/I)=\setm{\vec x/I}{\vec x\in \Def_{\ell}(X)}$ for each $\ell\in\sL$. That is $X/I=\pi(X)$ as partial algebras.

Let~$\bB$ be a \subpregamp\ of~$\bA$ and let $I$ be a common ideal of~$\bA$ and~$\bB$. Then we identify the quotient $\bB/I$ with the corresponding \subpregamp\ of $\bA/I$.
\end{notation}

\begin{remark*}
It is easy to construct a \pregamp~$\bA$, a term $t$, a tuple~$\vec x$ of~$A$, and an ideal~$I$ of~$\widetilde A$, such that $t(\vec x)$ is not defined in~$A$, but $t(\vec x/I)$ is defined in~$A/I$.
\end{remark*}

The following proposition gives a description of how morphisms of \pregamps\ factorize through quotients. It is related to Lemma~\ref{L:quotMorphSem}.

\begin{proposition}\label{P:projsmpa}
Let $\bff\colon\bA\to\bB$ be a morphism of \pregamps, let $I\in\Id\bA$, and let $J\in\Id\bB$. If~$\widetilde f(I)\subseteq J$, then the following maps are well-defined:
\begin{align*}
g\colon A/I&\to B/J\\
x/I&\mapsto f(x)/J,
\end{align*}
\begin{align*}
\widetilde g\colon \widetilde A/I&\to \widetilde B/J\\
\alpha/I&\mapsto \widetilde f(\alpha)/J.
\end{align*}
Moreover, $\bgg=(g,\widetilde g)$ is a morphism of \pregamps\ from~$\bA/I$ to~$\bB/J$. If $\bpi_I\colon\bA\to\bA/I$ and $\bpi_J\colon\bA\to\bA/I$ denote the canonical projections, then the following diagram commutes:
\[
\begin{CD}
\bA & @>\bff>> & \bB\\
@V{\bpi_I}VV & & @VV{\bpi_J}V\\
\bA/I & @>>\bgg> & \bB/J
\end{CD}
\]
\end{proposition}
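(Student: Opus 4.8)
The plan is to verify directly that the two maps $g$ and $\widetilde g$ are well-defined, that $g$ is a morphism of partial algebras, that $\widetilde g$ is a $(\vee,0)$-homomorphism, that the distance-compatibility condition of Definition~\ref{D:pregampcategoryandfunctor} holds for the pair $\bgg=(g,\widetilde g)$, and finally that the square commutes. Since the semilattice component is the easier half, I would dispatch it first: the map $\widetilde g$ is exactly the $(\vee,0)$-homomorphism produced by Lemma~\ref{L:quotMorphSem} applied to $\widetilde f\colon\widetilde A\to\widetilde B$ together with the ideals $I$ and $J$ and the hypothesis $\widetilde f(I)\subseteq J$, so well-definedness and the $(\vee,0)$-homomorphism property are immediate, and there is nothing new to prove there.

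The main work is the partial-algebra component $g$. First I would check well-definedness: if $x/I=x'/I$, then $\delta_{\bA}(x,x')\in I$, so $\delta_{\bB}(f(x),f(x'))=\widetilde f(\delta_{\bA}(x,x'))\in\widetilde f(I)\subseteq J$, whence $f(x)/J=f(x')/J$; thus $g$ is a well-defined map $A/I\to B/J$. Next I would show $g$ is a morphism of partial algebras. Recall from Proposition~\ref{P:partialquotien} that $\Def_{\ell}(A/I)=\setm{\vec x/I}{\vec x\in\Def_{\ell}(A)}$ and $\ell^{A/I}(\vec x/I)=\ell^A(\vec x)/I$, and similarly for $\bB/J$. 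So given $\ell\in\sL$ and $\vec x/I\in\Def_{\ell}(A/I)$ with $\vec x\in\Def_{\ell}(A)$, since $f$ is a morphism of partial algebras we have $f(\vec x)\in\Def_{\ell}(B)$, hence $g(\vec x/I)=f(\vec x)/J\in\Def_{\ell}(B/J)$, and moreover
\[
\ell^{B/J}(g(\vec x/I))=\ell^{B/J}(f(\vec x)/J)=\ell^B(f(\vec x))/J=f(\ell^A(\vec x))/J=g(\ell^A(\vec x)/I)=g(\ell^{A/I}(\vec x/I)),
\]
so $g$ is a morphism of partial algebras.

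Then I would verify the distance condition. For $x,y\in A$,
\[
\delta_{\bB/J}(g(x/I),g(y/I))=\delta_{\bB/J}(f(x)/J,f(y)/J)=\delta_{\bB}(f(x),f(y))/J=\widetilde f(\delta_{\bA}(x,y))/J=\widetilde g(\delta_{\bA}(x,y)/I)=\widetilde g(\delta_{\bA/I}(x/I,y/I)),
\]
using the definition of $\delta_{\bB/J}$, the distance-compatibility of $\bff$, the definition of $\widetilde g$, and the definition of $\delta_{\bA/I}$ in turn. Hence $\bgg=(g,\widetilde g)$ is a morphism of \pregamps\ from $\bA/I$ to $\bB/J$. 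Finally, commutativity of the square is just the computation $g(\pi_I(x))=g(x/I)=f(x)/J=\pi_J(f(x))$ for the partial-algebra component and $\widetilde g(\widetilde\pi_I(\alpha))=\widetilde g(\alpha/I)=\widetilde f(\alpha)/J=\widetilde\pi_J(\widetilde f(\alpha))$ for the semilattice component, so $\bgg\circ\bpi_I=\bpi_J\circ\bff$. I do not anticipate a genuine obstacle: every step is a routine unwinding of the definitions in Proposition~\ref{P:partialquotien} and Definition~\ref{D:pregampcategoryandfunctor}, and the one place where the hypothesis $\widetilde f(I)\subseteq J$ is essential is exactly the two well-definedness checks (for $g$ and, via Lemma~\ref{L:quotMorphSem}, for $\widetilde g$). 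The only mild care needed is to keep the tuple-wise notation $f(\vec x)$, $\vec x/I$ straight and to cite the correct clauses of Proposition~\ref{P:partialquotien} for the partial-operation formulas on the quotients.
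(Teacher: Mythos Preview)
Your proposal is correct and follows essentially the same approach as the paper's proof: both invoke Lemma~\ref{L:quotMorphSem} for~$\widetilde g$, verify well-definedness of~$g$ via $\delta_{\bB}(f(x),f(x'))=\widetilde f(\delta_{\bA}(x,x'))\in J$, check that~$g$ respects partial operations using the description of $\Def_{\ell}$ on quotients from Proposition~\ref{P:partialquotien}, and then confirm the distance compatibility and the commutativity of the square. Your write-up is in fact more explicit where the paper simply says ``it is easy to check'' and ``obvious''.
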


\begin{proof}
Observe that $\widetilde g\colon\widetilde A/I\to \widetilde B/J$ is the \jzh\ induced by $\widetilde f$. Let $x,y\in A$ such that $x/I=y/I$, that is, $\delta_{\bA}(x,y)\in I$. It follows that $\delta_{\bB}(f(x),f(y))=\widetilde f( \delta_{\bA}(x,y))\in J$, so $f(x)/J=f(y)/J$. Therefore the map~$g$ is well-defined.

Let $\ell\in\sL$, let~$\vec a\in\Def_{\ell}(A/I)$, and let~$\vec x\in\Def_{\ell}(A)$ such that~$\vec a=\vec x/I$. The following equalities hold:
\[
g(\ell(\vec x/I))=g(\ell(\vec x)/I)=f(\ell(\vec x))/J=\ell(f(\vec x))/J=\ell(f(\vec x)/J)=\ell(g(\vec x/I)).
\]
Thus $g(\ell(\vec a))=\ell(g(\vec a))$. Therefore~$g$ is a morphism of partial algebras.

Let $x,y\in A$. It is easy to check~$\widetilde g(\delta_{\bA/I}(x/I,y/I))=\delta_{\bB/J}(g(x/J),g(y/J))$. Therefore $\bgg\colon\bA/I\to\bB/J$ is a morphism of \pregamps. Moreover $\bpi_J\circ\bff=\bgg\circ\bpi_I$ is obvious.
\end{proof}

\begin{notation}
We say that $\bff$ \emph{induces} $\bgg\colon \bA/I\to\bB/J$, the morphism of Proposition~\ref{P:projsmpa}.

Let $P$ be a poset, let~$\vec \bA=\famm{\bA_p,\bff_{p,q}}{p\le q\text{ in }P}$ be a $P$-indexed diagram in $\MPALG_{\sL}$, let $\vec I=(I_p)_{p\in P}$ be an ideal of $\vec\bA$, and let $\bgg_{p,q}\colon\bA_p/I_p\to \bA_q/I_q$ the morphism induced by~$\bff_{p,q}$, for all $p\le q$ in $P$. We denote by $\vec\bA/\vec I=\famm{\bA_p/I_p,\bgg_{p,q}}{p\le q\text{ in }P}$.

The diagram $\vec\bA/\vec I$ is a \emph{quotient} of $\vec\bA$.
\end{notation}

\begin{remark}
It is easy to check that $\vec\bA/\vec I$ is indeed a diagram. Given $p\le q\le r$ in $P$ and $x\in A_p$ the following equalities hold:
\[
g_{q,r}(g_{p,q}(x/I_p))=g_{q,r}(f_{p,q}(x)/I_q)=f_{q,r}(f_{p,q}(x))/I_r=f_{p,r}(x)/I_r=g_{p,r}(x/I_p).
\]
\end{remark}

Proposition~\ref{P:projsmpa} can be easily extended to diagrams in the following way. It is also related to Lemma~\ref{L:quotDiagSem}.

\begin{proposition}\label{P:projsmpadiag}
Let $P$ be a poset, let~$\vec \bA=\famm{\bA_p,\bff_{p,q}}{p\le q\text{ in }P}$ and~$\vec \bB=\famm{\bB_p,\bgg_{p,q}}{p\le q\text{ in }P}$ be $P$-indexed diagrams in $\MPALG_{\sL}$. Let~$\vec I$ be an ideal of~$\vec\bA$, let~$\vec J$ be an ideal of~$\vec\bB$. Let $\vec\bxi=(\bxi_p)_{p\in P}\colon\vec\bA\to\vec\bB$ be a natural transformation such that $\xi_p(I_p)\subseteq J_p$ for each $p\in P$. Denote by $\bchi_p\colon \bA_p/I_p\to\bB_p/J_p$ the morphism induced by $\bxi_p$, for each $p\in P$. Then $\vec\bchi=(\bchi_p)_{p\in P}$ is a natural transformation from $\vec\bA/\vec I$ to $\vec\bB/\vec J$.
\end{proposition}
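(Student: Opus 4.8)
The plan is to reduce everything to Proposition~\ref{P:projsmpa} together with the defining property of a natural transformation, via a routine diagram chase on representatives. First I would fix notation. For $p\le q$ in $P$, the hypotheses $\vec I\in\Id\vec\bA$ and $\vec J\in\Id\vec\bB$ give $\widetilde f_{p,q}(I_p)\subseteq I_q$ and $\widetilde g_{p,q}(J_p)\subseteq J_q$, while $\bxi_p$ being a morphism of \pregamps\ with $\widetilde\xi_p(I_p)\subseteq J_p$ lets Proposition~\ref{P:projsmpa} produce the induced morphisms $\bff'_{p,q}\colon\bA_p/I_p\to\bA_q/I_q$ (so that $\vec\bA/\vec I=\famm{\bA_p/I_p,\bff'_{p,q}}{p\le q\text{ in }P}$), $\bgg'_{p,q}\colon\bB_p/J_p\to\bB_q/J_q$ (so that $\vec\bB/\vec J=\famm{\bB_p/J_p,\bgg'_{p,q}}{p\le q\text{ in }P}$), and the morphisms $\bchi_p$ of the statement. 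Each $\bchi_p$ is already a morphism of \pregamps\ by Proposition~\ref{P:projsmpa}, so the only thing left to verify is that the square $\bchi_q\circ\bff'_{p,q}=\bgg'_{p,q}\circ\bchi_p$ of morphisms $\bA_p/I_p\to\bB_q/J_q$ commutes for every $p\le q$ in $P$.

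Next I would use the fact that two morphisms of \pregamps\ coincide exactly when their partial-algebra components agree and their semilattice components agree, together with the surjectivity of the canonical projections $A_p\to A_p/I_p$ and $\widetilde A_p\to\widetilde A_p/I_p$; so it suffices to test the equality on representatives $x/I_p$ ($x\in A_p$) and $\alpha/I_p$ ($\alpha\in\widetilde A_p$). On the partial-algebra side, for $x\in A_p$ one computes $(\chi_q\circ f'_{p,q})(x/I_p)=\xi_q(f_{p,q}(x))/J_q$ and $(g'_{p,q}\circ\chi_p)(x/I_p)=g_{p,q}(\xi_p(x))/J_q$, and these agree because $\xi_q\circ f_{p,q}=g_{p,q}\circ\xi_p$, which is precisely the naturality of $\vec\bxi$ at the level of partial algebras. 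On the semilattice side, the same computation with $\widetilde f_{p,q},\widetilde g_{p,q},\widetilde\xi_p,\widetilde\xi_q$ in place of $f_{p,q},g_{p,q},\xi_p,\xi_q$ reduces the claim to $\widetilde\xi_q\circ\widetilde f_{p,q}=\widetilde g_{p,q}\circ\widetilde\xi_p$, again naturality of $\vec\bxi$, this time at the level of \jzs s. Hence the square commutes, and therefore $\vec\bchi$ is a natural transformation from $\vec\bA/\vec I$ to $\vec\bB/\vec J$.

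I do not expect any genuine obstacle: the proposition is a formal consequence of Proposition~\ref{P:projsmpa} and the definition of a natural transformation. The only points requiring care are bookkeeping — keeping the two families of induced transition morphisms ($\bff'_{p,q}$ for $\vec\bA/\vec I$ versus $\bgg'_{p,q}$ for $\vec\bB/\vec J$) typographically distinct from the transition morphisms $\bff_{p,q},\bgg_{p,q}$ of the original diagrams, since the ambient notation overloads $\bgg$ — and recording at the outset that all the relevant inclusions ($\widetilde f_{p,q}(I_p)\subseteq I_q$, $\widetilde g_{p,q}(J_p)\subseteq J_q$, $\widetilde\xi_p(I_p)\subseteq J_p$) hold, so that every ``induced morphism'' invoked is genuinely defined via Proposition~\ref{P:projsmpa}.
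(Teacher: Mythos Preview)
Your proposal is correct and matches the paper's approach: the paper simply remarks that Proposition~\ref{P:projsmpa} ``can be easily extended to diagrams'' and omits the proof entirely, so your routine diagram chase on representatives, reducing the naturality square to $\bxi_q\circ\bff_{p,q}=\bgg_{p,q}\circ\bxi_p$, is exactly what is intended.
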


\begin{notation}
With the notation of Proposition~\ref{P:projsmpadiag}. We say that $\vec\bchi\colon \vec\bA/\vec I\to\vec\bB/\vec J$ is \emph{induced by $\vec\bxi$}.
\end{notation}

The following lemma expresses that ideal-induced images of \pregamps\ correspond, up to isomorphism, to quotients of \pregamps. It is related to Lemma~\ref{L:IIsem}.

\begin{lemma}\label{L:IIPG}
Let $\bff\colon\bA\to\bB$ be a morphism of \pregamps. The following statements are equivalent:
\begin{enumerate}
\item $\bff$ is ideal-induced.
\item $\bff$ induces an isomorphism $\bgg\colon\bA/\ker_0\bff\to\bB$.
\end{enumerate}
\end{lemma}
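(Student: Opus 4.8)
The plan is to prove the two implications separately, relying throughout on Proposition~\ref{P:partialquotien} (which describes $\bA/\ker_0\bff$ and the canonical projection $\bpi\colon\bA\to\bA/\ker_0\bff$) and on Proposition~\ref{P:projsmpa} (which produces the induced morphism $\bgg$). Recall first that by definition $\ker_0\bff=\ker_0\widetilde f$, and that $\widetilde f(\ker_0\widetilde f)=\set{0}\subseteq\set{0}$, so $\bgg\colon\bA/\ker_0\bff\to\bB$ is indeed well-defined, with $\bgg\circ\bpi=\bff$; moreover by Proposition~\ref{P:projsmpa} the semilattice component $\widetilde g\colon\widetilde A/\ker_0\widetilde f\to\widetilde B$ is precisely the \jzh\ induced by $\widetilde f$ in the sense of Lemma~\ref{L:quotMorphSem}.

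For the direction $(2)\Rightarrow(1)$: assume $\bgg$ is an isomorphism of \pregamps. By Remark~\ref{R:isopregamp} both $g$ and $\widetilde g$ are isomorphisms (in $\PALG_{\sL}$ and in $\SEM$ respectively). Since $\widetilde g$ is an isomorphism, Lemma~\ref{L:IIsem} gives that $\widetilde f$ is ideal-induced. It remains to check that $f(A)=B$ as partial algebras. We have $f=g\circ\pi$ where $\pi\colon A\to A/\ker_0\bff$ is surjective (Proposition~\ref{P:partialquotien}(2) says $\bpi$ is ideal-induced, hence $\pi(A)=A/\ker_0\bff$ as partial algebras) and $g$ is an isomorphism, hence $g$ carries $A/\ker_0\bff=\pi(A)$ onto $B$ as partial algebras by Lemma~\ref{L:caractiso}; using the functoriality of images along morphisms (the ``Remark'' after Notation~\ref{N:imagepartsubalgebra}) we get $f(A)=g(\pi(A))=B$ as partial algebras. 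By the definition of ideal-induced morphism of \pregamps, $\bff$ is ideal-induced.

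For the direction $(1)\Rightarrow(2)$: assume $\bff$ is ideal-induced, i.e.\ $f(A)=B$ as partial algebras and $\widetilde f$ is ideal-induced. We must show $\bgg$ is an isomorphism; by Remark~\ref{R:isopregamp} it suffices to show that $g$ is an isomorphism of partial algebras and $\widetilde g$ is an isomorphism of \jzs s. The semilattice part is immediate from Lemma~\ref{L:IIsem}: since $\widetilde f$ is ideal-induced, the induced map $\widetilde g\colon\widetilde A/\ker_0\widetilde f\to\widetilde B$ is an isomorphism. For $g$: surjectivity is clear since $g(A/\ker_0\bff)=g(\pi(A))=f(A)=B$ as partial algebras. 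For injectivity, suppose $g(x/I)=g(y/I)$ where $I=\ker_0\bff$; then $f(x)/\set 0 = f(y)$, i.e.\ $f(x)=f(y)$, so $0=\delta_{\bB}(f(x),f(y))=\widetilde f(\delta_{\bA}(x,y))$, hence $\delta_{\bA}(x,y)\in\ker_0\widetilde f=I$, i.e.\ $x/I=y/I$. Thus $g$ is a bijective morphism of partial algebras, and by Remark~\ref{R:isopalg} (or Lemma~\ref{L:caractiso}, via $g(A/I)=B$ as partial algebras) $g$ is an isomorphism of partial algebras. Hence $\bgg$ is an isomorphism of \pregamps.

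The routine verifications are all of the ``unwind the definitions'' type; the one point needing a little care — and the closest thing to an obstacle — is making sure the partial-algebra structures match, i.e.\ that ``$f(A)=B$ as partial algebras'' together with the description of $\Def_\ell(A/I)$ in Proposition~\ref{P:partialquotien} forces $g$ to be an isomorphism rather than merely a bijective morphism. This is exactly where Lemma~\ref{L:caractiso} (an embedding of partial algebras with $g(A/I)=B$ as partial algebras is an isomorphism) does the work, so the argument should go through cleanly once images along morphisms are tracked carefully.
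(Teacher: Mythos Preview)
Your proof is correct and follows essentially the same route as the paper's own proof: both directions hinge on Lemma~\ref{L:IIsem} for the semilattice component and on Lemma~\ref{L:caractiso} (via $g(\pi(A))=f(A)=B$ as partial algebras) for the partial-algebra component. The only cosmetic difference is that where you verify injectivity of $g$ by hand (from $f(x)=f(y)$ to $\delta_{\bA}(x,y)\in\ker_0\widetilde f$), the paper invokes Proposition~\ref{P:descrsubpregamps}(2) to conclude that $g$ is an embedding because $\widetilde g$ separates~$0$; this is exactly the same computation packaged as a citation.
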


\begin{proof}
Denote by $\bpi\colon \bA\to\bA/\ker_0\bff$ the canonical projection, so $\bgg\circ\bpi=\bff$.

Assume that $\bff$ is ideal-induced. As $\widetilde f\colon\widetilde A\to\widetilde B$ is ideal-induced, Lemma~\ref{L:IIsem} implies that $\widetilde f$ induces an isomorphism $\widetilde g\colon \widetilde A/\ker_0\bff\to\widetilde B$. It follows that $\widetilde g$ separates~$0$, thus (cf. Proposition~\ref{P:descrsubpregamps}(2)) $g$ is an embedding. Moreover $g(A/\ker_0\bff)=g(\pi(A))=f(A)=B$ as partial algebras. Therefore it follows from Lemma~\ref{L:caractiso} that $g$ is an isomorphism of partial algebras, thus $\bgg$ is an isomorphism of \pregamps\ (cf. Remark~\ref{R:isopregamp}).

Assume that $\bgg$ is an isomorphism. It follows that $\widetilde g$ is an isomorphism, so Lemma~\ref{L:IIsem} implies that $\widetilde f$ is ideal-induced. Moreover $g$ is an isomorphism, thus $f(A)=g(\pi(A))=g(A/\ker_0\bff)=B$ as partial algebras. Therefore $\bff$ is ideal-induced.
\end{proof}

The following proposition expresses that a quotient of a quotient is a quotient. It follows from Lemma~\ref{L:IIPG}, together with the fact that a composition of ideal-induced morphisms of \pregamps\ is ideal-induced.

\begin{proposition}\label{P:quotquotisquot}
Let~$\bA$ be a \pregamp, let~$I$ be an ideal of~$\bA$, let $J$ be an ideal of $\bA/I$. Then $(\bA/I)/J$ is isomorphic to a quotient of~$\bA$.
\end{proposition}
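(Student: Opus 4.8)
The plan is to reduce the statement to Lemma~\ref{L:IIPG} by composing canonical projections. First I would observe that, by Proposition~\ref{P:partialquotien}(2), the canonical projection $\bpi_I\colon\bA\to\bA/I$ is ideal-induced, and, applying the same proposition to the \pregamp\ $\bA/I$ and its ideal $J$, that the canonical projection $\bpi_J\colon\bA/I\to(\bA/I)/J$ is ideal-induced as well. By the Remark following the definition of ideal-induced morphisms of \pregamps, a composite of ideal-induced morphisms is ideal-induced; hence $\bff=\bpi_J\circ\bpi_I\colon\bA\to(\bA/I)/J$ is ideal-induced.

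Next I would set $K=\ker_0\bff$, which is an ideal of $\bA$, being the $0$-kernel of the \jzh\ $\widetilde f=\widetilde{\pi}_J\circ\widetilde{\pi}_I$. Applying Lemma~\ref{L:IIPG} to the ideal-induced morphism $\bff$ then yields that $\bff$ induces an isomorphism $\bgg\colon\bA/K\to(\bA/I)/J$ of \pregamps. Since $\bA/K$ is by definition a quotient of $\bA$, this proves that $(\bA/I)/J$ is isomorphic to a quotient of $\bA$, as desired.

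The one point that deserves a line of verification is that the composite $\bpi_J\circ\bpi_I$ is genuinely ideal-induced. Its underlying map of partial algebras is $\pi_J\circ\pi_I$, which is onto and satisfies $(\pi_J\circ\pi_I)(A)=(A/I)/J$ as partial algebras (each $\pi$ is surjective on universes and, by the description of $\Def_\ell$ of a quotient in Proposition~\ref{P:partialquotien}, also surjective ``on definedness''); and its semilattice component $\widetilde{\pi}_J\circ\widetilde{\pi}_I$ is a composite of ideal-induced \jzh s, hence ideal-induced — this last fact can either be quoted from the earlier remarks or checked by hand by combining witnesses for $\widetilde{\pi}_I$ and $\widetilde{\pi}_J$ and using $\ker_0\widetilde{\pi}_I\subseteq\ker_0\widetilde f$. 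I do not anticipate any real obstacle here: the proposition is essentially a formal corollary of Lemma~\ref{L:IIPG} together with the stability of the class of ideal-induced morphisms under composition.
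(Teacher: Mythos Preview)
Your proposal is correct and follows essentially the same route as the paper: the paper's proof is just the one-line observation that the result follows from Lemma~\ref{L:IIPG} together with the fact that a composition of ideal-induced morphisms of \pregamps\ is ideal-induced, and you have unpacked exactly this argument (compose the two canonical projections, take the $0$-kernel, and apply Lemma~\ref{L:IIPG}).
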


The following results expresses that, up to isomorphism, quotients of \subpregamps\ are \subpregamps\ of quotients.

\begin{proposition}\label{P:quotsubgamp}
Let~$\bA$ be a \pregamp, let~$\bB$ be a \subpregamp\ of~$\bA$, and let $I\in\Id\bB$. Then there exist $J\in\Id\bA$, a \subpregamp\ $\bC$ of $\bA/J$, and an isomorphism $\bff\colon\bB/I\to\bC$.

Let~$\bA$ be a \pregamp, let $I\in\Id\bA$, and let~$\bB$ be a \subpregamp\ of $\bA/I$. There exists a \subpregamp\ $\bC$ of~$\bA$ such that~$\bB$ is isomorphic to some quotient of $\bC$.
\end{proposition}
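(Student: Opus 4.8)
The plan is to derive both assertions from Lemma~\ref{L:IIPG} by exhibiting, in each case, an ideal-induced morphism of \pregamps\ built by (co)restricting a canonical projection, and then computing its $0$-kernel. For the first assertion, let $J=\widetilde A\dnw I$ be the ideal of~$\widetilde A$ generated by~$I$ (an ideal because~$I$ is closed under finite joins; note $I\subseteq J$), let $\bpi_J\colon\bA\to\bA/J$ be the canonical projection, and put $\bC=\bpi_J(\bB)$, which is a \subpregamp\ of~$\bA/J$ by Notation~\ref{N:fbC}. I would first note that the restriction of $\bpi_J$ to~$\bB$, corestricted to its image~$\bC$, is a morphism of \pregamps\ $\bff\colon\bB\to\bC$ (routine from Notation~\ref{N:imagepartsubalgebra} and $\delta_{\bB}=\delta_{\bA}\res B^2$), and then check that $\bff$ is ideal-induced: by construction $f(B)=C$ as partial algebras and $\widetilde f$ is onto $\widetilde C=\widetilde\pi_J(\widetilde B)$; and if $x,y\in\widetilde B$ satisfy $x/J=y/J$, choosing $u\in J$ with $x\vee u=y\vee u$ and then $w\in I$ with $u\le w$ gives $x\vee w=y\vee w$ with $w\in\widetilde B$ and $\widetilde f(w)=w/J=0$ (since $w\in I\subseteq J$). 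Finally, $\ker_0\bff=\widetilde B\cap\ker_0\widetilde\pi_J=\widetilde B\cap J$ by Proposition~\ref{P:partialquotien}, and $\widetilde B\cap J=I$ (clearly $I\subseteq\widetilde B\cap J$; conversely any $x\in\widetilde B\cap J$ satisfies $x\le w$ for some $w\in I$, and~$I$ is a lower subset of~$\widetilde B$, so $x\in I$). Thus Lemma~\ref{L:IIPG} provides an isomorphism $\bB/I=\bB/\ker_0\bff\to\bC$.

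For the second assertion, let $\bpi_I\colon\bA\to\bA/I$ be the canonical projection and put $\bC=\bpi_I^{-1}(\bB)$, a \subpregamp\ of~$\bA$ by Notation~\ref{N:fbC}; note $I=\ker_0\widetilde\pi_I\subseteq\widetilde C=\widetilde\pi_I^{-1}(\widetilde B)$ because $0\in\widetilde B$. I would show that the corestriction $\bff\colon\bC\to\bB$ of~$\bpi_I$ is a well-defined ideal-induced morphism of \pregamps. The corestriction lands on all of~$\bB$ because $\pi_I$ is onto and $\Def_\ell(A/I)=\setm{\vec x/I}{\vec x\in\Def_\ell(A)}$ (Proposition~\ref{P:partialquotien}), whence $\pi_I(\pi_I^{-1}(B))=B$ as partial algebras and $\widetilde\pi_I(\widetilde\pi_I^{-1}(\widetilde B))=\widetilde B$; so $f(C)=B$ as partial algebras. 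For ideal-inducedness of $\widetilde f$: it is onto~$\widetilde B$, and if $x,y\in\widetilde C$ satisfy $x/I=y/I$, pick $u\in I$ with $x\vee u=y\vee u$; then $u\in\widetilde C$ and $\widetilde f(u)=u/I=0$. Hence by Lemma~\ref{L:IIPG}, $\bff$ induces an isomorphism $\bC/\ker_0\bff\to\bB$, so~$\bB$ is isomorphic to the quotient $\bC/\ker_0\bff$ of~$\bC$.

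The main obstacle is not the application of Lemma~\ref{L:IIPG} nor the elementary $0$-kernel computations, but the careful bookkeeping with partial-algebra structures: one must verify that the image partial algebras $\pi_J(B)$ and $\pi_I(\pi_I^{-1}(B))$ and the preimage partial algebra $\pi_I^{-1}(B)$ carry exactly the structures of the \subpregamps\ $\bC$ and~$\bB$ appearing in the argument, so that the relevant (co)restrictions of the canonical projections genuinely are morphisms of \pregamps\ onto their intended targets. This is what lets the rest of the argument go through, and it rests on Notation~\ref{N:imagepartsubalgebra}, Notation~\ref{N:fbC}, and Proposition~\ref{P:partialquotien}.
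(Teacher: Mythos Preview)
Your proposal is correct and essentially follows the paper's own argument: same choice $J=\widetilde A\dnw I$, same $\bC=\bpi^{-1}(\bB)$ in the second part, and the same key computation (pass from $u\in J$ to $w\in I$ with $u\le w$). The only cosmetic difference is that for the first assertion the paper first forms the induced morphism $\bB/I\to\bA/J$ via Proposition~\ref{P:projsmpa} and shows it is an embedding using Proposition~\ref{P:descrsubpregamps}, whereas you restrict $\bpi_J$ to~$\bB$, check it is ideal-induced with $0$-kernel~$I$, and invoke Lemma~\ref{L:IIPG}; these are two sides of the same commutative square and yield the same~$\bC$.
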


\begin{proof}
Let~$\bA$ be a \pregamp, let~$\bB$ be a \subpregamp\ of~$\bA$, let $I\in\Id\bB$. Put $J=\widetilde A\dnw I$. As~$I$ is an ideal of $\widetilde B$, it is directed, therefore $J$ is an ideal of~$\widetilde A$.

Let $\bff\colon \bB\to \bA$ be the canonical embedding. Notice that~$\widetilde{f}(I)\subseteq J$; denote by $\bgg\colon \bB/I\to\bA/J$ the morphism induced by $\bff$ (cf. Proposition~\ref{P:projsmpa}).

Let $d,d'\in \widetilde B$ such that~$\widetilde g(d/I)=\widetilde g(d'/I)$, that is, $d/J=d'/J$, so there exists $u\in J$ such that $d\vee u=d'\vee u$. As $J=\widetilde A\dnw I$, there exists $v\in I$ such that $u\le v$, hence $d\vee v=d'\vee v$, that is, $d/I=d'/I$. Therefore~$\widetilde g$ is an embedding. It follows from Proposition~\ref{P:descrsubpregamps} that $\bgg=(g,\widetilde g)$ is an embedding and induces an isomorphism $\bB/I\to \bgg(\bB/I)$; the latter is a \subpregamp\ of $\bA/J$.

Now let $I\in\Id\bA$ and let~$\bB$ be a \subpregamp\ of $\bA/I$. Denote by $\bpi\colon\bA\to\bA/I$ the canonical projection, put $\bC=\bpi^{-1}(\bB)$ (cf. Notation~\ref{N:fbC}). As $\bpi$ is ideal-induced, it is easy to check that $\bpi(\bC)=\bB$, and the restriction $\bpi\res\bC\to\bB$ is ideal-induced.
\end{proof}

The following lemma, in conjunction with Lemma~\ref{L:colimitquot2}, proves that, given a direct system~$\vec \bA$ of \pregamps, every quotient of the colimit of $\vec \bA$ is the colimit of a quotient of~$\vec \bA$.

\begin{lemma}\label{L:colimitquot1}
Let $P$ be directed poset and let~$\vec \bA=\famm{\bA_p,\bff_{p,q}}{p\le q\text{ in }P}$ be a $P$-indexed diagram in $\MPALG_{\sL}$. Let $\famm{\bA,\bff_p}{p\in P}=\varinjlim \famm{\bA_p,\bff_{p,q}}{p\le q\text{ in }P}$ be a directed colimit cocone in $\MPALG_{\sL}$. Let~$\vec I=(I_p)_{p\in P}$ be an ideal of~$\vec \bA$. Then~$I=\bigcup_{p\in P} \widetilde f_p(I_p)$ is an ideal of~$\bA$.

Let $\bgg_p\colon\bA_p/I_p\to\bA/I$ be the morphism induced by $\bff_p$, let $\bgg_{p,q}\colon\bA_p/I_p\to\bA_q/I_q$ be the morphism induced by~$\bff_{p,q}$, for all $p\le q$ in $P$. The following is a directed colimit cocone:
\[
\famm{\bA/I,\bgg_p}{p\in P}=\varinjlim \famm{\bA_p/I_p,\bff_{p,q}}{p\le q\text{ in }P} \text{ in $\MPALG_{\sL}$.}
\]
\end{lemma}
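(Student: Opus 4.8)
The statement has two parts: first, that $I=\bigcup_{p\in P}\widetilde f_p(I_p)$ is an ideal of $\bA$; second, that the induced cocone $\famm{\bA/I,\bgg_p}{p\in P}$ is a directed colimit in $\MPALG_{\sL}$. The first part is essentially Lemma~\ref{L:colimitquot2}(1): since $\famm{\widetilde A,\widetilde f_p}{p\in P}=\varinjlim\famm{\widetilde A_p,\widetilde f_{p,q}}{p\le q\text{ in }P}$ in $\SEM$ (this is part of the definition of a colimit cocone in $\MPALG_{\sL}$, by Lemma~\ref{L:MPALGhasLimit}), and $\vec I$ is by definition an ideal of the diagram $\famm{\widetilde A_p,\widetilde f_{p,q}}{p\le q}$, Lemma~\ref{L:colimitquot2}(1) directly gives that $I$ is an ideal of $\widetilde A$, hence of $\bA$. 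So I would simply cite it.

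For the second part, the strategy is to verify the universal property directly, reducing everything to the three component colimit statements. By Lemma~\ref{L:MPALGhasLimit} it suffices to show that the three underlying cocones are colimits in their respective categories: $\famm{A/I,g_p}{p\in P}$ in $\PALG_{\sL}$, the same in $\SET$, and $\famm{\widetilde A/I,\widetilde g_p}{p\in P}$ in $\SEM$. The $\SEM$ statement is exactly Lemma~\ref{L:colimitquot2}(1) again (it asserts $\famm{S/I,\psi_p}{p\in P}=\varinjlim\vec S/\vec I$ in $\SEM$, with the $\psi_p$ being precisely our $\widetilde g_p$). For the $\PALG_{\sL}$ (equivalently $\SET$) statement, I would argue as follows. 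The set $A/I$ is the quotient $A/\theta_I$ where $\theta_I=\setm{(x,y)}{\delta_\bA(x,y)\in I}$, and $A=\varinjlim A_p$ in $\SET$. Given any cocone $\famm{C,h_p}{p\in P}$ over $\famm{A_p/I_p,g_{p,q}}{p\le q}$ in $\PALG_{\sL}$, precompose with the canonical projections $\pi_p\colon A_p\to A_p/I_p$ to obtain a cocone $\famm{C,h_p\circ\pi_p}{p\in P}$ over $\vec A$ in $\PALG_{\sL}$; by Lemma~\ref{L:PALGhasLimit} this factors uniquely through $A$ via some $k\colon A\to C$. One then checks that $k$ is constant on $\theta_I$-classes (because $\delta_\bA(x,y)\in I$ forces, via the colimit presentation of $I$ and compatibility of the $h_p\circ\pi_p$ with the quotient maps, the two images to agree in $C$), hence factors through $A/I$ as the desired unique morphism; uniqueness follows from surjectivity of $\pi\colon A\to A/I$ together with uniqueness of $k$.

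Finally I would observe that the distance $\delta_{\bA/I}$ on $A/I$ coincides with the distance produced by Lemma~\ref{L:MPALGhasLimit} from the colimit data $\famm{A/I,g_p}{p}$ and $\famm{\widetilde A/I,\widetilde g_p}{p}$: indeed both are characterized by the identity $\delta(g_p(x/I_p),g_p(y/I_p))=\widetilde g_p(\delta_{\bA_p/I_p}(x/I_p,y/I_p))$, which holds for $\delta_{\bA/I}$ by a short computation unwinding Proposition~\ref{P:partialquotien} and Proposition~\ref{P:projsmpa}. Therefore $\bA/I$ with the cocone $\famm{\bgg_p}{p}$ is the colimit of $\vec\bA/\vec I$ in $\MPALG_{\sL}$, as claimed.

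**Main obstacle.** The routine but slightly delicate point is showing that the factoring map $k\colon A\to C$ respects $\theta_I$: one must pass from ``$\delta_\bA(x,y)\in I=\bigcup_p\widetilde f_p(I_p)$'' to a statement at a finite stage $p$ of the diagram (namely $\delta_{\bA_p}(x',y')\in I_p$ for suitable preimages $x',y'$), using that $P$ is directed and $I$ is the union of the $\widetilde f_p(I_p)$; this is the same kind of ``go far enough out in the directed system'' argument as in the proof of Lemma~\ref{L:DLPlagCon}(1), and there are no real surprises, just bookkeeping. Everything else is an assembly of the component colimit facts already available.
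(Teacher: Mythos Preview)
Your proposal is correct and follows the same overall architecture as the paper: both reduce the $\MPALG_{\sL}$ colimit to its three components, invoking Lemma~\ref{L:colimitquot2}(1) for the $\SEM$ part and the ideal statement, and then checking the distance compatibility to apply Lemma~\ref{L:MPALGhasLimit}.

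The only real difference is in how you handle the $\SET$/$\PALG_{\sL}$ component. The paper verifies the \emph{concrete} description of directed colimits furnished by Lemma~\ref{L:PALGhasLimit}: it shows directly that whenever $g_p(x/I_p)=g_p(y/I_p)$ there exists $r\ge p$ with $g_{p,r}(x/I_p)=g_{p,r}(y/I_p)$, that $A/I=\bigcup_p g_p(A_p/I_p)$, and that $\Def_\ell(A/I)=\bigcup_p g_p(\Def_\ell(A_p/I_p))$, which by Lemma~\ref{L:PALGhasLimit} suffices. You instead verify the universal property abstractly, by precomposing a test cocone with the projections $\pi_p$, factoring through $A$, and then descending to $A/I$. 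Both arguments hinge on the same ``go far enough out'' step (pushing $\delta_{\bA}(x,y)\in I$ down to $\delta_{\bA_t}(x'',y'')\in I_t$ at some stage~$t$), which you correctly identify as the main bookkeeping point. The paper's route is marginally more direct since Lemma~\ref{L:PALGhasLimit} already packages the colimit recognition; yours is a touch more categorical but equally valid.
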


\begin{proof}
Lemma~\ref{L:PALGhasLimit} and Lemma~\ref{L:MPALGhasLimit} imply that the following are colimits cocones:
\begin{align}
\famm{A,f_p}{p\in P}&=\varinjlim \famm{A_p,f_{p,q}}{p\le q\text{ in }P}\text{ in $\SET$},\label{E:PLHL1}\\
\famm{\widetilde{A},\widetilde{f}_p}{p\in P}&=\varinjlim \famm{\widetilde{A}_p,\widetilde{f}_{p,q}}{p\le q\text{ in }P}\text{ in $\SEM$}\label{E:PLHL2}\\
\famm{A,f_p}{p\in P}&=\varinjlim \famm{A_p,f_{p,q}}{p\le q\text{ in }P}\text{ in $\PALG_{\sL}$}\label{E:PLHL3}
\end{align}

Moreover, Lemma~\ref{L:colimitquot2} implies that $I$ is an ideal of $\widetilde A$ and that the following is a directed colimit cocone:
\begin{equation}
\famm{\widetilde A/I,\widetilde g_p}{p\in P}=\varinjlim \famm{\widetilde A_p/I_p,\widetilde g_{p,q}}{p\le q\text{ in }P}\text{ in $\SEM$}\label{E:PLHL4}
\end{equation}

Let $p\in P$, let $x,y\in A_p$ such that $g_p(x/I_p)=g_p(y/I_p)$. It follows that $f_p(x)/I=f_p(y)/I$, that is, $\delta_{\bA}(f_p(x),f_p(y))\in I$. So there exist $q\in P$ and $\alpha\in\widetilde A_q$ such that~$\widetilde f_p(\delta_{\bA_p}(x,y))=\delta_{\bA}(f_p(x),f_p(y))=\widetilde f_q(\alpha)$. It follows from~\eqref{E:PLHL2} that there exists $r\geq p,q$ such that $\delta_{\bA_r}(f_{p,r}(x),f_{p,r}(y))=\widetilde f_{p,r}(\delta_{\bA_p}(x,y))=\widetilde f_{q,r}(\alpha)$. However, $\widetilde f_{q,r}(\alpha)\in \widetilde f_{q,r}(I_q)\subseteq I_r$, so $f_{p,r}(x)/I_r=f_{p,r}(y)/I_r$, and so $g_{p,r}(x/I_p)=g_{p,r}(y/I_p)$. Moreover~$A/I=\bigcup_{p\in P} f_p(A_p)/I=\bigcup_{p\in P} g_p(A_p/I_p)$. Hence the following is a directed colimit cocone:
\[
\famm{A/I,g_p}{p\in P}=\varinjlim \famm{A_p/I_p,g_{p,q}}{p\le q\text{ in }P}\text{ in $\SET$.}
\]

Let $\ell\in\sL$. The following equalities hold:
\[
\Def_{\ell}(A/I)=\Def_{\ell}(A)/I=\bigcup_{p\in P}f_p(\Def_{\ell}(A_p))/I=\bigcup_{p\in P}g_p(\Def_{\ell}(A_p/I_p)).
\]
Let $p\in P$, let~$\vec a\in \Def_{\ell}(A_p/I)$. Then, as $g_p$ is a morphism of partial algebras, $g_p(\ell(\vec a))=\ell(g_p(\vec a))$. So, by Lemma~\ref{L:PALGhasLimit}, the following is a directed colimit cocone:
\[
\famm{A/I,g_p}{p\in P}=\varinjlim \famm{A_p/I_p,g_{p,q}}{p\le q\text{ in }P}\text{ in $\PALG_{\sL}$}.
\]

As $\bff_p$ is a morphism of \pregamps, $\delta_{\bA/I}(g_p(x),g_p(y))=\widetilde g_p(\delta_{\bA_p/I_p}(x,y))$ for all $p\in P$ and all $x,y\in A_p/I_p$, thus Lemma~\ref{L:MPALGhasLimit} implies that the following is a directed colimit cocone:
\[
\famm{\bA/I,\bgg_p}{p\in P}=\varinjlim \famm{\bA_p/I_p,\bgg_{p,q}}{p\le q\text{ in }P}\text{ in $\MPALG_{\sL}$}.\tag*{\qed}
\]
\renewcommand{\qed}{}
\end{proof}

\begin{definition}\label{D:PregofV}
A \pregamp~$\bA$ \emph{satisfies} an identity $t_1=t_2$ if~$A/I$ satisfies $t_1=t_2$ for each $I\in\Id\bA$.

Let~$\cV$ be a variety of algebras. A \pregamp~$\bA$ is a \emph{\pregamp\ of~$\cV$} if it satisfies all identities of~$\cV$.
\end{definition}

\begin{remark*}
It is not hard to construct a \pregamp~$\bA$, an identity $t_1=t_2$, and an ideal~$I$ of~$\widetilde A$ such that~$A$ satisfies $t_1=t_2$, but~$A/I$ fails $t_1=t_2$.

A \pregamp~$\bA$ satisfies an identity $t_1=t_2$ if and only if for each ideal-induced morphism $\bff\colon\bA\to\bB$ of \pregamps, the partial algebra~$B$ satisfies $t_1=t_2$.
\end{remark*}

\begin{definition}
Let~$\cV$ be a variety of algebras. \emph{The category of \pregamps\ of~$\cV$}, denoted by $\MPALG(\cV)$, is the full subcategory of $\MPALG_{\sL}$ in which the objects are all the \pregamps\ of~$\cV$.
\end{definition}

As an immediate application of Lemma~\ref{L:colimitquot1} and Lemma~\ref{L:colimitquot2}, we obtain that the class of all \pregamps\ that satisfy a given identity is closed under directed colimits.

\begin{corollary}\label{C:limidentity}
Let~$\cV$ be a variety of algebras and let $P$ be a directed poset. Let~$\vec \bA=\famm{\bA_p,\bff_{p,q}}{p\le q\text{ in }P}$ be a $P$-indexed diagram in $\MPALG(\cV)$. Let $\famm{\bA,\bff_p}{p\in P}=\varinjlim \famm{\bA_p,\bff_{p,q}}{p\le q\text{ in }P}$ be a directed colimit cocone in $\MPALG_{\sL}$. Then~$\bA$ is a \pregamp\ of~$\cV$.
\end{corollary}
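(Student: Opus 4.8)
The plan is to unwind Definition~\ref{D:PregofV}: it suffices to fix an identity $t_1=t_2$ of~$\cV$ and an ideal $I\in\Id\bA$, and to prove that the partial algebra $A/I$ satisfies $t_1=t_2$. Since each $\bA_p$ is a \pregamp\ of~$\cV$, it satisfies $t_1=t_2$, so $A_p/I_p$ satisfies $t_1=t_2$ for every ideal $I_p$ of $\bA_p$; this is the input we want to feed into a colimit argument.

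First I would build, from the ideal~$I$ of~$\bA$, a compatible ideal of the diagram~$\vec\bA$. Because $\CPG$ preserves directed colimits (Remark~\ref{R:CPGandPGApreslim}), the cocone $\famm{\widetilde A,\widetilde f_p}{p\in P}$ is a directed colimit cocone in $\SEM$, so Lemma~\ref{L:colimitquot2}(2) applies: putting $I_p=\widetilde f_p^{-1}(I)$, the family $\vec I=(I_p)_{p\in P}$ is an ideal of~$\vec\bA$ and $I=\bigcup_{p\in P}\widetilde f_p(I_p)$. Note each $I_p$ is in particular an ideal of $\bA_p$, so ``$A_p/I_p$ satisfies $t_1=t_2$'' is a legitimate instance of ``$\bA_p$ satisfies $t_1=t_2$''.

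Next I would pass to the quotient diagram. The hypotheses of Lemma~\ref{L:colimitquot1} are now met, so $\famm{\bA/I,\bgg_p}{p\in P}=\varinjlim\famm{\bA_p/I_p,\bgg_{p,q}}{p\le q\text{ in }P}$ in $\MPALG_{\sL}$, where $\bgg_{p,q}$ is induced by $\bff_{p,q}$. Forgetting the distance, the underlying partial algebra $A/I$ is then the directed colimit of the partial algebras $A_p/I_p$ in $\PALG_{\sL}$ (this is the $\PALG_{\sL}$-layer of the colimit description of Lemma~\ref{L:MPALGhasLimit}, as spelled out in the proof of Lemma~\ref{L:colimitquot1}). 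Since every $A_p/I_p$ satisfies $t_1=t_2$, Lemma~\ref{L:PALGhasLimit}(5) gives that $A/I$ satisfies $t_1=t_2$. As $I\in\Id\bA$ was arbitrary, $\bA$ satisfies $t_1=t_2$; as the identity was arbitrary, $\bA$ is a \pregamp\ of~$\cV$.

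There is no real obstacle here beyond chaining the cited lemmas in the right order; the only point needing a word of care is that the given colimit cocone need not be the one literally constructed in Lemma~\ref{L:MPALGhasLimit}, but any two directed colimit cocones over $\vec\bA$ are canonically isomorphic in $\MPALG_{\sL}$, and an isomorphism of \pregamps\ restricts to an isomorphism of the underlying partial algebras, which preserves satisfaction of identities; so we may harmlessly assume $\famm{\bA,\bff_p}{p\in P}$ is the cocone built in Lemma~\ref{L:MPALGhasLimit}.
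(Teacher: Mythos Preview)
Your proof is correct and follows exactly the approach the paper intends: the paper states the corollary as an immediate application of Lemma~\ref{L:colimitquot1} and Lemma~\ref{L:colimitquot2}, and you have simply spelled out that chain (pull back the ideal via Lemma~\ref{L:colimitquot2}(2), pass to quotient colimits via Lemma~\ref{L:colimitquot1}, then invoke Lemma~\ref{L:PALGhasLimit}(5)).
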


Similarly, it follows from Proposition~\ref{P:quotquotisquot} and Proposition~\ref{P:quotsubgamp} that the class of all \pregamps\ that satisfy a given identity is closed under ideal-induced images and \subpregamps.

\begin{corollary}\label{C:quotsubpregamp}
Let~$\bA$ be a \pregamp\ of a variety~$\cV$, let $\bB$ be a \pregamp, let $\bff\colon\bA\to\bB$ be an ideal-induced morphism of \pregamps, then $\bB$ is a \pregamp\ of~$\cV$. Furthermore, every \subpregamp\ of~$\bA$ is a \pregamp\ of~$\cV$.
\end{corollary}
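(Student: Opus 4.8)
The plan is to work directly from Definition~\ref{D:PregofV}: a \pregamp\ $\bX$ is a \pregamp\ of~$\cV$ if and only if, for every identity $t_1=t_2$ holding throughout~$\cV$ and every $I\in\Id\bX$, the partial algebra $X/I$ satisfies $t_1=t_2$. So I would fix an identity $t_1=t_2$ valid in~$\cV$ and reduce each assertion to showing that an arbitrary quotient of~$\bB$ (respectively, of a given \subpregamp\ of~$\bA$) satisfies $t_1=t_2$ as a partial algebra; the statement then follows by letting the identity vary over the identities of~$\cV$.

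For the first assertion, given an ideal-induced morphism $\bff\colon\bA\to\bB$, I would first invoke Lemma~\ref{L:IIPG} to see that~$\bB$ is isomorphic to the quotient $\bA/\ker_0\bff$. Hence any quotient of~$\bB$ is isomorphic to a quotient of $\bA/\ker_0\bff$ (quotients of isomorphic \pregamps, along corresponding ideals, being isomorphic), which by Proposition~\ref{P:quotquotisquot} is in turn isomorphic to a quotient $\bA/K$ of~$\bA$ for some $K\in\Id\bA$. Since~$\bA$ is a \pregamp\ of~$\cV$ and $K\in\Id\bA$, the partial algebra $A/K$ satisfies $t_1=t_2$; and satisfaction of an identity is invariant under isomorphism of partial algebras. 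Thus every quotient of~$\bB$ satisfies $t_1=t_2$, so~$\bB$ satisfies $t_1=t_2$, and therefore~$\bB$ is a \pregamp\ of~$\cV$.

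For the second assertion, let~$\bB$ be a \subpregamp\ of~$\bA$ and $I\in\Id\bB$. By the first statement of Proposition~\ref{P:quotsubgamp} there are $J\in\Id\bA$, a \subpregamp\ $\bC$ of $\bA/J$, and an isomorphism $\bB/I\to\bC$. Because~$\bA$ is a \pregamp\ of~$\cV$ and $J\in\Id\bA$, the partial algebra $A/J$ satisfies $t_1=t_2$; and any partial subalgebra of a partial algebra satisfying $t_1=t_2$ again satisfies $t_1=t_2$, since whenever both sides of the identity are defined on a tuple of the subalgebra they are defined, with the same values, in the ambient partial algebra, where they agree. Hence the partial algebra~$C$, and therefore $B/I$, satisfies $t_1=t_2$. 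As~$I$ was arbitrary, $\bB$ satisfies $t_1=t_2$, and so~$\bB$ is a \pregamp\ of~$\cV$.

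I do not expect a genuine obstacle here: the argument is essentially bookkeeping. The one point worth attention is that the defining condition for a \pregamp\ of~$\cV$ ranges over \emph{all} ideals (equivalently, all quotients), so the reduction works only because Proposition~\ref{P:quotquotisquot} and Proposition~\ref{P:quotsubgamp} guarantee that quotients of quotients, and quotients of \subpregamps, remain — up to isomorphism — within the class of quotients of~$\bA$, for which the hypothesis on~$\bA$ applies.
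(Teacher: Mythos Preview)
Your proof is correct and follows exactly the approach the paper intends: the paper simply records that the corollary ``follows from Proposition~\ref{P:quotquotisquot} and Proposition~\ref{P:quotsubgamp}'', and you have spelled out precisely how, including the use of Lemma~\ref{L:IIPG} to identify~$\bB$ with a quotient of~$\bA$ and the observation that satisfaction of an identity passes to partial subalgebras.
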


\section{Gamps}\label{S:Gamp}

A \emph{gamp} of a variety~$\cV$ is a \pregamp\ that ``belongs'' to~$\cV$ (cf. $(1)$), together with a partial subalgebra (cf. $(2)$). The main interest of this new notion is to express later some additional properties that reflect properties of algebras (cf. Definition~\ref{D:propgamp}). It is a generalization of the notion of a \emph{semilattice-metric cover as defined in} \cite[Section 5-1]{GiWe1}.

\begin{definition}\label{D:gamp}
Let~$\cV$ be a variety of~$\sL$-algebras. A \emph{gamp} (resp., a \emph{gamp of~$\cV$}) is a quadruple $\bA=(A^*,A,\delta_{\bA},\widetilde{A})$ such that 
\begin{enumerate}
\item $(A,\delta_{\bA},\widetilde{A})$ is a \pregamp\ (resp., a \pregamp\ of~$\cV$) (cf. Definitions~\ref{D:distance} and~\ref{D:PregofV}).
\item~$A^*$ is a partial subalgebra of~$A$.
\end{enumerate}

A \emph{realization} of~$\bA$ is an ordered pair $(A',\chi)$ such that~$A'\in\cV$,~$A$ is a partial subalgebra of~$A'$, $\chi\colon \widetilde A\to\Conc A'$ is a \jze, and $\chi(\delta_{\bA}(x,y))=\Theta_{A'}(x,y)$ for all $x,y\in A$. A realization is \emph{isomorphic} if $\chi$ is an isomorphism.

A gamp~$\bA$ is \emph{finite} if both~$A$ and~$\widetilde{A}$ are finite.

Let~$\bA$ and~$\bB$ be gamps. A morphism $\bff\colon(A,\delta_{\bA},\widetilde{A})\to(B,\delta_{\bB},\widetilde{B})$ of \pregamps\ is a \emph{morphism of gamps} from~$\bA$ to~$\bB$ if $f(A^*)$ is a partial subalgebra of~$B^*$.

The \emph{category of gamps of~$\cV$}, denoted by $\GAMP(\cV)$, is the category in which the objects are the gamps of~$\cV$ and the arrows are the morphisms of gamps.

A \emph{subgamp} of a gamp~$\bA$ is a gamp $\bB=(B^*,B,\delta_{\bB},\widetilde{B})$ such that~$B^*$ is a partial subalgebra of~$A^*$,~$B$ is a partial subalgebra of~$A$, $\delta_{\bB}=\delta_{\bA}\res B^2$, and~$\widetilde{B}$ is a \jz-subsemilattice of~$\widetilde{A}$. Let $f\colon B\to A$ and~$\widetilde{f}\colon\widetilde{B}\to\widetilde{A}$ be the inclusion maps. The ordered pair $(f,\widetilde{f})$ is a morphism of gamps from~$\bB$ to~$\bA$, called \emph{the canonical embedding}.
\end{definition}

\begin{remark*}
A gamp might have no realization. A realization of a finite gamp does not need to be finite. 

Let $\bff\colon\bA\to\bB$ be a morphism of gamps, let $(A',\chi)$ be a realization of~$\bA$, and let $(B',\xi)$ be a realization of~$\bB$. There might not exist any morphism $g\colon A'\to B'$.
\end{remark*}

\begin{definition}\label{D:gampchain}
A \emph{gamp of lattices} is a gamp of the variety of all lattices.

Let~$\bB$ be a gamp of lattices. A \emph{chain} of~$\bB$ is a sequence $x_0,x_1,\dots,x_{n-1}$ of~$B^*$ such that $x_i\wedge x_j=x_i$ in~$B$ for all $i\le j< n$. We sometime denote such a chain as $x_0\le x_1\le \dots\le x_{n-1}$. If $x_i\not=x_j$ for all $ i<j< n$, we denote the chain as $x_0<x_1<\dots<x_{n-1}$.

Let $u<v$ be a chain of~$\bB$, we say that $v$ is a \emph{cover} of $u$, and then we write $u\prec v$, if there is no chain $u<x<v$ in~$\bB$.
\end{definition}

The following properties for a gamp come from algebra. It follows from Definition~\ref{D:propgamp}$(1)$ that there are many operations defined in~$A$. With $(2)$ or $(3)$ all ``congruences'' have a set of ``generators''. Condition $(4)$ expresses that whenever two elements are identified by a ``congruence'' of~$A^*$, then there is a ``good reason'' for this in~$A$ (cf. Lemma~\ref{L:Condcompcongruences}). Conditions $(6)$ and $(7)$ are related to the transitive closure of relations. Condition $(8)$ is related to congruence $n$-permutability (cf. Proposition~\ref{P:CP-PR}).

\begin{definition}\label{D:propgamp}
A gamp~$\bA$ is \emph{strong} if the following holds:
\begin{enumerate}
\item[$(1)$] $A^*$ is a strong partial subalgebra of~$A$ (cf. Definition~\ref{D:StrPartAlg}).
\end{enumerate}

A gamp~$\bA$ is \emph{distance-generated} if it satisfies the following condition:
\begin{enumerate}
\item[$(2)$] Every element of $\widetilde A$ is a finite join of elements of the form $\delta_{\bA}(x,y)$ where $x,y\in A^*$.
\end{enumerate}

A gamp~$\bA$ of lattices is \emph{distance-generated with chains} if
\begin{enumerate}
\item[$(3)$] For all $\alpha\in\widetilde A$ there are a positive integer $n$, and chains $x_0<y_0, x_1<y_1,\dots,x_{n-1}<y_{n-1}$ of~$\bA$ such that $\alpha=\bigvee_{k<n}\delta_{\bA}(x_k,y_k)$.
\end{enumerate}

A gamp~$\bA$ is \emph{congruence-tractable} (cf. Lemma~\ref{L:Condcompcongruences}) if
\begin{enumerate}
\item[$(4)$] For all $x,y, x_0,y_0,\dots,x_{m-1},y_{m-1}$ in~$A^*$, if $\delta_{\bA}(x,y)\le\bigvee_{k<m}\delta_{\bA}(x_k,y_k)$ then there are a positive integer~$n$, a list~$\vec z$ of parameters from~$A$, and terms $t_0$, \dots, $t_n$ such that the following equations are satisfied in~$A$.
\begin{align*}
x&=t_0(\vec x,\vec y,\vec z),\\
y&=t_n(\vec x,\vec y,\vec z),\\
t_k(\vec y,\vec x,\vec z)&=t_{k+1}(\vec x,\vec y,\vec z)\quad(\text{for all }k<n).
\end{align*}
\end{enumerate}

A morphism $\bff\colon\bA\to\bB$ of gamps is \emph{strong} if
\begin{enumerate}
\item[$(5)$] $f(A)$ is a strong partial subalgebra of~$B^*$ (cf. Definition~\ref{D:StrPartAlg}).
\end{enumerate}

A morphism $\bff\colon\bA\to\bB$ of gamps is \emph{congruence-cuttable} if
\begin{enumerate}
\item[(6)] $f(A)$ is a partial sublattice of~$B^*$ and given a finite subset~$X$ of~$\widetilde B$ and $x,y\in A$ with $\delta_{\bA}(f(x),f(y))\le\bigvee X$, there are $n<\omega$ and $f(x)=x_0,\dots,x_n=f(y)$ in~$B^*$ such that $\delta_{\bA}(x_k,x_{k+1})\in\widetilde B\dnw X$ for all $k<n$.
\end{enumerate}

A morphism $\bff\colon\bA\to\bB$ of gamps of the variety of all lattices is \emph{congruence-cuttable with chains} if
\begin{enumerate}
\item[$(7)$] $f(A)$ is a partial sublattice of~$B^*$ and given a finite subset~$X$ of~$\widetilde B$ and $x,y\in A$ with $\delta_{\bA}(f(x),f(y))\le\bigvee X$, there is a chain $x_0<\dots<x_n$ of~$\bB$ such that $x_0=f(x)\wedge f(y)$, $x_n=f(x)\vee f(y)$, and $\delta_{\bA}(x_k,x_{k+1})\in\widetilde B\dnw X$ for all $k<n$.
\end{enumerate}

Let $n\ge 1$ be an integer. A gamp~$\bA$ is \emph{congruence $n$-permutable} if the following statement holds:
\begin{enumerate}
\item[$(8)$] For all $x_0,x_1,\dots,x_n\in A^*$, there are $x_0=y_0,y_1,\dots,y_n=x_n$ in~$A$ such that:
 \begin{align*}
 \delta_{\bA}(y_k,y_{k+1})&\le\bigvee\famm{\delta_{\bA}(x_{i},x_{i+1})}{i< n\text{ even}},
 &&\text{for all $k<n$ odd},\\
 \delta_{\bA}(y_k,y_{k+1})&\le\bigvee\famm{\delta_{\bA}(x_{i},x_{i+1})}{i< n\text{ odd}},
 &&\text{for all $k<n$ even}.
 \end{align*}
\end{enumerate}
\end{definition}

The following lemma shows that chains in strong gamps of lattices behave the same way as chains in lattices.

\begin{lemma}\label{L:presqueordre}
Let $x_0<\dots<x_n$ a chain of a strong gamp of lattices~$\bB$. The equalities $x_i\wedge x_j=x_j\wedge x_i=x_i$ and $x_i\vee x_j=x_j\vee x_i=x_j$ hold in~$B$ for all $i\le j\le n$.

Moreover the following statements hold:
\begin{align}
\delta_{\bB}(x_k,x_{k'})&\le \delta_{\bB}(x_i,x_j),\quad\text{for all $i\le k\le k'\le j\le n$}.
\label{E:po1}\\
\delta_{\bB}(x_i,x_j)&=\bigvee_{i\le k<j}\delta_{\bB}(x_k,x_{k+1}),\quad\text{for all $i\le j\le n$}.\label{E:po2}
\end{align}
\end{lemma}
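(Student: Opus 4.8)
The plan is to work entirely inside the partial algebra~$B$, using the partial-subalgebra structure $B^*$ and the hypothesis that~$\bB$ is strong only through the fact that the meets and joins $x_i\wedge x_j$, $x_i\vee x_j$ are \emph{defined} in~$B$ (since $x_i,x_j\in B^*$ and $B^*$ is a strong partial subalgebra, Definition~\ref{D:StrPartAlg}). First I would establish the order-type statement: given the chain $x_0<\dots<x_n$, which by Definition~\ref{D:gampchain} means $x_i\wedge x_j=x_i$ in~$B$ for all $i\le j$, I want $x_j\wedge x_i=x_i$ and $x_i\vee x_j=x_j\vee x_i=x_j$. For $x_i\wedge x_j=x_j\wedge x_i$ one invokes the lattice identity $u\wedge v=v\wedge u$: this is an identity of~$\cL$, and since $\bB$ is a gamp of lattices, the pregamp $(B,\delta_\bB,\widetilde B)$ satisfies all identities of~$\cL$; taking the ideal $I=\{0\}$ (so $B/I=B$) in Definition~\ref{D:PregofV}, $B$ itself satisfies $u\wedge v=v\wedge u$. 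Both $x_i\wedge x_j$ and $x_j\wedge x_i$ are defined (strongness), hence equal. For the join equalities, note $x_i\wedge x_j=x_i$ together with the absorption identity $u\vee(u\wedge v)=u$, again valid in~$B$ and with all terms defined, gives $x_j\vee x_i=x_j\vee(x_j\wedge x_i)=x_j$; commutativity of~$\vee$ (same argument) finishes it. This also shows any two entries of the chain, and iterated meets/joins of entries, stay within the relevant defined region.

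Next, for~\eqref{E:po1}, fix $i\le k\le k'\le j$. I would like to write $\delta_\bB(x_k,x_{k'})\le\delta_\bB(x_i,x_j)$ by exhibiting a term computation linking the pairs. The cleanest route: observe that $x_k=x_k\wedge x_j$ (from the order part just proved, since $k\le j$) and $x_{k'}=x_{k'}\wedge x_j$; also $x_k=x_i\vee x_k$ and $x_{k'}=x_i\vee x_{k'}$. Hmm — more directly, use that $\delta_\bB$ is compatible with the binary terms $\wedge$ and $\vee$ (Lemma~\ref{L:DistCompTerms}): from $\delta_\bB$-compatibility of $t(u,v)=u\wedge v$ applied to $(x_k,x_j)$ and $(x_{k'},x_j)$ we get $\delta_\bB(x_k\wedge x_j,\,x_{k'}\wedge x_j)\le\delta_\bB(x_k,x_{k'})\vee\delta_\bB(x_j,x_j)=\delta_\bB(x_k,x_{k'})$, which only gives one direction; I need the reverse. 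The right way is to start from $x_i,x_j$: apply compatibility of $u\mapsto (x_i\vee u)\wedge x_j$ — a unary term with parameter — to the pair $(x_k,x_{k'})$… no: I would apply it to the pair $(x_i,x_j)$. Since $x_k=(x_i\vee x_k)\wedge x_j$ and in general, setting $u=x_i$ gives $(x_i\vee x_i)\wedge x_j=x_i$ and $u=x_j$ gives $(x_i\vee x_j)\wedge x_j=x_j$ (all via the order identities above, all terms defined). So with the term $t(u)=(x_i\vee u)\wedge x_j$ we have $t(x_i)=x_i$ — that is not $x_k$. Let me instead use $s(u,v)=(u\vee x_k)\wedge(v\wedge x_{k'})$-type expressions evaluated to land $x_i\mapsto x_k$, $x_j\mapsto x_{k'}$: concretely $s(u)= (u\vee x_k)\wedge x_{k'}$ gives $s(x_i)=(x_i\vee x_k)\wedge x_{k'}=x_k\wedge x_{k'}=x_k$ and $s(x_j)=(x_j\vee x_k)\wedge x_{k'}=x_j\wedge x_{k'}=x_{k'}$. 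All sub-evaluations are defined because every argument lies in $B^*$ and $B^*$ is strong. Then compatibility of the (parametrized) term~$s$ with~$\delta_\bB$, via Lemma~\ref{L:DistCompTerms}, yields $\delta_\bB(x_k,x_{k'})=\delta_\bB(s(x_i),s(x_j))\le\delta_\bB(x_i,x_j)$, which is~\eqref{E:po1}.

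Finally, for~\eqref{E:po2}, fix $i\le j$. The inequality $\bigvee_{i\le k<j}\delta_\bB(x_k,x_{k+1})\le\delta_\bB(x_i,x_j)$ is immediate from~\eqref{E:po1} applied with $(k,k')=(k,k+1)$. For the reverse, use the triangle inequality (Definition~\ref{D:distance}(3)) repeatedly along the chain $x_i,x_{i+1},\dots,x_j$: $\delta_\bB(x_i,x_j)\le\delta_\bB(x_i,x_{i+1})\vee\delta_\bB(x_{i+1},x_j)\le\dots\le\bigvee_{i\le k<j}\delta_\bB(x_k,x_{k+1})$, by induction on $j-i$. Combining the two inequalities gives~\eqref{E:po2}.

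\textbf{Main obstacle.} The only genuinely delicate point is making sure that every intermediate term evaluation in the proof of~\eqref{E:po1} (the expressions $x_i\vee x_k$, $(x_i\vee x_k)\wedge x_{k'}$, etc.) is actually \emph{defined} in the partial algebra~$B$ so that Lemma~\ref{L:DistCompTerms} applies; this is exactly where the hypothesis that~$\bB$ is a \emph{strong} gamp of lattices is used — all the relevant arguments sit in $B^*$, and strongness (Definition~\ref{D:propgamp}(1)) guarantees $B^*{}^{\ari(\ell)}\subseteq\Def_\ell(B)$ — together with the order identities from the first paragraph, which confirm that the intermediate values $x_i\vee x_k=x_k$ etc. again lie in $B^*$ and hence can be fed into the next operation. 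Once the bookkeeping of definedness is in place, the rest is the routine use of compatibility of $\delta_\bB$ with lattice terms and the triangle inequality.
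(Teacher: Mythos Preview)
Your proposal is correct and follows essentially the same approach as the paper. The only cosmetic difference is in~\eqref{E:po1}: the paper reaches $\delta_{\bB}(x_k,x_{k'})\le\delta_{\bB}(x_i,x_j)$ in two single-operation steps (first $\delta_{\bB}(x_k,x_{k'})=\delta_{\bB}(x_k\wedge x_{k'},x_j\wedge x_{k'})\le\delta_{\bB}(x_k,x_j)$ via Definition~\ref{D:distance}(4) for~$\wedge$, then $\delta_{\bB}(x_k,x_j)=\delta_{\bB}(x_i\vee x_k,x_j\vee x_k)\le\delta_{\bB}(x_i,x_j)$ via~$\vee$), whereas you compress both into the single parametrized term $s(u)=(u\vee x_k)\wedge x_{k'}$; either way the key point---which you correctly isolate---is that strongness plus the order identities from the first paragraph keep every intermediate value inside~$B^*$, so all required evaluations are defined.
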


\begin{proof}
Let $i\le j\le n$. As $x_i,x_j\in B^*$, all the elements $x_i\wedge x_j$, $x_j\wedge x_i$, $x_i\vee x_j$, and $x_j\vee x_i$ are defined in~$B$.

As $u\wedge v=v\wedge u$ is an identity of lattices, it follows that $x_j\wedge x_i=x_i\wedge x_j=x_i$.

As $x_j\wedge x_i=x_i$, $(x_i\wedge x_j)\vee x_j=x_i\vee x_j$ in~$B$, and $(u\wedge v)\vee v=v$ is an identity of lattices, $x_i\vee x_j=(x_i\wedge x_j)\vee x_j=x_j$. Similarly $x_j\vee x_i=x_j$.

Let $i\le k\le k'\le j\le n$. As $x_k\wedge x_{k'}=x_k$ and $x_j\wedge x_{k'}=x_{k'}$, we obtain from Definition~\ref{D:distance}(4) the inequality:
\[
\delta_{\bB}(x_k,x_{k'})=\delta_{\bB}(x_k\wedge x_{k'},x_j\wedge x_{k'})\le \delta_{\bB}(x_k,x_j).
\]
Similarly, as $x_k=x_k\vee x_i$ and $x_j=x_j\vee x_i$, the inequality $\delta_{\bB}(x_k,x_j)\le\delta_{\bB}(x_i,x_j)$ holds. Therefore~\eqref{E:po1} holds.

Let $i<j\le n$. Definition~\ref{D:distance}(3) implies the inequality:
\[
\delta_{\bB}(x_i,x_j)\le\bigvee_{i\le k<j}\delta_{\bB}(x_k,x_{k+1}).
\]
Moreover~\eqref{E:po1} implies $\delta_{\bB}(x_k,x_{k+1})\le\delta_{\bB}(x_i,x_j)$ for all $i\le k<j$, it follows that~\eqref{E:po2} is true.
\end{proof}

The following proposition gives a description of \emph{quotient} of gamps.

\begin{proposition}\label{P:quot}
Let~$\bA$ be a gamp of~$\cV$ and let~$I$ be an ideal of~$\widetilde{A}$. Let $(A,\delta_{\bA},\widetilde A)/I=(A/I,\delta_{\bA/I},\widetilde A/I)$ be the quotient \pregamp\ and set~$A^*/I=\setm{a/I}{a\in A^*}$ \textup(cf. Notation~\textup{\ref{N:quotpregamp}}\textup). The following statements hold:
\begin{enumerate}
\item $\bA/I=(A^*/I,A/I,\delta_{\bA/I},\widetilde{A}/I)$ is a gamp of~$\cV$.
\item The canonical projection $\bpi\colon (A,\delta_{\bA},\widetilde A)\to(A/I,\delta_{\bA/I},\widetilde A/I)$ of \pregamps\ is a morphism of gamps from~$\bA$ to $\bA/I$.
\item If $(A',\chi)$ is a realization of~$\bA$ in~$\cV$, then $(A'/\bigvee\chi(I),\chi')$ is a realization of~$\bA/I$ in~$\cV$, where 
\begin{align*}
\chi'\colon \widetilde{A}/I&\to\Conc(A'/\bigvee\chi(I))\\
d/I&\mapsto \chi(d)/\bigvee\chi(I).
\end{align*}
Moreover, if $(A',\chi)$ is an isomorphic realization of~$\bA$, then $(A'/\bigvee\chi(I),\chi')$ is an isomorphic realization of~$\bA/I$.
\item If~$\bA$ is strong, then $\bA/I$ is strong.
\item If~$\bA$ is distance-generated \pup{resp., distance-generated with chains} then $\bA/I$ is distance-generated \pup{resp., distance-generated with chains}.
\item If~$\bA$ is distance-generated and congruence-tractable, then $\bA/I$ is congruence-tractable.
\item Let $n\ge 2$ an integer. If~$\bA$ is congruence $n$-permutable then $\bA/I$ is congruence $n$-permutable.
\item If~$\bA$ is a gamp of lattices and $x_0\le x_1\le\dots\le x_n$ is a chain of~$\bA$, then $x_0/I\le x_1/I\le\dots\le x_n/I$ is a chain of $\bA/I$.
\end{enumerate}
\end{proposition}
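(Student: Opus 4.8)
The plan is to build everything on top of Proposition~\ref{P:partialquotien}, which already supplies the quotient \pregamp\ $(A/I,\delta_{\bA/I},\widetilde A/I)$ together with the canonical projection $\bpi=(\pi,\widetilde\pi)$, an ideal-induced morphism of \pregamps\ with $0$-kernel~$I$; the eight items are then, for the most part, verifications assembled from this.

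For item~$(1)$, the triple $(A/I,\delta_{\bA/I},\widetilde A/I)$ is a \pregamp\ by Proposition~\ref{P:partialquotien}, and it is a \pregamp\ of~$\cV$ because it is an ideal-induced image (via~$\bpi$) of the \pregamp\ of~$\cV$ $(A,\delta_{\bA},\widetilde A)$, so Corollary~\ref{C:quotsubpregamp} applies; since $A^*/I=\pi(A^*)$ is a partial subalgebra of $A/I$ (an image of a partial subalgebra under a morphism of partial algebras), $\bA/I$ is a gamp of~$\cV$. Item~$(2)$ follows at once: $\bpi$ is a morphism of \pregamps, and $\pi(A^*)=A^*/I$ is a partial subalgebra of the starred component of $\bA/I$, which is $A^*/I$ itself. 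Items $(4)$, $(5)$, and~$(8)$ are short. For~$(4)$: if $\vec a$ is an $\ari(\ell)$-tuple from~$A^*$ then $\vec a\in\Def_\ell(A)$ by strongness, hence $\vec a/I\in\Def_\ell(A/I)$. For~$(8)$: if $x_i\wedge x_j=x_i$ is defined in~$A$ then $x_i/I\wedge x_j/I=(x_i\wedge x_j)/I=x_i/I$ is defined in $A/I$. For~$(5)$: apply the \jzh\ $\widetilde\pi$ to a representation $\alpha=\bigvee_k\delta_{\bA}(x_k,y_k)$ with $x_k,y_k\in A^*$; in the ``with chains'' version the $x_k<y_k$ are chains of~$\bA$, so by item~$(8)$ the $x_k/I\le y_k/I$ are chains of $\bA/I$, and one discards the indices~$k$ for which $x_k/I=y_k/I$. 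Item~$(7)$ is of the same kind: lift $\bar x_0,\dots,\bar x_n\in A^*/I$ to $x_0,\dots,x_n\in A^*$, apply condition~$(8)$ of Definition~\ref{D:propgamp} for~$\bA$ to obtain witnesses $y_0,\dots,y_n\in A$, project these by~$\pi$, and derive the two families of inequalities by applying~$\widetilde\pi$ (a \jzh, hence monotone).

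The substantive item is~$(3)$. Given a realization $(A',\chi)$ of~$\bA$, set $\theta=\bigvee\chi(I)\in\Con A'$; since~$I$ is directed, $\chi(I)$ is up-directed and $\theta=\bigcup\chi(I)$, so $A'/\theta\in\cV$. One first checks that $a/I\mapsto a/\theta$ presents $A/I$ as a partial subalgebra of $A'/\theta$: both well-definedness and injectivity rest on the fact that the elements of $\Conc A'$ are compact, so that $\Theta_{A'}(a,b)=\chi(\delta_{\bA}(a,b))\le\theta$ together with directedness of $\chi(I)$ gives $\chi(\delta_{\bA}(a,b))\le\chi(u)$ for some $u\in I$, whence $\delta_{\bA}(a,b)\le u\in I$ because the \jze\ $\chi$ reflects the order. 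Next, $\chi'$ is obtained by composing~$\chi$ with the canonical \jzh\ $\Conc A'\to\Conc(A'/\theta)$ (which by Lemma~\ref{L:quotDiagAlgAndSem} is ideal-induced with $0$-kernel $(\Conc A')\dnw\theta$) and then factoring through $\widetilde A/I$, which is legitimate because $I$ lies in the $0$-kernel of the composite; checking that $\chi'$ is a well-defined \jze\ with $\chi'(\delta_{\bA/I}(x/I,y/I))=\Theta_{A'/\theta}(x/\theta,y/\theta)$ uses the ideal-induced description of $\Conc A'\to\Conc(A'/\theta)$ together with the same compactness and order-reflection facts once more. For the ``isomorphic'' clause one identifies $\Conc(A'/\theta)\cong(\Conc A')/((\Conc A')\dnw\theta)\cong\widetilde A/\chi^{-1}((\Conc A')\dnw\theta)$ (via Lemmas~\ref{L:IIsem} and~\ref{L:quotDiagAlgAndSem}) and shows, again by compactness of finitely generated congruences and the order-reflecting property of~$\chi$, that $\chi^{-1}((\Conc A')\dnw\theta)=I$; the resulting isomorphism is precisely~$\chi'$.

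Finally, item~$(6)$ is where the distance-generatedness of~$\bA$ enters. Starting from $\delta_{\bA/I}(\bar x,\bar y)\le\bigvee_{k<m}\delta_{\bA/I}(\bar x_k,\bar y_k)$, with all entries lifted to~$A^*$, unravelling the order in $\widetilde A/I$ produces $u\in I$ with $\delta_{\bA}(x,y)\le\bigvee_{k<m}\delta_{\bA}(x_k,y_k)\vee u$; writing $u=\bigvee_{j<l}\delta_{\bA}(p_j,q_j)$ with $p_j,q_j\in A^*$ turns this into an inequality of exactly the shape demanded by condition~$(4)$ of Definition~\ref{D:propgamp} for~$\bA$, yielding terms $t_0,\dots,t_n$ and parameters $\vec z$ from~$A$. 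Applying~$\pi$ and noting that $\delta_{\bA}(p_j,q_j)\le u\in I$ forces $p_j/I=q_j/I$, one sees that the~$\vec p$- and~$\vec q$-slots in the images of the~$t_j$ all receive one and the same tuple from $A/I$; so that tuple, together with $\pi(\vec z)$, can be absorbed into the parameter list, leaving terms that witness congruence-tractability of $\bA/I$. I expect item~$(3)$ to be the main obstacle — specifically the repeated appeals to compactness of finitely generated congruences together with the order-reflecting property of the \jze~$\chi$, which are what let one pin down the $0$-kernel of the induced map and hence identify the quotient $\widetilde A/I$ — with the index- and parameter-bookkeeping in item~$(6)$ the other delicate point.
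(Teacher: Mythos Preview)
Your proposal is correct and follows essentially the same approach as the paper: item~$(1)$ via Corollary~\ref{C:quotsubpregamp}, items $(4)$, $(5)$, $(7)$, $(8)$ as routine verifications, and item~$(6)$ by lifting, writing the slack $u\in I$ as a join of distances of elements of~$A^*$ via distance-generatedness, applying congruence-tractability, and then absorbing the extra variables into the parameter list after projection. The only cosmetic difference is in~$(3)$: the paper verifies directly that~$\chi'$ is a well-defined \jze\ (using compactness exactly as you do) and handles the isomorphic clause more quickly by observing that surjectivity of~$\chi$ immediately yields surjectivity of~$\chi'$, whereas you route the same facts through the ideal-induced machinery of Lemmas~\ref{L:IIsem} and~\ref{L:quotDiagAlgAndSem}.
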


\begin{proof}
The statement $(1)$ follows from Corollary~\ref{C:quotsubpregamp}. Denote by $\bpi\colon (A,\delta_{\bA},\widetilde A)\to(A/I,\delta_{\bA/I},\widetilde A/I)$ the canonical projection of \pregamps. The fact that $\pi(A^*)=A^*/I$ as partial algebras follows from the definition of~$A^*/I$. Thus $(3)$ holds.

Let $(A',\chi)$ be a realization of~$\bA$ in~$\cV$, let $d,d'\in \widetilde A$ such that $d/I=d'/I$. Hence there exists $u\in I$ such that $d\vee u=d'\vee u$, it follows that $\chi(d)/\chi(u)=\chi(d')/\chi(u)$, hence $\chi(d)/\bigvee\chi(I)=\chi(d')/\bigvee\chi(I)$. Therefore the map $\chi'$ is well-defined. It is easy to check that $\chi'$ is a \jzh. Assume that $\chi'(d/I)\le\chi'(d'/I)$ for some $d,d'\in \widetilde A$. Hence $\chi(d)/\bigvee\chi(I)\le\chi(d')/\bigvee\chi(I)$, so $\chi(d)\le\chi(d')\vee\bigvee\chi(I)$. However, $\chi(d)$ is a compact congruence of~$A'$, so there exist $u\in I$ such that $\chi(d)\le\chi(d')\vee\chi(u)=\chi(d'\vee u)$, as $\chi$ is an embedding, it follows that $d\le d'\vee u$, so $d/I\le d'/I$. Therefore $\chi'$ is an embedding.

Let $x,y\in A$. The following equivalences hold:
\begin{align*}
x/I=y/I &\Longleftrightarrow \delta_{\bA}(x,y)\in I\\
&\Longleftrightarrow \chi(\delta_{\bA}(x,y))\le\bigvee\chi(I)\\
&\Longleftrightarrow\Theta_{A'}(x,y)\le \bigvee\chi(I)\\
&\Longleftrightarrow x/\bigvee\chi(I)=y/\bigvee\chi(I).
\end{align*}
So we can identify~$A/I$ with the corresponding subset of~$A'/\bigvee\chi(I)$. Moreover, given~$\vec a\in\Def_{\ell}(A/I)$, there exists~$\vec x\in\Def_{\ell}(A)$ such that~$\vec a=\vec x/I$, hence $\ell(\vec a)=\ell(\vec x)/I$ is identified with $\ell(\vec x)/\bigvee\chi(I)=\ell(\vec x/\bigvee\chi(I))$. So this identification preserves the operations. 

Now assume that the realization is isomorphic. Then~$\chi$ is surjective, thus~$\chi'$ is surjective, and thus bijective, hence the realization $(A'/\bigvee\chi(I),\chi')$ is isomorphic. Therefore $(3)$ holds.

The proofs of the statements $(4)$, $(5)$, $(7)$, and $(8)$ are straightforward.

Assume that~$\bA$ is distance-generated and congruence-tractable. Let $x,y\in A^*$, let $m<\omega$, let~$\vec x,\vec y$ be $m$-tuples of~$A^*$. Assume that:
\[
\delta_{\bA/I}(x/I,y/I)\le\bigvee_{k<m}\delta_{\bA/I}(x_k/I,y_k/I).
\]
It follows that there exists $u\in I$ with:
\[
\delta_{\bA}(x,y)\le u\vee\bigvee_{k<m}\delta_{\bA}(x_k,y_k).
\]
However, as~$\bA$ is distance-generated, there exist $x_0',\dots,x_{p-1}',y_0',\dots,y_{p-1}'$ in~$A^*$ such that $u=\bigvee_{k<p}\delta_{\bA}(x_k',y_k')$. As $\delta_{\bA}(x_k',y_k')\le u\in I$, $x_k'/I=y_k'/I$ for all $k<p$. Moreover the following inequality holds:
\[
\delta_{\bA}(x,y)\le \bigvee_{k<m}\delta_{\bA}(x_k,y_k)\vee\bigvee_{k<p}\delta_{\bA}(x_k',y_k').
\]
As~$\bA$ is congruence-tractable, there are a positive integer~$n$, a list~$\vec z$ of parameters from~$A$, and terms $t_0$, \dots, $t_n$ such that, the following equations are satisfied in~$A$:
\begin{align*}
x&=t_0(\vec x,\vec x',\vec y,\vec y',\vec z),\\
y&=t_n(\vec x,\vec x',\vec y,\vec y',\vec z),\\
t_k(\vec y,\vec y',\vec x,\vec x',\vec z)&=t_{k+1}(\vec x,\vec x',\vec y,\vec y',\vec z)\quad(\text{for all }k<n).
\end{align*}
Put $t_k'(\vec a,\vec b,\vec c,\vec d)=t_k(\vec a,\vec d,\vec b,\vec d,\vec c)$, for all tuples $\vec a,\vec b,\vec c,\vec d$ and all $k\le n$. As~$\vec x'/I=\vec y'/I$ the following equations are satisfied in~$A/I$:
\begin{align*}
x/I&=t_0'(\vec x/I,\vec y/I,\vec z/I,\vec x'/I),\\
y/I&=t_n'(\vec x/I,\vec y/I,\vec z/I,\vec x'/I),\\
t_k'(\vec y/I,\vec x/I,\vec z/I,\vec x'/I)&=t_{k+1}'(\vec x/I,\vec y/I,\vec z/I,\vec x'/I)\quad(\text{for all }k<n).
\end{align*}
Therefore $\bA/I$ is congruence-tractable.
\end{proof}

\begin{definition}
The gamp $\bA/I$ described in Proposition~\ref{P:quot} is a \emph{quotient of~$\bA$}, the morphism~$\bpi$ is \emph{the canonical projection}.
\end{definition}

The following proposition describes how morphisms factorize through quotients of gamps.

\begin{proposition}\label{P:quotarrow}
Let $\bff\colon \bA\to\bB$ be a morphism of gamps, let~$I$ be an ideal of~$\widetilde{A}$, and let $J$ be an ideal of~$\widetilde{B}$. Assume that~$\widetilde{f}(I)\subseteq J$ and denote by
 \[
 \bgg\colon(A/I,\delta_{\bA/I},\widetilde A/I)\to(B/J,\delta_{\bB/J},\widetilde B/J)
 \]
the morphism of \pregamps\ induced by $\bff$. The following statements hold.
\begin{enumerate}
\item $\bgg\colon\bA/I\to\bB/J$ is a morphism of gamps.
\item If $\bff$ is strong, then $\bgg$ is strong.
\item If $\bff$ is congruence-cuttable then $\bgg$ is congruence-cuttable.
\item If~$\bA$ and~$\bB$ are gamps of lattices and $\bff$ is congruence-cuttable with chains, then $\bgg$ is congruence-cuttable with chains.
\end{enumerate}
\end{proposition}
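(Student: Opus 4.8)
The plan is to transport everything through the canonical projections $\bpi_I\colon\bA\to\bA/I$ and $\bpi_J\colon\bB\to\bB/J$. Recall from Proposition~\ref{P:projsmpa} that $\bgg$ is given by $g(x/I)=f(x)/J$ on partial algebras and $\widetilde g(\alpha/I)=\widetilde f(\alpha)/J$ on semilattices, and that $\bpi_J\circ\bff=\bgg\circ\bpi_I$; in particular $g(A/I)=\pi_J(f(A))$ and $g(A^*/I)=\pi_J(f(A^*))$ as partial algebras. For $(1)$: since $\bff$ is a morphism of gamps, $f(A^*)$ is a partial subalgebra of $B^*$, so its image $\pi_J(f(A^*))=g(A^*/I)$ under the morphism of partial algebras $\pi_J$ is a partial subalgebra of $\pi_J(B^*)=B^*/J$ (Notation~\ref{N:imagepartsubalgebra}); hence $\bgg$ is a morphism of gamps. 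For $(2)$: if $\bff$ is strong, i.e. $\bigl(f(A)\bigr)^{\ari(\ell)}\subseteq\Def_{\ell}(B^*)$ for all $\ell\in\sL$, then every $\ari(\ell)$-tuple of $g(A/I)$ has the form $f(\vec a)/J$ for some $\ari(\ell)$-tuple $\vec a$ of~$A$; since $f(\vec a)\in\Def_{\ell}(B^*)$ we get $f(\vec a)/J\in\Def_{\ell}(B^*/J)$ (Notation~\ref{N:quotpregamp}). Thus $g(A/I)$ is a strong partial subalgebra of $B^*/J$ (Definition~\ref{D:StrPartAlg}), i.e. $\bgg$ is strong.

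For $(3)$ the main device is lifting the relevant inequality from the quotient semilattice $\widetilde B/J$ back to $\widetilde B$. First, $g(A/I)=\pi_J(f(A))$ is a partial sublattice of $B^*/J$, being the image of the partial sublattice $f(A)$ of $B^*$ under the lattice morphism $\pi_J$. Now assume $\bff$ is congruence-cuttable, let $Y$ be a finite subset of $\widetilde B/J$, and let $x/I,y/I\in A/I$ (with $x,y\in A$) satisfy $\delta_{\bB/J}\bigl(g(x/I),g(y/I)\bigr)\le\bigvee Y$. Pick a finite $X_0\subseteq\widetilde B$ with $\set{\beta/J\mid\beta\in X_0}=Y$ (take $X_0=\es$ when $Y=\es$). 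The hypothesis says $\delta_{\bB}(f(x),f(y))/J\le\bigl(\bigvee X_0\bigr)/J$ in $\widetilde B/J$, so by the description of $\theta_J$ (Lemma~\ref{L:quotSem} with $I:=J$) there is $w\in J$ with $\delta_{\bB}(f(x),f(y))\le\bigl(\bigvee X_0\bigr)\vee w=\bigvee\bigl(X_0\cup\set{w}\bigr)$. Applying congruence-cuttability of $\bff$ to the finite set $X_0\cup\set{w}\subseteq\widetilde B$ yields $n<\omega$ and $f(x)=b_0,\dots,b_n=f(y)$ in~$B^*$ with $\delta_{\bB}(b_k,b_{k+1})\in\widetilde B\dnw(X_0\cup\set{w})$ for every $k<n$. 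Setting $z_k=b_k/J\in B^*/J$ we get $z_0=g(x/I)$, $z_n=g(y/I)$, and $\delta_{\bB/J}(z_k,z_{k+1})=\delta_{\bB}(b_k,b_{k+1})/J$; choosing $\xi\in X_0\cup\set{w}$ with $\delta_{\bB}(b_k,b_{k+1})\le\xi$, either $\xi\in X_0$ and then $\delta_{\bB/J}(z_k,z_{k+1})\le\xi/J\in Y$, or $\xi=w\in J$ and then $\delta_{\bB/J}(z_k,z_{k+1})=0\in\widetilde B/J\dnw Y$ (the latter provided $Y\ne\es$; if $Y=\es$ then $\bigvee Y=0$ forces $g(x/I)=g(y/I)$, so $n=0$ works). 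Hence $\bgg$ is congruence-cuttable.

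For $(4)$ one repeats the argument of $(3)$ verbatim, but invokes congruence-cuttability \emph{with chains} of $\bff$ on $X_0\cup\set{w}$, obtaining a chain $b_0<\dots<b_n$ of~$\bB$ with $b_0=f(x)\wedge f(y)$, $b_n=f(x)\vee f(y)$, and $\delta_{\bB}(b_k,b_{k+1})\in\widetilde B\dnw(X_0\cup\set{w})$. By Proposition~\ref{P:quot}$(8)$, $b_0/J\le\dots\le b_n/J$ is a chain of $\bB/J$; since $\pi_J$ is a morphism of partial algebras and $f(x)\wedge f(y)$, $f(x)\vee f(y)$ are defined in~$B$, the endpoints become $b_0/J=g(x/I)\wedge g(y/I)$ and $b_n/J=g(x/I)\vee g(y/I)$. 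Deleting repeated consecutive vertices (a deleted vertex coincides with its predecessor, so the relevant $\delta_{\bB/J}$-values are unchanged) yields a strict chain, possibly a single point, with the same endpoints, and the distance bounds land in $\widetilde B/J\dnw Y$ by the case analysis of $(3)$. Hence $\bgg$ is congruence-cuttable with chains (note $\bA/I$, $\bB/J$ are gamps of lattices by Proposition~\ref{P:quot}$(1)$).

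The only genuinely non-routine point is the lifting step used in $(3)$ and $(4)$: trading the inequality in the quotient semilattice $\widetilde B/J$ for an inequality in $\widetilde B$ at the cost of one additional ``slack'' generator $w\in J$, feeding $w$ into the finite parameter set passed to the cuttability property of $\bff$, and checking that after projection $w$ contributes only the value~$0$, so that the resulting distances land in $\widetilde B/J\dnw Y$. The verifications for $(1)$ and $(2)$, and the ``partial sublattice'' bookkeeping, are routine diagram chases using Propositions~\ref{P:projsmpa} and~\ref{P:quot} together with Notation~\ref{N:imagepartsubalgebra}.
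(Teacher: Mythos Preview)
Your proof is correct and follows essentially the same approach as the paper: lift the finite set from $\widetilde B/J$ to $\widetilde B$, absorb the quotienting error into one extra element $w\in J$, apply the cuttability hypothesis of $\bff$ to the enlarged finite set, and project back, noting that the $w$-contribution vanishes modulo~$J$. Your treatment of~$(4)$ is in fact more careful than the paper's (which simply says the proof is similar to~$(3)$): you explicitly handle the passage from the non-strict projected chain to a strict one by deleting repeated consecutive vertices and observing that this does not disturb the distance bounds.
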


\begin{proof}
The equality $g(A^*/I)=f(A^*)/J$ of partial algebras holds. Moreover, as $f(A^*)$ is a partial subalgebra of~$B$, $g(A^*/I)$ is a partial subalgebra of~$B/J$. Therefore $(1)$ holds.

The statement $(2)$ follows from the definitions of a quotient gamp (cf. Proposition~\ref{P:quot}) and of a strong morphism (Definition~\ref{D:propgamp}).

Assume that $\bff$ is congruence-cuttable. As $f(A)$ is a partial subalgebra of~$B^*$ it follows that $g(A/I)$ is a partial subalgebra of~$B^*/J$. Let~$X$ be a finite subset of~$\widetilde B$, let $x,y\in A$ such that $\delta_{\bB/J}(g(x/I),g(y/I))\le \bigvee X/I$. If $X=\emptyset$, then $x/I=y/I$, hence the case is immediate.

If $X\not=\emptyset$, let $u\in J$ such that $\delta_{\bB}(g(x),g(y))\le u\vee\bigvee X$. Put $X'=X\cup\set{u}$. There are $n<\omega$ and $f(x)=x_0,\dots,x_n=f(y)$ in~$B^*$ such that $\delta_{\bB}(x_k,x_{k+1})\in \widetilde B\dnw X'$ for each $k<n$. If $\delta_{\bB}(x_k,y_k)\le u$, then $\delta_{\bB/J}(x_k/J,y_k/J)=\delta_{\bB}(x_k,y_k)/J=0/J\in (\widetilde B/J)\dnw X/J$. Otherwise $\delta_{\bB}(x_k,y_k)\in \widetilde B\dnw X$, thus $\delta_{\bB/J}(x_k/J,y_k/J)=\delta_{\bB}(x_k,y_k)/J\in (\widetilde B/J)\dnw X/J$. Therefore $(3)$ holds.

The proof of $(4)$ is similar to the proof of $(3)$.
\end{proof}

We introduce in the following definitions a functor $\GA\colon\cV\to\GAMP(\cV)$, a functor $\CG\colon \GAMP(\cV)\to\SEM$ and functors $\PGGL,\PGGR\colon\GAMP(\cV)\to\MPALG(\cV)$.

\begin{definition}\label{D:gampfunctors}
Let~$A$ be a member of a variety~$\cV$ of algebras. Then the quadruple $\GA(A)=(A,A,\Theta_A,\Conc A)$ is a gamp of~$\cV$ (we recall that $\Theta_A(x,y)$ denotes the smallest congruence that identifies $x$ and $y$). If $f\colon A\to B$ is a morphism of algebras, then $\GA(f)=(f,\Conc f)$ is a morphism of gamps from $\GA(A)$ to $\GA(B)$. It defines a functor from the category~$\cV$ to the category $\GAMP(\cV)$.

A gamp~$\bA$ is an \emph{algebra} if~$\bA$ is isomorphic to $\GA(B)$ for some~$B$. A gamp~$\bA$ is an \emph{algebra} of a variety~$\cV$ if~$\bA$ is isomorphic to $\GA(B)$ for some~$B\in\cV$. 

Let~$\bA$ be a gamp of~$\cV$, we set $\CG(\bA)=\widetilde{A}$. Let $\bff\colon\bA\to\bB$ be a morphism of gamps of~$\cV$, we set $\CG(\bff)=\widetilde f$. This defines a functor $\CG\colon\GAMP(\cV)\to\SEM$.

Let~$\bA$ be a gamp of~$\cV$, we set $\PGGR(\bA)=(A,\delta_{\bA},\widetilde A)$. Let $\bff\colon\bA\to\bB$ be a morphism of gamps of~$\cV$, we put $\PGGR(\bff)=\bff$ as a morphism of \pregamps. This defines a functor $\PGGR \colon\GAMP(\cV)\to\MPALG(\cV)$.

Let~$\bA$ be a gamp of~$\cV$, we set $\PGGL(\bA)=(A^*,\delta_{\bA}\res (A^*)^2,\widetilde A)$. Let $\bff\colon\bA\to\bB$ be a morphism of gamps of~$\cV$, we denote by $\PGGL(\bff)$ the restriction $(f,\widetilde f)\colon \PGGL(\bA)\to\PGGL(\bB)$. This defines a functor $\PGGL \colon\GAMP(\cV)\to\MPALG(\cV)$.
\end{definition}

\begin{remark}\label{R:functor}
The following assertions hold.
\begin{enumerate}
\item The following equations, between the functors introduced in Definition~\ref{D:pregampcategoryandfunctor} and Definition~\ref{D:gampfunctors}, are satisfied:
\begin{align*}
\CG\circ\GA&=\Conc=\CPG\circ\PGA,\\
\PGGR\circ\GA&=\PGGL\circ\GA=\PGA,\\
\CPG\circ\PGGR&=\CPG\circ\PGGL=\CG.
\end{align*}

\item If~$A$ is a subalgebra of~$B$, then, in general, $\GA(A)$ is not a subgamp of $\GA(B)$. The different ``congruences'' of a subgamp can be extended in a natural way to different ``congruences'' of the gamp.

\item Let $\bA$ be a gamp. If~$\bA$ is an algebra, then there is a unique, up to isomorphism, algebra~$B$ such that $\bA\cong\GA(B)$. Moreover if $\bA$ is a gamp in a variety~$\cV$, then $B\in\cV$. Indeed~$A$ is an algebra and $\bA\cong\GA(A)$.

\item Let $\bff\colon\bA\to\bB$ be a morphism of gamps. If $\bA$ and $\bB$ are algebras, then $f\colon A\to B$ is a morphism of algebras.

\item Let $\vec\bA=\famm{\bA_p,\bff_{p,q}}{p\le q\text{ in }P}$ be a diagram of gamps. If $\bA_p$ is an algebra, for all $p\in P$, then $\vec A=\famm{A_p,f_{p,q}}{p\le q\text{ in }P}$ is a diagram of algebras, moreover $\vec\bA\cong \GA\circ\vec A$.

\item Let~$B\in\cV$ and let~$I$ be an ideal of $\Conc B$. There is an isomorphism $\bff\colon\GA(B)/I\cong\GA(B/\bigvee I)$ satisfying $f(x/I)=x/\bigvee I$ and~$\widetilde f(\alpha/I)=\alpha/\bigvee I$ for each $x\in B$ and each $\alpha\in\Conc B$.

\item Let~$B$ be an algebra. Then~$B$ is congruence $n$-permutable if and only if $\GA(B)$ is congruence $n$-permutable (this follows immediately from Proposition~\ref{P:CP-PR}).
\end{enumerate}
\end{remark}

\begin{lemma}\label{L:GAMPhasLimit}
Let~$\cV$ be a variety of algebras. The category $\GAMP(\cV)$ has all directed colimits. Suppose that we are given a directed poset $P$, a $P$-indexed diagram~$\vec \bA=\famm{\bA_p,\bff_{p,q}}{p\le q\text{ in }P}$ in $\GAMP(\cV)$, and a directed colimit cocone:
\[
\famm{(A,\delta_{\bA},\widetilde A),\bff_p}{p\in P}=\varinjlim \famm{(A_p,\delta_{\bA_p},\widetilde A_p),\bff_{p,q}}{p\le q\text{ in }P}\quad\text{ in $\MPALG_{\sL}$.}
\]
Put~$A^*=\bigcup_{p\in P}f(A_p^*)$ with its natural structure of partial algebra \textup(cf. Lemma~\textup{\ref{L:PALGhasLimit}}\textup), then $\bA=(A^*,A,\delta_{\bA},\widetilde{A})$ is a gamp of~$\cV$, $\bff_{p}\colon\bA_p\to\bA$ is a morphism of gamps, and the following is a directed colimit cocone:
\[
\famm{\bA,\bff_p}{p\in P}=\varinjlim \famm{\bA_p,\bff_{p,q}}{p\le q\text{ in }P},\quad\text{ in }\GAMP(\cV).
\]
Moreover the following statements hold:
\begin{enumerate}
\item If $\bA_p$ is distance-generated for each $p\in P$, then~$\bA$ is distance-generated.
\item If~$\cV$ is a variety of lattices and $\bA_p$ is distance-generated with chains for each $p\in P$, then~$\bA$ is distance-generated with chains.
\item Let~$n$ be a positive integer. If $\bA_p$ is congruence $n$-permutable for each $p\in P$, then~$\bA$ is congruence $n$-permutable.
\end{enumerate}
\end{lemma}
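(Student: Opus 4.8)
\smallskip

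The plan is to build the colimit on top of the colimit of pregamps already supplied by Lemma~\ref{L:MPALGhasLimit}. First I would form the directed colimit cocone $\famm{(A,\delta_{\bA},\widetilde A),\bff_p}{p\in P}=\varinjlim\famm{(A_p,\delta_{\bA_p},\widetilde A_p),\bff_{p,q}}{p\le q\text{ in }P}$ in $\MPALG_{\sL}$; since each $(A_p,\delta_{\bA_p},\widetilde A_p)$ is a pregamp of~$\cV$, Corollary~\ref{C:limidentity} ensures that $(A,\delta_{\bA},\widetilde A)$ is a pregamp of~$\cV$. Next I would put $A^*=\bigcup_{p\in P}f_p(A_p^*)$ and observe that this union is the directed colimit of the partial algebras $A_p^*$ in $\SET$ along the restrictions $f_{p,q}\res A_p^*$ (these map $A_p^*$ into $A_q^*$ because each $\bff_{p,q}$ is a morphism of gamps), so Lemma~\ref{L:PALGhasLimit} equips~$A^*$ with a partial-algebra structure, and this structure makes~$A^*$ a partial subalgebra of~$A$: one has $\Def_{\ell}(A^*)=\bigcup_p f_p(\Def_{\ell}(A_p^*))\subseteq\bigcup_p f_p(\Def_{\ell}(A_p))=\Def_{\ell}(A)$ and the operations on~$A^*$ coincide with those of~$A$. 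Hence $\bA=(A^*,A,\delta_{\bA},\widetilde A)$ is a gamp of~$\cV$, and as $f_p(A_p^*)$ is a partial subalgebra of~$A^*$ for every~$p$, each $\bff_p$ is a morphism of gamps from~$\bA_p$ to~$\bA$.

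For the universal property, let $\famm{\bB,\bgg_p}{p\in P}$ be a cocone over $\vec\bA$ in $\GAMP(\cV)$. Viewing it as a cocone in $\MPALG_{\sL}$, Lemma~\ref{L:MPALGhasLimit} produces a unique morphism of pregamps $\bhh\colon(A,\delta_{\bA},\widetilde A)\to(B,\delta_{\bB},\widetilde B)$ with $\bhh\circ\bff_p=\bgg_p$ for all~$p$. It remains to check that $\bhh$ is a morphism of gamps. Using Notation~\ref{N:imagepartsubalgebra} and $h\circ f_p=g_p$ one gets $h(A^*)=\bigcup_p h(f_p(A_p^*))=\bigcup_p g_p(A_p^*)$ as partial algebras, whence $\Def_{\ell}(h(A^*))=\bigcup_p\Def_{\ell}(g_p(A_p^*))\subseteq\Def_{\ell}(B^*)$ since each $\bgg_p$ is a morphism of gamps; thus $h(A^*)$ is a partial subalgebra of~$B^*$. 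Uniqueness of $\bhh$ in $\GAMP(\cV)$ follows from its uniqueness in $\MPALG_{\sL}$. This proves that $\GAMP(\cV)$ has directed colimits, computed as described.

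The three ``moreover'' assertions all reduce to transporting the witnessing data through a single coprojection~$\bff_p$. For~(1): given $\alpha\in\widetilde A$, choose $p$ and $\alpha_p\in\widetilde A_p$ with $\widetilde f_p(\alpha_p)=\alpha$, write $\alpha_p=\bigvee_{k<n}\delta_{\bA_p}(x_k,y_k)$ with $x_k,y_k\in A_p^*$, and apply $\widetilde f_p$ to obtain $\alpha=\bigvee_{k<n}\delta_{\bA}(f_p(x_k),f_p(y_k))$ with all $f_p(x_k),f_p(y_k)\in A^*$. Assertion~(2) is identical, noting that a chain $x<y$ of~$\bA_p$ is carried by $f_p$ to a chain $f_p(x)\le f_p(y)$ of~$\bA$ (compatibility of $f_p$ with~$\wedge$), and that those indices~$k$ for which $f_p$ collapses $x_k$ and $y_k$ contribute~$0$ to the join and may be discarded. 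For~(3): given $x_0,\dots,x_n\in A^*$, directedness of~$P$ yields a common~$p$ and preimages $x_0',\dots,x_n'\in A_p^*$; congruence $n$-permutability of~$\bA_p$ gives $x_0'=y_0',\dots,y_n'=x_n'$ in~$A_p$ satisfying Definition~\ref{D:propgamp}$(8)$, and setting $y_k=f_p(y_k')$ and applying the order-preserving map $\widetilde f_p$ to those inequalities (via $\delta_{\bA}(f_p(u),f_p(v))=\widetilde f_p(\delta_{\bA_p}(u,v))$) yields the required $y_0,\dots,y_n\in A$.

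I expect the only genuine friction to be the partial-algebra bookkeeping: confirming that the structure Lemma~\ref{L:PALGhasLimit} puts on $A^*=\bigcup_p f_p(A_p^*)$ agrees with the one inherited from~$A$, and that the factoring pregamp morphism $\bhh$ really does send $A^*$ into $B^*$. Everything beyond that is a direct transcription of Lemma~\ref{L:MPALGhasLimit} together with routine unwinding of the definitions in Section~\ref{S:Gamp}.
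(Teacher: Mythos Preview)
Your proposal is correct and follows essentially the same route as the paper: build the pregamp colimit via Lemma~\ref{L:MPALGhasLimit}, invoke Corollary~\ref{C:limidentity} to stay in~$\cV$, adjoin $A^*=\bigcup_p f_p(A_p^*)$, and push witnesses for (1)--(3) through a single~$\bff_p$. The paper simply declares the universal property ``easy to check'' and says (2)--(3) are ``similar'' to~(1), whereas you actually spell out why the factoring morphism~$\bhh$ sends $A^*$ into~$B^*$ and handle the chain-collapsing detail in~(2); these are exactly the bookkeeping points you flagged, and your treatment of them is fine.
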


\begin{proof}
It follows from Corollary~\ref{C:limidentity} that $(A,\delta_{\bA},\widetilde A)$ is a \pregamp\ of~$\cV$. Moreover~$A^*$ is a partial subalgebra of~$A$. Hence $\bA=(A^*,A,\delta_{\bA},\widetilde{A})$ is a gamp of~$\cV$. As $f(A_p^*)$ is a partial subalgebra of~$A^*$, $\bff_{p}\colon\bA_p\to\bA$ is a morphism of gamps. It is easy to check that the following is a directed colimit cocone:
\[
\famm{\bA,\bff_p}{p\in P}=\varinjlim \famm{\bA_p,\bff_{p,q}}{p\le q\text{ in }P},\quad\text{ in }\GAMP(\cV).
\]

Assume that $\bA_p$ is distance-generated for each $p\in P$. Let $\alpha\in\widetilde A$, then there are $p\in P$ and $\beta\in\widetilde A_p$ such that $\alpha=\widetilde f_p(\beta)$. As $\bA_p$ is distance-generated, there are an integer $n\ge 0$ and $n$-tuple $\vec x,\vec y$ of~$A_p^*$ such that $\beta=\bigvee_{k<n}\delta_{\bA_p}(x_k,y_k)$. Therefore the following equalities hold:
\[
\alpha=\widetilde f_p(\beta)=\widetilde f_p\left(\bigvee_{k<n}\delta_{\bA_p}(x_k,y_k)\right)=\bigvee_{k<n}\widetilde f_p(\delta_{\bA_p}(x_k,y_k))=\bigvee_{k<n}\delta_{\bA}(f_p(x_k),f_p(y_k)).
\]
Thus~$\bA$ is distance-generated.

The proofs of $(2)$ and $(3)$ are similar.
\end{proof}

As an immediate application we obtain the following corollary.

\begin{corollary}\label{C:GCGpreslim}
The functors $\GA$, $\CG$, $\PGGL$, and $\PGGR$ preserves directed colimits.
\end{corollary}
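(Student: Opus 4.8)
The plan is to read everything off Lemma~\ref{L:GAMPhasLimit}, which already gives an explicit description of a directed colimit in $\GAMP(\cV)$, together with Lemma~\ref{L:MPALGhasLimit} and Lemma~\ref{L:PALGhasLimit} (which compute directed colimits in $\MPALG_{\sL}$ in terms of those in $\PALG_{\sL}$ and $\SEM$) and Remark~\ref{R:CPGandPGApreslim} (that $\PGA$ preserves directed colimits). First I would fix a directed poset $P$, a $P$-indexed diagram $\vec\bA=\famm{\bA_p,\bff_{p,q}}{p\le q\text{ in }P}$ in $\GAMP(\cV)$, and the colimit cocone $\famm{\bA,\bff_p}{p\in P}=\varinjlim\vec\bA$ in $\GAMP(\cV)$ built as in Lemma~\ref{L:GAMPhasLimit}; I record that its \pregamp\ part $(A,\delta_{\bA},\widetilde A)$ is the colimit of the $\PGGR(\bA_p)$ in $\MPALG_{\sL}$, that $A^*=\bigcup_{p\in P}f_p(A_p^*)$ with the corresponding structure of partial algebra, and that $\famm{\widetilde A,\widetilde f_p}{p\in P}$ is a directed colimit cocone in $\SEM$ by Lemma~\ref{L:MPALGhasLimit}.

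The functors $\CG$ and $\PGGR$ are then handled at once: applying $\CG$ to this cocone yields $\famm{\widetilde A,\widetilde f_p}{p\in P}$, a directed colimit cocone in $\SEM$; applying $\PGGR$ yields $\famm{(A,\delta_{\bA},\widetilde A),\bff_p}{p\in P}$, a directed colimit cocone in $\MPALG_{\sL}$, hence also in $\MPALG(\cV)$ because the latter is a full subcategory of $\MPALG_{\sL}$ closed under directed colimits (Corollary~\ref{C:limidentity}).

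For $\PGGL$ I would first note that each $\bff_{p,q}$, being a morphism of gamps, restricts to a morphism of partial algebras $A_p^*\to A_q^*$, so $\famm{A_p^*,f_{p,q}\res A_p^*}{p\le q\text{ in }P}$ is a diagram in $\PALG_{\sL}$. Since $A$ is the $\SET$-colimit of the $A_p$ and $f_{p,q}(A_p^*)\subseteq A_q^*$ for all $p\le q$, the canonical comparison map from the $\SET$-colimit of the $A_p^*$ to $A^*=\bigcup_{p\in P}f_p(A_p^*)$ is a bijection; by Lemma~\ref{L:PALGhasLimit} this makes $\famm{A^*,f_p\res A_p^*}{p\in P}$ a directed colimit cocone in $\PALG_{\sL}$, with the very structure $A^*$ carries inside $\bA$. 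Combining this with the $\SEM$-colimit of the $\widetilde A_p$ and with the identities $\delta_{\bA}(f_p(x),f_p(y))=\widetilde f_p(\delta_{\bA_p}(x,y))$, Lemma~\ref{L:MPALGhasLimit} shows that $\PGGL$ sends our cocone to a directed colimit cocone in $\MPALG_{\sL}$; and since each $\PGGL(\bA_p)$ is a \subpregamp\ of the \pregamp\ $\PGGR(\bA_p)$ of $\cV$, hence a \pregamp\ of $\cV$ (Corollary~\ref{C:quotsubpregamp}), this colimit is also computed in $\MPALG(\cV)$.

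For $\GA$ I would instead start from a directed diagram $\vec A=\famm{A_p,f_{p,q}}{p\le q\text{ in }P}$ in $\cV$ and its colimit $\famm{A,f_p}{p\in P}=\varinjlim\vec A$ in $\cV$, which is also the colimit in the category of all $\sL$-algebras and in $\SET$ since $\cV$ is closed under directed colimits. Computing $\varinjlim(\GA\circ\vec A)$ in $\GAMP(\cV)$ by Lemma~\ref{L:GAMPhasLimit}: its \pregamp\ part is $\varinjlim\PGA(A_p)=\PGA(A)=(A,\Theta_A,\Conc A)$ by Remark~\ref{R:CPGandPGApreslim}, and its starred part is $\bigcup_{p\in P}f_p(A_p)=A$, which, because each $A_p$ is a genuine algebra, carries the full algebra structure by Lemma~\ref{L:PALGhasLimit}(2). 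Hence $\varinjlim(\GA\circ\vec A)=(A,A,\Theta_A,\Conc A)=\GA(A)$ with cocone maps $\GA(f_p)$, so $\GA$ preserves directed colimits. The only points that call for care are the two ``$\PALG_{\sL}$-bookkeeping'' verifications — that $\bigcup_p f_p(A_p^*)$ is the $\PALG_{\sL}$-colimit of the $A_p^*$ (injectivity of the comparison map and agreement of the domains of definition) and that $\bigcup_p f_p(A_p)$ carries full domains — but both are routine once one uses that gamp morphisms send starred parts into starred parts and that a directed $\SET$-colimit is the union of the images.
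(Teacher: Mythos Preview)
Your argument is correct and follows the same approach as the paper: the paper's proof simply says that preservation by $\CG$, $\PGGL$, and $\PGGR$ follows from the explicit descriptions of directed colimits in Lemma~\ref{L:GAMPhasLimit} and Lemma~\ref{L:MPALGhasLimit}, and that $\GA$ preserves directed colimits because $\Conc$ does. You have unpacked these references in more detail (in particular the $\PALG_{\sL}$ bookkeeping for $\PGGL$ and the starred part of $\GA$), but the route is the same.
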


\begin{proof}
It follows from the description of directed colimits of gamps (cf. Lemma~\ref{L:GAMPhasLimit}) and \pregamps\ (cf. Lemma~\ref{L:MPALGhasLimit}) that $\CG$, $\PGGL$, and $\PGGR$ preserve directed colimits. As $\Conc$ preserves directed colimits, $\GA$ also preserves directed colimits.
\end{proof}

\begin{definition}\label{D:quotdiag}
Let~$\cV$ be a variety of algebras, let $P$ be a poset, and let~$\vec \bA=\famm{\bA_p,\bff_{p,q}}{p\le q\text{ in }P}$ in $\GAMP(\cV)$. An \emph{ideal} of~$\vec\bA$ is an ideal of $\CG\circ\vec A$. It consists of a family~$\vec I=(I_p)_{p\in P}$ such that $I_p$ is an ideal of~$\widetilde A_p$ and~$\widetilde f_{p,q}(I_p)\subseteq I_q$ for all $p\le q$ in~$P$.

We denote by $\vec\bA/\vec I=\famm{\bA_p/I_p,\bgg_{p,q}}{p\le q\text{ in }P}$, where $\bgg_{p,q}\colon \bA_p/I_p\to \bA_q/I_q$ is induced by~$\bff_{p,q}$, for all $p\le q$ in $P$.

The diagram $\vec\bA/\vec I$ is a \emph{quotient} of~$\vec\bA$.
\end{definition}

\begin{remark}\label{R:quotassoc}
In the context of Definition~\ref{D:quotdiag} the following equalities hold:
\[
(\PGGR\circ\vec \bA)/\vec I=\PGGR\circ(\vec \bA/\vec I).
\]
\[
(\CG\circ\vec \bA)/\vec I=\CG\circ(\vec \bA/\vec I).
\]
Moreover, up to a natural identification (cf. Notation~\ref{N:quotpregamp})
\[
(\PGGL\circ\vec \bA)/\vec I=\PGGL\circ(\vec \bA/\vec I).
\]
\end{remark}

\begin{definition}\label{D:partiallifting}
Let~$\cV$ be a variety of algebras, let $P$ be a poset. A \emph{partial lifting} in~$\cV$ is a diagram $\vec\bA=\famm{\bA_p,\bff_{p,q}}{p\le q\text{ in }P}$ in $\GAMP(\cV)$ such that the following statements hold:
\begin{enumerate}
\item The gamp $\bA_p$ is strong, congruence-tractable, distance-generated, and has an isomorphic realization (cf. Definitions~\ref{D:propgamp} and \ref{D:gamp}), for each $p\in P$.
\item The morphisms~$\bff_{p,q}$ is strong and congruence-cuttable (cf. Definition~\ref{D:propgamp}), for all $p<q$ in $P$.
\end{enumerate}

The partial lifting is a \emph{lattice partial lifting} if~$\cV$ is a variety of lattices, $\bB_p$ is distance-generated with chains for each $p\in P$, and~$\bff_{p,q}$ is congruence-cuttable with chains for all $p<q$ in $P$.

A partial lifting~$\vec \bA$ is \emph{congruence $n$-permutable} if $\bA_p$ is congruence $n$-permutable for each $p\in P$.

Let~$\vec S=(S_p,\sigma_{p,q})$ be a diagram in $\SEM$. A \emph{partial lifting of~$\vec S$} is a partial lifting of~$\vec\bA$ such that $\CG\circ\vec \bA\cong\vec S$.
\end{definition}

\begin{remark}\label{Rk:Iso2Equal}
If~$\vec \bA$ is a partial lifting of~$\vec S$, then there exists a diagram $\vec{\bA}'\cong\vec\bA$ such that $\CG\circ\vec{\bA}'=\vec S$. Hence we can assume that $\CG\circ\vec\bA=\vec S$.
\end{remark}

The following result expresses the fact that a subdiagram or a quotient of a partial lifting is a partial lifting.

\begin{lemma}
Let~$\cV$ be a variety of algebras, let $P$ be a poset, and let~$\vec \bA$ be a partial lifting in~$\cV$ of a diagram~$\vec S$. The following statements hold:
\begin{enumerate}
\item Let~$\vec I$ be an ideal of~$\vec\bA$; then $\vec\bA/\vec I$ is a partial lifting of~$\vec S/I$.
\item Let $Q\subseteq P$; then $\vec\bA\res Q$ is a partial lifting of~$\vec S\res Q$.
\end{enumerate}
\end{lemma}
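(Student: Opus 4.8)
The plan is to check, for each of the two assertions, the defining clauses of a partial lifting in Definition~\ref{D:partiallifting} — the object conditions (each $\bA_p$ strong, congruence-tractable, distance-generated, with an isomorphic realization), the arrow conditions (each $\bff_{p,q}$ strong and congruence-cuttable), and the identification $\CG\circ\vec\bA\cong\vec S$ — by quoting the structural results about quotients and restrictions proved earlier, namely Propositions~\ref{P:quot} and~\ref{P:quotarrow}, together with Remark~\ref{R:quotassoc} and Remark~\ref{Rk:Iso2Equal}. The whole argument is essentially bookkeeping; no new idea is needed.

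For $(1)$, I would first use Remark~\ref{Rk:Iso2Equal} to replace $\vec\bA$ by an isomorphic diagram with $\CG\circ\vec\bA=\vec S$, so that $\vec I$ is literally an ideal of $\vec S$ and $\vec S/\vec I$ is meaningful. On objects: fix $p\in P$. Since $\bA_p$ is strong, distance-generated, congruence-tractable and has an isomorphic realization, Proposition~\ref{P:quot}$(4)$, $(5)$, $(6)$, $(3)$ yield respectively that $\bA_p/I_p$ is strong, distance-generated, congruence-tractable, and has an isomorphic realization — here it matters that the definition of a partial lifting builds in distance-generation of $\bA_p$, which is exactly the hypothesis that makes Proposition~\ref{P:quot}$(6)$ applicable. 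On arrows: for $p<q$ in $P$ we have $\widetilde f_{p,q}(I_p)\subseteq I_q$ by the definition of an ideal of $\vec\bA$, so Proposition~\ref{P:quotarrow} applies, and its clauses $(2)$ and $(3)$ give that $\bgg_{p,q}$ is strong and congruence-cuttable. Finally, $\CG\circ(\vec\bA/\vec I)=(\CG\circ\vec\bA)/\vec I=\vec S/\vec I$ by Remark~\ref{R:quotassoc}. This establishes both clauses of Definition~\ref{D:partiallifting}, so $\vec\bA/\vec I$ is a partial lifting of $\vec S/\vec I$. The same scheme, via Proposition~\ref{P:quot}$(5)$,$(7)$,$(8)$ and Proposition~\ref{P:quotarrow}$(4)$, handles the ``lattice partial lifting'' and ``congruence $n$-permutable'' refinements should they be wanted.

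For $(2)$, the argument is immediate: every condition appearing in Definition~\ref{D:partiallifting} is a condition on the individual gamps $\bA_p$ for $p\in Q$ and on the individual morphisms $\bff_{p,q}$ for $p<q$ in $Q$, and all of these are inherited verbatim from $\vec\bA$ upon restricting the index set; moreover $\CG\circ(\vec\bA\res Q)=(\CG\circ\vec\bA)\res Q\cong\vec S\res Q$.

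I expect no real obstacle here. The only points that require a little attention are: invoking Remark~\ref{Rk:Iso2Equal} so that the quotient $\vec S/\vec I$ is well-defined, and noting that the congruence-tractability of the quotient objects genuinely uses the distance-generated hypothesis that is part of the notion of a partial lifting (so that Proposition~\ref{P:quot}$(6)$ rather than a naive restriction argument is what is being applied).
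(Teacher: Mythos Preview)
Your proposal is correct and follows essentially the same approach as the paper, which simply cites Proposition~\ref{P:quot} and Proposition~\ref{P:quotarrow} for~$(1)$ and calls~$(2)$ immediate. Your version is more explicit---in particular, your observation that congruence-tractability of the quotient requires the distance-generated hypothesis (so that Proposition~\ref{P:quot}$(6)$ applies) and your use of Remark~\ref{Rk:Iso2Equal} to make $\vec S/\vec I$ meaningful are points the paper leaves implicit.
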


\begin{proof}
The statement $(1)$ follows from Proposition~\ref{P:quot} and Proposition~\ref{P:quotarrow}. The statement $(2)$ is immediate.
\end{proof}

\begin{lemma}\label{L:LimPartLiftIsAlgebra}
Let~$\cV$ be a variety of algebras, let $P$ be a directed poset with no maximal element, and let~$\vec\bA=\famm{\bA_p,\bff_{p,q}}{p\le q\text{ in }P}$ be a partial lifting in~$\cV$. Consider a colimit cocone:
 \[
 \famm{\bA,\bff_p}{p\in P}=\varinjlim\vec\bA\quad\text{ in }\GAMP(\cV).
\]
Then~$\bA$ is an algebra in~$\cV$. Moreover, for any~$n\ge 2$, if all~$\bA_p$ are congruence $n$-permutable, then the algebra corresponding to~$\bA$ is congruence $n$-permutable.
\end{lemma}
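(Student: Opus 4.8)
The plan is to deduce the lemma from the directed-colimit theory of pregamps, principally Corollary~\ref{C:directlimisalgebra}. Fix a colimit cocone $\famm{\bA,\bff_p}{p\in P}=\varinjlim\vec\bA$ in $\GAMP(\cV)$; by Lemma~\ref{L:GAMPhasLimit} we may take $\bA=(A^*,A,\delta_{\bA},\widetilde A)$ with $\famm{(A,\delta_{\bA},\widetilde A),\bff_p}{p\in P}=\varinjlim\PGGR\circ\vec\bA$ in $\MPALG_{\sL}$ and $A^*=\bigcup_{p\in P}f_p(A_p^*)$. I would organise the proof in three steps: (i) $A$ is a total algebra and $A^*=A$; (ii) $\widetilde A\cong\Conc A$ compatibly with the distance, so that $\bA$ is isomorphic to $\GA(A)$ with $A\in\cV$; (iii) congruence $n$-permutability of $\bA$ passes to $A$.

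For step~(i): in a partial lifting every $\bff_{p,q}$ with $p<q$ is strong, which by Definition~\ref{D:propgamp} means $f_{p,q}(A_p)$ is a \emph{strong} partial subalgebra of $A_q^*$; hence $f_{p,q}\colon A_p\to A_q$ and each restriction $A_p^*\to A_q^*$ is a strong morphism of partial algebras, and $f_{p,q}(A_p)\subseteq A_q^*$. Since $P$ has no maximal element, Lemma~\ref{L:PALGhasLimit}(3) (applied to $\vec A$ and to the subdiagram of the $A_p^*$) shows that $A$ and $A^*$ are total algebras; and for $x=f_p(a)\in A$, choosing $q>p$ gives $x=f_q(f_{p,q}(a))\in A^*$, whence $A^*=A$ as partial algebras.

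For step~(ii): each $\bA_p$ is distance-generated as a gamp, so (as $A_p^*\subseteq A_p$) the pregamp $\PGGR(\bA_p)$ is distance-generated; and for $p<q$ the pregamp morphism $\PGGR(\bff_{p,q})$ is congruence-tractable in the sense of Definition~\ref{D:CongTract}. The one thing that needs verification is this last claim: given $x,y,x_0,y_0,\dots,x_{m-1},y_{m-1}\in A_p$ with $\delta_{\bA_q}(f_{p,q}(x),f_{p,q}(y))\le\bigvee_{k<m}\delta_{\bA_q}(f_{p,q}(x_k),f_{p,q}(y_k))$, strongness of $\bff_{p,q}$ places all of $f_{p,q}(x),f_{p,q}(y),f_{p,q}(x_k),f_{p,q}(y_k)$ in $A_q^*$, so the congruence-tractability of the \emph{gamp} $\bA_q$ (Definition~\ref{D:propgamp}(4)) supplies exactly the terms $t_0,\dots,t_n$, the parameters $\vec z$ from $A_q$, and the identities holding in $A_q$ required of $\PGGR(\bff_{p,q})$. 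Since $f_{p,q}$ is strong for $p<q$ and $P$ has no maximal element, Corollary~\ref{C:directlimisalgebra} then applies to $\PGGR\circ\vec\bA$: $A$ is an algebra and there is an isomorphism $\phi\colon\Conc A\to\widetilde A$ with $\phi(\Theta_A(x,y))=\delta_{\bA}(x,y)$ for all $x,y\in A$. Together with $A^*=A$, the pair $(\id_A,\phi^{-1})$ is then an isomorphism of gamps $\bA\to\GA(A)$; as $\bA$ is a gamp of $\cV$ the algebra $A$ satisfies all identities of $\cV$, hence $A\in\cV$ (cf.\ Remark~\ref{R:functor}(3)), and $\bA$ is an algebra in $\cV$.

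For step~(iii): if all $\bA_p$ are congruence $n$-permutable then so is $\bA$ by Lemma~\ref{L:GAMPhasLimit}(3); applying $\phi^{-1}$ to the witnessing inequalities among the $\delta_{\bA}$-values turns them into the inequalities among principal congruences of $A$ from Proposition~\ref{P:CP-PR}, so $A$ --- the algebra corresponding to $\bA$ --- is congruence $n$-permutable. I do not anticipate a genuine obstacle; the only slightly delicate point is the translation in step~(ii) of ``congruence-tractable gamp plus strong morphism'' into ``congruence-tractable morphism of pregamps'', and that is immediate once one notes that strongness forces the relevant images into $A_q^*$. (Congruence-cuttability of the $\bff_{p,q}$, though part of the definition of a partial lifting, is not actually used here.)
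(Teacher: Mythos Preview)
Your proposal is correct and follows essentially the same route as the paper's proof: both reduce to Corollary~\ref{C:directlimisalgebra} for the pregamp colimit, then use $f_{p,q}(A_p)\subseteq A_q^*$ to get $A^*=A$, and finish with Lemma~\ref{L:GAMPhasLimit}(3) plus Proposition~\ref{P:CP-PR} for the permutability claim. The paper simply asserts that the pregamp morphism $\PGGR(\bff_{p,q})$ is congruence-tractable (and strong) without justification; your explicit argument---that strongness of the gamp morphism forces the relevant elements into $A_q^*$, where the gamp-level congruence-tractability of $\bA_q$ applies---is exactly the missing step, and your remark that congruence-cuttability is unused is also accurate.
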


\begin{proof}
The morphism $\bff_{p,q}\colon(A_p,\delta_{\bA_p},\widetilde A_p)\to(A_q,\delta_{\bA_q},\widetilde A_q)$ of \pregamps\ is both strong and congruence-tractable for all $p<q$ in $P$. It follows from the description of colimits in $\GAMP(\cV)$ (cf. Lemma~\ref{L:GAMPhasLimit} and Corollary~\ref{C:directlimisalgebra}) that~$A$ is an algebra and there is an isomorphism $\phi\colon\Conc A\to\widetilde A$ satisfying:
\[\phi(\Theta_A(x,y))=\delta_{\bA}(x,y),\quad\text{ for all $x,y\in A$.}\]
As~$A^*=\bigcup_{p\in P}f_p(A_p^*)$ and $f_{p,q}(A_p)\subseteq A_q^*$ for all $p<q$ in $P$, it follows that~$A^*=A$. Therefore $(\id_A,\phi)\colon\GA(A)\to\bA$ is an isomorphism of gamps.

Now assume that all~$\bA_p$ are congruence $n$-permutable. It follows from~Lemma~\ref{L:GAMPhasLimit} that~$\bA$ is congruence $n$-permutable. As~$\bA$ is an algebra, the conclusion follows from Proposition~\ref{P:CP-PR}.
\end{proof}

\section{Locally finite properties}\label{S:LFP}

The aim of this section is to prove Lemma~\ref{L:LSGamp}, which is a special version of the Buttress Lemma of~\cite{GiWe1} adapted to gamps for the functor $\CG$.

We use the following generalizations of $(2)$, $(3)$, $(4)$, $(6)$, and $(7)$ of Definition~\ref{D:propgamp}.

\begin{definition}
Fix a gamp~$\bA$ in a variety~$\cV$, a \jzs~$S$, and a \jzh\ $\phi\colon\widetilde A\to S$.
The gamp~$\bA$ is \emph{distance-generated through} $\phi$ if the following statement holds:
\begin{enumerate}
\item[$(2')$] For each $s\in S$ there are $n<\omega$ and $n$-tuple~$\vec x,\vec y$ of~$A^*$ such that:
\[
s=\bigvee_{k<n}\phi(\delta_{\bA}(x_k,y_k))
\]
\end{enumerate}
If~$\bA$ is a gamp of lattices, we say~$\bA$ is \emph{distance-generated with chains through $\phi$} if 
\begin{enumerate}
\item[$(3')$] For each $s\in S$ there are $n<\omega$ and chains $x_0<y_0,\dots,x_{n-1}<y_{n-1}$ of~$\bA$ such that:
\[
s=\bigvee_{k<n}\phi(\delta_{\bA}(x_k,y_k))
\]
\end{enumerate}
The gamp~$\bA$ is \emph{congruence-tractable through $\phi$} if the following statement holds:
\begin{enumerate}
\item[$(4')$] Let $x,y, x_0,y_0,\dots,x_{m-1},y_{m-1}$ in~$A^*$, if $\phi(\delta_{\bA}(x,y))\le\bigvee_{k<m}\phi(\delta_{\bA}(x_k,y_k))$ then there are a positive integer~$n$, a list~$\vec z$ of parameters from~$A$, and terms $t_0$, \dots, $t_n$ such that, the following equations are satisfied in~$A$:
\begin{align*}
\phi(\delta_{\bA}(x,t_0(\vec x,\vec y,\vec z)))&=0,\\
\phi(\delta_{\bA}(y,t_n(\vec x,\vec y,\vec z)))&=0,\\
\phi(\delta_{\bA}(t_j(\vec y,\vec x,\vec z),t_{j+1}(\vec x,\vec y,\vec z)))&=0\quad(\text{for all }j<n).
\end{align*}
\end{enumerate}
A morphism $\bff\colon\bU\to\bA$ of gamps is \emph{congruence-cuttable through $\phi$} if the following statement holds:
\begin{enumerate}
\item[$(6')$] Given $X\subseteq S$, given $x,y$ in~$U$, if $\phi(\widetilde{f}(\delta_{\bA}(x,y)))\le\bigvee X$ then there exist $n<\omega$ and $f(x)=x_0, x_1,\dots, x_n=f(y)$ in~$A^*$ such that $\delta_{\bA}(x_k,x_{k+1})\in S\dnw X$ for all $k<n$.
\end{enumerate}
A morphism $\bff\colon\bU\to\bA$ of gamps of lattices is \emph{congruence-cuttable with chains through $\phi$} if the following statement holds:
\begin{enumerate}
\item[$(7')$] Given $X\subseteq S$, given $x,y\in U$, if $\phi(\widetilde{f}(\delta_{\bA}(x,y)))\le\bigvee X$ then there are $n<\omega$ and a chain $x_0<x_1<\dots<x_n$ of~$\bA$ such that $x_0=f(x)\wedge f(y)$, $x_n=f(x)\vee f(y)$, and $\phi(\delta_{\bA}(x_k,x_{k+1}))\in S\dnw X$ for all $k<n$.
\end{enumerate}
\end{definition}

\begin{remark}
A gamp~$\bA$ is distance-generated through $\id_{\widetilde A}$ if and only if~$\bA$ is distance-generated.

A gamp~$\bA$ is congruence-tractable through $\id_{\widetilde A}$ if and only if~$\bA$ is congruence-tractable.

A morphism $\bff\colon\bU\to\bA$ of gamps is congruence-cuttable through $\id_{\widetilde A}$ if and only if $\bff$ is congruence-cuttable.

A morphism $\bff\colon\bU\to\bA$ of gamps of lattices is congruence-cuttable with chains through $\id_{\widetilde A}$ if and only if $\bff$ is congruence-cuttable with chains.
\end{remark}

\begin{lemma}\label{L:gampthrough1}
Let~$\bA$ be a gamp in a variety of algebras~$\cV$, let $\phi\colon\widetilde A\to S$ an ideal-induced \jzh, and put $I=\ker_0\phi$.
\begin{enumerate}
\item If~$\bA$ is distance-generated through $\phi$, then $\bA/I$ is distance-generated.
\item If~$\bA$ is distance-generated with chains through $\phi$, then $\bA/I$ is distance-generated with chains.
\item If~$\bA$ is congruence-tractable through $\phi$ then $\bA/I$ is congruence-tractable.
\end{enumerate}
\end{lemma}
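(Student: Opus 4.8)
The plan is to reduce all three statements to the canonical projection by means of the isomorphism supplied by Lemma~\ref{L:IIsem}. Since $\phi\colon\widetilde A\to S$ is ideal-induced with $0$-kernel $I$, Lemma~\ref{L:IIsem} gives an isomorphism $\psi\colon\widetilde A/I\to S$ of \jzs s with $\psi(d/I)=\phi(d)$ for all $d\in\widetilde A$; equivalently $\psi^{-1}\circ\phi$ is the canonical projection $\widetilde A\to\widetilde A/I$. I would first record a translation dictionary, all immediate from this identity, from the description of the quotient gamp in Proposition~\ref{P:quot}, and from the construction of the partial operations on $A/I$ (Proposition~\ref{P:partialquotien}): (i) $\delta_{\bA/I}(a/I,b/I)=\delta_{\bA}(a,b)/I$, and the partial-subalgebra component of $\bA/I$ is $A^*/I=\setm{a/I}{a\in A^*}$; (ii) for $d,d_0,\dots,d_{m-1}\in\widetilde A$ one has $\phi(d)\le\bigvee_{k<m}\phi(d_k)$ if and only if $d/I\le\bigvee_{k<m}d_k/I$, because $\psi$ is an order-isomorphism; (iii) $\phi(d)=0$ if and only if $d\in I$, if and only if $d/I=0$; (iv) if a term $t(\vec x,\vec z)$ is defined in $A$ then $t(\vec x/I,\vec z/I)$ is defined in $A/I$ and equals $t(\vec x,\vec z)/I$, since the canonical projection $A\to A/I$ is a morphism of partial algebras.

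For (1): let $\bar s\in\widetilde A/I$ and put $s=\psi(\bar s)\in S$. Applying $(2')$ there are $n<\omega$ and $n$-tuples $\vec x,\vec y$ of $A^*$ with $s=\bigvee_{k<n}\phi(\delta_{\bA}(x_k,y_k))$; applying $\psi^{-1}$ and using (i) gives $\bar s=\bigvee_{k<n}\delta_{\bA}(x_k,y_k)/I=\bigvee_{k<n}\delta_{\bA/I}(x_k/I,y_k/I)$, a finite join of distances between elements of $A^*/I$. Hence $\bA/I$ is distance-generated. For (2) I would argue the same way from $(3')$, obtaining chains $x_0<y_0,\dots,x_{n-1}<y_{n-1}$ of $\bA$; by Proposition~\ref{P:quot}(8) each $x_k/I\le y_k/I$ is a chain of $\bA/I$, and after discarding the indices $k$ with $x_k/I=y_k/I$ (those terms contribute $\delta_{\bA/I}(x_k/I,y_k/I)=0$ to the join, by (iii)) one is left with honest chains $x_k/I<y_k/I$ whose distances still join to $\bar s$. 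Thus $\bA/I$ is distance-generated with chains.

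For (3): take $a,b,a_0,b_0,\dots,a_{m-1},b_{m-1}$ in $A^*/I$ with $\delta_{\bA/I}(a,b)\le\bigvee_{k<m}\delta_{\bA/I}(a_k,b_k)$, and write $a=x/I$, $b=y/I$, $a_k=x_k/I$, $b_k=y_k/I$ with all of $x,y,x_k,y_k$ in $A^*$. By (i) and the definition of $\theta_I$ there is $u\in I$ with $\delta_{\bA}(x,y)\le u\vee\bigvee_{k<m}\delta_{\bA}(x_k,y_k)$; applying $\phi$ and using $\phi(u)=0$ yields $\phi(\delta_{\bA}(x,y))\le\bigvee_{k<m}\phi(\delta_{\bA}(x_k,y_k))$. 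Now $(4')$ provides a positive integer $n$, a list $\vec z$ of parameters from $A$, and terms $t_0,\dots,t_n$ with all the relevant term evaluations defined in $A$ and $\phi(\delta_{\bA}(x,t_0(\vec x,\vec y,\vec z)))=0$, $\phi(\delta_{\bA}(y,t_n(\vec x,\vec y,\vec z)))=0$, and $\phi(\delta_{\bA}(t_j(\vec y,\vec x,\vec z),t_{j+1}(\vec x,\vec y,\vec z)))=0$ for all $j<n$. By (iv) the terms $t_j(\vec x/I,\vec y/I,\vec z/I)$ and $t_j(\vec y/I,\vec x/I,\vec z/I)$ are defined in $A/I$ and equal $t_j(\vec x,\vec y,\vec z)/I$, $t_j(\vec y,\vec x,\vec z)/I$ respectively, and by (i) and (iii) each vanishing condition says that the two arguments of the corresponding $\delta_{\bA}$ are $\theta_I$-equivalent. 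This gives exactly $x/I=t_0(\vec x/I,\vec y/I,\vec z/I)$, $y/I=t_n(\vec x/I,\vec y/I,\vec z/I)$, and $t_j(\vec y/I,\vec x/I,\vec z/I)=t_{j+1}(\vec x/I,\vec y/I,\vec z/I)$ for $j<n$, which is precisely the witness required for congruence-tractability of $\bA/I$, with parameter list $\vec z/I$ from $A/I$.

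The whole argument is essentially a change of notation along $\psi$, so I do not expect a serious obstacle. The only point that needs a moment's care is the collapse of strict chains in part (2): a strict chain of $\bA$ need not remain strict in $\bA/I$. This is harmless, since the collapsed links carry zero distance and can simply be deleted from the join; but it is the one place where the translation is not purely formal and must be mentioned explicitly.
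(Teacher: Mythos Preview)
Your proof is correct and follows essentially the same route as the paper: use the isomorphism $\psi\colon\widetilde A/I\to S$ induced by the ideal-induced map $\phi$ to translate each ``through $\phi$'' condition into the corresponding plain condition for $\bA/I$. Your treatment is in fact slightly more careful than the paper's, which dismisses part~(2) as ``similar'' without mentioning the collapse of strict chains, and in part~(3) you take a small detour through an element $u\in I$ where the paper applies the order-isomorphism $\psi$ directly; both differences are cosmetic.
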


\begin{proof}
As $\phi$ is ideal-induced, it induces an isomorphism $\xi\colon \widetilde A/I\to S$.

Assume that~$\bA$ is distance-generated through $\phi$. Let $d\in \widetilde A/I$, put $s=\xi(d)$. There are $n<\omega$ and $n$-tuples~$\vec x,\vec y$ in~$A^*$ such that $s=\bigvee_{k<n}\phi(\delta_{\bA}(x_k,y_k))$. It follows that $s=\bigvee_{k<n}\xi(\delta_{\bA/I}(x_k/I,y_k/I))$, so $d=\bigvee_{k<n}\delta_{\bA/I}(x_k/I,y_k/I)$. Therefore $\bA/I$ is distance-generated.

The case where~$\bA$ is distance-generated with chains through $\phi$ is similar.

Assume that~$\bA$ is congruence-tractable through $\phi$. Let $x,y, x_0,y_0,\dots,x_{m-1},y_{m-1}$ in~$A^*$ such that:
\[
\delta_{\bA/I}(x/I,y/I)\le\bigvee_{k<m}\delta_{\bA/I}(x_k/I,y_k/I).
\]
Thus $\phi(\delta_{\bA/I}(x,y))\le\bigvee_{k<m}\phi(\delta_{\bA}(x_k,y_k))$. Hence there are a positive integer~$n$, a list~$\vec z$ of parameters from~$A$, and terms $t_0$, \dots, $t_n$ such that the following equations are satisfied in~$A$:
\begin{align*}
\phi(\delta_{\bA}(x,t_0(\vec x,\vec y,\vec z)))&=0,\\
\phi(\delta_{\bA}(y,t_n(\vec x,\vec y,\vec z)))&=0,\\
\phi(\delta_{\bA}(t_k(\vec y,\vec x,\vec z),t_{k+1}(\vec x,\vec y,\vec z)))&=0\quad(\text{for all }k<n).
\end{align*}
Those equations imply that the following equations are satisfied in~$A/I$:
\begin{align*}
x/I&=t_0(\vec x/I,\vec y/I,\vec z/I),\\
y/I&=t_n(\vec x/I,\vec y/I,\vec z/I),\\
t_k(\vec y/I,\vec x/I,\vec z/I)&=t_{k+1}(\vec x/I,\vec y/I,\vec z/I)\quad(\text{for all }k<n).
\end{align*}
Therefore $\bA/I$ is congruence-tractable.
\end{proof}

The proof of the following lemma is similar.

\begin{lemma}\label{L:gampthrough2}
Let $\bff\colon \bU\to\bA$ be a morphism of gamps, let~$I$ be an ideal of $\bU$, and let $\phi\colon\widetilde A\to S$ be an ideal-induced \jzh. Put $J=\ker_0\phi$. Denote by $\bgg\colon\bU/I\to\bA/J$ the morphism of gamps induced by $\bff$. The following statements hold:
\begin{enumerate}
\item If $\bff$ is congruence-cuttable through $\phi$, then $\bgg$ is congruence-cuttable.
\item Assume that $\bff$ is a morphism of gamps of lattices. If $\bff$ is congruence-cuttable with chains through $\phi$, then $\bgg$ is congruence-cuttable with chains.
\end{enumerate}
\end{lemma}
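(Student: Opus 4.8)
The plan is to mimic the proof of Lemma~\ref{L:gampthrough1}, with the distance-cutting procedures $(6')$ and $(7')$ playing the role that $(2')$, $(3')$, $(4')$ played there. First I would unpack the data. Since $\phi$ is ideal-induced with $\ker_0\phi=J$, Lemma~\ref{L:IIsem} supplies an isomorphism $\xi\colon\widetilde A/J\to S$ with $\xi(\alpha/J)=\phi(\alpha)$ for all $\alpha\in\widetilde A$; in particular $\xi$ carries $(\widetilde A/J)\dnw Y$ onto $S\dnw\xi(Y)$ for every $Y\subseteq\widetilde A/J$. Next I would note that $\bgg$ is well-defined: the set-up forces $\widetilde f(I)\subseteq J$ (this is exactly the compatibility needed for an induced morphism to exist), so Proposition~\ref{P:quotarrow} produces $\bgg\colon\bU/I\to\bA/J$ with $g(x/I)=f(x)/J$ and $\widetilde g(\alpha/I)=\widetilde f(\alpha)/J$.

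For (1) I would verify the two clauses of congruence-cuttability (Definition~\ref{D:propgamp}(6)) for $\bgg$. That $g(U/I)=f(U)/J$ sits inside $(A/J)^{*}=A^{*}/J$ as a partial subalgebra follows from the fact that $f(U)$ is a partial subalgebra of $A^{*}$ (guaranteed by congruence-cuttability of $\bff$ through $\phi$) together with the description of quotient partial algebras in Proposition~\ref{P:quot}. For the cutting clause, take a finite $X\subseteq\widetilde A/J$ and $x,y\in U/I$ with $\delta_{\bA/J}(g(x),g(y))\le\bigvee X$; choose lifts $\bar x,\bar y\in U$, so that $\delta_{\bA}(f(\bar x),f(\bar y))/J\le\bigvee X$, and apply $\xi$ to rewrite this as $\phi(\widetilde f(\delta_{\bU}(\bar x,\bar y)))\le\bigvee\xi(X)$ in $S$. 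Clause $(6')$ applied to $\xi(X)\subseteq S$ then yields $n<\omega$ and $f(\bar x)=x_0,\dots,x_n=f(\bar y)$ in $A^{*}$ with $\phi(\delta_{\bA}(x_k,x_{k+1}))\in S\dnw\xi(X)$ for all $k<n$. Passing to $\bA/J$, the points $g(x)=x_0/J,\dots,x_n/J=g(y)$ lie in $A^{*}/J$, and $\delta_{\bA/J}(x_k/J,x_{k+1}/J)=\delta_{\bA}(x_k,x_{k+1})/J\in(\widetilde A/J)\dnw X$ because $\xi$ maps the latter element into $S\dnw\xi(X)$. This is precisely congruence-cuttability of $\bgg$.

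For (2) the argument is the same with $(6')$ replaced by $(7')$: from $\phi(\widetilde f(\delta_{\bU}(\bar x,\bar y)))\le\bigvee\xi(X)$ one obtains a chain $x_0<x_1<\dots<x_n$ of $\bA$ with $x_0=f(\bar x)\wedge f(\bar y)$, $x_n=f(\bar x)\vee f(\bar y)$, and $\phi(\delta_{\bA}(x_k,x_{k+1}))\in S\dnw\xi(X)$ for all $k<n$. Proposition~\ref{P:quot}(8) shows $x_0/J<\dots<x_n/J$ is a chain of $\bA/J$; since $g(x)\wedge g(y)=(f(\bar x)\wedge f(\bar y))/J=x_0/J$ and likewise $g(x)\vee g(y)=x_n/J$, and $\delta_{\bA/J}(x_k/J,x_{k+1}/J)\in(\widetilde A/J)\dnw X$ as before, $\bgg$ is congruence-cuttable with chains.

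There is no real obstacle here — this is a routine transport-of-structure argument, exactly as the paper's remark ``the proof is similar'' suggests. The only point requiring care is the bookkeeping around $\xi$: one must consistently move inequalities and down-sets between $S$, where $(6')$ and $(7')$ are phrased, and $\widetilde A/J$, where congruence-cuttability of $\bgg$ is phrased, and check that replacing $x,y\in U/I$ by lifts $\bar x,\bar y\in U$ does not disturb the hypothesis, which it does not since $\delta_{\bA/J}(g(x),g(y))\le\bigvee X$ is literally the $\xi$-image of $\phi(\widetilde f(\delta_{\bU}(\bar x,\bar y)))\le\bigvee\xi(X)$ and is independent of the chosen lifts. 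Everything else — quotients of partial subalgebras being partial subalgebras of quotients, chains descending to chains — has already been recorded in Proposition~\ref{P:quot} and Section~\ref{S:Pregamp}.
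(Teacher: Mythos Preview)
Your proposal is correct and follows exactly the route the paper indicates (``the proof of the following lemma is similar'' to Lemma~\ref{L:gampthrough1}): transport the inequality through the isomorphism $\xi\colon\widetilde A/J\to S$, apply $(6')$ or $(7')$, and push the resulting sequence down to the quotient. One small wrinkle in part~(2): Proposition~\ref{P:quot}(8) only guarantees that $x_0/J\le x_1/J\le\dots\le x_n/J$ is a chain of~$\bA/J$, not a strict one; some consecutive terms may coincide in the quotient. You should pass to the strictly increasing subsequence (the omitted steps contribute $\delta_{\bA/J}$-distance~$0$, so nothing is lost), after which the verification of~$(7)$ for~$\bgg$ goes through as you wrote.
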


We shall now define \emph{locally finite properties} for an algebra~$B$, as properties that are satisfied by ``many'' finite subgamps of $\GA(B)$.

\begin{definition}\label{D:LFF}
Let~$B$ be an algebra. A \emph{locally finite property for~$B$} is a property~$(P)$ in a subgamp~$\bA$ of $\GA(B)$ such that there exists a finite $X\subseteq B$ satisfying that for every finite full partial subalgebra~$A^*$ of~$B$ that contains~$X$, there exists a finite $Y\subseteq B$ such that for every finite full partial subalgebra~$A$ of~$B$ that contains~$A^*\cup Y$ and every finite \jz-subsemilattice~$\widetilde{A}$ of $\Conc B$ that contains $\Conc^A(B)$, the subgamp $(A^*,A,\Theta_B,\widetilde{A})$ of $\GA(B)$ satisfies $(P)$.
\end{definition}

\begin{proposition}\label{P:cunjlff}
Any finite conjunction of locally finite properties is locally finite.
\end{proposition}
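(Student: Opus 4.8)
The plan is to unwind Definition~\ref{D:LFF}: being a locally finite property for~$B$ has the logical shape
$\exists X\,\forall A^*\,\exists Y\,\forall A\,\forall\widetilde A\ (\dots)$,
where~$X$,~$A^*$,~$Y$,~$A$, $\widetilde A$ range over, respectively, finite subsets of~$B$, finite full partial subalgebras of~$B$, finite subsets of~$B$, finite full partial subalgebras of~$B$, and finite \jz-subsemilattices of $\Conc B$ containing $\Conc^A(B)$. To produce witnesses for a finite conjunction $(P_1)\wedge\dots\wedge(P_n)$ of locally finite properties one simply takes unions of the witnesses for the individual~$(P_i)$; I will do this directly, with no reduction to the case $n=2$.

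First I would fix, for each $i\in\set{1,\dots,n}$, a finite set $X_i\subseteq B$ as provided by Definition~\ref{D:LFF} for $(P_i)$, and set $X=\bigcup_{i=1}^n X_i$, which is a finite subset of~$B$. Then I would let~$A^*$ be an arbitrary finite full partial subalgebra of~$B$ with $X\subseteq A^*$. Since $X_i\subseteq X\subseteq A^*$ for each~$i$, the defining clause of~$X_i$ yields a finite set $Y_i\subseteq B$ — possibly depending on~$A^*$, which the definition allows — such that every subgamp $(A^*,A,\Theta_B,\widetilde A)$ of $\GA(B)$ with~$A$ a finite full partial subalgebra of~$B$ containing $A^*\cup Y_i$ and~$\widetilde A$ a finite \jz-subsemilattice of $\Conc B$ containing $\Conc^A(B)$ satisfies~$(P_i)$. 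I would then set $Y=\bigcup_{i=1}^n Y_i$, again a finite subset of~$B$.

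Finally I would take an arbitrary finite full partial subalgebra~$A$ of~$B$ with $A^*\cup Y\subseteq A$ and an arbitrary finite \jz-subsemilattice~$\widetilde A$ of $\Conc B$ with $\Conc^A(B)\subseteq\widetilde A$. For each~$i$ we have $A^*\cup Y_i\subseteq A^*\cup Y\subseteq A$, hence the subgamp $(A^*,A,\Theta_B,\widetilde A)$ satisfies~$(P_i)$; therefore it satisfies $(P_1)\wedge\dots\wedge(P_n)$. This exhibits~$X$, together with the assignment $A^*\mapsto Y$, as the witnesses required by Definition~\ref{D:LFF} for the conjunction.

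There is no genuine obstacle here: the argument is pure bookkeeping with the alternating quantifiers. The single point worth flagging is that one fixed~$A^*$ must serve all conjuncts simultaneously — which is exactly why~$X$ is taken to be the union of the~$X_i$, so that $X\subseteq A^*$ forces $X_i\subseteq A^*$ for every~$i$ — and, dually, that all the sets~$Y_i$ attached to this fixed~$A^*$ are absorbed into the single set~$Y$. Finiteness is preserved at every stage because a finite union of finite sets is finite.
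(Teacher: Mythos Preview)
Your proof is correct. The paper states Proposition~\ref{P:cunjlff} without proof, treating it as routine; your argument is precisely the straightforward quantifier-bookkeeping one would expect, taking unions of the witnesses~$X_i$ and~$Y_i$ at the two existential levels.
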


\begin{lemma}\label{L:exLFF}
Let~$B$ be an algebra and denote by~$\sL$ the similarity type of~$B$. The properties $(1)$-$(8)$ in~$\bA$ are locally finite for~$B$:

Assume that~$\sL$ is finite.
\begin{enumerate}
\item[$(1)$]~$\bA$ is strong.
\end{enumerate}
Fix an ideal-induced \jzh\ $\phi\colon\Conc B\to S$, with~$S$ finite.
\begin{enumerate}
\item[$(2)$]~$\bA$ is distance-generated through $\phi\res\widetilde A$.
\end{enumerate}
Fix an ideal-induced \jzh\ $\phi\colon\Conc B\to S$ with~$S$ finite, and assume that~$B$ is a lattice.
\begin{enumerate}
\item[$(3)$]~$\bA$ is distance-generated with chains through $\phi\res\widetilde A$.
\end{enumerate}
Fix an ideal-induced \jzh\ $\phi\colon\Conc B\to S$, with~$S$ finite.
\begin{enumerate}
\item[$(4)$]~$\bA$ is congruence-tractable through $\phi\res\widetilde A$.
\end{enumerate}
Assume that~$\sL$ is finite, fix a finite gamp $\bU$, fix a morphism $\bff\colon\bU\to\GA(B)$ of gamps.
\begin{enumerate}
\item[$(5)$] The restriction $\bff\colon\bU\to\bA$ is strong.
\end{enumerate}
Fix an ideal-induced \jzh\ $\phi\colon \Conc B\to S$ where~$S$ is finite, a finite gamp $\bU$, and a morphism $\bff\colon\bU\to\GA(B)$ of gamps.
\begin{enumerate}
\item[$(6)$] The restriction of $\bff\colon\bU\to\bA$ is congruence-cuttable through $\phi\res\widetilde A$.
\end{enumerate}
Assume that~$B$ is a lattice. Fix an ideal-induced \jzh\ $\phi\colon \Conc B\to S$, where~$S$ is finite. Fix a finite gamp $\bU$, fix a morphism $\bff\colon\bU\to\GA(B)$ of gamps. 
\begin{enumerate}
\item[$(7)$] The restriction of $\bff\colon\bU\to\bA$ is congruence-cuttable with chains through $\phi\res\widetilde A$.
\end{enumerate}
Let $n\ge 2$ be an integer. Assume that~$B$ is congruence $n$-permutable.
\begin{enumerate}
\item[$(8)$]~$\bA$ is congruence $n$-permutable.
\end{enumerate}
\end{lemma}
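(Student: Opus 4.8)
The plan is to establish all eight items by a single pattern. Each of the properties $(1)$--$(8)$ asserts that some conclusion holds for every configuration of data taken from a \emph{finite} portion of the subgamp $\bA=(A^*,A,\Theta_B,\widetilde A)$: from $A^*$ in $(1)$--$(4)$ and $(8)$, and from the fixed finite gamp $\bU$ together with the fixed finite $S$ in $(5)$--$(7)$. The corresponding \emph{global} statement holds in the ambient algebra~$B$ for an appropriate structural reason: because $B$ is a total algebra (strongness, $(1)$ and $(5)$); because $\phi$ is surjective (distance-generation, $(2)$ and $(3)$); because of Lemma~\ref{L:Condcompcongruences} (congruence-tractability, $(4)$); because a join of congruences is the transitive closure of the union of its joinands (congruence-cuttability, $(6)$ and $(7)$); and because of Proposition~\ref{P:CP-PR} (congruence $n$-permutability, $(8)$). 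For each property I would read off from the relevant global statement a \emph{finite} set of witnesses --- elements of $B$, lists of parameters, and the intermediate values that arise while evaluating finitely many fixed terms on finitely many fixed argument lists --- and place them into the set $X$ or the set $Y$ of Definition~\ref{D:LFF}. Once $A^*\supseteq X$ and $A\supseteq A^*\cup Y$ are \emph{full}, fullness propagates definedness of the needed operations up each parse tree, every principal congruence $\Theta_B(u,v)$ occurring lies in $\Conc^A(B)\subseteq\widetilde A$, and $\phi\res\widetilde A$ agrees with $\phi$ on it, so the subgamp inherits the property.

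For $(1)$ take $X=\es$ and $Y=\setm{\ell^B(\vec c)}{\ell\in\sL,\ \vec c\in(A^*)^{\ari(\ell)}}$, a finite set exactly because $\sL$ and $A^*$ are finite; fullness of $A$ then yields $(A^*)^{\ari(\ell)}\subseteq\Def_\ell(A)$. For $(5)$, put $f(U)$ and all $\ell^B(\vec c)$ with $\vec c\in(f(U))^{\ari(\ell)}$ into $X$, so that $f(U)$ becomes a strong partial subalgebra of the full $A^*$. For $(2)$ and $(3)$, use surjectivity of $\phi$ --- and, for $(3)$, that $B$ is a lattice, via $\Theta_B(a,b)=\Theta_B(a\wedge b,a\vee b)$ --- to fix for each of the finitely many $s\in S$ a decomposition $s=\bigvee_i\phi(\Theta_B(c^i_s,d^i_s))$, with $c^i_s\le d^i_s$ in case $(3)$, and put all the $c^i_s,d^i_s$ into $X$ (then $Y=\es$). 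For $(8)$, for each of the finitely many tuples $(x_0,\dots,x_n)\in(A^*)^{n+1}$ choose, by Proposition~\ref{P:CP-PR}, elements $y_1,\dots,y_{n-1}\in B$ witnessing congruence $n$-permutability, put all these $y_k$ into $Y$, and take $X=\es$; the defining inequalities then hold in $\bA$ because both of their sides are joins of members of $\Conc^A(B)$.

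The substantial cases are $(4)$, $(6)$, $(7)$. For $(6)$ the data are a pair $(x,y)\in U^2$ and a subset $Z\subseteq S$, hence finitely many configurations. For each one with $\phi(\widetilde f(\delta_{\bU}(x,y)))\le\bigvee Z$, lift $\bigvee Z$ to a compact congruence of $B$ and adjoin a compact $u\in\ker_0\phi$ to write $\Theta_B(f(x),f(y))\le\bigvee_m\Theta_B(c_m,d_m)$ with each $\phi(\Theta_B(c_m,d_m))$ either below an element of $Z$ or equal to $0$; as this join is the transitive closure of $\bigcup_m\Theta_B(c_m,d_m)$, there is a path $f(x)=z_0,\dots,z_\ell=f(y)$ in $B$ with $\Theta_B(z_j,z_{j+1})\le\Theta_B(c_{m_j},d_{m_j})$. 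Put all these $z_j$ and $f(U)$ into $X$. For $(7)$ do the same, additionally using $\Theta_B(a,b)=\Theta_B(a\wedge b,a\vee b)$ and the standard refinement of a path between comparable elements of a lattice into an increasing chain with the same congruence bounds, and put $f(x)\wedge f(y)$, $f(x)\vee f(y)$ and the chain points into $X$ as well. For $(4)$ the data lie in the finite set $A^*$, so, after discarding repeated pairs (which changes neither hypothesis nor conclusion), there are finitely many configurations; for each with $\phi(\Theta_B(x,y))\le\bigvee_k\phi(\Theta_B(x_k,y_k))$, write $\Theta_B(x,y)\le\bigvee_k\Theta_B(x_k,y_k)\vee\bigvee_j\Theta_B(p_j,q_j)$ with $\phi(\Theta_B(p_j,q_j))=0$, apply Lemma~\ref{L:Condcompcongruences} to the extended list of pairs to get terms $t'_0,\dots,t'_n$ and parameters $\vec z$ in $B$, and set $t_l(\vec x,\vec y,\vec z,\vec p)=t'_l((\vec x,\vec p),(\vec y,\vec p),\vec z)$ --- the collapsing substitution already used in the proof of Proposition~\ref{P:quot}$(6)$. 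Since $\Theta_B(p_j,q_j)\in\ker_0\phi$ and $\ker_0\phi$ is an ideal, $p_j$ and $q_j$ are congruent modulo $\bigvee\ker_0\phi$, so the equations of $(4')$ hold with these $t_l$; place $\vec z$, $\vec p$ and every intermediate value occurring in evaluating the $t_l$ on these arguments into $Y$.

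The delicate point --- and the main obstacle --- is twofold. First, one must be sure that only finitely many configurations need treatment: this is where finiteness of $A^*$ (in $(1)$--$(4)$, $(8)$), of $\bU$ (in $(5)$--$(7)$) and of $S$ (in $(2)$, $(3)$, $(4)$, $(6)$, $(7)$) is essential, together with the remark that repetitions among the data, and among the values of $\phi\circ\Theta_B$, are immaterial, so the relevant part of a configuration ranges over a fixed finite set; without this, $Y$ could be forced to be infinite. Second, passing from ``the equation holds in $B$'' to ``the equation holds in $A$'' is valid only when every subterm value of the finitely many chosen term evaluations already belongs to $A$; hence $Y$ must be closed under precisely these intermediate values, after which a short induction on term complexity, using fullness of $A$, shows that each such evaluation is defined in $A$ and returns its value in $B$. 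The rest is bookkeeping of the kind illustrated above.
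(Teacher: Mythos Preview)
Your proposal is correct and follows essentially the same approach as the paper: for each property, read off finitely many witnesses from the ambient algebra~$B$ (using surjectivity of~$\phi$, Lemma~\ref{L:Condcompcongruences}, the transitive-closure description of joins of congruences, or Proposition~\ref{P:CP-PR}), and pack them, together with the needed subterm values, into~$X$ or~$Y$; the collapsing substitution you use in~$(4)$ is exactly the paper's $t_j'(\vec a,\vec b,\vec c,\vec d)=t_j(\vec a,\vec d,\vec b,\vec d,\vec c)$.

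One small omission: in~$(5)$ (and implicitly in~$(6)$ and~$(7)$) you arrange that $f(U)$ is a strong partial subalgebra of~$A^*$, but for the restriction $\bff\colon\bU\to\bA$ to be a morphism of gamps at all you also need $\widetilde f(\widetilde U)\subseteq\widetilde A$. Since~$\widetilde U$ is finite and each $\widetilde f(\alpha)$ is a compact congruence of~$B$, there is a finite $X_2\subseteq B$ with $\widetilde f(\widetilde U)\subseteq\Conc^{X_2}(B)$; add~$X_2$ to your~$X$, so that $\widetilde f(\widetilde U)\subseteq\Conc^{X_2}(B)\subseteq\Conc^{A}(B)\subseteq\widetilde A$. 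This is exactly how the paper handles it, and with this addition your argument is complete.
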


\begin{proof}
$(1)$ Put $X=\emptyset$ and let~$A^*$ be a finite full partial subalgebra of~$B$. Put $Y=\setm{\ell^B(\vec x)}{\ell\in\sL\text{ and~$\vec X$ is an $\ari(\ell)$-tuple of $A^*$}}$. As~$A^*$ and $\sL$ are both finite, $Y$ is also finite. Let~$A$ be a finite full partial subalgebra of~$B$ that contains~$A^*\cup Y$, let~$\widetilde A$ be a finite \jz-subsemilattice of $\Conc B$ containing $\Conc^A(B)$. As $\ell(\vec x)\in Y\subseteq A$, it is defined in~$A$, for each $\ell\in\sL$ and each $\ari(\ell)$-tuple~$\vec X$ of~$A$.

$(2)$ Let $s\in S$. As $\phi$ is surjective, there exists $\theta\in\Conc B$ such that $s=\phi(\theta)$. So there exist $n<\omega$ and $n$-tuple~$\vec x,\vec y$ of~$B$ such that $s=\phi(\bigvee_{k<n}\Theta_B(x_k,y_k))$. Put $X_s=\set{x_0,\dots,x_{n-1},y_0,\dots,y_{n-1}}$.

Put $X=\bigcup_{s\in S} X_s$. As~$S$ is finite and $X_s$ is finite for all $s\in S$,~$X$ is finite. Let~$A^*$ be a finite full partial subalgebra of~$B$ that contains~$X$. Put $Y=\emptyset$. Let~$A$ be a finite full partial subalgebra of~$B$ that contains~$A^*\cup Y$. Let~$\widetilde A$ be a finite \jz-subsemilattice of $\Conc B$ containing $\Conc^A(B)$. By construction $(A^*,A,\Theta_B,\widetilde A)$ satisfies $(2)$.

The proof that $(3)$ is a locally finite property is similar.

$(4)$ Put $X=\emptyset$, let~$A^*$ be a finite full partial subalgebra of~$B$. Denote by $E$ the set of all quadruples $(x,y,\vec x,\vec y)$ such that the following statements are satisfied:
\begin{itemize}
\item $x,y\in A^*$.
\item~$\vec X$ and $\vec y$ are $m$-tuples of~$A^*$, for some $m<\omega$.
\item $\phi(\Theta_B(x,y))\le\bigvee_{k<m}\phi(\Theta_B(x_k,y_k))$.
\item $(x_i,y_i)\not=(x_j,y_j)$ for all $i<j<m$.
\end{itemize}

Put $I=\ker_0\phi$ and let $(x,y,\vec x,\vec y)\in E$. As $\phi$ is ideal-induced, there exists $\alpha\in I$ such that $\Theta_B(x,y)\le \bigvee_{k<m}\Theta_B(x_k,y_k)\vee\alpha$. Let~$\vec x',\vec y'$ be $p$-tuples of~$B$ such that $\alpha=\bigvee_{k<p}\Theta(x'_k,y'_k)$. Hence:
\[\Theta_B(x,y)\le \bigvee_{k<m}\Theta_B(x_k,y_k)\vee\bigvee_{k<p}\Theta_B(x_k',y_k').
\]
It follows from Lemma~\ref{L:Condcompcongruences} that there are a positive integer~$n$, a list~$\vec z$ of parameters from~$B$, and terms $t_0$, \dots, $t_n$ such that the following equalities hold in~$B$:
\begin{align}
x&=t_0(\vec x,\vec x',\vec y,\vec y',\vec z),\label{EA:1}\\
y&=t_n(\vec x,\vec x',\vec y,\vec y',\vec z),\label{EA:2}\\
t_j(\vec y,\vec y',\vec x,\vec x',\vec z)&=t_{j+1}(\vec x,\vec x',\vec y,\vec y',\vec z),\quad\text{for all }j<n\label{EA:3}.
\end{align}
Put $t'_j(\vec a,\vec b,\vec c,\vec d)=t_j(\vec a,\vec d,\vec b,\vec d,\vec c)$ for all tuples~$\vec a,\vec b,\vec c,\vec d$ of~$B$ of appropriate length. The following inequalities follow from the compatibility of $\Theta_B$ with terms:
\begin{align}
\Theta_B(t'_j(\vec x,\vec y,\vec z,\vec x'),t_j(\vec x,\vec x',\vec y,\vec y',\vec z))\le\bigvee_{k<p}\Theta_B(x'_k,y'_k)=\alpha,\label{EA:4}\\
\Theta_B(t'_j(\vec y,\vec x,\vec z,\vec x'),t_j(\vec y,\vec y',\vec x,\vec x',\vec z))\le\bigvee_{k<p}\Theta_B(x'_k,y'_k)=\alpha.\label{EA:5}
\end{align}
As $\phi(\alpha)=0$, it follows from~\eqref{EA:4} and~\eqref{EA:1} that:
\[
\phi(\Theta_B(x,t_0'(\vec x,\vec y,\vec z,\vec x')))=0
\]
Set~$\vec z'=(\vec z,\vec x')$. As $\phi(\alpha)=0$, it follows from~\eqref{EA:1}-\eqref{EA:5} that
\begin{align*}
\phi(\Theta_B(x,t_0'(\vec x,\vec y,\vec z')))&=0,\\
\phi(\Theta_B(y,t_n'(\vec x,\vec y,\vec z')))&=0,\\
\phi(\Theta_B(t_j'(\vec y,\vec x,\vec z'),t_{j+1}'(\vec x,\vec y,\vec z')))&=0\quad(\text{for all }j<n).
\end{align*}
Let $Y_{(x,y,\vec x,\vec y)}$ be a finite partial subalgebra of~$B$ such that both $t_j'(\vec x,\vec y,\vec z,\vec x')$ and $t_j'(\vec y,\vec x,\vec z,\vec x')$ are defined in $Y_{(x,y,\vec x,\vec y)}$, for each $j\le n$.

Put $Y=\bigcup\famm{Y_{e}}{e\in E}$. As $Y_{e}$ is finite for each ${e}\in E$ and $E$ is finite, $Y$ is finite. Let~$A$ be a finite full partial subalgebra of~$B$ that contains $Y\cup A^*$, and let~$\widetilde A$ be a finite \jz-subsemilattice of $\Conc B$ containing $\Conc^A(B)$. It is not hard to verify that $(A^*,A,\Theta_B,\widetilde A)$ satisfies $(4)$.

$(5)$ As~$U$ is finite, the set $X_1=\gen{f(U)}^1_B$ (cf. Notation~\ref{N:gen}) is also finite. As~$\widetilde U$ is finite and each element of~$\widetilde{f}(\widetilde U)$ is a compact congruence of~$B$, we can choose a finite subset~$X_2$ of~$B$ such that~$\widetilde{f}(\widetilde U)\subseteq\Conc^{X_2}(B)$. The set $X=X_1\cup X_2$ is finite. Let~$A^*$ be a finite full partial subalgebra of~$B$ containing~$X$. Put $Y=\emptyset$, let~$A$ be a full partial subalgebra of~$B$ containing $Y\cup A^*$, and let~$\widetilde A$ be a finite \jz-subsemilattice of $\Conc B$ containing $\Conc^A(B)$. The following containments hold:
\[
\widetilde{f}(\widetilde U)\subseteq \Conc^{X_2}(B)\subseteq \Conc^{A}(B)\subseteq \widetilde A.
\]
Moreover $\gen{f(U)}^1_B\subseteq A^*$.

The proofs that $(6),(7)$, and $(8)$ are locally finite properties are similar.
\end{proof}

The following lemma is an analogue for gamps of the \emph{Buttress Lemma} (cf. \cite{GiWe1}).

\begin{lemma}\label{L:LSGamp}
Let~$\cV$ be a variety of algebras in a finite similarity type, let $P$ be a lower finite poset, let $(S_p)_{p\in P}$ be a family of finite \jzs s, let~$B\in\cV$, and let $(\phi_p)_{p\in P}$ be a family of \jzh s where $\phi_p\colon \Conc B\to S_p$ is ideal-induced for each $p\in P$. There exists a diagram~$\vec \bA=\famm{\bA_p,\bff_{p,q}}{p\le q\text{ in }P}$ of finite subgamps of $\GA(B)$ such that the following statements hold:
\begin{enumerate}
\item $\phi_p\res\widetilde A_p$ is ideal-induced for each $p\in P$.
\item $\bA_p$ is strong, distance-generated through $\phi_p\res\widetilde A_p$, and congruence-tractable through $\phi_p\res\widetilde A_p$, for each $p\in P$.
\item $\bA_p$ is a subgamp of $\bA_q$ and $\bff_{p,q}$ is the canonical embedding, for all $p\le q$ in $P$. 
\item $\bff_{p,q}$ is strong and congruence-cuttable through $\phi_q\res\widetilde A_q$, for all $p<q$ in $P$.
\end{enumerate}

If~$B$ is a lattice, then we can construct~$\vec\bA$ such that~$\bff_{p,q}$ is congruence-cuttable with chains through $\phi_q\res\widetilde A_q$ for all $p<q$ in $P$, and $\bB_p$ is distance-generated with chains through $\phi_p$ for each $p\in P$.

If~$B$ is congruence $n$-permutable \pup{where $n$ is a positive integer}, then we can construct~$\vec\bA$ such that $\bA_p$ is congruence $n$-permutable for each $p\in P$.
\end{lemma}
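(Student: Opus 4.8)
The plan is to build the diagram $\vec\bA$ by recursion along the strict order $<$ of $P$ — which is legitimate because a lower finite poset is automatically well-founded (an infinite strictly descending chain would sit inside a single finite $\dnw p$) — arranging that every $\bA_p=(A_p^*,A_p,\Theta_B,\widetilde A_p)$ is a finite subgamp of $\GA(B)$ and that $\bA_q$ is a subgamp of $\bA_p$ whenever $q<p$. Then $\bff_{p,q}$ is taken to be the canonical embedding for all $p\le q$, so conclusion (3) holds and, since a composite of canonical embeddings is again a canonical embedding, $\vec\bA$ is indeed a diagram. The point of lower finiteness is that when stage $p$ is reached, only the finitely many gamps $\bA_q$ with $q<p$ have already been built, so all the unions occurring below are finite.

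At stage $p$, I would first list the finitely many properties that $\bA_p$, and the restricted canonical embeddings $\bA_q\hookrightarrow\GA(B)$ for $q<p$, are required to have: strongness, distance-generation through $\phi_p\res\widetilde A_p$, and congruence-tractability through $\phi_p\res\widetilde A_p$ of $\bA_p$ (Lemma~\ref{L:exLFF}(1), (2), (4)); strongness and congruence-cuttability through $\phi_p\res\widetilde A_p$ of the canonical embedding $\bA_q\to\bA_p$, for each $q<p$ (Lemma~\ref{L:exLFF}(5), (6), applied with source gamp $\bU=\bA_q$ and $\phi=\phi_p$); the chain-variants (3) and (7) of that lemma when $B$ is a lattice; and (8) when $B$ is congruence $n$-permutable. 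By Proposition~\ref{P:cunjlff} their conjunction is a locally finite property for $B$; let $X$ be a finite subset of $B$ witnessing it in the sense of Definition~\ref{D:LFF}. Next I would run the downward selection prescribed by Definition~\ref{D:LFF}: choose a finite full partial subalgebra $A_p^*$ of $B$ containing $X\cup\bigcup_{q<p}A_q^*$ (any finite superset of this set, with its induced full structure, is such a partial subalgebra, and each earlier $A_q^*$ is then a partial subalgebra of $A_p^*$); take the finite set $Y$ that Definition~\ref{D:LFF} attaches to $A_p^*$; and choose a finite full partial subalgebra $A_p$ of $B$ containing $A_p^*\cup Y\cup\bigcup_{q<p}A_q$.

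It remains to choose $\widetilde A_p$. The \jz-subsemilattice $T_p$ of $\Conc B$ generated by $\Conc^{A_p}(B)\cup\bigcup_{q<p}\widetilde A_q$ is finite, being generated by a finite set; applying Proposition~\ref{P:IdeIdu} to the ideal-induced \jzh\ $\phi_p\colon\Conc B\to S_p$ and to $T_p$ produces a finite \jz-subsemilattice $\widetilde A_p\supseteq T_p$ of $\Conc B$ with $\phi_p\res\widetilde A_p$ ideal-induced, which is exactly conclusion (1). Since $\widetilde A_p\supseteq\Conc^{A_p}(B)$, the quadruple $\bA_p:=(A_p^*,A_p,\Theta_B,\widetilde A_p)$ is a genuine subgamp of $\GA(B)$, and Definition~\ref{D:LFF} guarantees that it satisfies the conjunction fixed above, giving conclusion (2) and, for every $q<p$, the morphism part of conclusion (4), together with the lattice and $n$-permutable riders; moreover $\bA_q\subseteq\bA_p$ because all four components of $\bA_q$ were absorbed into those of $\bA_p$.

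The one point that genuinely needs care — and the main obstacle — is the interplay between the universal quantifier over the \jz-subsemilattice $\widetilde A$ inside Definition~\ref{D:LFF} and the competing demand that $\phi_p\res\widetilde A_p$ be ideal-induced, which forces $\widetilde A_p$ to contain certain extra witness elements. These are compatible precisely because the locally finite property holds for \emph{every} finite \jz-subsemilattice of $\Conc B$ lying above $\Conc^{A_p}(B)$, so enlarging $\widetilde A_p$ to meet the hypothesis of Proposition~\ref{P:IdeIdu} costs nothing; the only thing to check is that the input $T_p$ to Proposition~\ref{P:IdeIdu} is finite, which holds since $P$ is lower finite and each previously built gamp is finite. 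Everything else is bookkeeping: $\delta_{\bA_p}=\Theta_B\res A_p^2$ restricts distances compatibly, the canonical embeddings compose correctly, and the recursion terminates by well-foundedness (fixing the selections once and for all with the axiom of choice). The resulting $\vec\bA$ then satisfies (1)--(4) and the stated lattice and congruence $n$-permutable variants.
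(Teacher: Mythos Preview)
Your proposal is correct and follows essentially the same inductive strategy as the paper's proof: build $\bA_r$ after all $\bA_q$ with $q<r$, use Proposition~\ref{P:cunjlff} and Lemma~\ref{L:exLFF} to package the required conditions as a single locally finite property, run Definition~\ref{D:LFF} to obtain $A_r^*$ and $A_r$, and then invoke Proposition~\ref{P:IdeIdu} to pick $\widetilde A_r$ with $\phi_r\res\widetilde A_r$ ideal-induced. You are somewhat more explicit than the paper about forcing the containments $A_q^*\subseteq A_p^*$, $A_q\subseteq A_p$, $\widetilde A_q\subseteq\widetilde A_p$ by including the earlier pieces in the unions at stage $p$ (the paper leaves this implicit in the ``strong morphism'' clause of Lemma~\ref{L:exLFF}(5)), and you correctly identify the only delicate point, namely that enlarging $\widetilde A_p$ via Proposition~\ref{P:IdeIdu} is harmless because Definition~\ref{D:LFF} quantifies universally over all finite $\widetilde A\supseteq\Conc^{A_p}(B)$.
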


\begin{proof}
Let $r\in P$, suppose having constructed a diagram $\famm{\bA_p,\bff_{p,q}}{p\le q<r}$ of finite subgamps of $\GA(B)$ such that the following statements hold:
\begin{enumerate}
\item $\phi_p\res\widetilde A_p$ is ideal-induced for each $p<r$.
\item $\bA_p$ is strong, distance-generated through $\phi_p\res\widetilde A_p$, and congruence-tractable through $\phi_p\res\widetilde A_p$, for each $p<r$.
\item $\bA_p$ is a subgamp of $\bA_q$ and $\bff_{p,q}$ is the canonical embedding, for all $p\le q<r$. 
\item $\bff_{p,q}$ is strong and congruence-cuttable through $\phi_q\res\widetilde A_q$, for all $p<q<r$.
\end{enumerate}

The following property in~$\bA$ a subgamp of $\GA(B)$ is locally finite (see Proposition~\ref{P:cunjlff} and Lemma~\ref{L:exLFF})
\begin{itemize}
\item[$(F)$]~$\bA$ is strong, distance-generated through $\phi_r\res\widetilde A$, congruence-tractable through $\phi_r\res\widetilde A$, and the canonical embedding $\bff\colon\bA_p\to\bA$ is strong and congruence-cuttable through $\phi_r\res\widetilde A$, for all $p<r$.
\end{itemize}
Thus there exist finite partial subalgebras~$A^*_r$ and~$A_r$ of~$B$ such that for each~$\widetilde A$ containing $\Conc^{A_r}(B)$, the gamp $(A^*_r,A_r,\Theta_B,\widetilde A)$ satisfies $(F)$. Moreover it follows from Proposition~\ref{P:IdeIdu} that there exists a finite \jz-subsemilattice~$\widetilde A_r$ of $\Conc B$, such that $\Conc^{A_r}(B)\subseteq \widetilde A_r$ and $\phi_r\res\widetilde A_r$ is ideal-induced.

Set $\bA_r=(A^*_r,A_r,\Theta_B,\widetilde A_r)$, and $\bff_{p,r}$ the canonical embedding for each $p\le r$. By construction, the diagram $\famm{\bA_p,\bff_{p,q}}{p\le q\le r}$ satisfies the required conditions. The construction of $\famm{\bA_p,\bff_{p,q}}{p\le q\text{ in }P}$ follows by induction.

If~$B$ is a lattice, then we can add to the property $(F)$ the condition $\bff\colon\bA_p\to\bA$ is congruence-cuttable with chains through $\phi_r\res\widetilde A$ for each $p<r$, and $\bB_r$ is distance-generated with chains through $\phi_r$.

If~$B$ is congruence $n$-permutable, then we can add to the property $(F)$ the condition $\bA_r$ is congruence $n$-permutable.
\end{proof}

\begin{remark}\label{R:LSGamp}
In the context of Lemma~\ref{L:LSGamp}, if we have a locally finite property for~$B$, then we can assume that any $A_p$ satisfies this property.
\end{remark}

\section{Norm-coverings and lifters}

The aim of this section is to construct a (family) of posets, which we shall use later as an index for a diagram (in \cite{G4}). We also give a combinatorial statement that is satisfied by this poset.

We introduced the following definition in~\cite{G1}.

\begin{definition}\label{D:KerSupp}
A finite subset $V$ of a poset~$U$ is a \emph{kernel} if for every $u\in U$, there exists a largest element $v\in V$ such that $v\le u$. We say that~$U$ is \emph{supported} if every finite subset of~$U$ is contained in a kernel of~$U$.
\end{definition}

It is not hard to verify that this definition of a supported poset is equivalent to the one used in~\cite{GiWe1}.

The following definition introduced in \cite{G1} also appears in \cite{GiWe1} in a weaker form. Nevertheless, in the context of $\aleph_0$-lifters (cf. Definition~\ref{D:Lifter}), all these definitions are equivalent.

\begin{definition}\label{D:normcovering}
A \emph{norm-covering} of a poset $P$ is a pair $(U,\partial)$, where~$U$ is a supported poset and $\partial\colon U\to P$, $u\mapsto\partial u$ is an isotone map.

We say that an ideal~$\bu$ of~$U$ is \emph{sharp} if the set $\setm{\partial u}{u\in\bu}$ has a largest element, which we shall then denote by~$\partial{\bu}$. We shall denote by $\Ids U$ the set of all sharp ideals of~$U$, partially ordered by inclusion.
\end{definition}

We remind the reader about the following definition introduced in \cite{GiWe1}.

\begin{definition}\label{D:Lifter}
Let~$P$ be a poset. An \emph{$\aleph_0$-lifter} of~$P$ is a pair $(U,\bU)$, where~$U$ is a norm-covering of~$P$ and~$\bU$ is a subset of~$\Ids U$ satisfying the following properties:
\begin{enumerate}
\item The set $\bU^==\setm{\bu\in\bU}{\partial\bu\text{ is not maximal in }P}$ is lower finite, that is, the set $\bU\dnw\bu$ is finite for each~$\bu\in\bU^=$.
 
\item For every map $S\colon\bU^=\to[U]^{<\omega}$, there exists an isotone map $\sigma\colon P\to\bU$ such that
\begin{enumerate}
\item the map $\sigma$ is a \emph{section} of $\partial$, that is, $\partial\sigma(p)=p$ holds for each $p\in P$;
\item the containment $S(\sigma(p))\cap\sigma(q)\subseteq\sigma(p)$ holds for all $p<q$ in~$P$.
(\emph{Observe that~$\sigma(p)$ belongs to~$\bU^=$}.)
\end{enumerate}
\end{enumerate}
\end{definition}

The existence of lifters is related to the following infinite combinatorial statement introduced in~\cite{GiWe2}.

\begin{definition}\label{D:InfCombP}
For cardinals $\kappa$, $\lambda$ and a poset~$P$, let $(\kappa,{<}\lambda)\leadsto P$ hold if for every mapping $F\colon\Pow(\kappa)\to[\kappa]^{<\lambda}$, there exists a one-to-one map $f\colon P\toinj\nobreak\kappa$ such that
 \begin{equation*}
 F(f(P\dnw p))\cap f(P\dnw q)\subseteq f(P\dnw p)\,,
 \qquad\text{for all }p\le q\text{ in }P\,.
 \end{equation*}
 
Notice that in case $P$ is lower finite, it is sufficient to verify the conclusion above for all $F\colon[\kappa]^{<\omega}\to [\kappa]^{<\lambda}$ isotone and all $p\prec q$ in $P$.
\end{definition}

\begin{lemma}\label{L:squarehaslifter}
The square poset has an~$\aleph_0$-lifter $(X,\bX)$ such that $\card X=\aleph_1$.
\end{lemma}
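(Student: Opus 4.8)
The plan is to write down a concrete norm-covering of the square $P=\two\times\two=\set{0,a,b,1}$ — with $a,b$ the two atoms and $1$ its unique maximal element — for which the lifter axioms can be checked directly. Fix $\kappa=\aleph_1$. I would let $X$ be the poset with a least element $o$, pairwise incomparable elements $a_\xi$ and $b_\eta$ ($\xi,\eta<\kappa$) lying above $o$, and elements $c_{\xi,\eta}$ ($\xi,\eta<\kappa$) with $a_\xi<c_{\xi,\eta}$ and $b_\eta<c_{\xi,\eta}$, and no further relations; and I would set $\partial o=0$, $\partial a_\xi=a$, $\partial b_\eta=b$, $\partial c_{\xi,\eta}=1$. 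Then $\partial$ is isotone, and $X$ is supported: any finite $F\subseteq X$ is contained in the kernel obtained by adjoining $o$ together with the finitely many elements $c_{\xi,\eta}$ such that $a_\xi,b_\eta\in F$. A short case analysis (using that $a_\xi$ and $a_{\xi'}$ have a common upper bound only when $\xi=\xi'$, and likewise for the $b$'s and $c$'s) shows that the ideals of $X$ are exactly $\set{o}$, the sets $\set{o,a_\xi}$, the sets $\set{o,b_\eta}$, and the sets $\set{o,a_\xi,b_\eta,c_{\xi,\eta}}$, and that all of them are sharp, with $\partial$-value $0$, $a$, $b$, $1$ respectively. Taking $\bX=\Ids X$ gives $\card X=\aleph_1$ and $\bX^==\set{\set{o}}\cup\setm{\set{o,a_\xi}}{\xi<\kappa}\cup\setm{\set{o,b_\eta}}{\eta<\kappa}$; since $\bX\dnw\bx$ has at most two elements for every $\bx\in\bX^=$, this set is lower finite, so axiom~(1) of Definition~\ref{D:Lifter} holds.

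For axiom~(2), observe that an isotone section $\sigma\colon P\to\bX$ of $\partial$ amounts to a choice of $\xi,\eta<\kappa$, with $\sigma(0)=\set{o}$, $\sigma(a)=\set{o,a_\xi}$, $\sigma(b)=\set{o,b_\eta}$, and $\sigma(1)=\set{o,a_\xi,b_\eta,c_{\xi,\eta}}$ — the constraints $\sigma(a)\subseteq\sigma(1)$ and $\sigma(b)\subseteq\sigma(1)$ forcing the indices to match. Given $S\colon\bX^=\to[X]^{<\omega}$, writing out $S(\sigma(p))\cap\sigma(q)\subseteq\sigma(p)$ over the five strict pairs of $P$ shows that such a $\sigma$ is admissible precisely when $\xi,\eta$ avoid finitely many forbidden values determined by $S(\set{o})$, and in addition $\eta\notin F(\xi)$ and $\xi\notin G(\eta)$, where $F(\xi)$ is the finite set of $\vartheta$ with $b_\vartheta\in S(\set{o,a_\xi})$ or $c_{\xi,\vartheta}\in S(\set{o,a_\xi})$, and $G(\eta)$ the finite set of $\zeta$ with $a_\zeta\in S(\set{o,b_\eta})$ or $c_{\zeta,\eta}\in S(\set{o,b_\eta})$. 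Enlarging $F$ by a fixed finite set and restricting $\xi$ to the complement of a fixed finite set absorbs the $S(\set{o})$-exceptions, so the whole question reduces to: \emph{find $\xi,\eta<\kappa$ with $\eta\notin F(\xi)$ and $\xi\notin G(\eta)$.}

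This last claim, where the choice $\kappa=\aleph_1$ is the crucial point, is the only real obstacle. If it failed, then for every $\xi<\kappa$ the set $\setm{\eta}{\xi\notin G(\eta)}$ would be contained in $F(\xi)$, hence finite; picking any countably infinite $C\subseteq\kappa$, the set $\setm{\eta}{(\exists\xi\in C)\ \xi\notin G(\eta)}$ would be a countable union of finite sets, hence countable, so (as $\kappa$ is uncountable) some $\eta<\kappa$ would satisfy $C\subseteq G(\eta)$, contradicting $\card G(\eta)<\aleph_0$. Thus the desired section exists for every $S$, and $(X,\bX)$ is an $\aleph_0$-lifter of the square with $\card X=\aleph_1$. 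In the write-up the only slightly delicate parts should be the proof that $X$ is supported and the bookkeeping that reduces axiom~(2) to the displayed claim; alternatively, one could instead quote $(\aleph_1,{<}\aleph_0)\leadsto\two\times\two$ from~\cite{GiWe2} together with the standard passage from that relation to an $\aleph_0$-lifter, of which the construction above is the concrete two-dimensional case.
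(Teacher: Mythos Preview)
Your proof is correct and takes a different, more concrete route than the paper. The paper's argument is a three-line appeal to general machinery from \cite{GiWe2} and \cite{GiWe1}: the square has order-dimension~$2$, hence Kuratowski index at most~$2$, hence $(\aleph_1,{<}\aleph_0)\leadsto\two\times\two$, and a general construction then converts this relation into an $\aleph_0$-lifter of cardinality~$\aleph_1$. You instead build the lifter explicitly and verify Definition~\ref{D:Lifter} by hand, the combinatorial heart being the two-function free-set argument at the end (essentially the $n=1$ case of Kuratowski's free-set theorem). What your approach buys is a fully self-contained proof that does not require the reader to know the Kuratowski-index framework or the general lifter construction, and it makes the structure of the lifter completely transparent; what the paper's approach buys is brevity and immediate applicability to posets of higher order-dimension. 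As you yourself observe in your closing sentence, your construction is precisely the concrete unfolding of the general one in dimension two, so the two arguments are the same idea viewed at different levels of abstraction.
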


\begin{proof}
By \cite[Proposition~4.7]{GiWe2}, the Kuratowski index (cf. \cite[Definition~4.1]{GiWe2}) of the square~$P$ is less than or equal to its order-dimension, which is equal to 2. Hence, by the definition of the Kuratowski index, $(\kappa^+,{<}\kappa)\leadsto P$ for every infinite cardinal~$\kappa$. Therefore, by \cite[Corollary~3-5.8]{GiWe1}, $P$ has an $\aleph_0$-lifter $(X,\bX)$ such that $\card X=\card\bX=\aleph_1$.
\end{proof}

Given a poset $P$, we introduce a new poset which looks like a lexicographical product of $P$ with a tree. This construction is mainly used in \cite{G4}.

\begin{definition}\label{D:newposet}
Let~$P$ be a poset with a smallest element, let $X\subseteq P$, let $\vec R=(R_x)_{x\in X}$ be a family of sets, let $\alpha\le\omega$. Consider the following poset:
\[
T=\setm{(n,\vec x,\vec r)}{n<\alpha,\ \vec x\in X^n,\text{ and }\vec r\in R_{x_0}\times\dots\times R_{x_{n-1}}},
\]
ordered by $(m,\vec x,\vec r)\le (n,\vec y,\vec s)$ if and only if $m\le n$, $\vec x=\vec y\res m$, and $\vec r=\vec s\res m$. Recall that $\vec y\res m=(y_0,\dots,y_{m-1})$. Given $t=(n,\vec x,\vec r)\in T$ and $m\le n$, we set $t\res m=(m,\vec x\res m,\vec r\res m)$.

Put:
\[
A=\kposet{P}{X}{\vec R}{\alpha} = T\times P=\bigcup_{n\in\alpha}\bigcup_{\vec x\in X^n}\Big(\set{n}\times \set{\vec x}\times( R_{x_0}\times\dots\times R_{x_{n-1}})\times P\Big).
\]
Any element of $A$ can be written $(n,\vec x,\vec r,p)$ with $n<\alpha$, $\vec x\in X^n$ and $\vec r\in R_{x_0}\times\dots\times R_{x_{n-1}}$.

We define an order on $A$ by $(m,\vec x,\vec r,p)\le (n,\vec y,\vec s,q)$ if and only if the following conditions hold:
\begin{enumerate}
\item $(m,\vec x,\vec r)\le (n,\vec y,\vec s)$.
\item If $m=n$ then $p\le q$.
\item If $m<n$ then $p\le y_{m}$.
\end{enumerate}
\end{definition}

\begin{remark}\label{R:prodlf}
In the context of Definition~\ref{D:newposet}, notice that~$T$ is a lower finite tree. Indeed, if $(n,\vec x,\vec r)\in T$, then $T\dnw (n,\vec x,\vec r)=\setm{(m,\vec x\res m,\vec r\res m)}{m\le n}$ is a chain of length~$n$. The tree $T$ is called \emph{the tree associated to $\kposet{P}{X}{\vec R}{\alpha}$}.

The following statements hold:
\begin{enumerate}
\item If~$P$ is lower finite, then $A$ is lower finite.
\item The inequality $\card A\le\aleph_0 +\card P+\sum_{x\in X}\card R_x$ holds.
\item The inequality $\card T\le\aleph_0+\sum_{x\in X}\card R_x$ holds.
\end{enumerate}
\end{remark}

\begin{remark}\label{R:coverinkprod}
In the context of Definition~\ref{D:newposet}, if $a<b$ in $A$, then there are $t=(n,\vec x,\vec r)\in T$, $p,q\in P$, and $m\le n$ such that $a=(t\res m,p)$ and $b=(t,q)$. Moreover, if $m<n$, then $(t\res m,p)<(t\res(m+1),0)\leq(t,q)$. It follows easily that $a\prec b$ if and only if exactly one of the following statements holds:
\begin{enumerate}
\item $m=n$ and $p\prec q$.
\item $n=m+1$, $p=x_m$, and $q=0$.
\end{enumerate}
As a consequence, we obtain immediately that \emph{If $P$, $X$, and all $R_x$ are finite, then each $a\in A$ has only finitely many covers}.
\end{remark}

\begin{lemma}\label{L:prodhascomb}
Let $\kappa\ge \lambda$ be infinite cardinals, let~$P$ be a lower finite $\kappa$-small poset with a smallest element, let $X\subseteq P$, let $\vec R=(R_x)_{x\in X}$ be a family of $\kappa$-small sets, and let $\alpha\le\omega$. If $(\kappa,<\lambda)\leadsto P$, then $(\kappa,<\lambda)\leadsto \kposet{P}{X}{\vec R}{\alpha}$.
\end{lemma}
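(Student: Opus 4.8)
The plan is to use the lower finite reformulation of the relation $\leadsto$ from Definition~\ref{D:InfCombP}. Since $P$ is lower finite, so is $A=\kposet{P}{X}{\vec R}{\alpha}$ (Remark~\ref{R:prodlf}), so it suffices to show: for every isotone $F\colon[\kappa]^{<\omega}\to[\kappa]^{<\lambda}$ there is a one-to-one map $f\colon A\toinj\kappa$ with $F(f(A\dnw a))\cap f(A\dnw b)\subseteq f(A\dnw a)$ for every cover $a\prec b$ of $A$. First I would record the shape of principal ideals and covers of $A$. For $t=(n,\vec x,\vec r)\in T$ set
\[
D_t=\bigcup_{m<n}\set{t\res m}\times(P\dnw x_m)\subseteq A;
\]
one reads off from Definition~\ref{D:newposet} that $A\dnw(t,p)=\bigl(\set{t}\times(P\dnw p)\bigr)\cup D_t$, and $D_t$ is \emph{finite} (as $n<\omega$ and $P$ is lower finite) and lies in slices strictly shorter than $t$. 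By Remark~\ref{R:coverinkprod} a cover $a\prec b$ of $A$ has one of two forms: $(1)$ $a=(t,p)$, $b=(t,q)$ with $p\prec q$ in $P$; or $(2)$ $a=(t\res(n-1),x_{n-1})$, $b=(t,0)$, with $n\ge 1$. Thus $A$ is the disjoint union of the slices $\set{t}\times P$, $t\in T$, attached along the finite ``bases'' $D_t$.

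I would then build $f$ by recursion along $T$. Since $T$ is lower finite and $\card T\le\aleph_0+\sum_{x\in X}\card R_x\le\kappa$ (Remark~\ref{R:prodlf}), enumerate $T=\setm{t_\xi}{\xi<\tau}$ with $\tau\le\kappa$ a cardinal, so that $t<t'$ in $T$ forces $t$ to precede $t'$; then the ancestors of $t_\xi$ all occur among $\setm{t_\eta}{\eta<\xi}$. Assume one-to-one maps $g_{t_\eta}\colon P\toinj\kappa$ have been defined for $\eta<\xi$, put $t=t_\xi$ and $f(D_t)=\bigcup_{m<n}g_{t\res m}(P\dnw x_m)$, a finite subset of $\kappa$. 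Set
\[
W_\xi=\Bigl(\bigcup_{\eta<\xi}g_{t_\eta}(P)\Bigr)\cup F(f(D_t));
\]
then $\card W_\xi<\kappa$, being a union of fewer than $\kappa$ sets of cardinality $\le\card P<\kappa$ together with a set of cardinality $<\lambda\le\kappa$. Fix a bijection $\phi_\xi\colon\kappa\to\kappa\setminus W_\xi$ and define the isotone map $F_\xi\colon[\kappa]^{<\omega}\to[\kappa]^{<\lambda}$ by
\[
F_\xi(E)=\phi_\xi^{-1}\bigl(F(\phi_\xi(E)\cup f(D_t))\cap(\kappa\setminus W_\xi)\bigr).
\]
Applying $(\kappa,{<}\lambda)\leadsto P$ to $F_\xi$ (the relevant sets $P\dnw p$ are finite, so the lower finite form of Definition~\ref{D:InfCombP} applies) yields a one-to-one $g'_\xi\colon P\toinj\kappa$ with $F_\xi(g'_\xi(P\dnw p))\cap g'_\xi(P\dnw q)\subseteq g'_\xi(P\dnw p)$ for all $p\prec q$ in $P$. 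Set $g_{t_\xi}=\phi_\xi\circ g'_\xi$, a one-to-one map from $P$ into $\kappa\setminus W_\xi$, and finally define $f(t,p)=g_t(p)$.

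It then remains to check that $f$ works. Injectivity is clear: each $g_t$ is one-to-one, and for $\eta<\xi$ the range of $g_{t_\xi}$ avoids $g_{t_\eta}(P)\subseteq W_\xi$, so distinct slices get disjoint ranges. For the covering condition, in a type~$(1)$ cover one has $f(A\dnw(t,p))=g_t(P\dnw p)\cup f(D_t)$ and likewise for $q$; since $f(D_t)$ lies in both ideals, the inequality reduces to $F(g_t(P\dnw p)\cup f(D_t))\cap g_t(P\dnw q)\subseteq g_t(P\dnw p)$, and applying $\phi_\xi^{-1}$ (all the sets in play live in $\kappa\setminus W_\xi$) turns this into precisely the inequality satisfied by $g'_\xi$ via $F_\xi$. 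In a type~$(2)$ cover $f(A\dnw a)=f(D_t)$ and $f(A\dnw b)=f(D_t)\cup\set{g_t(0)}$, so the inequality amounts to $g_t(0)\notin F(f(D_t))$, which holds since $g_t(0)\in\kappa\setminus W_\xi$ and $F(f(D_t))\subseteq W_\xi$.

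The hard part is the bookkeeping: the recursion must keep the ranges of the slice-maps pairwise disjoint (for injectivity of $f$) while still allowing $(\kappa,{<}\lambda)\leadsto P$ to be invoked, whose conclusion concerns all of $\kappa$. What makes this possible is that, because $T$ has been enumerated in order type $\le\kappa$ and $P$ is $\kappa$-small, the set $W_\xi$ of already-used ordinals has cardinality $<\kappa$, so $\kappa\setminus W_\xi$ still has cardinality $\kappa$; the combinatorial property of $P$ can then be transported into $\kappa\setminus W_\xi$ through $\phi_\xi$, and the twist by $f(D_t)$ in the definition of $F_\xi$ absorbs the fixed finite base that every principal ideal of $A$ carries. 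One should also keep an eye on the easy finiteness facts ($D_t$, $f(D_t)$, and all principal ideals of $P$ are finite) that make $F$ and $F_\xi$ legitimately applicable.
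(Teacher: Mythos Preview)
Your proof is correct and follows essentially the same route as the paper's: decompose $A$ into the slices $\set{t}\times P$ glued along the finite ``bases'' $D_t$, build the section slice by slice using $(\kappa,{<}\lambda)\leadsto P$ applied to a version of $F$ shifted by $f(D_t)$, and verify the two kinds of covers from Remark~\ref{R:coverinkprod}. The only real difference is bookkeeping for injectivity: the paper fixes in advance a partition $(K_t)_{t\in T}$ of~$\kappa$ into blocks of size~$\kappa$ and sends each slice into its own block (so disjointness is automatic and the recursion can follow the well-founded tree order directly), whereas you enumerate $T$ linearly and at each step carve out the already-used ordinals via a bijection $\phi_\xi$; both devices work, the paper's being slightly cleaner since it avoids having to argue that a tree of height ${\le}\omega$ and size ${\le}\kappa$ admits a linear extension of order type ${\le}\kappa$.
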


\begin{proof}
Denote by $T$ the tree associated to $A=\kposet{P}{X}{\vec R}{\alpha}$. It follows from Remark~\ref{R:prodlf} together with the assumptions on cardinalities that the following inequalities hold:
\[
\card T\le\aleph_0 +\sum_{x\in X}\card R_x\le\aleph_0+\sum_{x\in X}\kappa\le\kappa.
\]
Thus there exists a partition $(K_t)_{t\in T}$ of~$\kappa$ such that $\card K_t=\kappa$ for each $t\in T$.

Notice that $A$ is lower finite. Let $F\colon[\kappa]^{<\omega}\to[\kappa]^{<\lambda}$ isotone, let $t\in T$. Assume having constructed, for each $s<t$, a one-to-one map $\sigma_s\colon P\toinj K_s$ such that setting
\[
S_{s}=\setm{\sigma_{s\res m}(p)}{m<n\text{ and }p\le x_m},\text{ for all $s=(n,\vec x,\vec r)\le t$,}
\]
the following containments hold:
\begin{align*}
\rng\sigma_s&\subseteq K_s-F(S_s),&&\text{for each $s<t$,}\\
F\big(\sigma_s(P\dnw p)\cup S_s\big)\cap \sigma_s(P\dnw q)&\subseteq \sigma_s(P\dnw p),&&\text{for all $p\le q$ in $P$ and all $s<t$.}
\end{align*}
Put $F_t(U)=F(U\cup S_t) - F(S_t)$ for each $U\in[K_t-F(S_t)]^{<\omega}$. As $S_t$ is finite, this defines a map $F_t\colon[K_t-F(S_t)]^{<\omega}\to[K_t-F(S_t)]^{<\lambda}$. As $F(S_t)$ is $\lambda$-small, $\card(K_t-F(S_t))=\kappa$, moreover $(\kappa,{<}\lambda)\leadsto P$, so there exists a one-to-one map $\sigma_t\colon P \toinj K_t-F(S_t)$ such that:
\[
F\big(\sigma_t(P\dnw p)\cup S_t\big)\cap \sigma_t(P\dnw q)\subseteq \sigma_t(P\dnw p),\quad \text{for all $p\le q$ in $P$.}
\]
Therefore we construct, by induction on~$t$, a one-to-one map $\sigma_t\colon P \toinj K_t$ for each $t\in T$, such that setting 
\begin{equation}\label{E:plldefS}
S_{t}=\setm{\sigma_{t\res m}(p)}{m<n\text{ and }p\le x_m},\text{ for all $t=(n,\vec x,\vec r)\in T$,}
\end{equation}
the following containments hold:
\begin{align}
\rng\sigma_t&\subseteq K_t-F(S_t),&&\text{for each $t\in T$,}\label{E:pllt1}\\
F\big(\sigma_t(P\dnw p)\cup S_t\big)\cap \sigma_t(P\dnw q)&\subseteq \sigma_t(P\dnw p),
&& \text{for all $p\le q$ in $P$ and all $t\in T$.}\label{E:pllt2}
\end{align}
For $(t,p)\in A$, set $\sigma(t,p)=\sigma_{t}(p)$. This defines a map $\sigma\colon A\to\kappa$. Let $a=(s,p)$ and $b=(t,q)$ in $A$ such that $\sigma(a)=\sigma(b)$. It follows from~\eqref{E:pllt1} that $\sigma(a)\in K_s$ and $\sigma(b)\in K_t$. As $(K_u)_{u\in T}$ is a partition of $\kappa$, we obtain $s=t$. Moreover $\sigma_t(p)=\sigma(a)=\sigma(b)=\sigma_t(q)$, so, as $\sigma_t$ is one-to-one, $p=q$, and so $a=b$. Therefore $\sigma$ is one-to-one.

Let $a=(t,p)\in A$, with $t=(n,\vec x,\vec r)\in T$. It follows from the definition of $A$ that:
\[
A\dnw a=\setm{(t\res m,q)\in A}{m< n\text{ and }q\in P\dnw x_{m}}\cup\setm{(t,q)\in A}{q\in P\dnw p}.
\]
Thus, from~\eqref{E:plldefS}, we see that
\begin{equation}\label{E:sigmaAdnwa}
\sigma(A\dnw a)=S_{t}\cup \sigma_{t}(P\dnw p),\quad\text{for each $a=(t,p)\in A$}.
\end{equation}
As $(K_t)_{t\in T}$ is a partition, it follows from~\eqref{E:plldefS} and~\eqref{E:pllt1} that $K_t\cap S_t=\emptyset$, thus, by~\eqref{E:sigmaAdnwa},

\begin{equation}\label{E:sigmaAdnwa2}
\sigma(A\dnw a)\cap K_{t}=\sigma_{t}(P\dnw p),\quad\text{for each $a=(t,p)\in A$}.
\end{equation}

Let $a\prec b$ in $A$. There are two cases to consider (cf. Remark~\ref{R:coverinkprod}). First assume that $a=(t,p)$ and $b=(t,q)$ with $p\prec q$ and $t=(n,\vec x,\vec r)\in T$. Let $c\leq b$ with $\sigma(c)\in F(\sigma(A\dnw a))$. We can write $c=(t\res m,p')$, with $m\le n$. Suppose first that $m<n$. As $c\leq b$, $p'\leq x_m$, thus $c<a$. So $\sigma(c)\in\sigma(A\dnw a)$. Now suppose that $m=n$. It follows from~\eqref{E:pllt1} that
\[
\sigma(c)=\sigma_t(p') \in F(\sigma(A\dnw a)) \cap \sigma(A\dnw b)\cap (K_{t}-F(S_{t})).
\]
So, from~\eqref{E:sigmaAdnwa} and~\eqref{E:sigmaAdnwa2} we obtain
\[
\sigma(c)\in F(\sigma_{t}(P\dnw p)\cup S_{t}) \cap \sigma_{t}(P\dnw q).
\]
Thus~\eqref{E:pllt2} implies that $\sigma(c)\in \sigma_{t}(P\dnw p)$, from~\eqref{E:sigmaAdnwa} we obtain $\sigma(c)\in \sigma(A\dnw a)$. Therefore the containment $F(\sigma(A\dnw a))\cap \sigma(A\dnw b)\subseteq \sigma(A\dnw a)$ holds.

Now assume that $t=(n+1,\vec x,\vec r)\in T$, $b=(t,0)$ and $a=(t\res n,x_n)$. Let $c\le b$ such that $\sigma(c)\in F(\sigma(A\dnw a))$. As $c\le b$ there are $m\le n+1$ and $p\in P$ such that $c=(t\res m,p)$. If $m\leq n$, then $p\le x_m$, thus $c\le a$, so $\sigma(c)\in \sigma(A\dnw a)$. If $m=n+1$, then $c=b$. {}From~\eqref{E:plldefS} and~\eqref{E:sigmaAdnwa} we obtain $\sigma(A\dnw a)=S_{t\res n}\cup\sigma_{t\res n}(P\dnw x_n)=S_t$. Therefore $\sigma_{t}(0)=\sigma(c)\in F(S_t)$, in contradiction with~\eqref{E:pllt1}. So the containment $F(\sigma(A\dnw a))\cap \sigma(A\dnw b)\subseteq \sigma(A\dnw a)$ holds.
\end{proof}

\begin{corollary}\label{C:norcov}
For an integer $m>1$, put
 \[
 \rB_m({\le}2)=\setm{X\in\Pow(m)}{\text{either }\card X\le 2\text{ or }X=m}.
 \]
Let $P$ be a \jzs\ embeddable, as a poset, into $\rB_m({\le}2)$. Let $X\subseteq P$, let $\vec R=(R_x)_{x\in X}$ be a family of finite sets, and let $\alpha\le\omega$. There exists an $\aleph_0$-lifter $(U,\bU)$ of $A=\kposet{P}{X}{\vec R}{\alpha}$ such that $U$ has cardinality~$\aleph_2$.
\end{corollary}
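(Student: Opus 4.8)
The plan is to derive the $\aleph_0$-lifter from the combinatorial statement $(\aleph_2,{<}\aleph_1)\leadsto A$ for $A=\kposet{P}{X}{\vec R}{\alpha}$, exactly as Lemma~\ref{L:squarehaslifter} derives its lifter from the analogous statement for the square. Since $P$ is finite it is lower finite, so $A$ is lower finite by Remark~\ref{R:prodlf}(1); feeding $(\aleph_2,{<}\aleph_1)\leadsto A$ into \cite[Corollary~3-5.8]{GiWe1} (applied with $\aleph_1$ in the role of $\kappa$, so that $\kappa^+=\aleph_2$) then produces an $\aleph_0$-lifter $(U,\bU)$ of $A$ with $\card U=\card\bU=\aleph_2$.

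To get $(\aleph_2,{<}\aleph_1)\leadsto A$ I would apply Lemma~\ref{L:prodhascomb} with $\kappa=\aleph_2$ and $\lambda=\aleph_1$; its hypotheses are immediate, since $P$ is a \jzs, hence has a least element, and is finite, hence lower finite and $\aleph_2$-small; each $R_x$ is finite, hence $\aleph_2$-small; $\alpha\le\omega$; and $\aleph_2\ge\aleph_1$. This reduces the whole statement to $(\aleph_2,{<}\aleph_1)\leadsto P$.

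For the latter I would use the hypothesis that $P$ embeds as a poset into $R=\rB_m({\le}2)$, via two ingredients. The first is monotonicity of $\leadsto$ under order-embeddings of lower finite posets: if $\iota\colon P\toinj R$ is a poset embedding with $P$, $R$ lower finite, then $(\aleph_2,{<}\aleph_1)\leadsto R$ implies $(\aleph_2,{<}\aleph_1)\leadsto P$. To prove this one uses the cover-and-isotone reformulation of Definition~\ref{D:InfCombP}: given an isotone $F\colon[\aleph_2]^{<\omega}\to[\aleph_2]^{<\aleph_1}$, choose a one-to-one $f\colon R\toinj\aleph_2$ witnessing $(\aleph_2,{<}\aleph_1)\leadsto R$ for $F$ (extended by $\es$ on infinite subsets of $\aleph_2$), and set $g=f\circ\iota$. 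Since $\iota$ is an order-embedding, $\iota^{-1}(R\dnw\iota(p))=P\dnw p$ and $g(P\dnw p)\subseteq f(R\dnw\iota(p))$; as $F$ is isotone, $F(g(P\dnw p))\cap g(P\dnw q)\subseteq F(f(R\dnw\iota(p)))\cap f(R\dnw\iota(q))\subseteq f(R\dnw\iota(p))$, and a final intersection with $g(P\dnw q)$, combined with the injectivity of $f$ and $p\le q$, collapses the left side to $g(P\dnw p)$. The second ingredient is $(\aleph_2,{<}\aleph_1)\leadsto\rB_m({\le}2)$, i.e. $(\kappa^+,{<}\kappa)\leadsto\rB_m({\le}2)$ for every infinite $\kappa$: this is the $\rB_m({\le}2)$-analogue of the square step of Lemma~\ref{L:squarehaslifter}, and follows from the Kuratowski index of $\rB_m({\le}2)$ being $2$ (see \cite{GiWe2}).

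I expect this last point to be the crux. Unlike the square, $\rB_m({\le}2)$ has order-dimension growing with $m$ — it is the incidence poset of $K_m$ with a least and a greatest element adjoined — so one cannot bound its Kuratowski index via \cite[Proposition~4.7]{GiWe2}; the sharper fact $\kur(\rB_m({\le}2))=2$ is what is needed. Failing a direct citation, I would prove $(\kappa^+,{<}\kappa)\leadsto\rB_m({\le}2)$ by hand through Definition~\ref{D:InfCombP}: fix the image of $\es$, then choose the images of the $m$ singletons inside a free set of size $\kappa^+$ for the set-mapping $z\mapsto F(\{f(\es),z\})$, then the images of the $\binom m2$ pairs inside a free set of size $\kappa^+$ for $z\mapsto\bigcup_{i<j}F(\{f(\es),f(\{i\}),f(\{j\}),z\})$, and finally the image of the top avoiding finitely many $F$-values; free sets of size $\kappa^+$ exist at each stage by Hajnal's set-mapping theorem because $\kappa<\cf(\kappa^+)$, and only finitely many $F$-values, each of size $\le\kappa$, ever have to be dodged. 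Everything else — the passage from $P$ to $A$ and from the combinatorial statement to the lifter — is packaged in Lemma~\ref{L:prodhascomb} and \cite[Corollary~3-5.8]{GiWe1}, so no further calculation is required.
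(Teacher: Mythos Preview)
Your architecture is the paper's: establish the combinatorial statement $\leadsto$ for $\rB_m({\le}2)$, transfer it to~$P$ by monotonicity, lift it to $A=\kposet{P}{X}{\vec R}{\alpha}$ via Lemma~\ref{L:prodhascomb}, then extract an $\aleph_0$-lifter. The difference is in the parameter~$\lambda$. The paper works with $\lambda=\aleph_0$ throughout: it cites the Hajnal--M\'at\'e partition relation $(\aleph_2,2,\aleph_0)\to m$ together with \cite[Proposition~5.2]{GiWe2} to obtain $(\aleph_2,{<}\aleph_0)\leadsto\rB_m({\le}2)$ directly, uses \cite[Lemma~3.2]{GiWe2} for the monotonicity step you prove by hand, and at the end invokes \cite[Lemma~3-5.5]{GiWe1} (rather than \cite[Corollary~3-5.8]{GiWe1}), which only needs the ${<}\aleph_0$ statement together with the finitely-many-covers condition supplied by Remark~\ref{R:coverinkprod}.

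Your choice $\lambda=\aleph_1$ forces you to prove the stronger $(\aleph_2,{<}\aleph_1)\leadsto\rB_m({\le}2)$, which is why you end up sketching a free-set argument \`a la Hajnal. That argument is sound, and your route is correct; but it is more work than necessary, and the claim $\kur(\rB_m({\le}2))=2$ you flag as the crux is precisely what the paper sidesteps by staying at $\lambda=\aleph_0$. In short: same proof skeleton, but the paper's parameter choice lets every step be a one-line citation, whereas yours requires reproving two of those steps.
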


\begin{proof}
It follows from \cite{HajMat}, see also \cite[Theorem~46.2]{EHMR}, that $(\aleph_2,2,\aleph_0)\to m$, so \cite[Proposition~5.2]{GiWe2} implies $(\aleph_2,<\aleph_0)\leadsto\rB_m({\le}2)$, and so, from \cite[Lemma~3.2]{GiWe2} we obtain $(\aleph_2,<\aleph_0)\leadsto P$. It follows from Lemma~\ref{L:prodhascomb} that $(\aleph_2,<\aleph_0)\leadsto A$. The conclusion follows from Remark~\ref{R:coverinkprod} together with \cite[Lemma~3-5.5]{GiWe1}.
\end{proof}

\section{The Condensate Lifting Lemma for gamps}

In this section we apply the Armature Lemma from \cite{GiWe1} together with Lemma~\ref{L:LSGamp} to prove a special case of the Condensate Lifting Lemma for gamps.

In order to use the \emph{condensate} constructions, we need categories and functors that satisfy the following conditions.

\begin{definition}
Let $\cA$ and~$\cS$ be categories, let $\Phi\colon\cA\to\cS$ be a functor. We introduce the following statements:
\begin{itemize}
\item[(CLOS)] $\cA$ has all small directed colimits.
\item[(PROD)] Any two objects of~$\cA$ have a product in~$\cA$.
\item[(CONT)] The functor~$\Phi$ preserves all small directed colimits.
\end{itemize}
\end{definition}

\begin{remark}\label{R:DefCondensate}
Given a norm-covering~$X$ of a poset $P$ and a category $\cA$ that satisfies both (CLOS) and (PROD), we can construct an object $\xF(X)\otimes\vec A$ which is a directed colimit of finite products of objects in~$\vec A$, together with morphisms
 \[
 \pi_{\bx}^X\otimes \vec A\colon\xF(X)\otimes\vec A\to A_{\partial\bx}
 \]
for each $\bx\in\Ids X$.

Moreover if $\cA$ is a class of algebras closed under finite products and directed colimits, then $\pi_{\bx}^X\otimes \vec A$ is surjective, and $\card(\xF(X)\otimes\vec A)\le \card X+\sum_{p\in P} A_p$. For more details about this construction, we refer the reader to \cite[Chapter~2]{GiWe1}.
\end{remark}

In the following theorem, we refer the reader to Definition~\ref{D:partiallifting} for the definition of a partial lifting.

\begin{theorem}\label{T:CLLGamp}
Let~$\cV$ and~$\cW$ be varieties of algebras such that~$\cW$ has finite similarity type, let $(X,\bX)$ be an~$\aleph_0$-lifter of a poset $P$, let~$\vec A=\famm{A_p,f_{p,q}}{p\le q\text{ in }P}$ be a diagram in~$\cV$ such that $\Conc A_p$ is finite for each $p\in P^=$, let~$B\in\cW$ such that $\Conc B\cong\Conc\bigl(\xF(X)\otimes\vec A\bigr)$. Then there exists a partial lifting $\vec \bB=\famm{\bB_p,\bgg_{p,q}}{p\le q\text{ in }P}$ of $\Conc\circ\vec A$ in~$\cW$ such that $\bB_p$ is finite for each $p\in P^=$ and $\bB_p$ is a quotient of $\GA(B)$ for each $p\in \Max P$.
\end{theorem}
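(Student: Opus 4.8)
The plan is to combine the Armature Lemma of \cite{GiWe1} with the Buttress-type Lemma~\ref{L:LSGamp} of the present section. First I would fix an isomorphism $\Conc B\cong\Conc\bigl(\xF(X)\otimes\vec A\bigr)$ and, for each $p\in P$, consider the morphism $\pi^X_{\bx}\otimes\vec A\colon\xF(X)\otimes\vec A\to A_{\partial\bx}$ attached to a sharp ideal $\bx$ with $\partial\bx=p$ (cf. Remark~\ref{R:DefCondensate}); since $\cV$ is closed under finite products and directed colimits, this morphism is surjective, so $\Conc(\pi^X_{\bx}\otimes\vec A)$ is an ideal-induced \jzh\ from $\Conc\bigl(\xF(X)\otimes\vec A\bigr)\cong\Conc B$ onto $\Conc A_p$, which is finite for $p\in P^=$. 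This gives a family $(\phi_p)_{p\in P}$ of ideal-induced \jzh s $\phi_p\colon\Conc B\to\Conc A_p$, at least on $P^=$; on $\Max P$ one handles the (possibly infinite) semilattices $\Conc A_p$ separately, taking $\phi_p$ to be the appropriate projection. The Armature Lemma, applied to the $\aleph_0$-lifter $(X,\bX)$ and the diagram $\vec A$, produces a natural transformation realizing $\Conc\circ\vec A$ as a quotient-compatible family of ideal-induced images of $\Conc B$; this is where the lifter hypothesis is used.

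Next I would feed the family $(\phi_p)_{p\in P}$ into Lemma~\ref{L:LSGamp}: since $\cW$ has finite similarity type, $B\in\cW$, and each $\phi_p\colon\Conc B\to\Conc A_p$ (for $p\in P^=$) is ideal-induced onto a finite \jzs, that lemma yields a diagram $\vec\bA=\famm{\bA_p,\bff_{p,q}}{p\le q\text{ in }P}$ of finite subgamps of $\GA(B)$ such that $\phi_p\res\widetilde A_p$ is ideal-induced, each $\bA_p$ is strong, distance-generated through $\phi_p\res\widetilde A_p$, and congruence-tractable through $\phi_p\res\widetilde A_p$, and each $\bff_{p,q}$ is strong and congruence-cuttable through $\phi_q\res\widetilde A_q$. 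Then I would set $I_p=\ker_0(\phi_p\res\widetilde A_p)$; this is an ideal of $\bA_p$, and by naturality $(I_p)_{p\in P}$ is an ideal of $\vec\bA$. Put $\bB_p=\bA_p/I_p$ and $\bgg_{p,q}$ the induced morphisms. By Lemma~\ref{L:gampthrough1} each $\bB_p$ is strong, distance-generated, and congruence-tractable, and by Lemma~\ref{L:gampthrough2} each $\bgg_{p,q}$ is strong and congruence-cuttable. Since $\phi_p\res\widetilde A_p$ is ideal-induced it induces an isomorphism $\widetilde A_p/I_p\cong\Conc A_p$, so $\CG\circ\vec\bB\cong\Conc\circ\vec A$ as diagrams; in particular $\widetilde B_p$ is finite for $p\in P^=$, hence $\bB_p$ is finite there. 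For $p\in\Max P$, $\bB_p=\GA(B)/I_p$ is a quotient of $\GA(B)$ by construction.

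The remaining point is that each $\bB_p$ has an \emph{isomorphic} realization in $\cW$, which is what upgrades the diagram from a subgamp datum to a genuine partial lifting. Here I would take the realization $(B,\chi)$ of $\bA_p$ (as a subgamp of $\GA(B)$, with $\chi$ the inclusion $\widetilde A_p\hookrightarrow\Conc B$) and apply Proposition~\ref{P:quot}(3): the quotient $\bB_p=\bA_p/I_p$ has the realization $(B/\!\bigvee\chi(I_p),\chi')$, and since $\phi_p$ identifies $\widetilde A_p/I_p$ with all of $\Conc A_p$ — which is $\Conc$ of the corresponding quotient of $B$ — the map $\chi'$ is onto, i.e.\ the realization is isomorphic. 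For $p\in P^=$ one should check that $\bigvee\chi(I_p)$ is exactly $\ker(\pi^X_{\bx}\otimes\vec A)$ transported to $B$, so that $B/\!\bigvee\chi(I_p)\in\cW$ maps onto $A_p$; this compatibility is exactly what the Armature Lemma guarantees. I expect the main obstacle to be precisely this bookkeeping: matching the ideals $I_p$ coming from Lemma~\ref{L:LSGamp} with the kernels of the condensate projections $\pi^X_{\bx}\otimes\vec A$ supplied by the Armature Lemma, and verifying that the resulting realizations are simultaneously isomorphic and compatible with the transition maps $\bgg_{p,q}$ — in other words, that the naturality squares survive the passage to quotients. Everything else (strongness, distance-generation, congruence-tractability, congruence-cuttability) transfers formally through the ``through $\phi$'' machinery of Lemmas~\ref{L:gampthrough1} and~\ref{L:gampthrough2}.
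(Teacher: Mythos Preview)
Your proposal has the right ingredients but applies them in the wrong order, and this is not mere bookkeeping. The Armature Lemma of \cite{GiWe1} does not take as input just the lifter $(X,\bX)$ and the target diagram $\vec A$; it requires an $\bX$-indexed diagram in the comma category $\cS\dnw\Conc B$ whose values on $\bX^=$ are \emph{finite}, and it then returns a section $\sigma\colon P\to\bX$ such that the \emph{restrictions} $(\rho_{\sigma(p)}\res\widetilde B_{\sigma(p)})_{p\in P}$ form a natural transformation to $\Conc\circ\vec A$. The finite objects $\widetilde B_{\bx}$ must therefore be constructed \emph{before} the Armature Lemma is invoked, because $\sigma$ is chosen precisely so as to make those particular restrictions commute. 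You cannot first extract a section, define $\phi_p=\rho_{\sigma(p)}$, and only afterwards run Lemma~\ref{L:LSGamp} over the index set $P$: nothing would then guarantee that $(\phi_p\res\widetilde A_p)_{p\in P}$ is a natural transformation, and that is exactly the condition needed for $\CG\circ\vec\bB\cong\Conc\circ\vec A$ to hold as \emph{diagrams} rather than merely objectwise.

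The paper's proof therefore runs Lemma~\ref{L:LSGamp} first, but indexed by the lower-finite poset $\bX^=$ rather than by $P$, producing an $\bX^=$-indexed diagram of finite subgamps of $\GA(B)$; this is extended over all of $\bX$ by setting $\bB_{\by}=\GA(B)$ for maximal $\by$. Only then is the Armature Lemma applied, to this $\bX$-indexed diagram, yielding the section $\sigma$ together with naturality of the restricted maps. One composes with $\sigma$, takes kernels $I_p=\ker_0\rho_{\sigma(p)}$, and passes to quotients exactly as you describe. Your final paragraph correctly senses that ``matching the ideals $I_p$ \dots\ with the kernels of the condensate projections'' and securing naturality is the crux; it is, and it cannot be arranged by post-hoc bookkeeping---it is precisely what the Armature Lemma delivers, provided it is fed the finite subgamps first.
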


\begin{proof}
Denote by $\cS$ the category of all \jzs s with \jzh s. The category~$\cV$ satisfies (CLOS) and (PROD), so $\xF(X)\otimes\vec A$ is well-defined (cf. \cite[Section~3-1]{GiWe1}). The functor $\Conc$ satisfies (CONT). Let $\chi\colon \Conc B\to \Conc\bigl(\xF(X)\otimes\vec A\bigr)$ be an isomorphism and put $\rho_{\bx}=\Conc (\pi_{\bx}^X\otimes\vec A)\circ\chi$ for each $\bx\in\bX$. As $\pi_{\bx}^X\otimes\vec A$ is surjective and $\chi$ is an isomorphism, it follows that $\rho_{\bx}$ is ideal-induced. Notice that $\rng\rho_{\bx}=\Conc A_{\partial\bx}$ is finite.

Lemma~\ref{L:LSGamp} implies that there exists a diagram~$\vec \bB=\famm{\bB_{\bx},\bgg_{\bx,\by}}{\bx\le \by\text{ in }\bX^=}$ of finite subgamps of $\GA(B)$ in~$\cW$ such that the following statements hold:
\begin{enumerate}
\item $\rho_{\bx}\res\widetilde B_{\bx}$ is ideal-induced for each $\bx\in \bX^=$.
\item $\bB_{\bx}$ is strong, distance-generated through $\rho_{\bx}\res\widetilde B_{\bx}$, and congruence-tractable through $\rho_{\bx}\res\widetilde B_{\bx}$, for each $\bx\in \bX^=$.
\item $\bgg_{\bx,\by}$ is the canonical embedding, for all $\bx\le \by$ in~$\bX^=$. 
\item $\bgg_{\bx,\by}$ is strong and congruence-cuttable through $\rho_{\by}\res\widetilde B_{\by}$, for all $\bx<\by$ in~$\bX^=$.
\end{enumerate}
We extend this diagram to an~$\bX$-indexed diagram, with $\bB_{\by}=\GA(B)$ and defining $\bgg_{\bx,\by}$ as the canonical embedding for each $\by\in\bX-\bX^=$ and each $\bx\le\by$. Thus $\CG\circ\bB$ is an $\bX$-indexed diagram in the comma category $\cS\dnw \Conc B$, moreover $\CG(\bB_{\bx})=\widetilde B_{\bx}$ is finite for each $\bx\in\bX^=$. Therefore, it follows from the Armature Lemma \cite{GiWe1} that there exists an isotone section $\sigma\colon P\into\bX$ such that the family $(\rho_{\sigma(p)}\res\widetilde B_{\sigma(p)})_{p\in P}$ is a natural transformation from $\famm{\CG(\bB_{\sigma(p)}),\CG(\bgg_{\sigma(p),\sigma(q)})}{p\le q\text{ in }P}$ to $\Conc\circ\vec A$.

Put $\vec\bB'=\famm{\bB_{\sigma(p)},\bgg_{\sigma(p),\sigma(q)}}{p\le q\text{ in }P}$ and $I_p=\ker_0\rho_{\sigma(p)}$, for each $p\in P$. This defines an ideal~$\vec I=(I_p)_{p\in P}$ of~$\vec \bB'$. Moreover, as $\rho_{\sigma(p)}\res\widetilde B_{\sigma(p)}$ is ideal-induced for each $p\in P$, these morphisms induce a natural equivalence $(\CG\circ\vec\bB')/\vec I\cong\Conc\circ\vec\bA$ (cf. Lemma~\ref{L:IIsem}).

Denote by $\bhh_{p,q}\colon \bB_{\sigma(p)}/I_p\to\bB_{\sigma(q)}/I_q$ the morphism induced by $\bgg_{\sigma(p),\sigma(q)}$. It follows from Proposition~\ref{P:quot} that $\bB_{\sigma(p)}/I_p$ is strong for each $p\in P$, and it follows from Proposition~\ref{P:quotarrow} that $\bhh_{p,q}$ is strong for all $p<q$ in~$P$. Lemma~\ref{L:gampthrough1} implies that $\bB_{\sigma(p)}/I_p$ is distance-generated and congruence-tractable for each $p\in P$. {}From Lemma~\ref{L:gampthrough2} we obtain that $\bhh_{p,q}$ is congruence-cuttable for all $p<q$ in~$P$.

Let $p\in P$, let $\chi\colon \widetilde B_{\sigma(p)}\to\Conc B$ be the inclusion map. As $\bB_{\sigma(p)}$ is a subgamp of $\GA(B)$, $(B,\chi)$ is a realization of $\bB_{\sigma(p)}$, thus it induces a realization $(B/\bigvee I_p,\chi')$ where $\chi'\colon \widetilde B_p/I_p\to B/\bigvee I$ satisfies $\chi'(d/I_p)=d/\bigvee I_p$ (cf. Proposition~\ref{P:quot}). As $\rho_{\sigma(p)}\res\widetilde B_{\sigma(p)}$ is ideal-induced it is easy to check that $\chi'$ is surjective, hence it defines an isomorphic realization.

Let $p$ a maximal element of~$P$. {}From $\bB_{\sigma(p)}=\GA(B)$ it follows that $\bB_{\sigma(p)}/I_p$ is a quotient of $\GA(B)$.

Therefore $\vec\bB'/\vec I$ is a partial lifting of $\Conc\circ\vec A$ in~$\cW$. Moreover, if $p\in P^=$ then $\bB_{\sigma(p)}$ is finite, thus $\bB'_p/I_p=\bB_{\sigma(p)}/I_p$ is finite.
\end{proof}

\begin{remark}\label{R:CLLGamp}
Use the notation of Theorem~\ref{T:CLLGamp}. A small modification of the proof above shows that if~$\cW$ is a variety of lattices, then we can construct a lattice partial lifting~$\vec\bB$ of $\Conc\circ\vec A$ in~$\cW$. Moreover, for any integer $n\ge 2$, if~$B$ is congruence $n$-permutable, then all~$\bB_p$ can be chosen congruence $n$-permutable.
\end{remark}

\begin{corollary}
Let~$\cV$ be a locally finite variety of algebras, let~$\cW$ be a variety of algebras with finite similarity type. The following statements are equivalent:
\begin{enumerate}
\item $\crit{\cV}{\cW}>\aleph_0$.
\item Let $T$ be a countable lower finite tree and let~$\vec A$ be a $T$-indexed diagram of finite algebras in~$\cV$. Then $\Conc\circ\vec A$ has a partial lifting in~$\cW$.
\item Let~$\vec A$ be a $\omega$-indexed diagram of finite algebras in~$\cV$. Then $\Conc\circ\vec A$ has a partial lifting in~$\cW$.
\end{enumerate}
\end{corollary}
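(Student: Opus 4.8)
The plan is to prove the cycle $(1)\Rightarrow(2)\Rightarrow(3)\Rightarrow(1)$; the implication $(2)\Rightarrow(3)$ is immediate, since~$\omega$ is a countable lower finite tree.

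\emph{$(1)\Rightarrow(2)$.} Given a countable lower finite tree~$T$ and a $T$-indexed diagram $\vec A=\famm{A_t,f_{s,t}}{s\le t\text{ in }T}$ of finite algebras in~$\cV$, I would apply the Condensate Lifting Lemma for gamps (Theorem~\ref{T:CLLGamp}). The first step is to produce a \emph{countable} $\aleph_0$-lifter of~$T$. For this I would first check that $(\aleph_0,{<}\aleph_0)\leadsto T$: fix an isotone map $F\colon[\aleph_0]^{<\omega}\to[\aleph_0]^{<\omega}$, enumerate $T=\set{t_0,t_1,\dots}$ so that $t_i<t_j$ implies $i<j$ (possible as~$T$ is lower finite and countable), and build a one-to-one $f\colon T\toinj\aleph_0$ by recursion; the key point is that each $T\dnw t_n$ is a \emph{chain}, so that, much as in the proof of Lemma~\ref{L:prodhascomb}, the value $f(t_n)$ only has to avoid finitely many finite sets — the image under $F$ of $f$ applied to the down-set of the unique cover of~$t_n$ below~$t_n$, together with the already used values. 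Then, by the combinatorial machinery of~\cite{GiWe1} (as used in the proofs of Lemma~\ref{L:squarehaslifter} and Corollary~\ref{C:norcov}), $T$ has an $\aleph_0$-lifter $(X,\bX)$ with $\card X=\aleph_0$. Since $\xF(X)\otimes\vec A$ is a directed colimit of finite products of the finite algebras~$A_t$, it lies in~$\cV$, and $\card\bigl(\xF(X)\otimes\vec A\bigr)\le\aleph_0$ (being at most $\card X+\sum_{t\in T}\card A_t$, cf. Remark~\ref{R:DefCondensate}); hence $\Conc\bigl(\xF(X)\otimes\vec A\bigr)$ is a countable member of $\Conc\cV$, so by~$(1)$ there is $B\in\cW$ with $\Conc B\cong\Conc\bigl(\xF(X)\otimes\vec A\bigr)$. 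As each $\Conc A_t$ is finite, Theorem~\ref{T:CLLGamp} now yields a partial lifting of $\Conc\circ\vec A$ in~$\cW$.

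\emph{$(3)\Rightarrow(1)$.} I would argue by contraposition. Assume $\crit{\cV}{\cW}\le\aleph_0$ and fix a countable \jzs\ $S\in(\Conc\cV)-(\Conc\cW)$, say $S\cong\Conc A$ with $A\in\cV$. The crucial step is a L\"owenheim--Skolem argument for the functor~$\Conc$: I claim~$A$ has a \emph{countable} subalgebra~$A'$ for which the canonical \jzh\ $\Conc A'\to\Conc A$ (sending $\Theta_{A'}(x,y)$ to $\Theta_A(x,y)$) is an isomorphism. To build~$A'$, start from a countable subset of~$A$ containing, for every $s\in S$, a finite family of endpoints witnessing~$s$ as a join of principal congruences of~$A$; then take the union of a countable increasing chain obtained by alternately closing under subalgebra generation and, for each containment $\Theta_A(x,y)\le\bigvee_{i<m}\Theta_A(x_i,y_i)$ with endpoints already present, adjoining the list~$\vec z$ of parameters of the terms $t_0,\dots,t_n$ supplied by Lemma~\ref{L:Condcompcongruences}. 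The resulting~$A'$ is countable; $\Conc A'\to\Conc A$ is surjective by the choice of the seed set and order-reflecting, hence an isomorphism, by Lemma~\ref{L:Condcompcongruences} applied \emph{inside}~$A'$, so $\Conc A'\cong S$. Since~$\cV$ is locally finite, $A'$ is the colimit of an $\omega$-indexed chain $\vec A'=\famm{A'_n}{n<\omega}$ of finite subalgebras. If $\Conc\circ\vec A'$ had a partial lifting $\vec\bB$ in~$\cW$, then, as~$\omega$ has no maximal element, Lemma~\ref{L:LimPartLiftIsAlgebra} would give $\varinjlim\vec\bB\cong\GA(B')$ for some $B'\in\cW$, and since~$\CG$ preserves directed colimits (Corollary~\ref{C:GCGpreslim}) and $\CG\circ\vec\bB\cong\Conc\circ\vec A'$, we would get $\Conc B'\cong\varinjlim_{n<\omega}\Conc A'_n=\Conc A'\cong S$, contradicting $S\notin\Conc\cW$. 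Hence $\Conc\circ\vec A'$ has no partial lifting in~$\cW$, and~$(3)$ fails.

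The step I expect to be the main obstacle is the L\"owenheim--Skolem passage in $(3)\Rightarrow(1)$: one cannot just take an arbitrary countable subalgebra of~$A$, because $\Conc\cW$ is not known to be closed under directed colimits of \jzs s, so the countable subalgebra has to be chosen to ``capture all of~$S$''; this is exactly what the term-and-parameter description of finitely generated congruences (Lemma~\ref{L:Condcompcongruences}) makes possible, by letting one absorb the finitely many parameters witnessing each congruence containment. A secondary point needing care is the existence of a countable $\aleph_0$-lifter for an \emph{arbitrary} countable lower finite tree~$T$ (not merely for~$\omega$), which is where the tree hypothesis — equivalently, that every principal ideal of~$T$ is a chain — enters.
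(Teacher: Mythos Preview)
Your argument is correct and follows essentially the same route as the paper: the cycle $(1)\Rightarrow(2)\Rightarrow(3)\Rightarrow(1)$, with Theorem~\ref{T:CLLGamp} for $(1)\Rightarrow(2)$ and Lemma~\ref{L:LimPartLiftIsAlgebra} plus preservation of directed colimits for $(3)\Rightarrow(1)$. The only differences are cosmetic: the paper outsources the two steps you spell out---it cites \cite[Corollary~4.7]{G1} for the existence of a countable $\aleph_0$-lifter of a countable lower finite tree (rather than your direct verification of $(\aleph_0,{<}\aleph_0)\leadsto T$), and \cite[Lemma~3.6]{G1} for the L\"owenheim--Skolem reduction to a countable subalgebra with the same $\Conc$---and it phrases $(3)\Rightarrow(1)$ directly rather than contrapositively.
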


\begin{proof}
Assume that $(1)$ holds. Let $T$ be a countable lower finite tree and let~$\vec A$ be a $T$-indexed diagram of finite algebras in~$\cV$. It follows from \cite[Corollary 4.7]{G1} that there exists an~$\aleph_0$-lifter $(X,\bX)$ of $T$ such that $\card X=\aleph_0$. Hence the following inequalities hold:
\[
\card(\xF(X)\otimes\vec A)\le \card X+\sum_{p\in T} A_{p} \le\aleph_0\sum_{p\in T} \aleph_0=\aleph_0.
\]
Thus, as $\crit{\cV}{\cW}>\aleph_0$, there exists~$B\in\cW$ such that $\Conc B\cong\Conc\bigl(\xF(X)\otimes\vec A\bigr)$. It follows from Theorem~\ref{T:CLLGamp} that there exists a partial lifting of $\Conc\circ\vec A$ in~$\cW$.

The implication $(2)\Longrightarrow(3)$ is immediate.

Assume that $(3)$ holds and let~$A\in\cV$ such that $\card\Conc A\le\aleph_0$. By replacing~$A$ with one of its subalgebras we can assume that~$\card A\le\aleph_0$ (see \cite[Lemma~3.6]{G1}). As~$\cV$ is locally finite, there exists an increasing sequence $(A_k)_{k<\omega}$ of finite subalgebras of~$A$ with union~$A$. Denote by $f_{i,j}\colon A_i\to A_j$ the inclusion map, for all $i\le j<\omega$. Put $\vec A=\famm{A_i,f_{i,j}}{i\le j<\omega}$. Let~$\vec\bB$ be a partial lifting of $\Conc\circ\vec A$ in~$\cW$, let~$\bB$ be the directed colimit of~$\vec\bB$ in $\GAMP(\cV)$. As $\CG$ and $\Conc$ both preserve directed colimits, the following gamps are isomorphic:
\[
\CG(\bB)\cong\CG(\varinjlim\vec\bB)\cong\varinjlim(\CG\circ\vec\bB)\cong\varinjlim(\Conc\circ\vec A)\cong\Conc\varinjlim\vec A\cong\Conc A.
\]
Moreover, it follows from Lemma~\ref{L:LimPartLiftIsAlgebra} that~$\bB$ is an algebra in~$\cW$, that is, there exists~$B\in\cW$ such that $\bB\cong\GA(B)$, so $\Conc B =\CG(\GA(B))\cong\CG(\bB)\cong\Conc A$. Therefore $\crit{\cV}{\cW}>\aleph_0$.
\end{proof}

A variety of algebras is \emph{congruence-proper} if each of its member with a finite congruence lattice is finite (cf. \cite[Definition 4-8.1]{GiWe1}).

The following theorem is similar to Theorem~\ref{T:CLLGamp} if $\cW$ is a congruence-proper variety with a finite similarity type, then we no longer need partial liftings in the statement of the theorem. There is a similar theorem in \cite[Theorem~4-9.2]{GiWe1}. This new version applies only to varieties (not quasivarieties), but the assumption that~$\cW$ is locally finite is no longer needed.

\begin{theorem}\label{T:diagcrit}
Let~$\cV$ be a variety of algebras. Let~$\cW$ be a congruence-proper variety of algebras with a finite similarity type. Let $(X,\bX)$ be an~$\aleph_0$-lifter of a poset $P$. Let~$\vec A=\famm{A_p,f_{p,q}}{p\le q\text{ in }P}$ be a diagram in~$\cV$ such that $\Conc A_p$ is finite for each $p\in P^=$. Let~$B\in\cW$ such that $\Conc B\cong\Conc\bigl(\xF(X)\otimes\vec A\bigr)$. Then there exists a lifting $\vec B$ of $\Conc\circ\vec A$ in~$\cW$ such that $B_p$ is a quotient of~$B$ for each $p\in P$.
\end{theorem}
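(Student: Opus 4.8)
The strategy is to reduce Theorem~\ref{T:diagcrit} to Theorem~\ref{T:CLLGamp} by showing that, in the presence of the congruence-proper hypothesis on $\cW$, a partial lifting produced there can be ``rigidified'' into an honest lifting by algebras of $\cW$. First I would apply Theorem~\ref{T:CLLGamp} to the given data $\cV$, $\cW$, $(X,\bX)$, $\vec A$, $B$. This yields a partial lifting $\vec\bB=\famm{\bB_p,\bgg_{p,q}}{p\le q\text{ in }P}$ of $\Conc\circ\vec A$ in $\cW$ such that $\bB_p$ is finite for each $p\in P^=$ and $\bB_p$ is a quotient of $\GA(B)$ for each $p\in\Max P$. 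By Definition~\ref{D:partiallifting}, each $\bB_p$ is strong, congruence-tractable, distance-generated, and has an isomorphic realization, and each $\bgg_{p,q}$ is strong and congruence-cuttable.

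The heart of the matter is to convert each gamp $\bB_p$ into an algebra $B_p\in\cW$ in a way that is functorial in $p$ and produces a genuine lifting. Here is where congruence-properness and finiteness of the similarity type do the work. For $p\in P^=$, the gamp $\bB_p$ is finite, so $\widetilde{B_p}=\CG(\bB_p)\cong\Conc A_p$ is finite; since $\bB_p$ has an isomorphic realization $(B_p',\chi_p)$ with $B_p'\in\cW$ and $\chi_p\colon\widetilde{B_p}\to\Conc B_p'$ an isomorphism, $\Conc B_p'$ is finite, and congruence-properness of $\cW$ forces $B_p'$ to be finite. For $p\in\Max P$ one instead uses that $\bB_p$ is a quotient $\GA(B)/I_p$, which by Remark~\ref{R:functor}(6) is (isomorphic to) $\GA(B/\bigvee I_p)$, i.e.\ genuinely an algebra of $\cW$. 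So in either case each $\bB_p$ is isomorphic to $\GA$ of an actual algebra $B_p\in\cW$. The remaining difficulty — and the step I expect to be the main obstacle — is to promote the transition morphisms $\bgg_{p,q}$ of gamps to morphisms of algebras $B_p\to B_q$ compatible with the functor $\GA$, so that applying $\CG$ recovers $\Conc\circ\vec A$. Since $\bgg_{p,q}\colon\bB_p\to\bB_q$ is a morphism of gamps and both $\bB_p$, $\bB_q$ are algebras (isomorphic to $\GA(B_p)$, $\GA(B_q)$), Remark~\ref{R:functor}(4) gives that the underlying map $B_p\to B_q$ is a morphism of algebras; hence $\vec B=\famm{B_p,\text{(underlying maps)}}{p\le q}$ is a diagram of algebras in $\cW$ with $\vec\bB\cong\GA\circ\vec B$ by Remark~\ref{R:functor}(5).

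Finally, one reads off that $\vec B$ is a lifting of $\Conc\circ\vec A$: applying $\CG$ to the isomorphism $\vec\bB\cong\GA\circ\vec B$ and using $\CG\circ\GA=\Conc$ (Remark~\ref{R:functor}(1)) together with $\CG\circ\vec\bB\cong\Conc\circ\vec A$ (which holds since $\vec\bB$ is a partial lifting of $\Conc\circ\vec A$), we obtain $\Conc\circ\vec B\cong\Conc\circ\vec A$. The assertion that each $B_p$ is a quotient of $B$ follows for $p\in\Max P$ directly from $\bB_p=\GA(B)/I_p\cong\GA(B/\bigvee I_p)$; for general $p\in P$ one needs in addition that the section $\sigma$ in the proof of Theorem~\ref{T:CLLGamp} can be chosen so that $\bB_{\sigma(p)}$ is a subgamp of $\GA(B)$ and the resulting quotient $\bB_{\sigma(p)}/I_p$, being strong with $\widetilde{} $ finite and congruence-proper ambient variety, collapses to a genuine quotient algebra of $B$ — the cleanest route is to invoke the congruence-proper version of the Condensate Lifting Lemma machinery (as in \cite[Theorem~4-9.2]{GiWe1}) whose proof is structurally identical once one knows every finite-congruence gamp in $\cW$ is an algebra. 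The one genuine check to carry out carefully is that the isomorphisms $\bB_p\cong\GA(B_p)$ can be chosen coherently, i.e.\ that the square relating $\bgg_{p,q}$ to $\GA$ of the induced algebra map commutes; this is where I would spend the bulk of the write-up, using Remark~\ref{R:functor}(3)--(5) to pin down uniqueness of the algebra underlying a gamp that is an algebra.
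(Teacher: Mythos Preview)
Your reduction to Theorem~\ref{T:CLLGamp} contains a genuine gap at the step ``each $\bB_p$ is isomorphic to $\GA$ of an actual algebra $B_p\in\cW$'' for $p\in P^=$. An isomorphic realization $(B_p',\chi_p)$ of a gamp $\bB_p$ only says that the partial algebra $B_p$ is a partial subalgebra of the (total) algebra $B_p'$ and that $\chi_p\colon\widetilde{B_p}\to\Conc B_p'$ is an isomorphism (Definition~\ref{D:gamp}). Even if congruence-properness forces $B_p'$ to be finite, this does \emph{not} make $\bB_p\cong\GA(B_p')$: the underlying partial algebra of $\bB_p$ is still the small finite partial algebra $B_p$, not $B_p'$, and there is no reason for $B_p^*=B_p$ or for $B_p$ to be total. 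So Remark~\ref{R:functor}(3)--(5) simply do not apply. A related problem: even if you could replace $\bB_p$ by $\GA(B_p')$ as an object, the morphisms $\bgg_{p,q}$ live on the small partial algebras and need not extend to algebra morphisms $B_p'\to B_q'$; and the realization $B_p'$ is abstract, with no reason to be a quotient of $B$.

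The paper does not apply Theorem~\ref{T:CLLGamp} as a black box; it reopens the construction of Lemma~\ref{L:LSGamp}. The key observation is that congruence-properness makes $B/\ker_0\rho_{\bx}$ \emph{finite} for each $\bx\in\bX^=$, and therefore the requirement ``$\bA/\ker_0\rho_{\bx}=\GA(B)/\ker_0\rho_{\bx}$'' becomes a locally finite property in the sense of Definition~\ref{D:LFF} (one just needs the finite subgamp to contain representatives of all the finitely many classes). By Remark~\ref{R:LSGamp} one can then impose this extra condition on every $\bB_{\bx}$. After the Armature Lemma and passage to the quotient $\vec\bB'/\vec I$, each vertex is literally $\GA(B)/\ker_0\rho_{\sigma(p)}\cong\GA(B/\bigvee\ker_0\rho_{\sigma(p)})$, an honest algebra and a quotient of $B$; now Remark~\ref{R:functor}(5) applies and yields the lifting $\vec B$. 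The point you were missing is that one must \emph{build the finite subgamps large enough} so that their quotients already coincide with the full algebra quotients, rather than trying to enlarge them after the fact via a realization.
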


\begin{proof}
We notice, as in the beginning of the proof of Theorem~\ref{T:CLLGamp}, that $\xF(X)\otimes\vec A$ is well-defined. We also uses the same notations. Let $\chi\colon \Conc B\to \Conc\bigl(\xF(X)\otimes\vec A\bigr)$ be an isomorphism. Put $\rho_{\bx}=\Conc (\pi_{\bx}^X\otimes\vec A)\circ\chi$ for each $\bx\in\bX$. As $\pi_{\bx}^X\otimes\vec A$ is surjective and $\chi$ is an isomorphism, it follows that $\rho_{\bx}$ is ideal-induced.

Notice that $\rng\rho_{\bx}=\Conc A_{\partial\bx}$ is finite. Hence $\Conc B/\ker_0\rho_{\bx}\cong \Conc A_{\partial\bx}$ is finite. As $\cW$ is congruence-proper, it follows that $B/\ker_0\rho_{\bx}$ is finite.

Hence, the property $\bA/\ker_0\rho_{\bx}=\GA(B)/\ker_0\rho_{\bx}$, in $\bA$ a subgamp of $\GA(B)$ is a locally finite property for~$B$. It follows from Lemma~\ref{L:LSGamp} and Remark~\ref{R:LSGamp} that there exists a diagram~$\vec \bB=\famm{\bB_{\bx},\bgg_{\bx,\by}}{\bx\le \by\text{ in }\bX^=}$ of finite subgamps of $\GA(B)$ in~$\cW$, as in the proof of Theorem~\ref{T:CLLGamp}, but that satisfies the additional condition:
\begin{enumerate}
\item[$(5)$] $\bB_{\bx}/\ker_0\rho_{\bx}=\GA(B)/\ker_0\rho_{\bx}$, for all $\bx\in\bX^=$.
\end{enumerate}
We continue with the same argument as the one in the proof of Theorem~\ref{T:CLLGamp}. We obtain $\bB'/\vec I$ a partial lifting of $\Conc\circ\vec A$. The following isomorphisms hold
\begin{align*}
\bB'_p/I_p&=\bB_{\sigma(p)}/\ker_0\rho_{\sigma(p)} &&\text{see proof of Theorem~\ref{T:CLLGamp}.}\\
&=\GA(B)/\ker_0\rho_{\sigma(p)}&&\text{by $(5)$}.\\
&\cong\GA(B/\ker_0\rho_{\sigma(p)}) &&\text{by Remark~\ref{R:functor}(6).}
\end{align*}
Hence $\bB'_p/I_p$ is an algebra of~$\cV$ (cf. Remark~\ref{R:functor}(3)), for all $p\in P^=$, it is also true for $p\in\Max P$. It follows from Remark~\ref{R:functor}(5) that $\vec\bB'/\vec I\cong\GA\circ\vec B$, for a diagram of algebras $\vec B$. Hence:
\begin{align*}
\Conc\circ\vec B&=\CG\circ\GA\circ\vec B &&\text{by Remark~\ref{R:functor}(1).}\\
&\cong\CG\circ\vec\bB'/\vec I\\
&\cong\Conc\circ\vec A &&\text{as $\vec\bB'/\vec I$ is a partial lifting of $\Conc\circ\vec A$.}
\end{align*}
Hence $\vec B$ is a lifting of $\Conc\circ\vec A$.
\end{proof}

The following corollary is an immediate application of Theorem~\ref{T:diagcrit}.

\begin{corollary}
Let~$\cV$ be a variety of algebras. Let~$\cW$ be a congruence-proper variety of algebras with a finite similarity type. Let $(X,\bX)$ be an~$\aleph_0$-lifter of a poset $P$. Let~$\vec A=\famm{A_p,f_{p,q}}{p\le q\text{ in }P}$ be a diagram of finite algebras in~$\cV$. Assume that $\Conc\circ\vec A$ has no lifting in $\cW$, then $\crit{\cV}{\cW}\le\aleph_0+\card X$.
\end{corollary}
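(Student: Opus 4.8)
The plan is to derive this corollary from Theorem~\ref{T:diagcrit} by contraposition: assume $\crit{\cV}{\cW}>\aleph_0+\card X$ and produce a lifting of $\Conc\circ\vec A$ in~$\cW$, contradicting the hypothesis. First I would form the condensate $\xF(X)\otimes\vec A$. Since all the algebras $A_p$ are finite and $P=P^=$ for the purposes of the cardinality estimate (the bound $\card(\xF(X)\otimes\vec A)\le\card X+\sum_{p\in P}\card A_p$ from Remark~\ref{R:DefCondensate} applies because~$\cW$, hence also the ambient category for the condensate, is a variety closed under finite products and directed colimits), we get $\card(\xF(X)\otimes\vec A)\le\card X+\aleph_0+\sum_{p\in P}\aleph_0=\aleph_0+\card X$; here I use that an $\aleph_0$-lifter of~$P$ has $P$ lower finite, hence~$P$ countable when $X$ is, and more generally $\card P\le\card X$ in the relevant range — but in fact the clean way is simply to bound $\sum_{p\in P}\card A_p\le\card X\cdot\aleph_0$ using that the index set $P$ embeds into $\Ids X$ and is therefore of size at most $\card X$, giving $\card(\xF(X)\otimes\vec A)\le\aleph_0+\card X$.

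Next, from the assumption $\crit{\cV}{\cW}>\aleph_0+\card X$, and since $\xF(X)\otimes\vec A\in\cV$ (as~$\cV$ is closed under finite products and directed colimits) has $\card\Conc(\xF(X)\otimes\vec A)\le\card(\xF(X)\otimes\vec A)\le\aleph_0+\card X$, the definition of the critical point yields an algebra $B\in\cW$ with $\Conc B\cong\Conc(\xF(X)\otimes\vec A)$. (Strictly, $\crit{\cV}{\cW}>\aleph_0+\card X$ means every $\js$ in $(\Conc\cV)-(\Conc\cW)$ has cardinality $>\aleph_0+\card X$, so a $\js$ of cardinality $\le\aleph_0+\card X$ that is liftable in~$\cV$ must be liftable in~$\cW$; $\Conc(\xF(X)\otimes\vec A)$ is such a $\js$.) Now all the hypotheses of Theorem~\ref{T:diagcrit} are in place: $\cV$ is a variety, $\cW$ is congruence-proper with finite similarity type, $(X,\bX)$ is an $\aleph_0$-lifter of $P$, $\vec A$ is a $P$-indexed diagram in~$\cV$ with $\Conc A_p$ finite for each $p\in P^=$ (indeed for all $p$), and $\Conc B\cong\Conc(\xF(X)\otimes\vec A)$. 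Theorem~\ref{T:diagcrit} then provides a lifting $\vec B$ of $\Conc\circ\vec A$ in~$\cW$, contradicting the assumption that $\Conc\circ\vec A$ has no lifting in~$\cW$. Hence $\crit{\cV}{\cW}\le\aleph_0+\card X$.

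There is essentially no hard step here: the only thing requiring care is the cardinality bookkeeping, i.e. justifying $\card(\xF(X)\otimes\vec A)\le\aleph_0+\card X$ and matching the hypothesis of Theorem~\ref{T:diagcrit} (the $\aleph_0$-lifter condition and the finiteness of $\Conc A_p$ on $P^=$) precisely to what is given. I expect the verification that the $\js$ obtained as $\Conc(\xF(X)\otimes\vec A)$ genuinely falls under the scope of the critical-point inequality — that is, it is of the right cardinality and liftable in $\cV$ — to be the one place where one must be slightly careful, but it is immediate from the cardinality bound and the fact that $\xF(X)\otimes\vec A$ itself lies in $\cV$.
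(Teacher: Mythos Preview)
Your contrapositive argument is exactly the intended one, and the paper itself gives no proof beyond calling the corollary ``an immediate application of Theorem~\ref{T:diagcrit}''. The skeleton---build the condensate $\xF(X)\otimes\vec A\in\cV$, bound its cardinality, invoke the critical-point hypothesis to obtain $B\in\cW$ with $\Conc B\cong\Conc(\xF(X)\otimes\vec A)$, then apply Theorem~\ref{T:diagcrit}---is correct and matches the paper.

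Two minor points of sloppiness you should clean up. First, a slip: the cardinality bound from Remark~\ref{R:DefCondensate} applies because \emph{$\cV$} (not~$\cW$) is a variety, hence closed under finite products and directed colimits; the condensate lives in~$\cV$. You catch this yourself a few lines later, but the parenthetical is garbled. Second, your justification that $\card P\le\card X$ is not right as written: ``$P$ embeds into $\Ids X$'' only gives $\card P\le\card\Ids X$, which could be $2^{\card X}$, and ``lower finite hence countable'' is simply false (an uncountable antichain is lower finite). In the paper's intended applications $P$ is finite (or at least $\card P\le\card X$ by construction of the lifter), so $\sum_{p\in P}\card A_p\le\aleph_0+\card X$ is unproblematic; but if you want a fully general statement you should either note this as an additional standing hypothesis on the lifter or simply observe that the bound $\card(\xF(X)\otimes\vec A)\le\card X+\sum_p\card A_p$ already gives what you need once $\card P\le\aleph_0+\card X$.
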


\section{An unliftable diagram}\label{S:unlift}

Each countable locally finite lattice has a congruence-permutable, congruence-preserving extension (cf.~\cite{GLWe}). This is not true for all locally finite lattices. Given a non-distributive variety~$\cV$ of lattices, there is no congruence-permutable algebra~$A$ such that $\Conc A\cong\Conc F_{\cV}(\aleph_2)$ (cf. \cite{RTW}). Hence $\Conc F_{\cV}(\aleph_2)$ has no congruence-permutable, congruence-preserving extension. The latter result can be improved, by stating that the free lattice $F_\cV(\aleph_1)$ has no congruence-permutable, congruence-preserving extension (cf. \cite{GiWe1}).

Let~$\cV$ be a nondistributive variety of lattices. There is no congruence $n$-permutable lattices~$L$ such that $\Conc L\cong F_\cV(\aleph_2)$, for each $n\ge 2$ (cf. \cite{Ploscica08}). In particular $F_\cV(\aleph_2)$ has no congruence $n$-permutable, congruence-preserving extension, for each $n\ge 2$. The aim of this section is to improve the cardinality bound to~$\aleph_1$. We use gamps to find a lattice of cardinal~$\aleph_1$ with no congruence $n$-permutable, congruence-preserving extension, for each $n\ge 2$. This partially solve \cite[Problem~7]{GiWe1}. The proof is based on a square-indexed diagram of lattices with no congruence $n$-permutable, congruence-preserving extension.

Let $\vec A=\famm{A_p,f_{p,q}}{p\le q\text{ in }P}$ be a diagram of algebras, let $\vec B=\famm{B_p,g_{p,q}}{p\le q\text{ in }P}$ be a congruence-preserving extension of~$\vec A$. Then $\bB_p=(A_p,B_p,\Theta_{B_p},\Conc B_p)$ is a gamp and $\bgg_{p,q}=(g_{p,q},\Conc g_{p,q})\colon \bB_p\to\bB_q$ is a morphism of gamps, for all $p\le q$ in $P$. This defines a diagram $\vec\bB$ of gamps. Moreover, identifying $\Conc A_p$ and $\Conc B_p$ for all $p\in P$, we have $\PGGL\circ\vec \bB=\PGA\circ\vec A$.

Fix $n\ge 2$ an integer. Given a diagram~$\vec A$ of algebras, with a congruence $n$-permutable, congruence-preserving extension, there exists a diagram $\vec\bB$ of congruence $n$-permutable gamps such that $\PGGL\circ\vec \bB=\PGA\circ\vec A$. The converse might not hold in general.

However the square-indexed diagram~$\vec A$ of finite lattices with no congruence $n$-permutable, congruence-preserving extension, mentioned above, satisfies a stronger property. There is no operational diagram $\vec \bB$ of lattice congruence $n$-permutable gamps of lattices (cf. Definition~\ref{D:perminlatt}) such that $\PGA\circ\vec A\cong\PGGL\circ\vec \bB$ (cf. Lemma~\ref{L:unliftablediag}).

We conclude, in Theorem~\ref{T:NoCPCnP}, that there is a condensate of~$\vec A$, of cardinal~$\aleph_1$, with no congruence $n$-permutable, congruence-preserving extension.

For the purpose of this section, we need a stronger version of congruence $n$-permutable gamp, specific to gamp of lattices.

\begin{definition}\label{D:perminlatt}
A gamp $\bA$ of lattices is \emph{lattice congruence $n$-permutable} if for all $x_0,\dots,x_n$ in $A^*$ there exist $y_0,\dots,y_n$ in~$A$ such that $y_i\wedge y_j=y_j\wedge y_i=y_i$ in~$A$ for all $i\le j\le n$, $y_0=x_0\wedge x_n=x_n\wedge x_0$ in~$A$, $y_n=x_0\vee x_n=x_n\vee x_0$ in~$A$, and:
\[
\delta(y_k,y_{k+1})\le\bigvee\famm{\delta(x_{i},x_{i+1})}{i<n\text{ even}},\quad\text{for all $k<n$ odd},
\]
\[
\delta(y_k,y_{k+1})\le\bigvee\famm{\delta(x_{i},x_{i+1})}{i<n\text{ odd}},\quad\text{for all $k<n$ even}.
\]

A morphism $\bff\colon \bA\to\bB$ of gamps is \emph{operational} if $\ell(\vec x)$ is defined in~$B$ for all $\ell\in\sL$ and all $\ari(\ell)$-tuple~$\vec X$ in $f(A)\cup B^*$.

Let $P$ be a poset, a diagram $\vec\bA=\famm{\bA_p,\bff_{p,q}}{p\le q\text{ in }P}$ of gamps is \emph{operational} if $\bff_{p,q}$ is operational for all $p<q$ in $P$.
\end{definition}

\begin{remark}
In the context of Definition~\ref{D:perminlatt}, the elements $y_1,\dots,y_n$ doe not form a chain in general, as we might have $y_i\not\in A^*$ for some $i$.
\end{remark}

In this section we simply say that the gamp is congruence $n$-permutable instead of lattice congruence $n$-permutable. We also consider only pregamps and gamps of lattices. The following lemma is immediate, the properties given in Definition~\ref{D:perminlatt} go to quotients.

\begin{lemma}
Let $\bA$ be a congruence $n$-permutable gamp, let $I$ be an ideal of $\bA$, then $\bA/I$ is a congruence $n$-permutable gamp.

Let $P$ be a poset, let $\vec\bA$ be an operational $P$-indexed diagram of algebras. Let $\vec I$ be an ideal of $\vec\bA$. The quotient $\bA/\vec I$ is operational.
\end{lemma}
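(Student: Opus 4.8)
The plan is to verify each of the two assertions separately, reducing both to the already-established machinery for quotients of gamps (Proposition~\ref{P:quot} and Proposition~\ref{P:quotarrow}) together with a routine check that the specific defining inequalities and defining equations pass to quotients.

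First I would treat the statement about a single gamp. Let $\bA$ be a (lattice) congruence $n$-permutable gamp and let $I$ be an ideal of $\widetilde A$. By Proposition~\ref{P:quot}(1), $\bA/I=(A^*/I,A/I,\delta_{\bA/I},\widetilde A/I)$ is again a gamp of lattices. Fix $x_0/I,\dots,x_n/I$ in $A^*/I$, coming from $x_0,\dots,x_n\in A^*$. Applying congruence $n$-permutability of $\bA$ produces $y_0,\dots,y_n\in A$ with $y_i\wedge y_j=y_j\wedge y_i=y_i$, $y_0=x_0\wedge x_n=x_n\wedge x_0$, $y_n=x_0\vee x_n=x_n\vee x_0$, and the two displayed families of inequalities for $\delta_{\bA}$. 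Now pass to the quotient: by the definition of the partial operations on $A/I$ (cf.\ Proposition~\ref{P:partialquotien}), $(y_i/I)\wedge(y_j/I)=(y_i\wedge y_j)/I=y_i/I$ and likewise for the other meets and joins, so the order and endpoint conditions survive verbatim. For the distance inequalities, apply the canonical projection $\widetilde\pi\colon\widetilde A\to\widetilde A/I$, which is a \jzh; from $\delta_{\bA}(y_k,y_{k+1})\le\bigvee\famm{\delta_{\bA}(x_i,x_{i+1})}{i<n\text{ even}}$ we get, after applying $\widetilde\pi$ and using $\delta_{\bA/I}(a/I,b/I)=\delta_{\bA}(a,b)/I$, that $\delta_{\bA/I}(y_k/I,y_{k+1}/I)\le\bigvee\famm{\delta_{\bA/I}(x_i/I,x_{i+1}/I)}{i<n\text{ even}}$, and symmetrically for the odd case. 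Hence the witnesses $y_0/I,\dots,y_n/I$ show $\bA/I$ is congruence $n$-permutable.

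Next I would treat the statement about an operational diagram. Let $\vec\bA=\famm{\bA_p,\bff_{p,q}}{p\le q\text{ in }P}$ be operational and let $\vec I=(I_p)_{p\in P}$ be an ideal of $\vec\bA$, so that $\widetilde f_{p,q}(I_p)\subseteq I_q$; then $\vec\bA/\vec I=\famm{\bA_p/I_p,\bgg_{p,q}}{p\le q\text{ in }P}$ is defined, with $\bgg_{p,q}$ the morphism induced by $\bff_{p,q}$ (cf.\ Definition~\ref{D:quotdiag}). Fix $p<q$ in $P$; I must show $\bgg_{p,q}$ is operational, i.e.\ for every $\ell\in\sL$ and every $\ari(\ell)$-tuple $\vec u$ of $g_{p,q}(A_p/I_p)\cup (A_q/I_q)^*$, the element $\ell(\vec u)$ is defined in $A_q/I_q$. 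Each coordinate $u_k$ of $\vec u$ lifts to an element $v_k$ of $A_q$ lying in $f_{p,q}(A_p)\cup A_q^*$: indeed $g_{p,q}(a/I_p)=f_{p,q}(a)/I_q$, so a coordinate in $g_{p,q}(A_p/I_p)$ lifts into $f_{p,q}(A_p)$, and a coordinate in $(A_q/I_q)^*=\pi_q(A_q^*)$ lifts into $A_q^*$. Since $\bff_{p,q}$ is operational, $\ell(\vec v)$ is defined in $A_q$; then by the definition of the partial operations on $A_q/I_q$ (Proposition~\ref{P:partialquotien}), $\ell(\vec v/I_q)=\ell(\vec v)/I_q$ is defined in $A_q/I_q$, and $\vec v/I_q=\vec u$. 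Hence $\bgg_{p,q}$ is operational for all $p<q$, so $\vec\bA/\vec I$ is operational.

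I do not anticipate any genuine obstacle here; the lemma is, as the text says, immediate, and the only point requiring any care is the bookkeeping of lifting tuples back from the quotient in the operational case and noting that meets/joins and the distance are computed coordinatewise modulo $I$. The proof is essentially a transcription of Propositions~\ref{P:partialquotien}, \ref{P:quot}, and \ref{P:quotarrow} applied to the two specific properties in Definition~\ref{D:perminlatt}.
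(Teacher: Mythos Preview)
Your proof is correct and matches the paper's approach: the paper does not give a proof at all, merely stating that the lemma is immediate because the properties in Definition~\ref{D:perminlatt} go to quotients. Your argument is exactly the routine verification of this claim, lifting witnesses from the quotient and pushing the defining equalities and inequalities through the canonical projection via Proposition~\ref{P:partialquotien}.
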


The following lemma is similar to the Buttress Lemma (cf. \cite{GiWe1}) and to Lemma~\ref{L:LSGamp}, this version is specific to the functor $\PGGL$.

\begin{lemma}\label{L:LSGampOverPregamp}
Let $n\ge 2$. Let~$\cV$ be a variety of lattices. Let $P$ be a lower finite poset. Let $(\bA_p)_{p\in P}$ be a family of finite \pregamps. Let $\bB$ be a gamp in~$\cV$ such that~$B$ is a congruence $n$-permutable lattice. Let $(\bpi_p)_{p\in P}$ be a family of ideal-induced morphisms of \pregamps\ where $\bpi_p\colon \PGGL \bB\to \bA_p$ for all $p\in P$. Then there exists a diagram $\vec \bB=\famm{\bB_p,\bgg_{p,q}}{p\le q\text{ in }P}$ of finite subgamps of $\bB$ (where $\bgg_{p,q}$ is the canonical embedding for all $p\le q$ in $P$), such that the following assertions hold.
\begin{enumerate}
\item $\bpi_p\res\PGGL(\bB_p)$ is an ideal-induced morphism of \pregamps\ for all $p\in P$.
\item $\bB_p$ is strong and congruence $n$-permutable for all $p\in P$.
\item $\bgg_{p,q}$ is operational for all $p<q$ in $P$.
\end{enumerate}
\end{lemma}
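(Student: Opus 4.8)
The plan is to imitate the inductive proof of Lemma~\ref{L:LSGamp}, but to work inside the fixed gamp $\bB=(B^*,B,\delta_\bB,\widetilde B)$ and to keep track of the functor $\PGGL$ and the morphisms $\bpi_p=(\pi_p,\widetilde\pi_p)$ rather than of $\CG$ and the $\phi_p$. Since $P$ is lower finite, its strict order is well-founded, so it suffices to explain how, for a fixed $r\in P$, to build a finite subgamp $\bB_r$ of $\bB$ once finite subgamps $\bB_p\le\bB$ satisfying (1)--(3) among themselves have been constructed for all $p<r$. We shall arrange that $\bB_r$ contains every $\bB_p$ with $p<r$ as a subgamp and take all $\bgg_{p,q}$ to be canonical embeddings, so that the coherence of the diagram is automatic and the inductive hypotheses about $\bgg_{p,q}$ for $p<q<r$ are preserved. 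Note that $\sL=\set{\wedge,\vee}$ is finite and that $B$ is a total algebra (a lattice), which is what keeps the partial subalgebras that appear finite and makes all term evaluations defined in $B$.

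The construction of $\bB_r$ falls into three stages, following the ``first $A^*$, then $A$, then $\widetilde A$'' pattern of Definition~\ref{D:LFF}. First choose $B_r^*$: a finite full partial subalgebra of $B^*$ containing $\bigcup_{p<r}B_p^*$, one $\pi_r$-preimage of each element of the finite set $A_r$, and, for each $\ell\in\sL$ and each $\vec a\in\Def_\ell(A_r)$, one tuple $\vec b\in\Def_\ell(B^*)$ with $\pi_r(\vec b)=\vec a$ together with $\ell^{B^*}(\vec b)$; this is finite because $\bA_r$ and $\sL$ are finite, and by construction $\pi_r(B_r^*)=A_r$ as partial algebras. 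Next choose $B_r$: a finite full partial subalgebra of the lattice $B$ containing $B_r^*\cup\bigcup_{p<r}B_p$, all elements $\ell^B(\vec x)$ with $\vec x$ a tuple of $B_r^*$ (so that $\bB_r$ is strong) and with $\vec x$ a tuple of $B_p\cup B_r^*$ for some $p<r$ (so that each canonical embedding $\bgg_{p,r}$ is operational), and, for every $(n+1)$-tuple $x_0,\dots,x_n$ of $B_r^*$, the elements $x_0\wedge x_n$, $x_0\vee x_n$ and a finite chain $x_0\wedge x_n=y_0\le\dots\le y_n=x_0\vee x_n$ in $B$ realizing congruence $n$-permutability for that tuple, produced as explained below. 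Finally choose $\widetilde B_r$: by Proposition~\ref{P:IdeIdu} applied to the ideal-induced map $\widetilde\pi_r$, a finite \jz-subsemilattice of $\widetilde B$ containing $\delta_\bB(B_r^2)\cup\bigcup_{p<r}\widetilde B_p$ on which $\widetilde\pi_r$ restricts to an ideal-induced homomorphism onto $\widetilde A_r$. Then $\bB_r=(B_r^*,B_r,\delta_\bB\res B_r^2,\widetilde B_r)$ is a finite subgamp of $\bB$ containing every $\bB_p$ with $p<r$; property (1) holds since $\pi_r(B_r^*)=A_r$ as partial algebras and $\widetilde\pi_r\res\widetilde B_r$ is ideal-induced; property (2) holds by the closure conditions imposed on $B_r$; and property (3) holds because $B_r$ was closed under the relevant values of $\wedge$ and $\vee$. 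This completes the inductive step, hence the construction; the lattice-specific refinements are added the same way.

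The one genuinely new point---and the step I expect to be the main obstacle---is producing, for each tuple $x_0,\dots,x_n$ of $B_r^*$, witnesses $y_0,\dots,y_n$ in $B$ forming a chain with $y_0=x_0\wedge x_n$, $y_n=x_0\vee x_n$, and satisfying the \emph{distance} inequalities $\delta_\bB(y_k,y_{k+1})\le\bigvee\famm{\delta_\bB(x_i,x_{i+1})}{i<n\text{ of the relevant parity}}$. One first gets the corresponding \emph{congruence} inequalities: letting $\alpha$ and $\beta$ be the two ``parity joins'' of the $\Theta_B(x_i,x_{i+1})$, congruence $n$-permutability of $B$ (Proposition~\ref{P:CP-PR}) yields that $(x_0\wedge x_n,x_0\vee x_n)$ lies in an $n$-fold relational composition $\alpha\circ\beta\circ\alpha\circ\cdots$; pushing the resulting chain of intermediate points into the interval $[x_0\wedge x_n,x_0\vee x_n]$ (lattice congruences are compatible with $\wedge$ and $\vee$) and replacing it by its running joins produces $y_0\le\dots\le y_n$ with the prescribed endpoints and with $\Theta_B(y_k,y_{k+1})$ below the appropriate parity join. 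To pass from these to the distance inequalities one invokes the easy direction of Lemma~\ref{L:Condcompcongruences}: the congruence inequality supplies a term-chain in $B$ (all evaluations defined, $B$ being total), and Lemma~\ref{L:TermsDist} applied to the pregamp $\PGGR(\bB)=(B,\delta_\bB,\widetilde B)$ turns that term-chain into $\delta_\bB(y_k,y_{k+1})\le\bigvee\famm{\delta_\bB(x_i,x_{i+1})}{i<n\text{ of the relevant parity}}$ in $\widetilde B$; since $\widetilde B_r$ is a \jz-subsemilattice of $\widetilde B$ containing all the values involved, this inequality persists in $\widetilde B_r$. The remaining verifications are the same finiteness bookkeeping as in Lemma~\ref{L:LSGamp}; alternatively one could package the statements ``$\bA$ is strong'', ``$\bpi_r\res\PGGL(\bA)$ is ideal-induced'', ``$\bA$ is congruence $n$-permutable'' and ``the canonical embedding $\bA_p\to\bA$ is operational'' as locally finite properties relative to $\bB$, exactly as in Lemma~\ref{L:exLFF}, and then run the inductive scheme of Lemma~\ref{L:LSGamp} verbatim.
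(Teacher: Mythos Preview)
Your proof is correct and follows essentially the same inductive three-stage construction (first $B_r^*$, then $B_r$, then $\widetilde B_r$ via Proposition~\ref{P:IdeIdu}) as the paper's own proof. You are in fact more explicit than the paper on two points it glosses over: ensuring that $\pi_r(B_r^*)=A_r$ holds as \emph{partial algebras} (not merely as sets), and justifying the passage from congruence inequalities in $B$ to $\delta_\bB$-inequalities via Lemmas~\ref{L:Condcompcongruences} and~\ref{L:TermsDist}.
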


\begin{proof}
Let $r\in P$, assume that we have already constructed $\famm{\bB_p,\bgg_{p,q}}{p\le q<r}$ that satisfies the required conditions up to~$r$.

As $\bpi_r$ is ideal-induced, $\pi_r(B^*)=A_r$. Moreover $A_r$ is finite, hence there exists $X$ a finite partial subalgebra of $B^*$ such that $\pi_r(X)=A_r$, put $B_r^*=X\cup\bigcup_{p<r} B_p^*$ with its structure of full partial subalgebra of $B^*$. It follows that $\pi_r(B_r^*)\subseteq\pi_r(B^*)=A_r=\pi_r(X)\subseteq\pi_r(B_r^*)$, hence $\pi_r(B_r^*)=A_r$. Moreover $B_r^*$ is finite.

Let $x_0,x_1,\dots,x_n$ in $B_r^*$. As~$B$ is a congruence $n$-permutable lattice, there exist $y_0<\dots<y_n$ in~$B$ such that $y_0=x_0\wedge x_n$, $y_n=x_0\vee x_n$, and
\[
\delta(y_k,y_{k+1})\le\bigvee\famm{\delta(x_{i},x_{i+1})}{i<n\text{ even}},\quad\text{for all $k<n$ odd},
\]
\[
\delta(y_k,y_{k+1})\le\bigvee\famm{\delta(x_{i},x_{i+1})}{i<n\text{ odd}},\quad\text{for all $k<n$ even}.
\]
Put $X_{x_0,x_1,\dots,x_n}=\set{y_0,\dots,y_n}$. We consider the following finite set with its structure of full partial subalgebra of~$B$ 
\begin{align*}
B_r=&\bigcup\famm{X_{x_0,x_1,\dots,x_n}}{x_0,x_1,\dots,x_n\in B_p^*}\\
&\cup\Setm{x\vee y}{x,y\in B_r^*\cup\bigcup_{p<r}B_p}\\
&\cup\Setm{x\wedge y}{x,y\in B_r^*\cup\bigcup_{p<r}B_p}.
\end{align*}

Put $Y=\setm{\delta(x,y)}{x,y\in B_r}$. As $\widetilde\pi_r$ is ideal-induced, it follows from Proposition~\ref{P:IdeIdu} that there is a finite \jz-subsemilattice $\widetilde B_r$ of $\widetilde B$ such that $Y\subseteq \widetilde B_r$, and $\widetilde\pi_r\res\widetilde B_r$ is ideal-induced. Put $\bB_r=(B_r^*,B_r,\delta,\widetilde B_r)$, denote by $\bgg_{p,r}\colon\bB_p\to\bB_r$ the inclusion morphism for all $p\le r$. The conditions $(1)$-$(3)$ are satisfied. The conclusion follows by induction.
\end{proof}

We apply Lemma~\ref{L:LSGampOverPregamp} and the Armature Lemma (cf. \cite{GiWe1}) to obtain a new, tailor-made version of CLL. Given a diagram~$\vec A$ of finite lattices and a congruence-preserving, congruence $n$-permutable extension of a condensate of~$\vec A$, we obtain a ``congruence-preserving, congruence $n$-permutable extension'' of~$\vec A$.

\begin{lemma}\label{L:CLLCPCnP}
Let~$\cV$ be a variety of lattices. Let $(X,\bX)$ be an~$\aleph_0$-lifter of a poset~$P$, let $\vec A=(A_p,f_{p,q})$ be a diagram of~$\cV$ such that $\Conc A_p$ is finite for all $p\in P^=$. Let~$B$ be a congruence $n$-permutable lattice. If~$B$ is a congruence-preserving extension of $\xF(X)\otimes\vec A$, then there exists an operational diagram $\vec \bB$ of congruence $n$-permutable gamps such that $\PGGL\circ\vec \bB\cong\PGA\circ\vec A$.
\end{lemma}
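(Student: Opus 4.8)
The plan is to imitate the proof of Theorem~\ref{T:CLLGamp}, but with the functor $\CG$ replaced by $\PGGL$ and with the Buttress-type input supplied by Lemma~\ref{L:LSGampOverPregamp} instead of Lemma~\ref{L:LSGamp}. First I would set $\cW=\cV$ (or just work inside $\cV$, since $B\in\cV$), note that $\cV$ satisfies (CLOS) and (PROD) so that $\xF(X)\otimes\vec A$ is well-defined, and recall that $\PGA$ preserves directed colimits (Remark~\ref{R:CPGandPGApreslim}), hence so does $\PGGL\circ\GA=\PGA$. Since $B$ is a congruence-preserving extension of $C:=\xF(X)\otimes\vec A$, we may identify $\Conc B$ with $\Conc C$, and for each $\bx\in\bX$ the surjection $\pi_{\bx}^X\otimes\vec A\colon C\tosurj A_{\partial\bx}$ yields an ideal-induced \jzh\ $\rho_{\bx}=\Conc(\pi_{\bx}^X\otimes\vec A)\colon\Conc B\to\Conc A_{\partial\bx}$ with finite range (finite for $\bx\in\bX^=$). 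Because $B$ is a congruence-preserving extension of $C$, the inclusion of $C$ into $B$ together with $\pi_{\bx}^X\otimes\vec A$ actually gives an ideal-induced morphism of \pregamps\ $\bpi_{\bx}\colon\PGGL(\GA(B))=\PGA(B)\to\PGA(A_{\partial\bx})$; one checks the ideal-induced clauses from surjectivity of $\pi_{\bx}^X\otimes\vec A$ on $C$ and from the congruence-preserving hypothesis identifying the distances.

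Next I would feed the family $(\PGA(A_{\partial\bx}))_{\bx\in\bX^=}$, the lattice $B$ (which is congruence $n$-permutable by hypothesis), and the morphisms $(\bpi_{\bx})_{\bx\in\bX^=}$ into Lemma~\ref{L:LSGampOverPregamp}, applied over the lower finite poset $\bX^=$. This produces a diagram $\vec\bB=\famm{\bB_{\bx},\bgg_{\bx,\by}}{\bx\le\by\text{ in }\bX^=}$ of finite subgamps of $\GA(B)$, with $\bgg_{\bx,\by}$ the canonical embedding, such that $\bpi_{\bx}\res\PGGL(\bB_{\bx})$ is ideal-induced, each $\bB_{\bx}$ is strong and congruence $n$-permutable, and each $\bgg_{\bx,\by}$ is operational. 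As in Theorem~\ref{T:CLLGamp} I extend to an $\bX$-indexed diagram by setting $\bB_{\by}=\GA(B)$ for $\by\in\bX-\bX^=$ with canonical embeddings. Then $\PGGL\circ\vec\bB$ is an $\bX$-indexed diagram in the comma category $\MPALG(\cV)\dnw\PGA(B)$ via the natural family $(\PGGL(\bgg)$-compatible) inclusions, with $\PGGL(\bB_{\bx})$ finite for $\bx\in\bX^=$; the maps $\bpi_{\bx}\res\PGGL(\bB_{\bx})$ form the relevant natural transformation target data. Applying the Armature Lemma of \cite{GiWe1} to the functor $\CPG\colon\MPALG(\cV)\to\SEM$ (which detects the relevant finiteness through $\CPG\circ\PGGL=\CG$, and $\CG(\bB_{\bx})=\widetilde B_{\bx}$ finite) gives an isotone section $\sigma\colon P\into\bX$ such that $(\bpi_{\sigma(p)}\res\PGGL(\bB_{\sigma(p)}))_{p\in P}$ is a natural transformation from $\PGGL\circ\famm{\bB_{\sigma(p)},\bgg_{\sigma(p),\sigma(q)}}{p\le q}$ to $\PGA\circ\vec A$.

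Now put $\vec\bB{}'=\famm{\bB_{\sigma(p)},\bgg_{\sigma(p),\sigma(q)}}{p\le q\text{ in }P}$ and $I_p=\ker_0\bpi_{\sigma(p)}\res\widetilde B_{\sigma(p)}$; this is an ideal $\vec I=(I_p)_{p\in P}$ of $\vec\bB{}'$ since $\bpi$ is natural. Passing to the quotient, Proposition~\ref{P:quot} gives that each $\bB_{\sigma(p)}/I_p$ is again a gamp, strong, and congruence $n$-permutable (here I use that congruence $n$-permutability of gamps of lattices passes to quotients, cf.\ the lemma just before Lemma~\ref{L:LSGampOverPregamp}); Proposition~\ref{P:quotarrow} together with operationality passing to quotients (same lemma) gives that the induced maps $\bhh_{p,q}$ are operational; and, since $\bpi_{\sigma(p)}\res\widetilde B_{\sigma(p)}$ is ideal-induced, Lemma~\ref{L:IIsem} identifies the induced map $\PGGL(\bB_{\sigma(p)})/I_p\to\PGA(A_p)$ as an isomorphism, naturally in $p$. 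Using $(\PGGL\circ\vec\bB{}')/\vec I=\PGGL\circ(\vec\bB{}'/\vec I)$ (Remark~\ref{R:quotassoc}), we conclude $\PGGL\circ(\vec\bB{}'/\vec I)\cong\PGA\circ\vec A$, so $\vec\bB:=\vec\bB{}'/\vec I$ is the desired operational diagram of congruence $n$-permutable gamps. The main obstacle I anticipate is purely bookkeeping: verifying that the inclusion-plus-projection really is an ideal-induced \emph{morphism of \pregamps} $\PGA(B)\to\PGA(A_{\partial\bx})$ (the congruence-preserving hypothesis is exactly what makes the distance-compatibility square commute and makes $\ker_0$ an honest ideal), and checking that the Armature Lemma applies in the comma category over $\PGA(B)$ for the functor $\CPG$ rather than directly for $\CG$ over $\Conc B$; everything else is a faithful transcription of the argument in Theorem~\ref{T:CLLGamp}.
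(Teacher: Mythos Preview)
Your outline follows the paper's argument closely, but there is one genuine error that is not mere bookkeeping. You take the ambient gamp to be $\GA(B)=(B,B,\Theta_B,\Conc B)$, so that $\PGGL(\GA(B))=\PGA(B)=(B,\Theta_B,\Conc B)$, and then claim that the projection $\pi_{\bx}^X\otimes\vec A$ together with the inclusion $C:=\xF(X)\otimes\vec A\hookrightarrow B$ yields a morphism of pregamps $\bpi_{\bx}\colon\PGA(B)\to\PGA(A_{\partial\bx})$. It does not: a morphism of pregamps requires, in particular, a map of partial algebras on the underlying sets, here a map $B\to A_{\partial\bx}$. The projection $\pi_{\bx}^X\otimes\vec A$ is only defined on $C$, not on all of $B$, and there is no reason for it to extend to~$B$. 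Consequently your $\bpi_{\bx}$ does not exist, and Lemma~\ref{L:LSGampOverPregamp} cannot be invoked with this data (its proof opens with $\pi_r(B^*)=A_r$, which requires $\pi_r$ to be defined on~$B^*$).

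The paper repairs this by taking instead the gamp $\bB=(B^*,B,\Theta_B,\Conc B)$ with $B^*=C$. Since $B$ is a congruence-preserving extension of $C$, one identifies $\Conc B$ with $\Conc C$ and obtains $\PGGL(\bB)=(C,\Theta_B\res C^2,\Conc B)=\PGA(C)$. Now $\bpi_{\bx}=\PGA(\pi_{\bx}^X\otimes\vec A)\colon\PGA(C)\to\PGA(A_{\partial\bx})$ is an honest ideal-induced morphism of pregamps, Lemma~\ref{L:LSGampOverPregamp} applies to this $\bB$ (producing finite subgamps of $\bB$, not of $\GA(B)$), one extends to $\bX$ by setting $\bB_{\by}=\bB$ for $\by\notin\bX^=$, and the Armature Lemma is applied in the category of pregamps. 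With this single correction, the remainder of your argument---quotienting by $\vec I$, invoking Lemma~\ref{L:IIPG} for the induced isomorphisms, and Remark~\ref{R:quotassoc}---matches the paper's proof exactly.
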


\begin{proof}
As in the proof of Theorem~\ref{T:CLLGamp}, $\xF(X)\otimes\vec A$ is well-defined. Denote by $\cS$ the category of \pregamps. The functor $\PGA\colon\cV\to\cS$ satisfies (CONT), see Remark~\ref{R:CPGandPGApreslim}. Put $B^*=\xF(X)\otimes\vec A$, as~$B$ is a congruence-preserving extension of $B^*$, we can identify $\Conc B^*$ with $\Conc B$. Put $\bB=(B^*,B,\Theta_B,\Conc B)$, hence $\PGGL(\bB)=\PGA(B^*)$. Put $\bpi_{\bx}=\PGA(\pi^X_{\bx}\otimes\vec A)\colon \PGGL(\bB)\to\PGA(\bA)$ for all $\bx\in\bX$. It follows from Lemma~\ref{L:LSGampOverPregamp}, that there exists a diagram $\famm{\bB_{\bx},\bgg_{\bx,\by}}{\bx\le \by\text{ in }\bX^=}$ of finite subgamps of $\bB$ such that the following assertions hold.
\begin{enumerate}
\item $\bpi_{\bx}\res\PGGL (\bB_{\bx})$ is an ideal-induced morphism of \pregamps\ for all $\bx\in \bX^=$.
\item $\bB_{\bx}$ is strong and congruence $n$-permutable for all $\bx\in \bX^=$.
\item $\bgg_{\bx,\by}$ is operational for all $\bx< \by$ in $\bX^=$.
\end{enumerate}
We complete the diagram with $\bB_{\by}=\bB$ and $\bgg_{\bx,\by}$ the inclusion morphism for all $\by\in\bX-\bX^=$, and $\bx\le \by$. It follows from the Armature Lemma (cf. \cite[Lemma~3-2.2]{GiWe1}) that there exists $\sigma\colon P\to\bX$ such that $\partial\sigma(p)=p$ for all $p\in P$ and $(\bpi_{\sigma(p)}\res\PGGL(\bB_{\sigma(p)}))_{p\in P}$ is a natural transformation from $\famm{\PGGL(\bB_{\sigma(p)}),\PGGL(\bgg_{\sigma(p),\sigma(q)})}{p\le q\text{ in }P}$ to $\PGA\circ\vec A$.

Put $\brho_p=\bpi_{\sigma(p)}\res\PGGL(\bB_{\sigma(p)})$, put $I_p=\ker_0\brho_p$ for all $p\in P$. Put $\vec I=(I_p)_{p \in P}$, it is an ideal of $\famm{\bB_{\sigma(p)},\bgg_{\sigma(p),\sigma(q)}}{p\le q\text{ in }P}$. Put $\vec\bC=\famm{\bB_{\sigma(p)}/I_p,\bgg_{\sigma(p),\sigma(q)}/\vec I}{p\le q\text{ in }P}$. Notice that $\vec\bC$ is an operational diagram of congruence $n$-permutable gamps.

Denote by $\bchi_p\colon \PGGL(\bB_{\sigma(p)})/I_p\to \PGA(A_p)$ the morphism induced by $\brho_p$. It follows from Lemma~\ref{L:IIPG} that $\bchi_p$ is an isomorphism, for all $p\in P$. Hence, from Proposition~\ref{P:projsmpadiag} and Remark~\ref{R:quotassoc}, we obtain that $\vec \bchi=(\bchi_p)_{p\in P}\colon\PGGL\circ\vec \bC\to\PGA\circ\vec A$ is a natural equivalence.
\end{proof}

In the following lemma we construct a square-indexed diagram of lattices with no congruence $n$-permutable, congruence-preserving extension.

\begin{lemma}\label{L:unliftablediag}
Let $n\ge 2$. Let $K$ be a nontrivial, finite, congruence $(n+1)$-permutable lattice, let $x_1,x_2,x_3$ in $K$ such that $x_1\wedge x_2=0$ and $x_3\vee x_2=x_3\vee x_1=1$. There exists a diagram~$\vec A$ of finite congruence $(n+1)$-permutable lattices in $\Var^{0,1}(K)$ indexed by a square, such that $\PGA\circ\vec A\not\cong\PGGL\circ\vec \bB$ for each operational square $\vec \bB$ of congruence $n$-permutable gamps.
\end{lemma}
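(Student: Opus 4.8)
The plan is to build the square-indexed diagram $\vec A$ out of the lattice $K$ by hand, in such a way that it encodes the obstruction to congruence $n$-permutability in the style of the free-lattice obstruction from \cite[Chapter~5]{GiWe1}. Concretely, I would take the square $P=\{0,1\}^2$ with bottom $\mathbf 0$, two intermediate nodes $\mathbf a,\mathbf b$, and top $\mathbf 1$. At $\mathbf 0$ put a small lattice (say the two-element chain, or a copy of $K$ itself); at $\mathbf a$ and $\mathbf b$ put two different "spread out" copies of $K$ in $\Var^{0,1}(K)$ where the elements $x_1,x_2,x_3$ have been separated by inserting long chains (of length depending on $n$), so that the intervals needed to witness $n$-permutability are forced to be too short; at $\mathbf 1$ put an amalgam of the two. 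The transition maps $f_{\mathbf 0,\mathbf a}$, $f_{\mathbf 0,\mathbf b}$, $f_{\mathbf a,\mathbf 1}$, $f_{\mathbf b,\mathbf 1}$ are the obvious bounded-lattice embeddings. The point of using $(n+1)$-permutability of $K$ rather than plain permutability is that it lets each individual $A_p$ be congruence $(n+1)$-permutable (so the diagram itself is "nice"), while the $n$-permutability obstruction only appears at the level of the whole diagram, exactly as in the pattern $\vec A$ that has no congruence-permutable congruence-preserving extension.

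Next I would prove the unliftability statement itself. Suppose, for contradiction, that $\vec\bB=\famm{\bB_p,\bff_{p,q}}{p\le q\text{ in }P}$ is an operational square of congruence $n$-permutable gamps with $\PGGL\circ\vec\bB\cong\PGA\circ\vec A$. By Remark~\ref{Rk:Iso2Equal}-style normalization I may assume the isomorphism is an equality, so that $B_p^*=A_p$, $\delta_{\bB_p}\res(A_p)^2=\Theta_{A_p}$, and $\widetilde B_p=\Conc A_p$, with the distance identifications compatible across the square. Now run the $n$-permutability witness for the gamp $\bB_{\mathbf a}$ (or $\bB_{\mathbf b}$) on the chosen triple coming from $x_1,x_2,x_3$: this produces intermediate elements $y_0=u\wedge v,y_1,\dots,y_n=u\vee v$ in $B_{\mathbf a}$ satisfying the two systems of distance inequalities of Definition~\ref{D:propgamp}(8). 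Pushing these forward via the operational morphisms $\bff_{\mathbf a,\mathbf 1}$ and $\bff_{\mathbf b,\mathbf 1}$ into $\bB_{\mathbf 1}$, using that operationality guarantees the required meets and joins are defined there, I would derive distance (hence congruence) inequalities in $\Conc A_{\mathbf 1}$. Because the chains in $A_{\mathbf a}$ and $A_{\mathbf b}$ were inserted of length strictly exceeding what an $n$-step zig-zag can bridge, and because $A_{\mathbf 1}$ was built precisely to make the two copies "interfere", these inequalities force a relation between the principal congruences $\Theta_{A_{\mathbf 1}}(x_i,x_j)$ that simply does not hold in $\Conc A_{\mathbf 1}$. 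That contradiction finishes the argument.

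The technical heart — and the main obstacle — is the explicit combinatorial design of $A_{\mathbf a}$, $A_{\mathbf b}$, $A_{\mathbf 1}$: one must choose the inserted chains and the amalgam so that (i) each $A_p$ genuinely lies in $\Var^{0,1}(K)$ and is congruence $(n+1)$-permutable, (ii) the congruence lattices $\Conc A_p$ are finite and their $0$-kernels behave correctly under the transition maps, and (iii) a length count shows that an $n$-term identity $y_0,\dots,y_n$ with the Definition~\ref{D:propgamp}(8) distance bounds cannot exist after transport to $\bB_{\mathbf 1}$. This is exactly the kind of "congruence $n$-permutability needs $n$ steps but the geometry only allows fewer" estimate that underlies \cite{Ploscica08} and \cite[Chapter~5]{GiWe1}; here it must be re-derived at the level of gamps, where the only tool available is the compatibility of $\delta$ with lattice terms (Lemma~\ref{L:DistCompTerms}) together with the chain manipulations of Lemma~\ref{L:presqueordre}. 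Once the $\Var^{0,1}(K)$ membership and the length estimate are nailed down, assembling the square and running the contradiction is routine; I would also record, in passing, that the same diagram has no congruence $n$-permutable, congruence-preserving extension in the ordinary sense, since such an extension would yield an operational diagram of congruence $n$-permutable gamps via $\PGGL$, contradicting what was just proved.
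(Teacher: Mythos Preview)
Your proposal has a genuine structural gap: the construction is too vague to carry the argument, and the outlined contradiction strategy does not match what the hypotheses actually give you.

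First, the construction. ``Spread out copies of $K$ with inserted chains'' and ``an amalgam of the two'' are not precise, and it is exactly here that the work lies: you must guarantee that each $A_p$ is in $\Var^{0,1}(K)$, is congruence $(n{+}1)$-permutable, and that $\Conc A_p$ is arranged so that the distance inequalities you later derive are actually false. You acknowledge this is the ``technical heart'', but without a concrete diagram there is nothing to check. The paper's construction is quite different from what you sketch: it puts an $(n{+}1)$-element \emph{chain} $C$ at the bottom node, defines small sublattices $X_0=\{0,x_3,1\}\subseteq X_1,X_2\subseteq X_3=K$, and sets $A_k=X_k^T$ for $k\in\{1,2,3\}$, where $T$ is the set of \emph{all} order-preserving surjections $t\colon C\twoheadrightarrow X_0$. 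Membership in $\Var^{0,1}(K)$ and congruence $(n{+}1)$-permutability then come for free (sublattices and products), with no amalgamation needed.

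Second, and more seriously, your contradiction plan applies $n$-permutability at an intermediate node to ``the chosen triple coming from $x_1,x_2,x_3$''. But lattice congruence $n$-permutability (Definition~\ref{D:perminlatt}) takes $n{+}1$ inputs $x_0,\dots,x_n$, not three, and the outputs $y_0,\dots,y_n$ lie in $B_p$, not in $A_p=B_p^*$; a ``length count'' in $A_p$ says nothing about these $y_i$. The paper instead applies $n$-permutability at the \emph{bottom} gamp $\bB_0$ to the full chain $a_0<\dots<a_n$ of $A_0=C$, obtains $b_0,\dots,b_n\in B_0$, and then --- crucially --- \emph{chooses the surjection $t\in T$ after the fact}, depending on indices $i<j$ extracted from the $b_k$. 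Projecting along this particular $t$ reduces to a fixed auxiliary square $\vec X$ where a short Claim (using operationality and the relations $x_1\wedge x_2=0$, $x_3\vee x_k=1$) forces $b_{i+1}/I_0=0$, yielding a contradiction in $\Con C$. This ``choose the quotient after seeing the witnesses'' maneuver is precisely why the product over all $t\in T$ is needed; your fixed-in-advance diagram has no analogue of it.
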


\begin{proof}
Put $X_0=\set{0,x_3,1}$, put $X_1=\set{0,x_1\wedge x_3,x_1,x_3,1}$, put $X_2=\set{0,x_2\wedge x_3,x_2,x_3,1}$, put $X_3=K$. Notice that $X_k$, $k<4$, are all congruence $(n+1)$-permutable sublattices of $K$.

\begin{figure}[here,top,bottom]
\setlength{\unitlength}{1mm}
\begin{picture}(50,70)(-5,-5)
\put(20,20){\line(1,1){20}}
\put(20,20){\line(-1,1){20}}
\put(40,40){\line(-1,1){20}}
\put(0,40){\line(1,1){20}}
\put(20,0){\line(0,1){20}}

\put(20,20){\circle*{2}}
\put(40,40){\circle*{2}}
\put(0,40){\circle*{2}}
\put(20,60){\circle*{2}}
\put(20,0){\circle*{2}}
\put(-6,39){$x_3$}
\put(42,39){$x_k$}
\put(22,19){$x_3\wedge x_k$}

\put(19,-5){0}
\put(19,62){1}
\end{picture}
\caption{The lattice~$X_k$, for $k\in\set{1,2}$}\label{F:lat-Xk}
\end{figure}
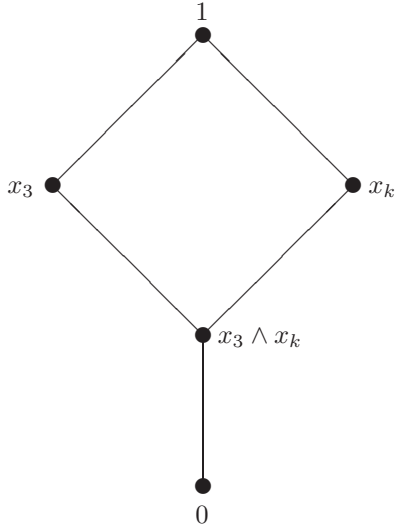

We denote by $h_i\colon X_0\to X_i$ and $h_i'\colon X_i\to X_3$ the inclusion maps, for $i=1,2$. Denote by~$\vec X$ the square on the right hand side of Figure~\ref{F:SQ1}.

\begin{sclaim}
Let $\vec \bB$ be a square of operational gamps \textup(as in Figure~\textup{\ref{F:SQ2}}\textup), let $\vec\bxi\colon\PGA\circ\vec X\to\PGGL\circ\vec\bB$ be a natural equivalence. Let $y\in B_0$ such that $\delta_{\bB_0}(\xi_0(0),y)\le\delta_{\bB_0}(\xi_0(x_3),\xi_0(1))$, $y\wedge\xi_0(1)=y$, and $y\vee\xi_0(0)=y$ in $B_0$, then $y=\xi_0(0)$.
\end{sclaim}

\begin{scproof}
We can assume that $g_1$, $g_2$, $g'_1$, and $g'_2$ are inclusion maps. We can assume that $\PGGL\circ\vec\bB=\PGA\circ\vec X$ and $\vec\bxi$ is the identity. We denote $\delta_k$ instead of $\delta_{\bB_k}$ for all $k\in\set{0,1,2,3}$. Notice that $\delta_k(u,v)$ is a congruence of $X_k$ for all $u,v\in B_k$, moreover if $u,v\in X_k=B_k^*$ then $\delta_k(u,v)=\Theta_{X_k}(u,v)$. Let $y\in B_0$ such that $\delta_{0}(0,y)\subseteq\delta_{0}(x_3,1)$, $y\wedge 1=y$ and $y\vee 0=y$ in~$B_0$.

Let $k\in\set{1,2}$. As $\bgg_k$ is operational, $y\wedge x_k$ is defined in $B_k$. Moreover $0\wedge x_k=0$, hence $\delta_k(0,y\wedge x_k)\subseteq\delta_k(0,y)$. Therefore the following containments hold:
\begin{equation}\label{C:eq1}
\delta_k(y,y\wedge x_k)\subseteq\delta_k(y,0)\vee\delta_k(0,y\wedge x_k)\subseteq\delta_k(0,y)\subseteq\delta_k(x_3,1)=\Theta_{X_k}(x_3,1).
\end{equation}
Moreover, as $y\wedge 1=y$, the following containment holds
\begin{equation}\label{C:eq2}
\delta_k(y,y\wedge x_k)=\delta_k(y\wedge 1,y\wedge x_k)\subseteq\delta_k(1,x_k)=\Theta_{X_k}(1,x_k)
\end{equation}
However $\Theta_{X_k}(1,x_k)\cap \Theta_{X_k}(x_3,1)=\zero_{X_k}$ (see Figure~\ref{F:lat-Xk}), thus it follows from \eqref{C:eq1} and \eqref{C:eq2} that $\delta_k(y,y\wedge x_k)=\zero_{X_k}$, hence the following equality holds
\[
y=y\wedge x_k,\quad\text{for each $k\in\set{1,2}$.}
\]
Therefore $y=(y\wedge x_1)\wedge x_2$ in $B_3$, moreover $x_1\wedge x_2=0$, hence as $\vec\bB$ is operational $y\wedge(x_1\wedge x_2)$ is defined in $B_3$, thus $y\wedge 0=y\wedge(x_1\wedge x_2)=(y\wedge x_1)\wedge x_2=y$.

Moreover as $y\vee 0=y$, it follows that $0=(y\vee 0)\wedge 0=y\wedge 0=y$ (all elements are defined in $B_3$), hence $y=0$.
\end{scproof}

Let $C$ be an $(n+1)$-element chain. Set $T=\setm{t}{t\colon C\tosurj X_0\text{ is order preserving}}$. Put $A_0=C$, put $A_1=X_1^T$, put $A_2=X_2^T$, put $A_3=X_3^T=K^T$. We consider the following morphisms:
\begin{align*}
f_i\colon A_0&\to A_i\\
x&\mapsto (t(x))_{t\in T},\quad\text{for $i=1,2$}.
\end{align*}
We denote by $f_i'\colon A_i\to A_3$ the inclusion maps, for $i=1,2$. We denote by~$\vec A$ the square in the left hand side of Figure~\ref{F:SQ1}.
\begin{figure}[htb]
 \[
  \xymatrix{
   & A_3 & & & X_3 & \\ 
   A_1\ar[ur]^{f_1'} & & A_2\ar[ul]_{f_2'} & X_1\ar[ur]^{h_1'} & & X_2\ar[ul]_{h_2'} \\
   & A_0\ar[ul]^{f_1}\ar[ur]_{f_2} & & & X_0\ar[ul]^{h_1}\ar[ur]_{h_2} &
   } 
 \]\caption{Two squares in $\Var^{0,1}(K)$}\label{F:SQ1}
\end{figure}
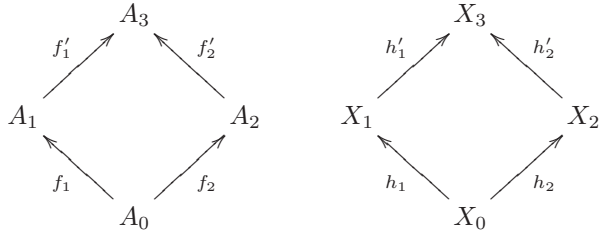

Given $t\in T$, denote $\pi^t_0=t$ and $\pi^t_k\colon A_k\to X_k$, $(a_p)_{p\in T}\mapsto a_t$ the canonical projection, for all $k\in\set{1,2,3}$. It defines a natural transformation $\vec\pi^t$ from~$\vec A$ to~$\vec X$. We denote by $\bpi_k^t=\PGA(\pi_k^t)=(\pi_k^t,\Conc\pi_k^t)$, for all $k\in\set{0,1,2,3}$.

Assume that there exists an operational square $\vec\bB$, as in Figure~\ref{F:SQ2}, of congruence $n$-permutable gamps, and a natural equivalence $\PGA\circ\vec A\to\PGGL\circ\vec\bB$. We can assume that $\PGGL\circ\vec\bB=\PGA\circ\vec A$. Put $\delta_k=\delta_{\bB_k}$, the distance $\delta_{\bA_k}$ is a restriction of $\delta_k$, for all $k\in\set{0,1,2,3}$.
\begin{figure}[htb]
 \[
  \xymatrix{
   & \bB_3 & \\ \bB_1\ar[ur]^{\bgg_1'} & & \bB_2\ar[ul]_{\bgg_2'}\\
   & \bB_0\ar[ul]^{\bgg_1}\ar[ur]_{\bgg_2} &
   } 
 \]\caption{A square in the category~$\GAMP(\cL)$}\label{F:SQ2}
\end{figure}
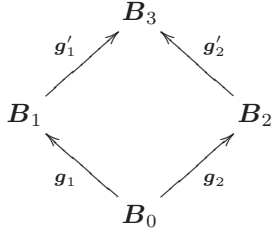

Let $a_0<a_1<\dots<a_n$ be the elements of $C=A_0=B_0^*$. As $\bB_0$ is congruence $n$-permutable, there exist $b_0,\dots,b_n$ in~$B_0$ such that $b_i\wedge b_j=b_j\wedge b_i=b_i$ in $B_0$ for all $i\le j\le n$, $b_0=a_0\wedge a_n=a_0$ in $B_0$, $b_n=a_0\vee a_n=a_n$ in $B_0$, and:
\begin{align}
\delta_0(b_k,b_{k+1})\le\bigvee\famm{\delta_0(a_{i},a_{i+1})}{i<n\text{ even}},\quad\text{for all $k<n$ odd},\label{E:CPext1}\\
\delta_0(b_k,b_{k+1})\le\bigvee\famm{\delta_0(a_{i},a_{i+1})}{i<n\text{ odd}},\quad\text{for all $k<n$ even}.\label{E:CPext2}
\end{align}
In particular the following inequality holds
\begin{equation}\label{E:CPext2b}
\delta_0(b_k,b_{k+1})\le\delta_0(a_0,a_k)\vee\delta_0(a_{k+1},a_n),\quad\text{for all $k<n$.}
\end{equation}
As $b_n=a_n$, an immediate consequence of \eqref{E:CPext2b} is $\delta_0(b_{n-1},a_{n})\le\delta_0(a_0,a_{n-1})$. Assume that $\delta_0(a_{n-2},b_{n-1})\le\delta_0(a_0,a_{n-2})$, it follows that
\[
\delta_0(a_0,a_n)\le\delta_0(a_0,a_{n-2})\vee\delta_0(a_{n-2},b_{n-1})\vee\delta_0(b_{n-1},a_n)\le\delta_0(a_0,a_{n-1})
\]
That is $\Theta_{A_0}(a_0,a_n)\le\Theta_{A_0}(a_0,a_{n-1})$ a contradiction, as $A_0$ is the chain $a_0<\dots<a_n$. It follows that $\delta_0(a_{n-2},b_{n-1})\not\le\delta_0(a_0,a_{n-2})$.

Take $i<n$ minimal such that the following inequality hold 
\begin{equation}\label{E:CPext3}
\delta_0(a_i,b_{i+1})\not\le\delta_0(a_0,a_i).
\end{equation}

If $i=0$, then $a_0=b_0=b_i$, hence $\delta_0(a_0,b_i)\le\delta_0(a_0,a_i)$. If $i>0$, then it follows from the minimality of $i$ that $\delta_0(a_{i-1},b_i)\le\delta_0(a_0,a_{i-1})\le\delta_0(a_0,a_i)$, thus $\delta_0(a_0,b_i)\le\delta_0(a_0,a_{i-1})\vee\delta_0(a_{i-1},b_i)\le\delta_0(a_0,a_i)$. Therefore the following inequality holds
\begin{equation}\label{E:CPextcomp}
\delta_0(a_0,b_i)\le\delta_0(a_0,a_i).
\end{equation}

Assume that $\delta_0(b_i,b_{i+1})\le\delta_0(a_0,a_i)$, this implies with \eqref{E:CPextcomp} the following inequality
\[
\delta_0(a_i,b_{i+1})\le\delta_0(a_i,a_0)\vee \delta_0(a_0,b_i)\vee\delta_0(b_i,b_{i+1})=\delta_0(a_0,a_i),
\]
which contradicts~\eqref{E:CPext3}. Therefore the following statement holds
\begin{equation}\label{E:tt}
\delta_0(b_i,b_{i+1})\not\le\delta_0(a_0,a_i).
\end{equation}

As $\Con A_0$ is a Boolean lattice with atoms $\delta_0(a_k,a_{k+1})$ for $k<n$, it follows from \eqref{E:tt} that there is $j<n$ such that
\begin{equation} \label{E:CPext5}
\delta_0(a_j,a_{j+1})\le \delta_0(b_i,b_{i+1})\text{\ \ and\ \ }\delta_0(a_j,a_{j+1})\not\le \delta_0(a_0,a_i),
\end{equation}
As $\delta_0(a_j,a_{j+1})\not\le \delta_0(a_0,a_i)$, $j\ge i$. It follows from \eqref{E:CPext1}, \eqref{E:CPext2}, and \eqref{E:CPext5} that $i$ and $j$ have distinct parities, therefore $j>i$.

Put 
\begin{align*}
t\colon A_0&\to \set{0,x_3,1}\\
a_k&\mapsto
\begin{cases}
0 &\text{if $k\le i$},\\
x_3 &\text{if $i<k\le j$},\\
1 &\text{if $j<k$,}
\end{cases}\quad\text{for all $k\le n$.}
\end{align*}
As $i<j<n$ the map $t$ is surjective, thus $t\in T$. Put $I_i=\ker_0\bpi_i^t$, for all $i\in\set{0,1,2,3}$. Denote by $\vec\bchi\colon \PGA\circ\vec A/\vec I=\PGGL\circ\vec\bB\to\vec\bX$ the natural transformation induced by~$\vec\bpi$. As $\vec\bpi=\PGA(\vec \pi)$ is ideal-induced, it follows from Lemma~\ref{L:IIPG} that $\vec\bchi$ is a natural equivalence. Put $\vec\bxi=\vec\bchi^{-1}$. Notice that the following inequalities hold
\begin{align*}
\delta_{0}(a_0,b_{i+1})&\le\delta_0(a_0,b_i)\vee\delta_0(b_i,b_{i+1})\\
&\le\delta_0(a_0,a_i)\vee\delta_0(a_{i+1},a_n) && \text{by \eqref{E:CPextcomp} and \eqref{E:CPext2b}.}
\end{align*}
Moreover $\xi_0(0)=a_0/I_0=a_i/I_0$, $\xi_0(x_3)=a_{i+1}/I_0$ and $\xi_0(1)=a_n/I_0$, thus $\delta_{\bB_0/I_0}(\xi_0(0),b_{i+1}/I_0)\le \delta_{\bB_0/I_0}(\xi_0(x_3),\xi_0(1))$. As $b_{i+1}/I_0\wedge\xi_0(1)=(b_{i+1}\wedge a_n)/I_0=b_{i+1}/I_0$ and $b_{i+1}/I_0\vee\xi_0(0)=(b_{i+1}\vee a_0)/I_0=b_{i+1}/I_0$. It follows from the Claim that $b_{i+1}/I_0=\xi_0(0)=a_0/I_0$, that is $\delta_0(a_0,b_{i+1})\in I_0$. Therefore the following inequality holds:
\begin{equation}\label{Eq:consclaim}
\delta_0(a_0,b_{i+1})\le \delta_0(a_0,a_i)\vee\delta_0(a_{i+1},a_j)\vee\delta_0(a_{j+1},a_n)
\end{equation}
Hence we obtain
\begin{align*}
\delta_0(a_j,a_{j+1})&\le \delta_0(b_i,b_{i+1}) &&\text{by \eqref{E:CPext5}.}\\
&\le\delta_0(b_i,a_0)\vee\delta_0(a_0,b_{i+1})\\
&\le \delta_0(a_0,a_i)\vee\delta_0(a_{i+1},a_j)\vee\delta_0(a_{j+1},a_n) &&\text{by \eqref{E:CPextcomp} and \eqref{Eq:consclaim}.}\\
&\le \delta_0(a_0,a_j)\vee\delta_0(a_{j+1},a_n).
\end{align*}
A contradiction, as $A_0$ is the chain $a_0<a_1<\dots<a_n$.
\end{proof}

\begin{figure}[here,bottom,top]
\setlength{\unitlength}{1mm}
\begin{picture}(50,45)(-5,-5)
\put(20,0){\line(1,1){20}}
\put(20,0){\line(-1,1){20}}
\put(20,0){\line(0,1){20}}
\put(19,-4){0}
\put(-5,20){$x_1$}
\put(15,20){$x_2$}
\put(35,20){$x_3$}
\put(19,42){1}

\put(0,20){\line(1,1){20}}
\put(20,20){\line(0,1){20}}
\put(40,20){\line(-1,1){20}}

\put(20,0){\circle*{2}}
\put(0,20){\circle*{2}}
\put(20,20){\circle*{2}}
\put(40,20){\circle*{2}}
\put(20,40){\circle*{2}}
\end{picture}
\quad
\begin{picture}(45,50)(5,-5)
\put(20,0){\line(-1,1){15}}
\put(20,0){\line(1,1){15}}
\put(35,15){\line(0,1){10}}
\put(5,15){\line(0,1){10}}

\put(5,25){\line(1,1){15}}
\put(35,25){\line(-1,1){15}}
\put(20,0){\circle*{2}}
\put(5,15){\circle*{2}}
\put(5,25){\circle*{2}}
\put(35,15){\circle*{2}}
\put(35,25){\circle*{2}}
\put(20,40){\circle*{2}}

\put(20,30){\line(0,1){10}}
\put(5,15){\line(1,1){15}}
\put(35,15){\line(-1,1){15}}
\put(20,30){\circle*{2}}

\put(19,-4){$0$}
\put(19,42){$1$}
\put(19,26){$x_3$}
\put(0,25){$x_1$}
\put(38,25){$x_2$}
\end{picture}\quad
\begin{picture}(50,50)(-5,-5)
\put(12,0){\line(-1,1){12}}
\put(12,0){\line(1,1){24}}
\put(24,12){\line(-1,1){12}}
\put(0,12){\line(1,1){24}}
\put(36,24){\line(-1,1){12}}

\put(12,0){\circle*{2}}
\put(0,12){\circle*{2}}
\put(24,12){\circle*{2}}
\put(12,24){\circle*{2}}
\put(18,18){\circle*{2}}
\put(36,24){\circle*{2}}
\put(24,36){\circle*{2}}

\put(11,-4){$0$}
\put(23,38){$1$}

\put(-5,11){$x_1$}
\put(13,17){$x_2$}
\put(39,23){$x_3$}

\end{picture}\quad
\begin{picture}(50,50)(-5,-5)
\put(20,0){\line(-1,1){20}}
\put(20,0){\line(1,1){15}}
\put(35,15){\line(0,1){10}}
\put(20,0){\line(0,1){10}}
\put(20,10){\line(1,1){15}}
\put(0,20){\line(1,1){20}}
\put(35,25){\line(-1,1){15}}
\put(20,0){\circle*{2}}
\put(0,20){\circle*{2}}
\put(20,10){\circle*{2}}
\put(35,15){\circle*{2}}
\put(35,25){\circle*{2}}
\put(20,40){\circle*{2}}
\put(19,-4){$0$}
\put(19,42){$1$}
\put(-5,19){$x_3$}
\put(38,14){$x_1$}
\put(15,9){$x_2$}
\end{picture}
\caption{The lattices $M_3$, $L_2$, $L_3$, $L_4$.}\label{F:treillis_M3_L2_L4}
\end{figure}
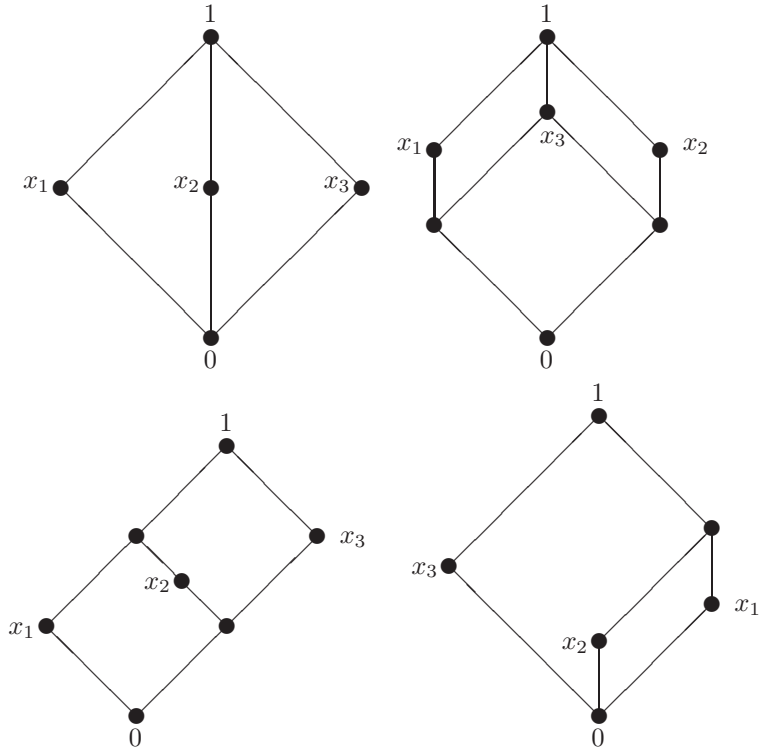 

\begin{theorem}\label{T:NoCPCnP}
Let $n\ge 2$. Let~$\cV$ be a variety of lattices, such that either $M_3\in\cV$, $L_2\in\cV$, $L_3\in\cV$, $L_4\in\cV$, $\dual{L_2}\in\cV$, or $\dual{L_4}\in\cV$. There exists a bounded lattice $L\in\cV$ such that~$L$ is congruence $(n+1)$-permutable, $\card L=\aleph_1$, and~$L$ has no congruence $n$-permutable, congruence-preserving extension in the variety of all lattices.
\end{theorem}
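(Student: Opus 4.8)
The plan is to combine the unliftable square of Lemma~\ref{L:unliftablediag} with a condensate over an $\aleph_0$-lifter of the square, after first reducing the two duality cases to the others. If $\dual{L_2}\in\cV$ (resp. $\dual{L_4}\in\cV$), then $L_2\in\dual\cV$ (resp. $L_4\in\dual\cV$), and it is enough to produce the required lattice $L'\in\dual\cV$ and set $L=\dual{L'}$: then $L\in\cV$ is bounded with $\card L=\card L'=\aleph_1$; congruence $(n+1)$-permutability passes to $L$ because, by the reformulation of Proposition~\ref{P:CP-PR}(2), it is expressed purely in terms of principal congruences and joins in $\Con$, which coincide for a lattice and its dual; and any congruence $n$-permutable, congruence-preserving extension $B\supseteq L$ in $\cL$ would dualize to such an extension $\dual B\supseteq L'$ (both ``congruence-preserving extension'' and ``congruence $n$-permutable'' being self-dual), a contradiction. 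So it suffices to treat the four cases $K\in\{M_3,L_2,L_3,L_4\}$ with $K\in\cV$.

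Fix such a $K$, regarded as a bounded lattice. It is finite, nontrivial, and congruence $(n+1)$-permutable: a finite check on its congruence lattice shows $K$ is congruence-$3$-permutable ($M_3$, $L_2$, $L_4$ are in fact congruence-permutable, their congruence lattices being chains), and $n+1\ge 3$. Moreover the elements $x_1,x_2,x_3$ displayed in Figure~\ref{F:treillis_M3_L2_L4} satisfy $x_1\wedge x_2=0$ and $x_3\vee x_1=x_3\vee x_2=1$. Hence Lemma~\ref{L:unliftablediag} provides a diagram $\vec A=\famm{A_p,f_{p,q}}{p\le q\text{ in }P}$, indexed by the square $P$, of finite congruence $(n+1)$-permutable lattices in $\Var^{0,1}(K)$, such that $\PGA\circ\vec A\not\cong\PGGL\circ\vec\bB$ for every operational square $\vec\bB$ of congruence $n$-permutable gamps of lattices. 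Using the $\aleph_0$-lifter $(X,\bX)$ of the square with $\card X=\aleph_1$ from Lemma~\ref{L:squarehaslifter}, I form the condensate $L=\xF(X)\otimes\vec A$ inside the (product- and directed-colimit-closed) variety of bounded lattices; thus $L$ is bounded, its lattice reduct lies in the locally finite variety $\Var(K)\subseteq\cV$, and $\card L\le\card X+\sum_{p\in P}\card A_p=\aleph_1$ by Remark~\ref{R:DefCondensate}. Since $L$ is a directed colimit of finite products of the $A_p$, and the class of congruence $(n+1)$-permutable algebras of the congruence-distributive variety of lattices is closed under finite products and directed colimits, $L$ is congruence $(n+1)$-permutable.

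Next I would show that $L$ has no congruence $n$-permutable, congruence-preserving extension in the variety $\cL$ of all lattices. Suppose $B\supseteq L$ is such an extension. Applying Lemma~\ref{L:CLLCPCnP} to the lifter $(X,\bX)$, the diagram $\vec A$ (each $A_p$ finite, hence $\Conc A_p$ finite for all $p\in P^=$), and the congruence $n$-permutable lattice $B$, which is a congruence-preserving extension of $\xF(X)\otimes\vec A=L$, yields an operational square $\vec\bB$ of congruence $n$-permutable gamps with $\PGGL\circ\vec\bB\cong\PGA\circ\vec A$, contradicting Lemma~\ref{L:unliftablediag}. So no such $B$ exists. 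Finally $L$ must be uncountable: a countable member of the locally finite variety $\Var(K)$ would admit a congruence-permutable, congruence-preserving extension by \cite{GLWe}, which we have just excluded. Hence $\card L=\aleph_1$, which finishes the four direct cases and, with the first paragraph, the theorem.

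The real content is carried by Lemmas~\ref{L:unliftablediag} and~\ref{L:CLLCPCnP}, which are taken for granted here; within the present argument the remaining work is bookkeeping, whose main delicate points are: (i) confirming congruence-$3$-permutability of the four base lattices, so that the condensate inherits congruence $(n+1)$-permutability via the closure properties of the congruence-distributive variety of lattices; and (ii) checking that congruence $n$-permutability is self-dual, which is transparent not from the definition via compositions of congruences but from the relation-free reformulation in Proposition~\ref{P:CP-PR}(2), together with the easy self-duality of boundedness and of the notion of a congruence-preserving extension. Modulo these, the proof is the expected assembly of the condensate machinery.
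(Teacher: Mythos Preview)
Your proof is correct and follows essentially the same approach as the paper: reduce the dual cases, pick $K\in\{M_3,L_2,L_3,L_4\}$ in~$\cV$, apply Lemma~\ref{L:unliftablediag} to get the square~$\vec A$, form the condensate $L=\xF(X)\otimes\vec A$ over the $\aleph_0$-lifter of Lemma~\ref{L:squarehaslifter}, and derive a contradiction from Lemma~\ref{L:CLLCPCnP}. The only notable difference is your handling of $\card L=\aleph_1$: the paper asserts equality directly from $\card X=\aleph_1$, while you obtain $\le\aleph_1$ from the general cardinality bound and then rule out countability via~\cite{GLWe}; your route is in fact more careful, since Remark~\ref{R:DefCondensate} only gives an upper bound.
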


\begin{proof}
Fix $(X,\bX)$ an~$\aleph_0$-lifter of the square such that $\card X=\aleph_1$ (cf. Lemma~\ref{L:squarehaslifter}). Up to changing~$\cV$ to its dual, we can assume that either $M_3\in\cV$, $L_2\in\cV$, $L_3\in\cV$, or $L_4\in\cV$. Let $K$ one of those lattices such that $K\in\cV$, let $x_1,x_2,x_3$ as in Figure~\ref{F:treillis_M3_L2_L4}. The conditions of Lemma~\ref{L:unliftablediag} are satisfied. Denote by~$\vec A$ the diagram constructed in Lemma~\ref{L:unliftablediag}.

Put $L=\xF(X)\otimes\vec A\in\Var^{0,1}(K)\subseteq\cV$ (cf. Remark~\ref{R:DefCondensate}). Notice that~$L$ is a directed colimit of finite products of lattices in $\vec A$ and all lattices in $\vec A$ are congruence $(n+1)$-permutable, thus~$L$ is congruence $(n+1)$-permutable. As $\card X=\aleph_1$ and each lattice in the diagram~$\vec A$ is finite, $\card L=\aleph_1$. Moreover~$L$ cannot have a congruence $n$-permutable, congruence-preserving extension, as the conclusions of Lemma~\ref{L:CLLCPCnP} and Lemma~\ref{L:unliftablediag} contradict each other.
\end{proof}

The following corollary is an immediate consequence of Theorem~\ref{T:NoCPCnP}.

\begin{corollary}
Let~$\cV$ be a variety of lattices such that either $M_3\in\cV$, $L_2\in\cV$, $L_3\in\cV$, $L_4\in\cV$, $\dual{L_2}\in\cV$, or $\dual{L_4}\in\cV$. The free bounded lattice on~$\aleph_1$ generators of~$\cV$ has no congruence $n$-permutable, congruence-preserving extension in the variety of all lattices, for each $n\ge 2$.
\end{corollary}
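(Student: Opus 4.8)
The plan is to deduce the corollary directly from Theorem~\ref{T:NoCPCnP} by a standard ``quotient'' argument, transporting a hypothetical congruence $n$-permutable, congruence-preserving extension of the free bounded lattice down to the lattice $L$ furnished by that theorem.

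Fix $n\ge 2$ and write $F$ for the free bounded lattice on $\aleph_1$ generators of~$\cV$. By Theorem~\ref{T:NoCPCnP} there is a bounded lattice $L\in\cV$ with $\card L=\aleph_1$ that has no congruence $n$-permutable, congruence-preserving extension in the variety of all lattices. Inspecting the proof of that theorem, $L$ lies in $\Var^{0,1}(K)$ for the relevant finite lattice~$K\in\cV$; in particular $L$ satisfies every lattice identity valid in~$\cV$, so $L$ belongs to the variety of bounded lattices over which $F$ is free, and $\card L\le\aleph_1$. Hence there is a surjective homomorphism of bounded lattices $\psi\colon F\tosurj L$. Put $\alpha=\ker\psi\in\Con F$ and identify $L$ with $F/\alpha$.

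Suppose, for a contradiction, that $F$ --- viewed as a lattice --- has a congruence $n$-permutable, congruence-preserving extension~$E$. Let $\eta\colon\Con E\to\Con F$, $\beta\mapsto\beta\cap F^2$, be the associated isomorphism, and set $\bar\alpha=\eta^{-1}(\alpha)$. From $\bar\alpha\cap F^2=\alpha$ we obtain an embedding $L=F/\alpha\into E/\bar\alpha$, so $E/\bar\alpha$ is a lattice extension of~$L$. Since $\eta$ is an order-isomorphism with $\eta(\bar\alpha)=\alpha$, it maps the set $\setm{\beta\in\Con E}{\beta\supseteq\bar\alpha}$ bijectively and order-isomorphically onto $\setm{\gamma\in\Con F}{\gamma\supseteq\alpha}$; under the canonical identifications $\Con(E/\bar\alpha)\cong\setm{\beta\in\Con E}{\beta\supseteq\bar\alpha}$ and $\Con(F/\alpha)\cong\setm{\gamma\in\Con F}{\gamma\supseteq\alpha}$ this restriction is precisely the map $\beta\mapsto\beta\cap(F/\alpha)^2$, so it is an isomorphism and $E/\bar\alpha$ is a congruence-preserving extension of~$L$. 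Being a quotient of the congruence $n$-permutable lattice~$E$, the lattice $E/\bar\alpha$ is itself congruence $n$-permutable, because the class of congruence $n$-permutable algebras of a fixed similarity type is closed under quotients. Thus $L$ admits a congruence $n$-permutable, congruence-preserving extension in the variety of all lattices, contradicting Theorem~\ref{T:NoCPCnP}. Therefore no such~$E$ exists, and since $n\ge 2$ was arbitrary the corollary follows.

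The argument is essentially formal once Theorem~\ref{T:NoCPCnP} is granted; the one point requiring care is the verification that the canonical isomorphism $\Con E\cong\Con F$ restricts to the canonical isomorphism $\Con(E/\bar\alpha)\cong\Con(F/\alpha)$, i.e.\ that congruence-preservation is inherited by these particular quotients. This is where one uses that a lattice isomorphism preserves order, so that $\alpha\le\gamma$ in $\Con F$ forces $\bar\alpha\le\eta^{-1}(\gamma)$ in $\Con E$; the remaining identifications of congruence lattices of quotients with principal filters of congruence lattices are routine.
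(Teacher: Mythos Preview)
Your argument is correct and is precisely the standard quotient argument that the paper leaves implicit when it declares the corollary ``an immediate consequence of Theorem~\ref{T:NoCPCnP}''. You have simply spelled out that~$L$ is a bounded-lattice quotient of~$F$ and that congruence-preserving extensions and congruence $n$-permutability both pass to such quotients; this is exactly the intended route.
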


\section{Acknowledgment}
I wish to thank Friedrich Wehrung for all the corrections and improvements he brought to this paper.

\end{document}